\title{A complex hyperbolic Riley slice}
\author{John R PARKER \\
Department of Mathematical Sciences\\ 
Durham University\\ Durham DH1 3LE\\ 
England
\and Pierre WILL \\
Universit\'e Grenoble Alpes\\
Institut Fourier, B.P.74\\
38402 Saint-Martin-d'H\`eres Cedex\\
France}
\begin{document}
\maketitle

\def\og{\leavevmode\raise.3ex\hbox{$\scriptscriptstyle\langle\!\langle$~}}
\def\fg{\leavevmode\raise.3ex\hbox{~$\!\scriptscriptstyle\,\rangle\!\rangle$}}
\newcommand{\la}{\langle}
\newcommand{\ra}{\rangle}
\newcommand{\HdC}{\mathbf H^{2}_{\mathbb C}}
\newcommand{\HdR}{\mathbf H^{2}_{\mathbb R}}
\newcommand{\HtR}{\mathbf H^{3}_{\mathbb R}}
\newcommand{\HuC}{\mathbf H^{1}_{\mathbb C}}
\newcommand{\HnC}{\mathbf H^{n}_{\mathbb C}}
\newcommand{\Cdu}{\mathbb C^{2,1}}
\newcommand{\Ct}{\mathbb C^{3}}
\newcommand{\C}{\mathbb C}
\newcommand{\Mn}{\mbox{M}_n\left(\mathbb C\right)}
\newcommand{\R}{\mathbb R}
\newcommand{\A}{\mathbb A}
\newcommand{\cC}{\mathfrak C}
\newcommand{\cCp}{\mathfrak C_{\mbox{p}}}
\newcommand{\cT}{\mathfrak T}
\newcommand{\rR}{\mathfrak R}
\newcommand{\hH}{\mathfrak H}
\newcommand{\n}{\noindent}
\newcommand{\p}{{\bf p}}
\newcommand{\lox}{\mbox{\scriptsize{lox}}}
\newcommand{\Td}{T^{2}_{\left(1,1\right)}}
\newcommand{\T}{T_{\left(1,1\right)}}
\newcommand{\PSL}{\mbox{\rm PSL(2,$\R$)}}
\newcommand{\Pu}{\mbox{\rm PU(2,1)}}
\newcommand{\X}{\mbox{\textbf{X}}}
\newcommand{\tr}{\mbox{\rm tr}}
\newcommand{\B}{\mathcal{B}}
\newcommand{\bx}{{\boxtimes}}
\newcommand{\bp}{{\bf p}}
\newcommand{\bm}{{\bf m}}
\newcommand{\bv}{{\bf v}}
\newcommand{\bA}{{\bf A}}
\newcommand{\bB}{{\bf B}}
\newcommand{\bc}{{\bf c}}
\newcommand{\ba}{{\bf a}}
\newcommand{\bd}{{\bf d}}
\newcommand{\bb}{{\bf b}}
\newcommand{\bn}{{\bf n}}
\newcommand{\bq}{{\bf q}}
\newcommand{\bz}{{\bf z}}
\newcommand{\td}{{\tt d}}

\newcommand{\tD}{{\tt D}}
\newcommand{\I}{\mathcal{I}}
\newcommand{\Z}{{\mathbb Z}}
\newcommand{\Isom}{\mathcal{I}}
\renewcommand{\arg}{\mbox{arg}}
\renewcommand{\Re}{\mbox{\rm Re}\,}
\renewcommand{\Im}{\mbox{\rm Im}\,}
\renewcommand{\leq}{\leqslant}
\renewcommand{\geq}{\geqslant} 
\newtheorem{theo}{Theorem}[section]
\newtheorem*{theo*}{Theorem}
\newtheorem{lem}[theo]{Lemma}
\newtheorem{coro}[theo]{Corollary}
\newtheorem{prop}[theo]{Proposition}
\newtheorem{conj}[theo]{Conjecture}
\theoremstyle{remark}
\newtheorem{rem}{Remark}
\newtheorem{ex}{Example}
\theoremstyle{definition}
\newtheorem{defi}{Definition}

\begin{abstract}
\noindent
We study subgroups of ${\rm PU}(2,1)$ generated by two non-commuting 
unipotent maps $A$ and $B$ whose product $AB$ is also unipotent. 
We call $\mathcal{U}$ the set of conjugacy classes of such groups. We provide a 
set of coordinates on $\mathcal{U}$ that make it homeomorphic to $\R^2$ . 
By considering the action on complex hyperbolic space $\HdC$ of groups in 
$\mathcal{U}$, we describe a two dimensional disc ${\mathcal Z}$ 
in $\mathcal{U}$ that 
parametrises a family of discrete groups. As a corollary, we give a proof of a 
conjecture of Schwartz for $(3,3,\infty)$-triangle groups. We 
also consider a particular group on the boundary of the disc ${\mathcal Z}$ where 
the commutator $[A,B]$ is also unipotent. We show that the boundary of the 
quotient orbifold associated to the latter group gives a spherical CR uniformisation 
of the Whitehead link complement.
\end{abstract}
{\small AMS classification 51M10, 32M15, 22E40}

\section{Introduction}

\subsection{Context and motivation}

The framework of this article is the study of the deformations of a discrete 
subgroup $\Gamma$ of a Lie group $H$ in a Lie 
group $G$ containing $H$. This question has been addressed in many different 
contexts. A classical example is the one where $\Gamma$ is a Fuchsian group,
$H={\rm PSL}(2,\R)$ and $G={\rm PSL}(2,\C)$. When $\Gamma$ is discrete,
such deformations are called quasi-Fuchsian. We will be interested in the case
where $\Gamma$ is a discrete subgroup of $H={\rm SO}(2,1)$ and $G$ is the 
group ${\rm SU}(2,1)$ (or their natural projectivisations over $\R$ and $\C$ 
respectively).
The geometrical motivation is very similar: In the classical case mentioned 
above, ${\rm PSL}(2,\C)$ is the orientation preserving isometry group of
hyperbolic 3-space ${\bf H}^3$ and a Fuchsian group preserves a totally
geodesic hyperbolic plane ${\bf H}^2$ in ${\bf H}^3$. In our case 
$G={\rm SU}(2,1)$ is (a triple cover of) the holomorphic isometry group 
of complex hyperbolic 2-space $\HdC$, and the subgroup $H={\rm SO}(2,1)$ 
preserves a totally geodesic Lagrangian plane isometric to ${\bf H}^2$.
A discrete subgroup $\Gamma$ of ${\rm SO}(2,1)$ is called $\R$-Fuchsian.
A second example of this construction is where $G$ is again ${\rm SU}(2,1)$
but now $H={\rm S}\bigl({\rm U}(1)\times{\rm U}(1,1)\bigr)$. In this case $H$
preserves a totally geodesic complex line in $\HdC$. A discrete subgroup of
$H$ is called $\C$-Fuchsian. Deformations of either $\R$-Fuchsian or 
$\C$-Fuchsian groups in ${\rm SU}(2,1)$ are called complex hyperbolic
quasi-Fuchsian. See \cite{PP2} for a survey of this topic.

The title of this article refers to the so-called \textit{Riley slice of Schottky space} 
(see \cite{KS} or \cite{ASWY}). Riley considered the space of conjugacy classes of 
subgroups of ${\rm PSL}(2,\C)$ generated by two non-commuting parabolic maps.
This space may be identified with $\C-\{0\}$ under the map that associates
the parameter $\rho\in\C-\{0\}$ with the conjugacy class of the group
$\Gamma_\rho$, where 
$$
\Gamma_\rho=\left\langle
\left[\begin{matrix} 1 & 1 \\ 0 & 1 \end{matrix}\right],\ 
\left[\begin{matrix} 1 & 0 \\ \rho & 1 \end{matrix}\right]\right\rangle.
$$
Riley was interested in the set of those parameters $\rho$ for which $\Gamma_\rho$
is discrete. He was particularly interested in the (closed) set where $\Gamma_\rho$
is discrete and free, which is now called the Riley slice of Schottky space \cite{KS}. 
This work has been taken up more recently
by Akiyoshi, Sakuma, Wada and Yamashita. In their book \cite{ASWY}
they illustrate one of Riley's original computer 
pictures\footnote{JRP has one of Riley's printouts of this picture 
dated 26th March 1979}, Figure 0.2a, and their version of this picture, 
Figure 0.2b. Riley's main method was to construct the Ford domain for $\Gamma_\rho$.
The different combinatorial patterns that arise in this Ford domain correspond
to the differently coloured regions in these figures from \cite{ASWY}.
Riley was also interested in groups $\Gamma_\rho$ that are discrete but not free. In 
particular, he showed that when $\rho$ is a complex sixth root of unity then the quotient
of hyperbolic 3-space by $\Gamma_\rho$ is the figure-eight knot complement.

\subsection{Main definitions and discreteness result}

The direct analogue of the Riley slice in complex hyperbolic plane would be the set of 
conjugacy classes of groups generated by two non-commuting, unipotent parabolic 
elements $A$ and $B$ of ${\rm SU}(2,1)$. (Note that in contrast to to ${\rm PSL}(2,\C)$, 
there exist parabolic elements in ${\rm SU}(2,1)$ that are not unipotent. In fact, there is 
a 1-parameter family of parabolic conjugacy classes, see for instance Chapter 6 of 
\cite{Go}.) This choice would give a four dimensional parameter space, and we require
additionally that $AB$ is unipotent; making the dimension drop to $2$.
Specifically, we define
\begin{equation}\label{eq-set-U} 
{ \mathcal{U} = \Bigl\{(A,B)\in {\rm SU}(2,1)^2\ :\  A,\,B,\, AB\,
\hbox{ all unipotent and } AB\neq BA \Bigr\}}/{SU(2,1)}.
\end{equation}

Following Riley, we are interested in the (closed) subset of ${\mathcal U}$
where the group $\langle A,B\rangle$ is discrete and free and our main method for
studying this set is to construct the Ford domain for its action on 
complex hyperbolic space $\HdC$. We shall also indicate
various other interesting discrete groups in ${\mathcal U}$ but these will not
be our main focus. 

In Section \ref{section-coordinates}, we will parametrise $\mathcal{U}$  
so that it becomes the open square $(-\pi/2,\pi/2)^2$. The parameters we
use will be the Cartan angular invariants $\alpha_1$ and $\alpha_2$ of the 
triples of (parabolic) fixed points of $(A,AB,B)$ and $(A,AB,BA)$ respectively (see 
Section  \ref{sec-Cartan} for the definitions). Note that the invariants
$\alpha_1$ and $\alpha_2$ are defined to lie in the closed interval
$[-\pi/2,\pi/2]$. Our assumption that $A$ and $B$ don't commute implies that 
neither $\alpha_1$ nor $\alpha_2$ can equal $\pm\pi/2$ 
(see Section \ref{section-coordinates}).

When $\alpha_1$ and $\alpha_2$ are both zero, that is at the origin of the square, 
the group $\langle A,B\rangle$ is ${\mathbb R}$-Fuchsian. The quotient of
the Lagrangian plane preserved by $\langle A,B\rangle$ is a hyperbolic three times punctured sphere
where the three (homotopy classes of) peripheral elements are represented
by (the conjugacy classes of) $A$, $B$ and $AB$. 
The space $\mathcal{U}$ can thus be thought of as the slice of the 
${\rm SU}(2,1)$-representation variety of the three times punctured 
sphere group defined by the conditions that the peripheral loops are 
mapped to unipotent isometries.

We can now state our main discreteness result.

\begin{theo}\label{thm-main}
Suppose that $\Gamma=\langle A,B\rangle$ is the group associated to parameters
$(\alpha_1,\alpha_2)$ satisfying
$\mathcal{D}\bigl(4\cos^2(\alpha_1),4\cos^2(\alpha_2)\bigr)>0$,
where $\mathcal{D}$ is the polynomial given by 
$$
\mathcal{D}(x,y)=x^3y^3-9x^2y^2-27xy^2+81xy-27x-27.
$$
Then $\Gamma$ is discrete and isomorphic to the free group $F_2$.
This region is $\mathcal{Z}$ in Figure \ref{picture-peach-1}.
\end{theo}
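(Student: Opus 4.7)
The plan is to apply the Poincar\'e polyhedron theorem to a Ford-type fundamental domain for $\Gamma$ acting on $\HdC$. Using the parametrisation of Section~\ref{section-coordinates}, I would fix explicit matrix representatives for $A$ and $B$ in terms of $(\alpha_1,\alpha_2)$ and choose a basepoint $\bp\in\HdC$ placed symmetrically with respect to the parabolic fixed points of $A$ and $B$. The candidate fundamental domain is then
$$
F \;=\; \bigcap_{g\in\{A^{\pm 1},\,B^{\pm 1}\}}\mathrm{Ext}\bigl(S_g(\bp)\bigr),
$$
where $S_g(\bp)$ denotes the isometric sphere (Heisenberg bisector) of $g$ based at $\bp$. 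Since $A$ and $B$ are unipotent, $S_A$ and $S_{A^{-1}}$ share the parabolic fixed point of $A$, and similarly for $B$, so the nontrivial combinatorics concerns only how $A$-spheres meet $B$-spheres.

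The first main task is to compute these four bisectors in Heisenberg coordinates and analyse their pairwise intersections. I would reduce the conditions ``$S_{A^{\varepsilon}}$ and $S_{B^{\eta}}$ meet along a ridge'' and ``non-adjacent bisectors are disjoint'' to real algebraic inequalities in $\cos^2\alpha_1$ and $\cos^2\alpha_2$, and identify the polynomial $\mathcal{D}$ as the discriminant of the equation controlling mutual tangency. When $\mathcal{D}\bigl(4\cos^2\alpha_1,4\cos^2\alpha_2\bigr)>0$, the four bisectors should be in the expected cyclic arrangement, so that $F$ has exactly four three-dimensional faces, paired by the side-pairing maps $A$ and $B$.

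The second task is the verification of the Poincar\'e polyhedron conditions: local tessellation along the two-dimensional ridges of $F$, completeness at the parabolic fixed points on $\partial\HdC$, and the absence of accidental cycle relations. Since the side pairings are $A$ and $B$ themselves and each has infinite order, a successful verification produces a presentation with no relations, so simultaneously $\Gamma$ is discrete, $F$ is a fundamental domain, and $\Gamma\cong F_2$.

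The main obstacle will be the bisector analysis leading to $\mathcal{D}$ and the ridge verification. Bisectors in $\HdC$ are three-dimensional real hypersurfaces with a nontrivial fibration by complex slices and by real geodesics (meridians), and their mutual intersections are not transverse in general; extracting a clean algebraic criterion from this geometry is delicate. Converting the local combinatorial picture near each ridge into a global discreteness statement is the traditionally subtle point, requiring explicit trigonometric identities linking the Cartan invariants $\alpha_1, \alpha_2$ to the spinal coordinates of the bisectors. Ruling out accidental contributions from isometric spheres of longer words in $\Gamma$ is handled by the local-to-global part of the Poincar\'e theorem once local tessellation at every ridge is established.
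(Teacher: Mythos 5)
Your overall strategy (build a Ford/Dirichlet-type polyhedron, control its combinatorics by an inequality in $\cos^2\alpha_1,\cos^2\alpha_2$, apply Poincar\'e) is in the right family, but the specific route you propose has a genuine gap, and it is not the route the paper takes. The paper never builds a four-sided domain for $\langle A,B\rangle$ with side pairings $A^{\pm1},B^{\pm1}$. Instead it first decomposes $A=ST$, $B=TS$ with $S,T$ regular elliptic of order three (Proposition \ref{ST}), places the fixed point of $A$ at $q_\infty$, and constructs the Ford domain of the index-three overgroup $\langle S,T\rangle$ as the common exterior of the infinite $A$-invariant family of unit Cygan spheres $\I_k^{\pm}=\I(A^kS^{\pm1}A^{-k})$, using a coset version of Poincar\'e with cusp stabiliser $\Upsilon=\langle A\rangle$. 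The polynomial $\mathcal{D}$ arises as (a factor of) the discriminant of a quartic controlling when the \emph{triple} intersection $\I(S)\cap\I(S^{-1})\cap\I(T)$ is empty (Proposition \ref{prop-triple-inter}); it is not the tangency discriminant of the four spheres of $A^{\pm1},B^{\pm1}$, and you give no reason why your configuration should be governed by the same polynomial. The free group conclusion then comes from group theory: Poincar\'e yields $\langle S,T: S^3=T^3=id\rangle\cong\Z_3*\Z_3$, whose index-three subgroup $\langle ST,TS\rangle$ is free of rank two.

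The deeper problem with your plan is structural. At the $\R$-Fuchsian centre your four isometric spheres must degenerate to the classical ideal quadrilateral for the thrice-punctured sphere group, whose sides are pairwise \emph{tangent} at the parabolic fixed points $p_A,p_B,p_{AB},p_{BA}$; since these parabolic points persist under deformation, your domain has tangent sides throughout $\mathcal{Z}$. Tangencies between bisectors are exactly the delicate situation the authors flag (and the reason Deraux needed a different domain in the deformed figure-eight case): you would need consistent horoballs at four orbits of tangency points and a proof that the spheres meet \emph{only} there, and if any two sides met along a genuine two-dimensional ridge the cycle transformation would be an infinite-order word in $A,B$, for which no cycle relation can hold. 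The paper's domain avoids all of this inside $\mathcal{Z}$: its sides meet transversally in ridges whose cycle maps are the order-three elliptics $S$ and $T^{-1}$, and tangencies/ideal vertices appear only on $\partial\mathcal{Z}$. Finally, you would also need to justify discarding the isometric spheres of $A^k$, $B^k$ for $|k|\ge 2$; the paper sidesteps this by making $\langle A\rangle$ act as the stabiliser of an infinite-sided invariant polyhedron rather than as a side pairing.
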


\begin{figure}
\begin{tabular}{cc}
\begin{minipage}[c]{.46\linewidth} \scalebox{0.4}{\includegraphics{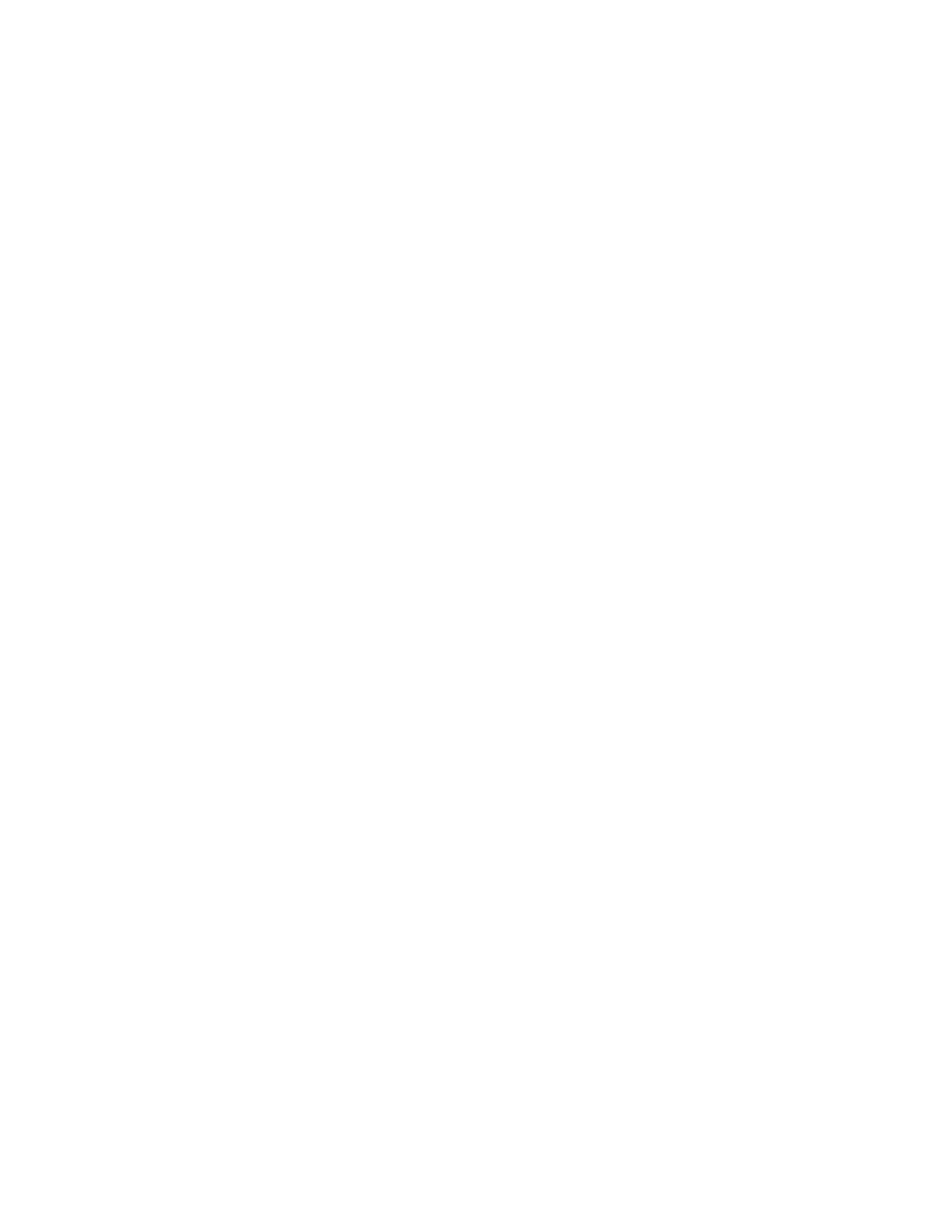}}\end{minipage}
& \begin{minipage}[c]{.46\linewidth}
 \begin{small}\begin{tabular}{ll}
0  & $\R$-Fuchsian representation of the $3$-punctured \\
   & sphere group.\\
1  & The horizontal segment marked $1$ corresponds to \\ 
   & even word subgroups of ideal triangle groups,\\
   & see \cite{GP,S1,S,S3}.\\
2  & Last ideal triangle group, contained with index\\ 
   & three in a group  uniformising the Whitehead \\ 
   & link complement obtained by Schwartz,\\  
   & see \cite{S1,S,S3}.\\
3  & The vertical segment marked $3$ corresponds to \\
   & bending groups that have been proved to be \\
   & discrete in \cite{Wi7}.\\
4  & $(3,3,4)$-group uniformising the figure eight knot \\
   & complement. Obtained by Deraux and Falbel \\
   & in \cite{DF8}.\\
5  & $(3,3,n)$-groups, proved to be discrete by in \cite{PWX}. \\
   & On this picture $4\leqslant n \leqslant 8$.\\
6  & Uniformisation of the Whitehead link complement \\
   & we obtain in this work.\\
7  & Subgroup of the Eisenstein-Picard Lattice, see \cite{FJP2}.
\end{tabular}\end{small}\end{minipage}
\end{tabular}

\caption{\label{picture-peach-1}The parameter space for $\mathcal{U}$. The exterior curve $\mathcal{P}$ corresponds to classes of groups for 
which $[A,B]$ is parabolic. The central dashed curve bounds the region $\mathcal{Z}$ where we prove discreteness. The labels correspond to 
various special values of the parameters. Points with the same labels are obtained from one another by symmetries about the 
coordinate axes. The results of Section \ref{sectionsymmetries} imply that they correspond to groups conjugate in Isom($\HdC$).}
\end{figure}

Note that at the centre of the square, we have $\mathcal{D}(4,4)=1225$ for the
${\mathbb R}$-Fuchsian representation. 
The region ${\mathcal Z}$ where ${\mathcal D}>0$ consists of groups $\Gamma$ 
whose Ford domain has the simplest possible combinatorial structure. It is the
analogue of the outermost region in the two figures from Akiyoshi, Sakuma, Wada 
and Yamashita \cite{ASWY} mentioned above.

\subsection{Decompositions and triangle groups}

We will prove in Proposition \ref{ST} that all pairs $(A,B)$ in $\mathcal{U}$ admit a
(unique) decomposition of the form
\begin{equation}\label{eq-ST}
A=ST\mbox{ and }B=TS,
\end{equation}
where $S$ and $T$ are order three regular elliptic elements (see 
Section \ref{section-isometries}). In turn, the group generated by $A$ and $B$ has 
index three in the one generated by $S$ and $T$. 
When either $\alpha_1=0$ or $\alpha_2=0$ there is a further decomposition
making $\langle A,B\rangle$ a subgroup of a triangle group. 

Deformations of triangle groups in ${\rm PU}(2,1)$ have been considered in many 
places, among which (\cite{GP,Pra,S2,PWX}). 
A complex hyperbolic $(p,q,r)$-triangle is one generated by three complex involutions
about (complex) lines with pairwise angles $\pi/p$, $\pi/q$, and $\pi/r$ where $p$, $q$
and $r$ are integers or $\infty$ (when one of them is $\infty$ the corresponding 
angle is $0$). Groups generated by complex reflections of higher order are also
interesting, see \cite{Most} for example, but we do not consider them here.
For a given triple $(p,q,r)$ with $\min\{p,q,r\}\geqslant 3$ the deformation space of 
the $(p,q,r)$-triangle group is one dimensional, and can be thought of as 
the deformation space of the $\R$-Fuchsian triangle group. 
In \cite{S2}, Schwartz develops a series of conjectures about which points in
this space yield discrete and faithful representations of the triangle group. 
For a given triple $(p,q,r)$, Conjecture 5.1 of \cite{S2} states that a complex hyperbolic 
$(p,q,r)$-triangle group is a discrete and faithful representation of the Fuchsian one 
if and only if the words $I_iI_jI_k$ and $I_iI_jI_kI_j$ (with $i,j,k$ pairwise distinct)
are non-elliptic. Moreover, depending on $p$, $q$ and $r$ he predicts
which of these words one should choose.

We now explain the relationship between triangle groups and groups on the axes
of our parameter space ${\mathcal U}$.
First consider groups with $\alpha_2=0$. Let $I_1$, $I_2$ and $I_3$ be
the involutions fixing the complex lines spanned by the fixed points of
$(A,\,B)$, of $(A,\,AB)$ and of $(B,\,AB)$ respectively. If $\alpha_2=0$ then 
$A$ and $B$ may be decomposed as $A=I_2I_1$ and $B=I_1I_3$, and also 
$\langle A,B\rangle$ has index 2 in $\langle I_1,I_2,I_3\rangle$ 
(Proposition \ref{prop-dec-ST}). Since $I_2I_1=A$, $I_1I_3=B$ 
and $I_2I_3=AB$ are all unipotent, we see that $\langle I_1,I_2,I_3\rangle$ is a 
complex hyperbolic ideal triangle group, as studied by Goldman and Parker \cite{GP} 
and Schwartz \cite{S1,S,S3}. Their results gave a complete characterisation of
when such a group is discrete. (Our Cartan invariant $\alpha_1$ is the same
as the Cartan invariant $\A$ used in these papers.)

\begin{theo}[Goldman, Parker \cite{GP}, Schwartz \cite{S,S3}]
\label{thm-CHIT}
Let $I_1$, $I_2$, $I_3$ be complex involutions fixing distinct, pairwise
asymptotic complex lines. Let $\A$ be the Cartan invariant of the fixed 
points of $I_1I_2$, $I_2I_3$ and $I_3I_1$. 
\begin{enumerate}
\item The group $\langle I_1,I_2,I_3\rangle$ is a discrete and faithful 
representation of an $(\infty,\infty,\infty)$-triangle group if and only 
if $I_1I_2I_3$ is non-elliptic. 
This happens when $|\A|\le\arccos\sqrt{3/128}$.
\item When $I_1I_2I_3$ is elliptic the group is not discrete. In this case
$\arccos\sqrt{3/128}<|\A|<\pi/2$.
\end{enumerate}
\end{theo}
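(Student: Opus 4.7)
The plan is to reduce the problem to a one-parameter family via the Cartan invariant $\A$, extract a single trace polynomial, and then treat the discreteness and the non-discreteness directions separately.

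First I would normalise the configuration. Since pairwise asymptotic complex lines meet at distinct points of $\partial\HdC$, place the three ideal intersection points $p_{12}=\mathrm{Fix}(I_1I_2)$, $p_{23}=\mathrm{Fix}(I_2I_3)$ and $p_{13}=\mathrm{Fix}(I_3I_1)$ in Heisenberg coordinates with one vertex at $\infty$. The Cartan invariant $\A$ of this triple is then the only remaining continuous parameter, so each $I_i$ can be written as an explicit matrix in ${\rm SU}(2,1)$ depending only on $\A$, and a direct computation produces $\tau(\A)=\tr(I_1I_2I_3)$.

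Second, I would determine the conjugacy type of $I_1I_2I_3$ via the Goldman trace polynomial $f(z)=|z|^4-8\,\Re(z^3)+18|z|^2-27$, which is positive on loxodromic elements, negative on regular elliptic elements, and vanishes on parabolic (non-semisimple) elements of ${\rm SU}(2,1)$. Substitution of $\tau(\A)$ yields a polynomial in $\cos^2\A$ whose positive zero lies at $\cos^2\A=3/128$, identifying the threshold $|\A|=\arccos\sqrt{3/128}$ and confirming that $I_1I_2I_3$ is non-elliptic exactly on the stated closed interval.

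The main obstacle is the discreteness and faithfulness statement up to this sharp threshold. The natural route is to build a Ford polyhedron $D$ for the stabiliser of $\infty$ in $\langle I_1,I_2,I_3\rangle$, with faces given by the Cygan isometric spheres of the $I_i$ and of selected conjugates. For $\A$ near $0$ the spheres of length-one words have pairwise disjoint interiors and the Poincar\'e polyhedron theorem gives discreteness and freeness directly; this is the original Goldman--Parker argument, valid in a strictly smaller subrange. As $|\A|$ grows towards $\arccos\sqrt{3/128}$, these spheres cease to bound a fundamental domain, and additional faces associated with a length-four word such as $I_1I_2I_3I_2$ (and its cyclic conjugates) must be incorporated. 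The difficult step, due to Schwartz, is to verify that this enlarged polyhedron still satisfies the tessellation, face-pairing and cycle conditions of the Poincar\'e theorem. I would follow Schwartz's strategy of transferring the problem to the sphere at infinity and using the combinatorics of $\R$-circles and $\C$-circles to control face intersections, then extend to the boundary parameter by a continuity/limiting argument (the limiting group being the spherical CR uniformisation of the Whitehead link complement appearing as point~2 in Figure~\ref{picture-peach-1}).

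For the non-discreteness direction, if $W=I_1I_2I_3$ is elliptic of infinite order then its iterates accumulate at the identity and discreteness fails immediately. In the remaining finite-order case, one exhibits an auxiliary word (the length-four word above, or a suitable commutator involving $W$) that is shown to be elliptic of infinite order throughout $\arccos\sqrt{3/128}<|\A|<\pi/2$, again ruling out discreteness. Combining the four steps gives both the equivalence and the explicit critical value.
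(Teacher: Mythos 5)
This theorem is not proved in the paper at all: it is imported verbatim from the literature, with the discreteness statement due to Goldman--Parker \cite{GP} on a subinterval and to Schwartz \cite{S,S3} up to the sharp bound, and the indiscreteness in the elliptic range due to \cite{GP}. Note in particular that the paper's own machinery does not recover it: the horizontal axis $\alpha_2=0$ of the parameter square meets the region $\mathcal{Z}$ of Theorem \ref{thm-main} only for $|\alpha_1|<\pi/6$, which is far short of $\arccos\sqrt{3/128}$, so the authors genuinely rely on the citation. Your outline is an accurate précis of how the result was actually established: the reduction to the single parameter $\A$, the computation of $\tr(I_1I_2I_3)$ and the location of the transition at $\cos^2\A=3/128$ via the function $\mathcal{F}$ of Proposition \ref{tracefunction}, the Ford/Poincar\'e argument of \cite{GP} for small $|\A|$, and Schwartz's completion.

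As a proof, however, it has a genuine gap exactly where the theorem is hard. The entire content of the statement on the range between the Goldman--Parker bound and $\arccos\sqrt{3/128}$ is Schwartz's verification that an enlarged polyhedron (or, in \cite{S3}, a different object built from $\R$-circles at infinity) satisfies the hypotheses of the Poincar\'e theorem; writing ``I would follow Schwartz's strategy'' defers the two papers' worth of estimates that constitute the proof. The endpoint itself is fine once the open interval is handled (a Chuckrow-type limit argument, as the paper uses for its own boundary groups in Section \ref{section-poincare}, or Schwartz's direct construction in \cite{S1}). The second soft spot is the indiscreteness claim: when $I_1I_2I_3$ is regular elliptic of finite order, non-discreteness does not follow from the ellipticity alone, and your ``suitable auxiliary word'' is asserted rather than exhibited; this case requires an actual argument (it is part of the theorem of \cite{GP}). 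Finally, faithfulness should be recorded explicitly as an output of the Poincar\'e theorem, namely the presentation $\langle I_1,I_2,I_3\ :\ I_1^2=I_2^2=I_3^2=id\rangle$, rather than left implicit.
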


When $\alpha_1=0$ we get an analogous result. In this case, it is the order
three maps $S$ and $T$ from \eqref{eq-ST} which decompose into products of 
complex involutions. Namely,  if $\alpha_1=0$, there exist three involutions 
$I_1$, $I_2$, $I_3$, each fixing a complex line, so that $S=I_2I_1$ and 
$T=I_1I_3$ have order 3 and $ST=A=I_2I_3$ is unipotent 
(Proposition \ref{prop-dec-ST}). Furthermore, writing $B=TS=I_1I_3I_2I_1$ 
we have $[A,B]=(ST^{-1})^3=(I_2I_1I_3I_1)^3$.
A corollary of Theorem \ref{thm-main}
is a statement analogous to Theorem \ref{thm-CHIT} 
for $(3,3,\infty)$-triangle groups, proving a special case of 
Conjecture 5.1 of Schwartz \cite{S2}. Compare with the proof of this conjecture 
for $(3,3,n)$-triangle groups given by Parker, Wang and Xie in \cite{PWX}.

\begin{theo}\label{thm-bend}
Let $I_1$, $I_2$ and $I_3$ be complex involutions fixing distinct complex
lines and so that $S=I_2I_1$ and $T=I_1I_3$ have order three and $A=ST=I_2I_3$
is unipotent. Let $\A$ be the Cartan invariant of the fixed points
of $A$, $SAS^{-1}$ and $S^{-1}AS$.
The group $\langle I_1,I_2,I_3\rangle$ is a discrete and faithful 
representation of the $(3,3,\infty)$-triangle group if and only if 
$I_2I_1I_3I_1=ST^{-1}$ is non-elliptic. This happens when $|\A|\le\arccos\sqrt{3/8}$.
\end{theo}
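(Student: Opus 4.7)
The strategy is to realise Theorem~\ref{thm-bend} as the one dimensional restriction of Theorem~\ref{thm-main} to the axis $\alpha_1=0$ of the parameter square. By Proposition~\ref{prop-dec-ST}, the $(3,3,\infty)$-triangle configurations considered in the statement are exactly those in $\mathcal{U}$ with $\alpha_1=0$, and the remaining free parameter $\alpha_2$ coincides, up to sign and a choice of ordering, with the Cartan invariant $\A$ of the theorem; pinning down this identification is the first thing to do. Substituting $\alpha_1=0$ in $\mathcal{D}$ and expanding gives the factorisation
\[
\mathcal{D}(4,4\cos^2\alpha_2)=(8\cos^2\alpha_2-3)^2\,(64\cos^2\alpha_2-15),
\]
so on this axis one has $\mathcal{D}>0$ exactly when $4\cos^2\alpha_2>15/16$ and $4\cos^2\alpha_2\neq 3/2$.

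Next I would compute $\tr(ST^{-1})$ explicitly in the normal form of Section~\ref{section-coordinates} and apply Goldman's classification of conjugacy classes in $\mathrm{SU}(2,1)$. The expected outcome is that the Goldman discriminant of $ST^{-1}$ vanishes precisely at $4\cos^2\alpha_2=3/2$, is positive for $4\cos^2\alpha_2>3/2$ and negative for $4\cos^2\alpha_2<3/2$; this matches the double root of $\mathcal{D}(4,\cdot)$ and yields the equivalence that $ST^{-1}$ is non elliptic if and only if $|\A|\leqslant \arccos\sqrt{3/8}$.

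For the ``if'' direction, when $|\A|<\arccos\sqrt{3/8}$ we have $\mathcal{D}(4,4\cos^2\alpha_2)>0$, so Theorem~\ref{thm-main} gives that $\Gamma=\langle A,B\rangle$ is discrete and free of rank two. Since $\Gamma$ has index three in $\langle S,T\rangle$ by Proposition~\ref{ST}, and $\langle S,T\rangle$ has index two in $\langle I_1,I_2,I_3\rangle$, the triangle group itself is discrete. Faithfulness then follows from the fact that the corresponding index six subgroup of the abstract $(3,3,\infty)$-triangle group is itself free of rank two, so any extra relation in the representation would restrict to a non trivial relation in the free group $\Gamma$, contradicting the conclusion of Theorem~\ref{thm-main}. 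For the ``only if'' direction, if $ST^{-1}$ is elliptic of finite order $n\geqslant 2$ then $(ST^{-1})^n=I$ is a relation that is not implied by the abstract $(3,3,\infty)$-presentation, since $ST^{-1}$ is loxodromic (of infinite order) in the Fuchsian $(3,3,\infty)$-group; this contradicts faithfulness. If $ST^{-1}$ is elliptic of infinite order, the cyclic group $\langle ST^{-1}\rangle$ is already non discrete, and so is $\langle I_1,I_2,I_3\rangle$.

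The main obstacle lies in the boundary case $|\A|=\arccos\sqrt{3/8}$, where $\mathcal{D}=0$ and Theorem~\ref{thm-main} does not apply directly. Here one must either adapt the Ford domain construction underlying Theorem~\ref{thm-main} so as to accommodate the extra parabolic element $[A,B]=(ST^{-1})^3$ (whose fixed point becomes a new cusp of the quotient), or argue by a careful limit of representations from the interior of $\mathcal{Z}$ while controlling both the loss of compactness along the emerging horosphere and the preservation of faithfulness. This is the delicate part that the rest of the paper must supply.
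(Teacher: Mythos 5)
Your proposal is essentially the paper's own proof: Theorem \ref{thm-bend} is obtained by restricting Theorem \ref{thm-main} to the axis $\alpha_1=0$ (via Propositions \ref{ST} and \ref{prop-dec-ST}) and comparing the factorisations $\mathcal{D}(4,y)=(16y-15)(2y-3)^2$ and $\mathcal{G}(4,y)=(2y+1)(2y-3)^3$ of the discreteness polynomial and of the Goldman discriminant of $ST^{-1}$; both factorisations are correct and appear in the paper in \eqref{special-values-D} and in the proof of Lemma \ref{lem-R-in-L}. Your caution about identifying $\A$ with $\pm\alpha_2$ is well founded: read literally, the fixed points of $A$, $SAS^{-1}$ and $S^{-1}AS$ are $p_A$, $p_{AB}$ and $p_B$, whose Cartan invariant is $\alpha_1=0$; the intended triple is $(p_A,p_{AB},p_{BA})$, with invariant $\alpha_2$, as the introduction's gloss ``$(\alpha_1,\alpha_2)=(0,\A)$'' confirms. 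One warning on your displayed factorisation: the statement ``$\mathcal{D}>0$ exactly when $4\cos^2\alpha_2>15/16$ and $4\cos^2\alpha_2\neq 3/2$'' is true of the polynomial but the component $15/16<4\cos^2\alpha_2<3/2$ lies outside the rectangle $\mathcal{R}$ and hence outside $\mathcal{Z}$, and there $[A,B]$ is elliptic, so the conclusion of Theorem \ref{thm-main} genuinely fails; the hypothesis of Theorem \ref{thm-main} must be read as membership in $\mathcal{Z}$ (Definition \ref{def-Z} and Proposition \ref{prop-triple-inter}), which on the axis is exactly $|\alpha_2|<\alpha_2^{\lim}$. You do not misapply the theorem, but without this caveat your ``only if'' direction would contradict your ``if'' direction on that spurious component.

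The only genuine incompleteness is the boundary case $|\A|=\arccos\sqrt{3/8}$, which you explicitly leave to ``the rest of the paper.'' It is less delicate than you fear: the paper disposes of it in one sentence at the end of Section \ref{sec-apply-poincare}, by observing that the algebraic limit of a sequence of discrete, faithful representations of a non virtually nilpotent group (here $\Z_3\ast\Z_3$, or equivalently the free group $\langle A,B\rangle$) in ${\rm Isom}(\HnC)$ is again discrete and faithful (Chuckrow/Martin), applied to parameters in $\mathcal{Z}$ converging to $(0,\pm\alpha_2^{\lim})$. The alternative you mention --- rerunning the Ford domain construction with a system of consistent horoballs at the new parabolic fixed points of $(ST^{-1})^{\pm 3}$ --- is indeed carried out in Section \ref{section-Poincare-limit}, but it is needed there for the Whitehead link identification (Theorem \ref{theo-CRWLC}), not for the discreteness and faithfulness claimed in Theorem \ref{thm-bend}. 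With the limit argument cited, your proof is complete and matches the paper's.
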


Theorem \ref{thm-bend} follows directly from Theorem \ref{thm-main} by restricting 
it to the case where $(\alpha_1,\alpha_2)=(0,\A)$.  These groups are a special case 
of those studied by Will in \cite{Wi7} from a different point of view. There, using 
bending he proved that these groups are discrete as long as $|\A|=|\alpha_2|\le \pi/4$. 
The gap between 
the vertical segment in Figure \ref{picture-peach-1} and the curve where $[A,B]$ is 
parabolic illustrates the non-optimality of the result of \cite{Wi7}.

\subsection{Spherical CR uniformisations of the Whitehead link complement}

The quotient of $\HdC$ by an $\R$ or $\C$-Fuchsian 
punctured surface group is a disc bundle over the 
surface. If the surface is non-compact, this bundle is trivial. Its boundary at infinity
is a circle bundle over the surface.   Such three-manifolds appearing on the 
boundary at infinity of quotients of $\HdC$ are naturally equipped with a 
\textit{spherical CR structure}, which is the analogue of the flat conformal 
structure in the real hyperbolic case. These structures are examples of 
$(X,G)$-structure, where $X=S^3=\partial\HdC$ and $G={\rm PU}(2,1)$. 
To any such structure on a three manifold $M$ are associated a holonomy 
representation $\rho : \pi_1(M)\longrightarrow {\rm PU}(2,1)$ and a 
developing map $D =\widetilde M \longrightarrow X$. This motivates the study 
of representations of fundamental groups of hyperbolic three manifolds 
in ${\rm PU}(2,1)$ and ${\rm PGL}(3,\C)$ initiated by Falbel in \cite{FalJDG}, and 
continued in \cite{FKR,FGKRT} (see also \cite{HMP}). Among 
${\rm PU}(2,1)$-representations, \textit{uniformisations} (see 
Definition 1.3 in \cite{Derrep}) are of special interest. 
There, the manifold at infinity is the quotient of the discontinuity region 
by the group action. 

For parameter values in the open region $\mathcal{Z}$, the manifold at infinity 
of $\HdC/\langle S,T\rangle$ is a Seifert fibre space over a $(3,3,\infty)$-orbifold. 
This is obviously true in the case where $\alpha_1=\alpha_2=0$ (the central point on 
Figure \ref{picture-peach-1}). Indeed, for these values the group $\la S,T\ra$ 
preserves $\HdR$ (it is $\R$-Fuchsian) and the fibres correspond to boundaries 
of real planes orthogonal to $\HdR$. As the combinatorics of our fundamental 
domain remains unchanged in $\mathcal{Z}$, 
the topology of the quotient is constant in $\mathcal{Z}$.

Things become interesting if we deform 
the group in such a way that a loop on the surface is represented by a 
parabolic map: the topology of the manifold at infinity can change. 
A hyperbolic manifold arising in this way was first constructed by Schwartz:

\begin{theo}[Schwartz \cite{S1}]\label{thm-wlc-1}
Let $I_1$, $I_2$ and $I_3$ be as in Theorem \ref{thm-CHIT}.
Let $\A$ be the Cartan invariant of the fixed 
points of $I_1I_2$, $I_2I_3$ and $I_3I_1$ and let $S$ be the regular 
elliptic map cyclically permuting these points.
When $I_1I_2I_3$ is parabolic the quotient of $\HdC$ by the group 
$\langle I_1I_2,S\rangle$ is a complex hyperbolic orbifold 
with isolated singularities whose boundary at infinity is a
spherical CR uniformisation of the Whitehead link complement. These groups
have Cartan invariant $\A=\pm\arccos\sqrt{3/128}$, 
\end{theo}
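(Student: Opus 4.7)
The plan is to reproduce Schwartz's argument from \cite{S1} within the framework of this paper. At the critical Cartan invariant $\mathcal{A} = \arccos\sqrt{3/128}$, Theorem \ref{thm-CHIT} guarantees that $\langle I_1, I_2, I_3\rangle$ is discrete and that $I_1I_2I_3$ is an accidental parabolic. Since $S$ has order three and cyclically permutes the three fixed points of $I_1I_2$, $I_2I_3$, $I_3I_1$, conjugation by $S$ cyclically permutes these three unipotent elements themselves. Consequently, $\langle I_1I_2, S\rangle$ contains the even-word subgroup $\langle I_1I_2, I_2I_3\rangle$ as an index-three normal subgroup, with quotient cyclic of order three generated by the image of $S$. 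This even-word subgroup is exactly the $\langle A, B\rangle$ of this paper at the parameter $(\alpha_1, \alpha_2) = (\mathcal{A}, 0)$, sitting on the boundary of the region $\mathcal{Z}$ of Theorem \ref{thm-main}, so the discreteness result of that theorem cannot be invoked directly and a separate construction is needed.

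The first technical step is to exhibit a fundamental polyhedron for the action of $\langle I_1I_2, S\rangle$ on $\HdC$. I would normalise so that the unique isolated fixed point of the regular elliptic $S$ sits at the origin of the ball model; this makes $S$ act as a linear rotation and renders the cyclic symmetry manifest. The domain would then be built from the isometric spheres of $I_1I_2$ and its inverse, intersected with a fundamental sector for the cyclic action of $S$. At the critical Cartan invariant the bisectors degenerate in a controlled way: the parabolic fixed point of $I_1I_2I_3$ lies on the boundary of the polyhedron and produces an extra ideal vertex, bringing in a new cusp that is not present in the Goldman--Parker interior of the ideal triangle group locus.

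The second step is to apply the Poincar\'e polyhedron theorem, in its complex hyperbolic version as used in \cite{GP,S1}, to the constructed domain. The side-pairings by $I_1I_2$ and $S$, together with a cycle analysis along the edges where bisectors meet, would simultaneously prove discreteness of $\langle I_1I_2, S\rangle$ and produce an explicit presentation. The cusp stabilisers then come out of the cycle transformations at the ideal vertices, and one expects exactly two conjugacy classes of rank-two parabolic subgroups: one at the common ideal fixed point of the $I_iI_j$, and one at the $S$-orbit of the parabolic fixed point of $I_1I_2I_3$. The fixed point of $S$ in the interior of $\HdC$ becomes the isolated orbifold singularity of the quotient.

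The hardest step, and the real obstacle, is identifying the manifold at infinity with the Whitehead link complement rather than some other two-cusped hyperbolic three-manifold. My strategy is to work in Heisenberg coordinates at each of the two cusps, compute the rank-two parabolic holonomy as a sublattice of the horospherical Heisenberg group, and read off the peripheral curves; then exhibit a finite presentation of $\pi_1$ of the discontinuity quotient from the side-pairings and compare with the standard Wirtinger presentation of the Whitehead link group. A cleaner finish uses Mostow rigidity together with a volume or cusp-invariant comparison to rule out the other small two-cusped hyperbolic three-manifolds. Turning the spherical CR data on $S^3$ into concrete topological gluing instructions for two cusp tori is the delicate part and is precisely where Schwartz's combinatorial analysis of the limit set and its complementary components would need to be reproduced or quoted.
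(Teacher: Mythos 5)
This statement is not proved in the paper at all: it is Schwartz's theorem, quoted from \cite{S1} as background, so there is no internal proof to compare your attempt against. The closest thing the paper contains is its own proof of the analogous Theorem \ref{theo-CRWLC} for the limit group on the \emph{vertical} axis, which does follow the broad outline you describe (Ford/isometric-sphere polyhedron, Poincar\'e polyhedron theorem with consistent horoballs at the new ideal vertices, then a direct combinatorial identification of the ideal boundary with an ideal octahedron carrying the Whitehead link face pairings). At that level your plan is the right shape, but as written it is a programme rather than a proof, and several of its specific claims are wrong.

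Concretely: (i) the parameter point for Schwartz's group is $(\alpha_1,\alpha_2)=(\pm\arccos\sqrt{3/128},0)$, which lies on the parabolicity curve $\mathcal{P}$ (one checks $\mathcal{G}(x,4)=288x-27$ vanishes at $x=3/32$), far outside $\overline{\mathcal{Z}}$, whose intersection with the horizontal axis is only $|\alpha_1|\le\pi/6$; it is not ``on the boundary of $\mathcal{Z}$''. (ii) Your plan to compute ``rank-two parabolic holonomy as a sublattice of the horospherical Heisenberg group'' and match peripheral curves against a Wirtinger presentation rests on a misconception about spherical CR uniformisations: the holonomy representation here is \emph{not} faithful (Proposition \ref{prop-quotient} shows $\pi_1$ of the Whitehead link complement surjects onto $\Z_3*\Z_3$), and the peripheral $\Z^2$ subgroups map to \emph{cyclic} parabolic groups --- e.g.\ the stabiliser of $q_\infty$ is $\langle A\rangle$. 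There are no rank-two cusp lattices to read off. (iii) The proposed ``cleaner finish'' via Mostow rigidity and volume comparison does not apply: the manifold at infinity carries a spherical CR structure, not a hyperbolic metric, so there is no volume or cusp shape to compute until one has already identified the topology by other means. The identification has to be done topologically --- either by Schwartz's analysis of the limit set and the components of the discontinuity domain, or by exhibiting an explicit ideal octahedron with face identifications as in Section \ref{section-limit-group} --- and that is exactly the step your proposal defers.
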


Schwartz's example provides a uniformisation of the 
Whitehead link complement. More recently, Deraux and Falbel 
described a uniformisation of the complement of the figure eight knot in \cite{DF8}.
In \cite{DerF8}, Deraux proved that this uniformisation was flexible: 
he described a one parameter deformation of the uniformisation described 
in \cite{DF8}, each group in the deformation being a uniformisation of the 
figure eight knot complement. 

Our second main result concerns the $(3,3,\infty)$ triangles group from 
Theorem \ref{thm-bend}, and it states that when $I_2I_1I_3I_1$ 
is parabolic the associated groups give a uniformisation 
of the Whitehead link complement which is different from Schwartz's one.
Indeed in our case the cusps of the Whitehead link complement both have 
unipotent holonomy. In Schwartz's case, one of them is unipotent whereas the 
other is screw-parabolic. The representation of the Whitehead link group we consider 
here was identified from a different point of view by Falbel, Koseleff and Rouillier
in their census of ${\rm PGL}(3,{\mathbb C})$ representations of 
knot and link complement groups, see page 254 of \cite{FKR}.

\begin{theo}\label{thm-wlc-2}
Let $I_1$, $I_2$ and $I_3$ be as in Theorem \ref{thm-bend} and define
$S=I_2I_1$ and $A=I_2I_3$. Let $\A$ be the Cartan invariant of the fixed points
of $A$, $SAS^{-1}$ and $S^{-1}AS$. When $I_2I_1I_3I_1$ is parabolic the quotient 
of $\HdC$ by $\langle A,S\rangle$ is a complex hyperbolic orbifold with 
isolated singularities whose boundary is 
a spherical CR uniformisation of the Whitehead link complement. These 
groups have Cartan invariant $\A=\pm\arccos\sqrt{3/8}$.
\end{theo}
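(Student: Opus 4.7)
The plan is to extend the Ford domain analysis underlying Theorem \ref{thm-main} to the boundary point $(\alpha_1,\alpha_2)=(0,\pm\arccos\sqrt{3/8})$ of the disc $\mathcal{Z}$, where $ST^{-1}=I_2I_1I_3I_1$ degenerates from loxodromic to parabolic. Since $T=S^{-1}A$ one has $\langle A,S\rangle=\langle S,T\rangle$, so the statement really concerns the order-three extension $\langle S,T\rangle$ of the $(3,3,\infty)$-triangle group of Theorem \ref{thm-bend} at the critical value of $\A$.

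The first step is to describe the limiting Ford polyhedron $F$ for $\Gamma=\langle S,T\rangle$ at this parameter. For parameters in the interior of $\mathcal{Z}$, Theorem \ref{thm-main} provides a fundamental polyhedron bounded by finitely many isometric spheres with a uniform combinatorial pattern; as $\A$ approaches the critical value, a new tangency between two specific isometric spheres appears on $\partial\HdC$, corresponding exactly to the degeneration of $ST^{-1}$. I would then apply the Poincar\'e polyhedron theorem to $F$, paying particular attention to the new parabolic cycle at this tangency point, in order to obtain simultaneously the discreteness of $\Gamma$, a presentation of $\Gamma$, and the fact that $A$ and $ST^{-1}$ represent the two unipotent parabolic conjugacy classes.

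The second step is to analyse the trace of $F$ on the ideal boundary, producing a combinatorial model of the manifold at infinity $M_\infty=\Omega/\Gamma$, where $\Omega\subset S^3$ is the discontinuity region. Two things must be checked: first, that each of the two cusp cross-sections is a torus (rather than a sphere with cone points), so that $M_\infty$ is a genuine 3-manifold with two torus cusps; second, that both peripheral holonomies are unipotent. This last point is what distinguishes our uniformisation from Schwartz's one (Theorem \ref{thm-wlc-1}), where one of the two peripheral holonomies is screw-parabolic.

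The third and most delicate step is to identify $M_\infty$ with the Whitehead link complement. The cleanest route is to match our presentation of $\Gamma$, together with its boundary-unipotent peripheral holonomies, with the representation of the Whitehead link group catalogued by Falbel, Koseleff and Rouillier on page 254 of \cite{FKR}, and to appeal to the fact that a spherical CR uniformisation is determined by its peripheral data. An independent check can be obtained by cutting $M_\infty$ along a separating torus and recognising the resulting pieces as cusp neighbourhoods in the standard ideal decomposition of the Whitehead link complement. The main obstacle is precisely this topological identification: the Ford domain analysis fits into the now-standard framework of \cite{S1,DF8,DerF8,PWX}, but carefully bookkeeping the face identifications on $\partial F$ and matching them with a combinatorial model of the Whitehead link complement is where the real work lies.
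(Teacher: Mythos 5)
Your Steps 1 and 2 follow the paper's route (Section \ref{section-limit-group}): one applies the Poincar\'e polyhedron theorem to the limit group after building a system of consistent horoballs at the new parabolic fixed points (the $A$-orbits of $p_{ST^{-1}}$ and $p_{S^{-1}T}$), which arise precisely as tangency and triple-intersection points of the isometric spheres. One small correction: the Ford polyhedron is bounded by the infinite family $\{\I_k^{\pm}\ :\ k\in\Z\}$, not by finitely many isometric spheres; it is only finite modulo the action of $\Upsilon=\langle A\rangle$, which is why the coset version of the Poincar\'e theorem is needed.

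The genuine gap is in Step 3. Matching the presentation and peripheral data of $\Gamma$ with the entry on page 254 of \cite{FKR} cannot identify the topology of the manifold at infinity, because the holonomy is not faithful: the image $\langle S,T\rangle$ is isomorphic to $\Z_3\ast\Z_3$, a proper quotient of the Whitehead link group (Proposition \ref{prop-quotient}), so the same group is the image of boundary-unipotent representations of many different $3$-manifold groups, and there is no theorem asserting that a spherical CR uniformisation is determined by its holonomy or its peripheral data. The identification must be carried out directly on the domain of discontinuity, and this is where the real proof lives. The paper does it by showing (Propositions \ref{prop-part-outside} through \ref{prop-solid-cylinder}) that the ideal boundary of the Ford domain is the exterior of an unknotted, $A$-invariant solid cylinder in $\partial\HdC-\{q_\infty\}$ --- unknottedness coming from the invariant line $\Delta_\varphi$ of the glide reflection $\varphi$ --- with each side contributing one quadrilateral $\mathcal{Q}_k^\pm$ and one bigon $\mathcal{B}_k^\pm$. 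Coning a fundamental domain for the $A$-action on this cylinder to $q_\infty$ gives two pyramids; one cut-and-paste by $S^{-1}$ and the merging of each bigon with an adjacent triangle produce a genuine combinatorial ideal octahedron whose face pairings are exactly the classical octahedral gluing of the Whitehead link complement (Section 3.3 of \cite{Thu}). Your proposal defers all of this to ``careful bookkeeping'' or to an appeal to \cite{FKR}; the former is the entire content of Section \ref{section-limit-group}, and the latter does not suffice.
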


Schwartz's uniformisation of the Whitehead link complement corresponds 
to each of the endpoints of the horizontal segment, marked 2 
in Figure \ref{picture-peach-1}, and our uniformisation corresponds
to each of the points on the vertical axis, marked 6 in that figure.

It should be noted that the image of the holonomy representation of our uniformisation 
of the Whitehead link complement is the group generated by $S$ and $T$, which is 
isomorphic to $\Z_3*\Z_3$. We note in Proposition \ref{prop-quotient} that the 
fundamental group of the Whitehead link complement surjects onto $\Z_3*\Z_3$.
Furthermore the group $\Z_3*\Z_3$ is the fundamental 
group of the (double) Dehn filling of the Whitehead link complement with 
slope $-3$ at each cusp in the standard marking (the same as in SnapPy). 
This Dehn filling is non-hyperbolic, as can be easily verified using the 
software SnapPy \cite{SnapPy} (it also follows from Theorem 1.3. in \cite{MP}). 
This fact should be compared with Deraux's remark in \cite{Derrep} that all 
known examples of non-compact finite volume hyperbolic manifold admitting a
spherical CR uniformisation also admit an exceptional Dehn filling which is a 
Seifert fibre space over a $(p,q,r)$-orbifold with $p,q,r,\geqslant 3$.

\subsection{Ideas for proofs.}
\paragraph{Proof of Theorem \ref{thm-main}. }
The rough idea of this proof is to construct fundamental domains for the 
groups corresponding to parameters in the region $\mathcal{Z}$. To this 
end, we construct their \textit{Ford domains}, which can be thought of 
as a fundamental domain for a coset decomposition of the group with respect 
to a parabolic element (here, this element is $A=ST$). The Ford 
domain is invariant by the subgroup generated by $A$ and we obtain a 
fundamental domain for the group by intersecting the Ford domain with a 
fundamental domain for the subgroup generated by $A$. The sides of the Ford 
domain are built out of pieces of \textit{isometric spheres} of various group 
elements (see Sections \ref{def-Isom} and \ref{section-isom-spheres})
This method is classical, and is described in the case of the Poincar\'e disc in 
Section 9.6 of Beardon \cite{Bear}.

We thus have to consider a 2-parameter family of such polyhedra, and the 
polynomial $\mathcal{D}$ controls the combinatorial complexity of the Ford 
domain within our parameter space for $\mathcal{U}$ in the following
sense. The null-locus of 
$\mathcal{D}$ is depicted on Figure \ref{picture-peach-1} as a dashed curve, 
which bounds the region $\mathcal{Z}$. In the interior of this curve, the 
combinatorics of our domain is constant, and stays the same as it is for the 
$\R$-Fuchsian group. On the boundary of $\mathcal{Z}$ the isometric spheres 
of the elements $S$, $S^{-1}$ and $T$ have a common point. More precisely, 
the isometric spheres of $S^{-1}$ and $T$ intersect for all values of $\alpha_1$ 
and $\alpha_2$, but inside $\mathcal{Z}$ their intersection is contained in one 
of the two connected components of the complement of the isometric sphere 
of $S$ in $\HdC$. When one reaches the boundary curve of $\mathcal{Z}$, one 
of their intersection points lies on the isometric sphere of $S$.

We believe that it should be possible to mimic Riley's approach and to construct 
regions in our parameter space where the Ford domain is more complicated. 
However, as with Riley's work, this may only be reasonable via computer experiments.

\paragraph{Proof of Theorem \ref{thm-wlc-2}.}

The groups where $[A,B]=(I_2I_1I_3I_1)^3$ is parabolic 
are the focus of Section \ref{section-limit-group} and 
Theorem \ref{thm-wlc-2} will follow from Theorem \ref{theo-CRWLC}. 
In order to prove this result, we analyse in details our fundamental domain, 
and show that it gives the classical description of the Whitehead link complement 
from an ideal octahedron equipped with face identifications.
The Whitehead link is depicted in Figure \ref{Whitehead}. We refer to 
Section 10.3 of Ratcliffe \cite{Rat} and Section 3.3 of Thurston \cite{Thu} for 
classical information about the topology of the Whitehead link complement and its 
hyperbolic structure.

\subsection{Further remarks}

\paragraph{Other discrete groups appearing in $\mathcal{U}$.}

As well as the ideal triangle groups and bending groups discussed
above, there are some other previously studied discrete groups in this family.
We give them in $(\alpha_1,\alpha_2)$ coordinates and
illustrate them in Figure \ref{picture-peach-1}.
\begin{enumerate}
\item The groups corresponding to $\alpha_1=0$ and 
$\alpha_2=\pm\arccos\sqrt{1/8}$ 
have been studied in great detail by 
Deraux and Falbel who proved that they give a spherical CR uniformisation 
of the figure-eight knot complement \cite{DF8}. This illustrates the fact that
there is no statement for Theorem \ref{thm-bend} analogous to the
second part of Theorem \ref{thm-CHIT}: the group from \cite{DF8} is 
contained in a discrete (non-faithful) $(3,3,\infty)$ triangle groups where 
$I_2I_1I_3I_1$ is elliptic.
\item The groups with parameters $\alpha_1=0$ and for which $ST^{-1}$ 
has order $n$ correspond to the $(3,3,n)$ triangle groups studied by 
Parker, Wang and Xie in \cite{PWX}. The corresponding value of $\alpha_2$ 
is given by $\alpha_2=\pm\arccos\sqrt{(4\cos^2(\pi/n)-1)/8}$.
\item The groups where $\alpha_1=\pm\pi/6$ and $\alpha_2=\pm\pi/3$ 
are discrete, since they are subgroups of the Eisenstein-Picard lattice 
${\rm PU}(2,1;\Z[\omega])$, where $\omega$ is a cube 
root of unity. That lattice has been studied by Falbel and Parker in 
\cite{FJP2}.
\end{enumerate}

\paragraph{Comparison with the classical Riley slice.}
There is, conjecturally, one extremely significant difference between 
the classical Riley slice and our complex hyperbolic version. The boundary of
the classical Riley slice is not a smooth curve and has a dense set of points where
particular group elements are parabolic (see for instance the beautiful picture in 
the introduction of \cite{KS}). On the other hand, we believe that in the complex 
hyperbolic case, discreteness is completely controlled by the commutator $[A,B]$, 
or equivalently $ST^{-1}$, as is true for the two cases where $\alpha_1=0$
or $\alpha_2=0$ described above. If this is true, then the boundary of the set 
of (classes of) discrete and faithful representations in ${\rm SU}(2,1)$ of the 
three punctured sphere group with unipotent peripheral holonomy is piecewise 
smooth, and it is given by the simple closed curve ${\mathcal P}$ 
in Figure \ref{picture-peach-1}. This curve provides a one parameter family of 
(conjecturally discrete) representations that connects Schwartz's 
uniformisation of the Whitehead link complement to ours. We believe that 
all these representations give uniformisations of the Whitehead link complement 
as well, but we are not able to prove this with our techniques. What  seems to happen 
is that if one deforms our uniformisation by following the curve $\mathcal{P}$,
the number of isometric spheres contributing to the boundary at infinity of the Ford 
domain becomes too large to be understood using our techniques. 
Possibly, this is because deformations of fundamental domains with tangencies
between bisectors is complicated. This should be compared to Deraux's
construction \cite{DerF8} of deformations of the figure-eight knot complement
mentioned above. There, he had to use a different domain to the one in \cite{DF8}, 
which also has tangencies between the bisectors. 

\medskip
\subsection{Organisation of the article.}

This article is organised as follows. In Section \ref{section-prelim-mat} we 
present the necessary background facts on complex hyperbolic space 
and its isometries. In Section \ref{section-parameter-space}, we describe 
coordinates on the space of (conjugacy classes) of group generated by two 
unipotent isometries with unipotent product. Section \ref{section-isom-spheres} 
is devoted to the description of the isometric spheres that bound our 
fundamental domains. We state and apply the Poincar\'e polyhedron theorem 
in Section \ref{section-poincare}. In Section \ref{section-limit-group}, we 
focus on the specific case where the commutator becomes parabolic, and 
prove that the corresponding manifold at infinity is homeomorphic to the 
complement of the Whitehead link. In Section \ref{section-technicalities}, 
we give the technical proofs which we have omitted for readability 
in the earlier sections.

\medskip

\subsection{Acknowledgements\label{ackowledgements}} 
The authors would like to thank Miguel Acosta, Martin Deraux, Elisha Falbel and 
Antonin Guilloux for numerous interesting discussions. The second author thanks 
Craig Hodgson, Neil Hoffman and Chris Leininger for kindly answering his na\"ive 
questions. This research was financially supported by ANR SGT and an 
LMS Scheme 2 grant. The research took place  during visits of both authors to  
Les Diablerets, Durham, Grenoble, Hunan University, ICTP and Luminy, and we 
would like to thank all these institutions for hospitality. We are very grateful to the
referee for their careful report and many helpful suggestions for improvement of the 
paper. The second author had the pleasure to share a very useful discussion with 
Lucien Guillou on a sunny June afternoon in Grenoble. Lucien has since passed away, 
and we remember him with affection.

\begin{figure}
\begin{center}
 \scalebox{0.6}{\includegraphics{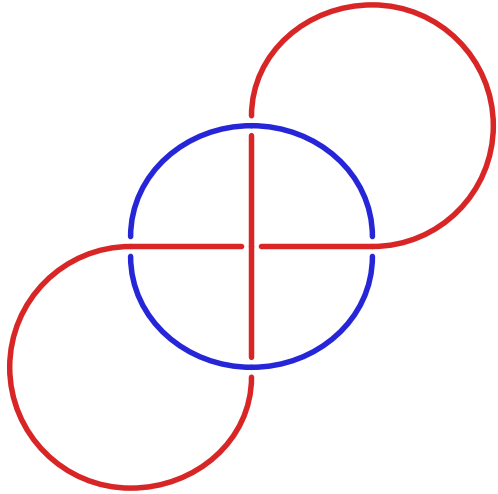}}
\end{center}
\caption{The Whitehead link\label{Whitehead}}
\end{figure}

\section{Preliminary material\label{section-prelim-mat}}

Throughout we will work in the complex hyperbolic plane using a projective 
model and will therefore pass from projective objects to lifts of them. 
Our convention is that the same letter will be used to denote a point in 
$\C P^2$ and a lift of it to $\C^3$ with a bold font for the lift. 
As an example, each time $p$ is a point of $\HdC$, $\bp$ will be a lift of $p$ 
to $\C^3$.

\subsection{The complex hyperbolic plane}
\label{section-proj-model}
The standard reference for complex hyperbolic space is Goldman's 
book \cite{Go}. A lot of information can also be found in Chen and Greenberg's
paper \cite{CG}, see also the survey articles \cite{PP2,Wi8}.

Let  $H$ be the following matrix
$$
H=\begin{bmatrix}0 & 0 & 1\\0 & 1 & 0 \\1 & 0 & 0\end{bmatrix}.
$$
The Hermitian product on $\C^3$ associated to $H$ is given by 
$\la {\bf x},{\bf y}\ra={\bf y}^*H{\bf x}$. The corresponding Hermitian 
form has signature $(2,1)$, and we denote by $V_-$ (respectively 
$V_0$ and $V_+$) the associated negative (respectively null and positive) 
cones in $\C^3$. 

\begin{defi}
The \emph{complex hyperbolic plane} $\HdC$ is the image of $V_-$ in $\C P^2$ by 
projectivisation and its boundary $\partial\HdC$ is the image of $V_0$ in 
$\C P^2$. The complex hyperbolic plane is endowed with the 
\emph{Bergman metric}
$$
ds^2 = \frac{-4}{\la {\bf z},{\bf z}\ra^2}{\rm det}
\left(\begin{matrix} 
\la {\bf z}, {\bf z} \ra & \la d{\bf z},{\bf z} \ra \\ 
\la {\bf z}, d{\bf z} \ra & \la d{\bf z},d{\bf z} \ra  
\end{matrix}\right).
$$
The Bergman metric is equivalent to the \emph{Bergman distance function} 
$\rho$ defined by
$$
\cosh^2{\left(\dfrac{\rho(m,n)}{2}\right)}
=\dfrac{\la \bm  ,\bn\ra\la\bn,\bm\ra}{\la\bm,\bm\ra\la\bn,\bn\ra},
$$ 
where $\bm$ and $\bn$ are lifts of $m$ and $n$ to $\C^3$.
\end{defi}

Let ${\bf z}=[z_1,\,z_2,\,z_3]^T$ be a (column) vector in $\C^3-\{{\bf 0}\}$. 
Then ${\bf z}\in V_-$ (respectively $V_0$) if and only if
$2{\rm Re}(z_1\overline{z}_3)+|z_2|^2<0$ (respectively $=0$). Vectors in $V_0$
with $z_3=0$ must have $z_2=0$ as well. Such a vector is unique up to scalar 
multiplication. We call such its projectivisation the \emph{point at infinity} 
$q_\infty\in\partial\HdC$. If $z_3\neq 0$ then we can use inhomogeneous 
coordinates with $z_3=1$. Writing $\langle{\bf z},{\bf z}\rangle=-2u$ we give
$\HdC\cup\partial\HdC-\{q_\infty\}$ \emph{horospherical coordinates}
$(z,t,u)\in\C\times\R\times\R_{\ge 0}$ defined as follows. A point
$q\in\HdC\cup\partial\HdC$ with horospherical coordinates
$(z,t,u)$ is represented by the following vector, which we call its \textit{standard lift}.
\begin{equation}\label{lift}
{\bf q}=\begin{bmatrix}-|z|^2-u+it\\z\sqrt{2}\\1\end{bmatrix}
\mbox{ if } q\neq q_\infty,\quad 
{\bf q}_\infty=\begin{bmatrix}1\\0\\0\end{bmatrix}
\mbox{ if } q= q_\infty. 
\end{equation}
Points of $\partial\HdC-\{q_\infty\}$ have $u=0$ and we will abbreviate
$(z,t,0)$ to $[z,t]$.

Horospherical coordinates give a model of complex hyperbolic space
analogous to the upper half plane model of the hyperbolic plane.
The \emph{Cygan metric} $d_{\rm Cyg}$ on $\partial\HdC-\{q_\infty\}$ 
plays the role of the Euclidean metric on the upper half plane. It is
defined by the distance function:
\begin{equation}\label{Cygan}
d_{\rm Cyg}(p,q)=\bigl|\langle{\bf p},{\bf q}\rangle\bigr|^{1/2}
=\Bigl||z-w|^2+i\bigl(t-s+{\rm Im}(z\overline{w})\bigr)\Bigr|^{1/2}
\end{equation} 
where $p$ and $q$ have horospherical coordinates $[z,t]$ and 
$[w,s]$. We may extend this metric to points $p$ and $q$ in $\HdC$ 
with horospherical coordinates $(z,t,u)$ and $(w,s,v)$ by writing
$$
d_{\rm Cyg}(p,q)
=\Bigl||z-w|^2+|u-v|+i\bigl(t-s+{\rm Im}(z\overline{w})\bigr)\Bigr|^{1/2}
$$
If (at least) one of $p$ and $q$ lies in $\partial\HdC$ then we still have 
the formula $d_{\rm Cyg}(p,q)=\bigl|\langle{\bf p},{\bf q}\rangle\bigr|^{1/2}$.

\subsection{Isometries\label{section-isometries}}

Since the Bergman metric and distance function are both given solely
in terms of the Hermitian form, any unitary matrix preserving this form
is an isometry. Similarly, complex conjugation of points in
$\C^3$  leaves both the metric and the distance function unchanged. Hence, complex 
conjugation is also an isometry.

Define ${\rm U}(2,1)$ to be the group of unitary matrices preserving
the Hermitian form and ${\rm PU}(2,1)$ to be the projective unitary
group obtained by identifying non-zero scalar multiples of matrices in
${\rm U}(2,1)$. We also consider the subgroup ${\rm SU}(2,1)$  of 
matrices  in ${\rm U}(2,1)$ with determinant $1$.

\begin{prop}
Every Bergman isometry of $\HdC$ is either holomorphic or 
anti-holomorphic. The group of holomorphic isometries is 
${\rm PU}(2,1)$, acting by projective transformations. 
Every antiholomorphic isometry is complex conjugation
followed by an element of ${\rm PU}(2,1)$.
\end{prop}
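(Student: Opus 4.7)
The plan is to verify the two easy directions by direct computation and handle the converse via a reduction to the isotropy at a point. For the first two claims: every $M\in{\rm U}(2,1)$ preserves $\la\cdot,\cdot\ra$ by definition, and the Bergman metric and distance formula are defined entirely in terms of this Hermitian product, so $M$ acts isometrically on $\HdC$. Scalar multiples of the identity act trivially on $\C P^2$, so the action descends to $\Pu$, and is holomorphic because it is the restriction of a projective linear transformation of $\C P^2$. For complex conjugation, the key observation is that $H$ has real entries, so $\la\bar\bz,\bar\bw\ra=\overline{\la\bw,\bz\ra}$; substituting into the distance formula shows that both the numerator $\la\bm,\bn\ra\la\bn,\bm\ra$ and the denominator $\la\bm,\bm\ra\la\bn,\bn\ra$ are preserved, so the induced map sends $V_-$ to itself and descends to an isometry of $\HdC$, manifestly anti-holomorphic in affine coordinates.

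For the converse, let $g$ be any Bergman isometry. Since $\Pu$ acts transitively on $\HdC$ (a routine check using Heisenberg translations, dilations and rotations, equivalent to the transitivity of ${\rm SU}(2,1)$ on $V_-$ modulo scalars), I would compose with an element of $\Pu$ to reduce to the case where $g$ fixes a chosen basepoint $o$. A Riemannian isometry fixing a point is determined by its differential there, so the task reduces to identifying which $\R$-linear isometries of $T_o\HdC\cong\C^2$ occur as $dg_o$. The stabiliser of $o$ in $\Pu$ is isomorphic to ${\rm U}(2)$ and realises every $\C$-linear unitary transformation of $T_o\HdC$ as a differential; composing with complex conjugation produces every $\C$-antilinear unitary transformation.

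The main obstacle is to establish the dichotomy: that $dg_o$ must be either $\C$-linear or $\C$-antilinear with respect to the complex structure $J$ on $T_o\HdC$, since a generic element of ${\rm O}(T_o\HdC)\cong{\rm O}(4)$ is neither. The plan is to exploit the K\"ahler structure of $\HdC$. Because $\HdC$ has constant negative holomorphic sectional curvature, the Riemann curvature tensor at $o$ can be written as an explicit expression in the metric $g$ and $J$ from which $J$ is recovered up to sign. Since $dg_o$ preserves both $g$ and the curvature tensor, it must either commute or anti-commute with $J$, giving the two cases. Combined with the realisation statement of the previous paragraph, $g$ then lies in $\Pu$ if $dg_o$ is $\C$-linear, and in the coset $c\cdot\Pu$ (with $c$ denoting complex conjugation) if $dg_o$ is $\C$-antilinear, which completes the proof.
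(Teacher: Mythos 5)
Your proposal is correct, and it is more complete than what the paper actually provides: the paper only records the easy direction informally (unitary matrices and complex conjugation preserve the Hermitian form, hence the Bergman distance) and treats the converse as a standard fact from the references (Goldman, Chen--Greenberg). Your argument for the converse --- reduce by transitivity of $\Pu$ to an isometry fixing a basepoint $o$, invoke the fact that a Riemannian isometry of a connected complete manifold is determined by its differential at a fixed point, and then use the curvature tensor of the complex space form to force $dg_o$ to commute or anticommute with $J$ --- is exactly the classical proof, and the isotropy group $\mathrm{P}({\rm U}(2)\times{\rm U}(1))\cong{\rm U}(2)$ does realise every $\C$-linear unitary map of $T_o\HdC$, so the realisation step closes the argument. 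The only point worth tightening is the dichotomy itself: recovering $J$ up to sign from $R$ and the metric shows that for each tangent vector $X$ one has $dg_o(JX)=\varepsilon(X)\,J\,dg_o(X)$ with $\varepsilon(X)=\pm1$, and you should note that $\varepsilon$ is locally constant on the (connected) set of nonzero vectors, hence globally constant --- otherwise the conclusion ``$\C$-linear or $\C$-antilinear'' does not yet follow. With that one-line continuity remark added, the proof is complete.
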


Elements of ${\rm SU}(2,1)$ fall into three types, according to the number and
type of the fixed points of the corresponding isometry. Namely, an
isometry is \textit{loxodromic} (respectively \textit{parabolic}) if it 
has exactly two fixed points (respectively exactly one fixed point) on 
$\partial\HdC$. It is called \textit{elliptic} when it has (at least) one 
fixed point inside $\HdC$. An elliptic element $A\in{\rm SU}(2,1)$ 
is called \textit{regular  elliptic} whenever it has three distinct 
eigenvalues, and \textit{special elliptic} if it has a repeated eigenvalue. 
The following criterion distinguishes the different isometry types. 

\begin{prop}[Theorem 6.2.4 of Goldman \cite{Go}]\label{tracefunction}
Let $\mathcal{F}$ be the polynomial given by $\mathcal{F}(z)=|z|^4-8\Re(z^3)+18|z|^2-27$,
and $A$ be a non identity matrix in ${\rm SU}(2,1)$. Then
\begin{enumerate}
\item $A$ is loxodromic if and only if $\mathcal{F}(\tr A)>0$,
\item $A$ is regular elliptic if and only if $\mathcal{F}(\tr A)<0$,
\item if $\mathcal{F}(\tr A)=0$, then $A$ is either parabolic or special  elliptic.
\end{enumerate}
\end{prop}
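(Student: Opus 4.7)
The strategy is to realise $\mathcal{F}(\tau)$ as the discriminant of the characteristic polynomial of $A$, and then to read off the isometry type from the eigenvalue configuration.

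\textbf{Step 1: The characteristic polynomial.} For $A\in{\rm SU}(2,1)$, the identity $A^*HA=H$ gives $A^{-1}=H^{-1}A^*H$, so $\tr(A^{-1})=\tr(A^*)=\overline{\tr A}$. Combined with $\det A=1$, this forces the characteristic polynomial to be
$$\chi_A(\lambda)=\lambda^3-\tau\lambda^2+\overline{\tau}\lambda-1,\qquad \tau=\tr A.$$
Observe the symmetry $\lambda^3\,\overline{\chi_A(1/\overline\lambda)}=\chi_A(\lambda)$: if $\lambda$ is an eigenvalue of $A$, then so is $1/\overline\lambda$. Hence eigenvalues either lie on the unit circle (and are self-paired) or occur in pairs $\{\lambda,1/\overline\lambda\}$ with $|\lambda|\ne 1$. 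In particular any repeated eigenvalue must lie on the unit circle (since $\lambda\ne 1/\overline\lambda$ whenever $|\lambda|\ne 1$, and a quick determinant calculation rules out configurations like $\{\lambda,\lambda,1/\overline\lambda\}$ with $|\lambda|\ne 1$).

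\textbf{Step 2: Discriminant computation.} Apply the classical discriminant formula for the cubic $\lambda^3+p\lambda^2+q\lambda+r$ with $(p,q,r)=(-\tau,\overline\tau,-1)$:
$$\Delta=18pqr-4p^3r+p^2q^2-4q^3-27r^2
=|\tau|^4-8\,\mathrm{Re}(\tau^3)+18|\tau|^2-27=\mathcal{F}(\tau).$$
Since $\Delta=\prod_{i<j}(\lambda_i-\lambda_j)^2$, this identifies the zero locus of $\mathcal{F}$ exactly with the set of traces coming from matrices with a repeated eigenvalue.

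\textbf{Step 3: Sign analysis, case by case.} I analyse the three admissible eigenvalue configurations.

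(a) \emph{Loxodromic case.} Up to conjugation the eigenvalues are $re^{i\alpha}$, $r^{-1}e^{i\alpha}$, $e^{-2i\alpha}$ with $r>0$, $r\ne 1$. Setting $s=r+r^{-1}>2$ and $c=\cos(3\alpha)$, a direct expansion of $\tau=se^{i\alpha}+e^{-2i\alpha}$ yields after simplification
$$\mathcal{F}(\tau)=(s^2-4)(s-2c)^2,$$
which is strictly positive because $s>2\geq 2c$.

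(b) \emph{Regular elliptic case.} The eigenvalues are $e^{i\theta_j}$ with $\sum\theta_j\equiv0\pmod{2\pi}$ and the $\theta_j$ pairwise distinct. Using $e^{i\theta_i}-e^{i\theta_j}=2ie^{i(\theta_i+\theta_j)/2}\sin((\theta_i-\theta_j)/2)$ and $e^{2i(\theta_1+\theta_2+\theta_3)}=1$,
$$\mathcal{F}(\tau)=\prod_{i<j}(\lambda_i-\lambda_j)^2=-64\prod_{i<j}\sin^2\!\left(\tfrac{\theta_i-\theta_j}{2}\right)<0.$$

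(c) \emph{Repeated eigenvalue case.} By Step 1 the repeated eigenvalue lies on the unit circle, and $\mathcal{F}(\tau)=\Delta=0$. According to whether the corresponding generalised eigenspace is diagonalisable or not, the element is special elliptic or parabolic; this is the Jordan form dichotomy.

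\textbf{Step 4: Matching eigenvalue types to isometry types.} It remains to check that the eigenvalue configurations in (a) and (b) correspond to the geometric notions of loxodromic and regular elliptic. This is the standard argument using that $A$ preserves $\langle\cdot,\cdot\rangle$: eigenvectors for eigenvalues $\lambda\ne 1/\overline\mu$ are $H$-orthogonal, and $\langle v,v\rangle=|\lambda|^2\langle v,v\rangle$ forces $v\in V_0$ when $|\lambda|\ne 1$. In case (a) one gets two null fixed points on $\partial\HdC$, and in case (b) three pairwise orthogonal eigenlines on which the indefinite form must have signatures $(-,+,+)$, producing a unique fixed point in $\HdC$. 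Assembling (a)--(c) gives the three claimed equivalences.

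The main obstacle is purely bookkeeping: the cubic discriminant expansion in Step 2 and the factorisation in Step 3(a) are slightly delicate but mechanical; everything else is a short structural argument using the pairing $\lambda\leftrightarrow 1/\overline\lambda$ forced by $A\in{\rm SU}(2,1)$.
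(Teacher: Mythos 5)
The paper offers no proof of this proposition: it is quoted verbatim as Theorem 6.2.4 of Goldman's book \cite{Go}. Your argument is correct and is essentially the standard proof given there — the palindromic-type symmetry of the characteristic polynomial $\lambda^3-\tau\lambda^2+\overline{\tau}\lambda-1$ forced by $A^*HA=H$, the identification of $\mathcal{F}(\tau)$ with its discriminant, and the sign computations in the three eigenvalue configurations (the factorisation $(s^2-4)(s-2c)^2$ and the value $-64\prod_{i<j}\sin^2\bigl((\theta_i-\theta_j)/2\bigr)$ both check out) combine with the loxodromic/parabolic/elliptic trichotomy to yield the stated equivalences.
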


We will be especially interested in elements of ${\rm SU}(2,1)$
with trace $0$ and those with trace $3$.

\begin{lem}[Section 7.1.3 of Goldman \cite{Go}]\label{zerotrace}
\begin{enumerate}
\item A matrix $A$ in ${\rm SU}(2,1)$ is regular elliptic of order three 
if and only if its trace is equal to zero.
\item Let $(p,q,r)$ be three pairwise distinct points in
$\partial\HdC$, not contained in a common complex line. Then there
exists a unique order three regular elliptic isometry $E$ so that
$E(p)=q$ and $E(q)=r$.
\end{enumerate}
\end{lem}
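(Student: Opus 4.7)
The plan is to handle the two claims separately, both relying on the structure of the characteristic polynomial of an element of ${\rm SU}(2,1)$.

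For part (1), note that any $A \in {\rm SU}(2,1)$ satisfies $A^{-1} = H^{-1} A^* H$, hence $\tr(A^{-1}) = \overline{\tr(A)}$; combined with $\det A = 1$, this forces the characteristic polynomial to take the form $\lambda^3 - \tau \lambda^2 + \bar\tau \lambda - 1$, where $\tau = \tr A$. When $\tau = 0$ it factors as $\lambda^3 - 1$, whose roots are the three distinct cube roots of unity $1, \zeta, \bar\zeta$ with $\zeta = e^{2\pi i / 3}$. Hence $A^3 = I$, and regular ellipticity is also confirmed by $\mathcal{F}(0) = -27 < 0$ via Proposition \ref{tracefunction}. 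Conversely, a regular elliptic $A$ of order three has $A^3 = I$ with three distinct eigenvalues that are cube roots of unity with product $1$; the only such triple is $\{1, \zeta, \bar\zeta\}$, whose sum $\tr A$ vanishes.

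For existence in part (2), lift the three points to null vectors $\mathbf{p}, \mathbf{q}, \mathbf{r}$. Since distinct boundary points have strictly positive Cygan distance and $d_{\rm Cyg}(p,q) = |\la \mathbf{p}, \mathbf{q}\ra|^{1/2}$, the three cross products $\la \mathbf{p}, \mathbf{q}\ra, \la \mathbf{q}, \mathbf{r}\ra, \la \mathbf{r}, \mathbf{p}\ra$ are nonzero, while the non-collinearity hypothesis makes $\mathbf{p}, \mathbf{q}, \mathbf{r}$ a basis of $\C^3$. A short algebraic computation shows that one can rescale the lifts so that these three cross products become equal, using only that they were initially nonzero. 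The cyclic linear map $E$ sending $\mathbf{p} \mapsto \mathbf{q}, \mathbf{q} \mapsto \mathbf{r}, \mathbf{r} \mapsto \mathbf{p}$ then has the same Gram matrix on source and target, so it preserves the Hermitian form and lies in ${\rm U}(2,1)$. After rescaling by a cube root of $1/\det E$ one obtains a representative in ${\rm SU}(2,1)$; since $E^3 = I$ and $E \neq I$, the only triple of cube roots of unity multiplying to $1$ that is not constant is $\{1, \zeta, \bar\zeta\}$, so $E$ has three distinct eigenvalues and is regular elliptic of order three.

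For uniqueness, which is the subtlest point, suppose $E$ and $E'$ both satisfy the conclusions. Then $E^{-1} E'$ fixes each of $p, q, r$ projectively, so in the basis $\mathbf{p}, \mathbf{q}, \mathbf{r}$ it is diagonal with eigenvalues $\lambda_1, \lambda_2, \lambda_3$. Preservation of the three nonzero cross products forces $\lambda_i \bar\lambda_j = 1$ for $i \neq j$; an elementary manipulation then gives $|\lambda_1| = |\lambda_2| = |\lambda_3| = 1$ with a common argument, so $E^{-1} E'$ is a scalar matrix and $E = E'$ in ${\rm PU}(2,1)$. The main obstacle in executing this plan is the rescaling step in the existence argument, where one must check solvability of the system equating the three cross products for arbitrary nonzero initial values; writing everything in terms of moduli and phases reduces this to elementary real algebra.
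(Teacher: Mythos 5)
The paper offers no proof of this lemma; it is quoted verbatim from Section 7.1.3 of Goldman \cite{Go}, so there is no internal argument to compare against. Your self-contained proof is correct and is essentially the standard one: part (1) via the characteristic polynomial $\lambda^3-\tau\lambda^2+\bar\tau\lambda-1$, and part (2) by rescaling the null lifts until the three cyclic Hermitian products $\la\mathbf{p},\mathbf{q}\ra$, $\la\mathbf{q},\mathbf{r}\ra$, $\la\mathbf{r},\mathbf{p}\ra$ coincide, so that the $3$-cycle on the basis preserves the Gram matrix. The rescaling system is indeed solvable for arbitrary nonzero initial values (set $a=1$, solve for $c$ in terms of $b$, and the remaining equation $b^2/\bar b=u\bar w/(\bar u v)$ has a solution because $re^{3i\theta}$ sweeps out all of $\C^*$), and the uniqueness argument via $\lambda_i\bar\lambda_j=1$ is fine. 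Two points to tighten. First, what you call ``cross products'' are Hermitian products; in this paper the cross product is the different object $\mathbf{p}\boxtimes\mathbf{q}$. Second, and more substantively, your last step of the existence argument tacitly assumes that the ${\rm SU}(2,1)$ representative still satisfies $E^3=I$; if $\det E$ were a nontrivial cube root of unity, the rescaled matrix would have ninth roots of unity as eigenvalues and the ``triple of cube roots of unity multiplying to $1$'' reasoning would not apply. This is rescued by observing that in the basis $(\mathbf{p},\mathbf{q},\mathbf{r})$ the map $E$ is the permutation matrix of a $3$-cycle, an even permutation, so $\det E=1$ from the outset and no rescaling is needed; with that noted, $\det E=1$, $E^3=I$ and $E$ non-scalar force the eigenvalue set $\{1,\zeta,\bar\zeta\}$, whence $\tr E=0$ and regular ellipticity also follows directly from part (1).
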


Suppose that $T\in{\rm SU}(2,1)$ has trace equal to $3$. Then all
$T$ eigenvalues of $T$ equal $1$, that is $T$ is \emph{unipotent}.
If $T$ is diagonalisable then it must be the identity; if it
is non-diagonalisable then it must fix a point of $\partial\HdC$.
Conjugating within ${\rm SU}(2,1)$ if necessary, we may assume that
$T$ fixes $q_\infty$. This implies that $T$ is upper triangular with 
each diagonal element equal to $1$.

\begin{lem}[Section 4.2 of Goldman \cite{Go}]\label{lem-unique-unipotent}
Suppose that $[w,s]\in\partial\HdC-\{q_\infty\}$. Then there is a unique
$T_{[w,s]}\in{\rm SU}(2,1)$ taking the point $[0,0]\in\partial\HdC$
to $[w,s]$. As a matrix this map is:
\begin{equation}\label{heisenberg}
T_{[w,s]}=\begin{bmatrix} 
1 & -\overline{w} \sqrt{2} & -|w|^2+is\\
0 & 1 & w\sqrt{2}\\0 & 0 & 1\end{bmatrix}.
\end{equation}
Moreover, composition of such elements gives $\partial\HdC-\{q_\infty\}$
the structure of the Heisenberg group 
$$
[w,s]\cdot[z,t]=\bigl[w+z,s+t-2{\rm Im}(z\overline{w})\bigr]
$$
and $T_{[w,s]}$ acts as left Heisenberg translation on 
$\partial\HdC-\{q_\infty\}$.
\end{lem}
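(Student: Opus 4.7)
The plan is to classify upper triangular matrices in ${\rm SU}(2,1)$ with $1$'s on the diagonal and then verify that the resulting two-parameter family acts on $\partial\HdC-\{q_\infty\}$ as stated. The paragraph preceding the lemma already reduces us to this form: a unipotent element of ${\rm SU}(2,1)$ fixing $q_\infty$ must have first column $(1,0,0)^T$, and the identity $T^{-1}=HT^*H$ combined with $\det T = 1$ then forces $T$ to be upper triangular with diagonal entries all equal to $1$. So $T_{[w,s]}$ is sought within this family.

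First I would take a general upper triangular matrix
$$T = \begin{bmatrix} 1 & a & b \\ 0 & 1 & c \\ 0 & 0 & 1 \end{bmatrix},$$
and spell out the condition $T^*HT = H$ entry by entry. The vanishing of the $(2,3)$-entry of $T^*HT$ gives $c = -\bar a$, while the $(3,3)$-entry gives $2\Re(b) + |c|^2 = 0$; the remaining entries are then automatic. Writing $a = -\bar w\sqrt 2$ yields $c = w\sqrt 2$ and $\Re(b) = -|w|^2$, and writing $b = -|w|^2 + is$ for a free real parameter $s$ produces exactly the matrix in (\ref{heisenberg}). This exhibits the two-parameter family of unipotent elements of ${\rm SU}(2,1)$ fixing $q_\infty$ as indexed by $(w,s) \in \C\times\R$.

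Next, applying $T_{[w,s]}$ to the standard lift $(0,0,1)^T$ of $[0,0]$ yields $(-|w|^2+is,\ w\sqrt 2,\ 1)^T$, which by the formula (\ref{lift}) is the standard lift of $[w,s]$. Existence follows, and uniqueness is immediate since distinct $(w,s)$ give distinct matrices. For the group law I would compute the product $T_{[w,s]}T_{[z,t]}$ directly: the matrix multiplication is straightforward, and matching the result against the form of $T_{[w',s']}$ gives $w' = w+z$ and $s' = s + t - 2\Im(z\bar w)$, which is precisely the Heisenberg product. Finally, since $T_{[z,t]}$ sends $[0,0]$ to $[z,t]$,
$$T_{[w,s]}([z,t]) = T_{[w,s]}T_{[z,t]}([0,0]) = T_{[w',s']}([0,0]) = [w,s]\cdot[z,t],$$
so $T_{[w,s]}$ acts as left Heisenberg translation.

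The only genuine computation is the matrix product in the last step, which is routine bookkeeping. If there is a main \emph{obstacle} it is conceptual rather than technical: one needs to recognise that the Hermitian constraint cuts the six-real-dimensional space of upper triangular unipotent matrices down to exactly the three-real-dimensional Heisenberg group acting simply transitively on $\partial\HdC-\{q_\infty\}$, which is what makes the parametrisation by $(w,s)$ natural rather than ad hoc.
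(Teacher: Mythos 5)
Your proof is correct and is the standard direct verification: the paper itself does not prove this lemma but cites Section 4.2 of Goldman, and your computation (imposing $T^*HT=H$ on an upper triangular unipotent matrix, checking the image of the standard lift of $[0,0]$, and multiplying two such matrices to read off the Heisenberg product) is exactly the argument one finds there. The only point worth tightening is the uniqueness step: what you need is not that distinct $(w,s)$ give distinct matrices but that distinct matrices in the family move $[0,0]$ to distinct points, which follows immediately from $T_{[w,s]}([0,0])=[w,s]$.
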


The action of $T_{[w,s]}$ on horospherical coordinates is:
$$
T_{[w,s]}:(z,t,u)\longmapsto \bigl(w+z,s+t-2{\rm Im}(z\overline{w}),u\bigr).
$$
An important observation is that this is an affine map, namely a translation
and shear.

We can restate Lemma \ref{lem-unique-unipotent} in an invariant way.
This result is actually true for any parabolic conjugacy class, as a special 
case of Proposition 3.1 in \cite{ParkWi}.
 
\begin{prop}\label{cor-unique-unipotent}
Let $(p_1,p_2,p_3)$ be a triple of pairwise distinct points in $\partial\HdC$.
Then there is a unique unipotent element of ${\rm PU}(2,1)$ fixing $p_1$ 
and taking $p_2$ to $p_3$.
\end{prop}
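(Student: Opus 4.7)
The plan is to reduce to a normalised configuration by means of the transitivity of ${\rm PU}(2,1)$ on ordered pairs of distinct points of $\partial\HdC$, and then invoke Lemma \ref{lem-unique-unipotent} directly. First I would note that ${\rm PU}(2,1)$ acts doubly transitively on $\partial\HdC$: Heisenberg translations act simply transitively on $\partial\HdC-\{q_\infty\}$ by Lemma \ref{lem-unique-unipotent}, while the projective involution induced by the matrix that swaps the first and third basis vectors lies in ${\rm PU}(2,1)$ and exchanges $q_\infty$ with $[0,0]$. Hence I can find $g\in{\rm PU}(2,1)$ with $g(p_1)=q_\infty$ and $g(p_2)=[0,0]$, and since the three $p_i$ are pairwise distinct, $g(p_3)=[w,s]$ for some $(w,s)\neq(0,0)$.

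For existence, Lemma \ref{lem-unique-unipotent} furnishes the Heisenberg translation $T_{[w,s]}\in{\rm SU}(2,1)$, which is unipotent, fixes $q_\infty$ and sends $[0,0]$ to $[w,s]$; the element $U=g^{-1}T_{[w,s]}g$ is then a unipotent element of ${\rm PU}(2,1)$ that fixes $p_1$ and sends $p_2$ to $p_3$. For uniqueness, suppose $U'\in{\rm PU}(2,1)$ is any unipotent element with the same property. Since each projective class contains at most one lift of trace $3$, $U'$ has a well-defined unipotent lift in ${\rm SU}(2,1)$, which I continue to denote $U'$. Then $gU'g^{-1}$ is a unipotent element of ${\rm SU}(2,1)$ fixing $q_\infty$, hence upper triangular with $1$'s on the diagonal in the chosen basis. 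Imposing the condition $T^*HT=H$ on such a matrix forces it to have exactly the form \eqref{heisenberg}, so the set of unipotent elements of ${\rm SU}(2,1)$ fixing $q_\infty$ is in bijection with $\C\times\R$ via $(w',s')\mapsto T_{[w',s']}$. Since $gU'g^{-1}$ sends $[0,0]$ to $[w,s]$, matching the image points gives $gU'g^{-1}=T_{[w,s]}=gUg^{-1}$, so $U'=U$.

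There is no genuine obstacle: the content of the statement is already packaged inside Lemma \ref{lem-unique-unipotent}, and the proposition is essentially a coordinate-free repackaging of the fact that the unipotent radical of the stabiliser of a point at infinity acts simply transitively on the complement of that point in $\partial\HdC$. The only point requiring a little care is the passage between ${\rm SU}(2,1)$ and ${\rm PU}(2,1)$, which is handled by the observation that among the three lifts of a ${\rm PU}(2,1)$ element (which differ by a cube root of unity) at most one has trace $3$.
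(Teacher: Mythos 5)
Your proof is correct and follows essentially the same route as the paper's: normalise by double transitivity of ${\rm PU}(2,1)$ on pairs of distinct boundary points so that $p_1=q_\infty$ and $p_2=[0,0]$, then invoke Lemma \ref{lem-unique-unipotent}. You simply spell out the details the paper leaves implicit (the double transitivity itself, the trace-$3$ lift to ${\rm SU}(2,1)$, and the fact that unipotent elements fixing $q_\infty$ are exactly the $T_{[w,s]}$), all of which are accurate.
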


\begin{proof}
We can choose $A\in{\rm SU}(2,1)$ taking $p_1$ to $q_\infty$ and
$p_2$ to $[0,0]$. The result then follows from Lemma \ref{lem-unique-unipotent}.
\end{proof}

\subsection{Totally geodesic subspaces.\label{section-subspaces}}

Maximal totally geodesic subspaces of $\HdC$ have real dimension 2, and 
they fall in two types. Complex lines are intersections with $\HdC$ of 
projective lines in $\C P^2$. By Hermitian duality, any complex line $L$ 
is \textit{polar} to a point in $\C P^2$ that is outside the 
closure of $\HdC$. Any lift of this point is called a \textit{polar vector} 
to $L$. Any two distinct points $p$ and $q$ in the closure of $\HdC$ belong 
to a unique complex line, and a vector polar to this line is given by 
$\bp\boxtimes\bq = H \overline{\bp\wedge\bq}$.  This can be verified directly   
using $\la {\bf x},{\bf y}\ra={\bf y}^* H {\bf x}$ and the fact that 
here, $H^2=1$. A more general description of cross products in Hermitian vector spaces 
can be found in Section 2.2.7. of Chapter 2 of Goldman \cite{Go}.

The other type of maximal totally geodesic subspace is a Lagrangian plane.
Lagrangian planes are ${\rm PU}(2,1)$ images of the set of real points 
$\HdR\subset\HdC$. In particular, real planes are fixed points sets of 
antiholomorphic isometric involutions (sometimes 
called \textit{real symmetries}). 
The symmetry fixing $\HdR$ is complex conjugation.  
In turn, the symmetry about any other Lagrangian plane $M\cdot\HdR$,
where $M\in {\rm SU}(2,1)$, is given by 
${\bf z}\longmapsto M\overline{M^{-1}}\,\overline{\bf z}=M\overline{\bigl(M^{-1}{\bf z}\bigr)}$. Note that the matrix 
$N=M\overline{M^{-1}}$ satisfies $N\overline{N}=Id$: this reflects the fact
that real symmetries are involutions. We refer the reader to 
Chapter 3 and 4 of Goldman \cite{Go}.

\subsection{Isometric spheres}
\label{defisom}

\begin{defi}\label{Isomsphere}
For any $B\in{\rm SU}(2,1)$ that does not fix $q_\infty$,
the \emph{isometric sphere} of $B$ (denoted $\Isom(B)$) is
defined to be
\begin{equation}\label{def-Isom}
\Isom(B)=\Bigl\{p\in\HdC\cup\partial\HdC\ :\ 
\bigl|\langle{\bf p},{\bf q}_\infty\rangle\bigr|=
\bigl|\langle{\bf p},B^{-1}({\bf q}_\infty)\rangle\bigr|=
\bigl|\langle B({\bf p}),{\bf q}_\infty\rangle\bigr|\Bigr\}
\end{equation}
where ${\bf p}$ is the standard lift of $p\in\HdC\cup\partial\HdC$ given
in \eqref{lift}.

The \emph{interior} of $\Isom(B)$ is the component of its complement in $\HdC\cup\partial\HdC$ 
that do not contain $q_\infty$, namely,
$$
\Bigl\{p\in\HdC\cup\partial\HdC\ :\ 
\bigl|\langle{\bf p},{\bf q}_\infty\rangle\bigr|>
\bigl|\langle{\bf p},B^{-1}({\bf q}_\infty)\rangle\bigr|\Bigr\}.
$$
The \emph{exterior} of $\Isom(B)$ is the component that contains the point at infinity $q_\infty$
\end{defi}

Suppose $B$ is written as a matrix as
\begin{equation}\label{eq-B-form}
B=\left[\begin{matrix} a & b & c \\ d & e & f \\ g & h & j \end{matrix}\right].
\end{equation}
Then 
$B^{-1}({\bf q}_\infty)=\bigl[\overline{j},\,\overline{h},\,\overline{g}\bigr]^T$.
Thus $B$ fixes $q_\infty$ if and only if $g=0$. If $B$ does not fix $q_\infty$
(that is $g\neq 0$) the horospherical coordinates of $B^{-1}(q_\infty)$ are:
$$
B^{-1}(q_\infty) =\Bigl[ \overline{h}/\bigl(\overline{g}\sqrt{2}\bigr),\,
{\rm Im}\bigl(\overline{j}/\overline{g}\bigr)\Bigr].
$$

\begin{lem}[Section 5.4.5 of Goldman \cite{Go}]\label{lem-isomA}
Let $B\in{\rm PU}(2,1)$  be an isometry of $\HdC$ not fixing $q_\infty$. 
\begin{enumerate}
\item The transformation $B$ maps $\Isom(B)$ to $\Isom(B^{-1})$, and 
the interior of $\Isom(B)$ to the exterior of $\Isom(B^{-1})$. 
\item For any $A\in{\rm PU}(2,1)$ fixing $q_\infty$ and such that the 
corresponding eigenvalue has unit modulus, we have $\Isom(B)=\Isom(AB)$.
\end{enumerate}
\end{lem}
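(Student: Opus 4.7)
The plan is to unwind the definition of the isometric sphere algebraically, using the unitarity of $B$ as the main tool and being careful about the scalar ambiguity between the standard lift of $B(p)$ and the vector $B({\bf p})$.

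First I would observe that since $B \in {\rm SU}(2,1)$ is unitary for the Hermitian form,
$\langle B({\bf p}),{\bf q}_\infty\rangle = \langle {\bf p}, B^{-1}({\bf q}_\infty)\rangle$,
so the second and third expressions in the definition \eqref{def-Isom} of $\Isom(B)$ are automatically equal. Consequently, membership in $\Isom(B)$ is governed by the single equation
$|\langle{\bf p},{\bf q}_\infty\rangle| = |\langle{\bf p},B^{-1}({\bf q}_\infty)\rangle|$, and the interior and exterior correspond to the two strict inequalities. This is the workhorse identity for both parts.

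For part (1), I would take $p \in \Isom(B)$ and set $p' = B(p)$. The standard lift ${\bf p}'$ from \eqref{lift} is in general only a non-zero scalar multiple of $B({\bf p})$, say ${\bf p}' = \lambda B({\bf p})$, and any such rescaling preserves the ratio of the two quantities that define $\Isom(B^{-1})$. Using $B$-unitarity twice, compute
$|\langle{\bf p}',{\bf q}_\infty\rangle| = |\lambda|\,|\langle{\bf p}, B^{-1}({\bf q}_\infty)\rangle|$
and
$|\langle{\bf p}',B({\bf q}_\infty)\rangle| = |\lambda|\,|\langle{\bf p},{\bf q}_\infty\rangle|$.
Equality of these two expressions is thus equivalent to $p\in \Isom(B)$, proving $B(\Isom(B))=\Isom(B^{-1})$. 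Tracking the strict inequalities in the same computation shows that strict inequality in one direction is exchanged for strict inequality in the opposite direction, so $B$ sends the interior of $\Isom(B)$ to the exterior of $\Isom(B^{-1})$.

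For part (2), the hypothesis that $A$ fixes $q_\infty$ with unit modulus eigenvalue means $A({\bf q}_\infty) = \lambda {\bf q}_\infty$ with $|\lambda|=1$, hence $A^{-1}({\bf q}_\infty) = \lambda^{-1}{\bf q}_\infty$. Therefore
$(AB)^{-1}({\bf q}_\infty) = B^{-1}A^{-1}({\bf q}_\infty) = \lambda^{-1} B^{-1}({\bf q}_\infty)$,
so $|\langle{\bf p},(AB)^{-1}({\bf q}_\infty)\rangle| = |\langle{\bf p},B^{-1}({\bf q}_\infty)\rangle|$ for every ${\bf p}$. The quantity $|\langle{\bf p},{\bf q}_\infty\rangle|$ is identical for both, so the defining equations of $\Isom(B)$ and $\Isom(AB)$ coincide, establishing the equality of sets.

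The main conceptual obstacle is not difficulty but bookkeeping: one must keep straight the distinction between the projective action of $B$ on $\HdC \cup \partial\HdC$ and the linear action on $\C^3$, since the standard lift ${\bf p}'$ of $B(p)$ can differ from $B({\bf p})$ by a scalar of arbitrary modulus. Once this scalar is absorbed into a harmless factor of $|\lambda|$ that cancels on both sides of the defining equation, the rest is a direct consequence of unitarity and the computation $(AB)^{-1}{\bf q}_\infty = \lambda^{-1}B^{-1}{\bf q}_\infty$.
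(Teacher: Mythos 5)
Your proof is correct. Note that the paper does not prove this lemma at all: it is quoted from Section 5.4.5 of Goldman's book, so there is no in-paper argument to compare against. Your verification --- reducing everything to the single equation $|\langle{\bf p},{\bf q}_\infty\rangle|=|\langle{\bf p},B^{-1}({\bf q}_\infty)\rangle|$ via unitarity, absorbing the scalar between the standard lift of $B(p)$ and $B({\bf p})$ into a common factor $|\lambda|$, and observing that $B$ swaps the roles of the two sides of the defining (in)equality --- is the standard argument and handles the one genuinely delicate point (the projective versus linear action) correctly; the orientation of the inequalities matches the paper's convention that the interior is the component not containing $q_\infty$.
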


Using the characterisation \eqref{Cygan} of the Cygan metric in terms of 
the Hermitian form, the following lemma is obvious.

\begin{lem}\label{centrisomsphere}
Suppose that $B\in{\rm SU}(2,1)$ written in the form \eqref{eq-B-form}
does not fix $q_\infty$. Then the isometric sphere $\Isom(B)$ is the 
Cygan sphere in $\HdC\cup\partial\HdC$ with centre $B^{-1}(q_\infty)$ and 
radius $r_A=1/|g|^{1/2}$. 
\end{lem}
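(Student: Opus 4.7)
The plan is to translate both sides of the defining equation \eqref{def-Isom} into the Cygan metric by carefully passing from the raw lifts that appear in the statement of the equation to the standard lifts that feature in formula \eqref{Cygan}. The whole argument is essentially a direct computation, but the bookkeeping with lifts is the only non-trivial part.

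First I would compute $\langle {\bf p},{\bf q}_\infty\rangle$ for a point $p\in\HdC\cup\partial\HdC$ with $p\neq q_\infty$ using the standard lift in \eqref{lift} and the explicit matrix $H$. A quick inspection gives $\langle{\bf p},{\bf q}_\infty\rangle=1$, independently of the horospherical coordinates of $p$. In particular $|\langle{\bf p},{\bf q}_\infty\rangle|=1$, so the first equality in \eqref{def-Isom} is automatic and the isometric sphere is simply the set of $p$ with $|\langle{\bf p},B^{-1}({\bf q}_\infty)\rangle|=1$.

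Next, for $B$ written in the form \eqref{eq-B-form} with $g\neq 0$, I would identify $B^{-1}({\bf q}_\infty)=[\,\overline{j},\overline{h},\overline{g}\,]^{T}$ and observe that this is \emph{not} a standard lift of $c:=B^{-1}(q_\infty)$ in the sense of \eqref{lift}; the standard lift ${\bf c}$ is obtained by dividing by $\overline{g}$. Because the Hermitian form $\langle\cdot,\cdot\rangle$ is antilinear in the second slot, this rescaling produces
$$
\langle{\bf p},B^{-1}({\bf q}_\infty)\rangle = g\,\langle{\bf p},{\bf c}\rangle,
$$
so the isometric sphere condition becomes $|g|\,|\langle{\bf p},{\bf c}\rangle|=1$.

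Finally, invoking the Cygan distance formula \eqref{Cygan}, which gives $d_{\rm Cyg}(p,c)=|\langle{\bf p},{\bf c}\rangle|^{1/2}$ in terms of standard lifts, the condition rewrites as $d_{\rm Cyg}(p,c)^{2}=1/|g|$, i.e.\ $d_{\rm Cyg}(p,B^{-1}(q_\infty))=1/|g|^{1/2}$. This is exactly the asserted Cygan sphere. The only real obstacle is the lift normalisation in the second step, and in particular making sure the antilinearity in the second slot is used with the correct conjugation so that the scalar $|g|$ rather than some other expression appears in the radius.
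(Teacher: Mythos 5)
Your proof is correct and follows exactly the route the paper has in mind: the paper declares the lemma ``obvious'' from the identity $d_{\rm Cyg}(p,q)=\bigl|\langle{\bf p},{\bf q}\rangle\bigr|^{1/2}$, and your computation (that $\langle{\bf p},{\bf q}_\infty\rangle=1$ for standard lifts, and that rescaling $B^{-1}({\bf q}_\infty)=\overline{g}\,{\bf c}$ to the standard lift ${\bf c}$ produces the factor $g$ via antilinearity in the second slot) is precisely the bookkeeping being suppressed. The only point worth noting explicitly is that the formula $d_{\rm Cyg}(p,c)=|\langle{\bf p},{\bf c}\rangle|^{1/2}$ is valid for all $p\in\HdC\cup\partial\HdC$ here because the centre $c=B^{-1}(q_\infty)$ lies on $\partial\HdC$, which is exactly the situation covered at the end of Section \ref{section-proj-model}.
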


The importance of isometric spheres is that they form the boundary
of the \emph{Ford polyhedron}. This is the limit of Dirichlet polyhedra
as the centre point approaches $\partial\HdC$; see Section 9.3 of 
Goldman \cite{Go}. The Ford polyhedron $D$ for a discrete group $\Gamma$ 
is the intersection of the (closures of the) exteriors of all isometric 
spheres for elements of $\Gamma$ not fixing $q_\infty$. That is:
$$
D_\Gamma=\Bigl\{p\in\HdC\cup\partial\HdC \ :\ 
\bigl|\langle{\bf p},{\bf q}_\infty\rangle\bigr|\ge 
\bigl|\langle{\bf p},B^{-1}{\bf q}_\infty\rangle\bigr| \hbox{ for all }
B\in\Gamma \hbox{ with } B(q_\infty)\neq q_\infty \Bigr\}.
$$ 
Of course, just as for Dirichlet polyhedra, to construct the Ford polyhedron
one must check infinitely many equalities. Therefore our method will be to
guess the Ford polyhedron and check this using the Poincar\'e polyhedron 
theorem. When $q_\infty$ is either in the domain of discontinuity
or is a parabolic fixed point, the Ford polyhedron
is preserved by $\Gamma_\infty$, the stabiliser of $q_\infty$ in $\Gamma$.
It is a fundamental polyhedron for the partition of $\Gamma$ into 
$\Gamma_\infty$-cosets. In order to obtain a fundamental domain for
$\Gamma$, one must intersect the Ford domain with a fundamental
domain for $\Gamma_\infty$.

\subsection{Cygan spheres and geographical coordinates.
\label{section-geog}}

We now give some geometrical results about Cygan spheres. They are, in
particular, applicable to isometric spheres.
The Cygan sphere $\mathcal{S}_{[0,0]}(r)$ of radius $r>0$ with centre the origin
$[0,0]$ is the (real) hypersurface of $\HdC\cup\partial\HdC$ described in horospherical coordinates by the equation
\begin{equation}\label{unitspinal}
\mathcal{S}_{[0,0]}(r)
=\Bigl\lbrace (z,t,u)\ :\ \bigl(|z|^2+u\bigr)^2+t^2=r^4\Bigr\rbrace.
\end{equation}
From \eqref{unitspinal} we immediately see that when written in
horospherical coordinates the interior of $\mathcal{S}_{[0,0]}(r)$
is convex. The Cygan sphere $\mathcal{S}_{[w,s]}(r)$ of radius $r$
with centre $[w,s]$ is the image of $\mathcal{S}_{[0,0]}(r)$ under
the Heisenberg translation $T_{[w,s]}$. Since Heisenberg 
translations are affine maps in horospherical coordinates, we see that 
the interior of any Cygan sphere is convex. This immediately gives: 

\begin{prop}\label{prop-inter-isom}
The intersection of two Cygan spheres is connected. 
\end{prop}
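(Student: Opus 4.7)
The plan is to exploit the convexity of the closed Cygan balls in horospherical coordinates, established just above. Let $\overline{B_1}$ and $\overline{B_2}$ denote the closed Cygan balls bounded by the two Cygan spheres $S_1$ and $S_2$.

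As a first step I would upgrade convexity to \emph{strict} convexity. Since $|z|^2+u$ is strictly convex on $\{u\ge 0\}$, $t\mapsto t^2$ is strictly convex, and $x\mapsto x^2$ is strictly convex on $\R_{\ge 0}$, a direct computation shows that for two distinct points of $\mathcal{S}_{[0,0]}(r)$ the midpoint satisfies $(|z|^2+u)^2+t^2<r^4$. Strict convexity is preserved by the affine Heisenberg translations, so each Cygan ball is strictly convex in horospherical coordinates.

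Next I would dispose of the degenerate cases: if $\overline{B_1}\cap\overline{B_2}=\emptyset$, if one ball is strictly contained in the other, or if they coincide, then $S_1\cap S_2$ is either empty or a whole Cygan sphere, and is therefore connected. In the remaining case, $K:=\overline{B_1}\cap\overline{B_2}$ is a compact convex body with non-empty interior in $\R^4$, so its boundary $\partial K$ is homeomorphic to $S^3$ and decomposes as
$$
\partial K \;=\; (S_1\cap\overline{B_2})\cup(\overline{B_1}\cap S_2),
$$
the two pieces meeting precisely along $S_1\cap S_2$.

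The remainder of the plan is to show that each of these two ``caps'' is a closed topological $3$-disk, so that $\partial K\cong S^3$ is presented as a union of two $3$-disks glued along their common boundary $S_1\cap S_2$; the latter is then a topological $2$-sphere, in particular connected. To identify the cap $S_1\cap\overline{B_2}$ as a $3$-disk, I would fix $p_0$ in the interior of $K$ and use the radial projection homeomorphism $\sigma:S^3\to\partial K$; under $\sigma$ the cap corresponds to $\{v\in S^3:\rho_1(v)\le\rho_2(v)\}$, where $\rho_i(v)$ is the exit time from $\overline{B_i}$ along the ray from $p_0$ in direction $v$. The main obstacle in the plan is justifying that the separating set $\{\rho_1=\rho_2\}$ is a topological $2$-sphere partitioning $S^3$ into two $3$-disks: this is a consequence of the strict convexity of both balls together with the assumption that neither is contained in the other, but requires some careful convex-geometric analysis of the functions $\rho_i$ on $S^3$.
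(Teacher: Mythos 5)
Your starting point (convexity of the Cygan balls in horospherical coordinates) is the same as the paper's, which observes that the unit Cygan ball is convex and Heisenberg translations are affine, and then identifies the proposition as a restatement of Goldman's Theorem 9.2.6 on intersections of \emph{covertical} bisectors. However, your plan breaks down at exactly the step you flag as needing ``careful convex-geometric analysis'', and the breakdown is not repairable: it is simply false that strict convexity of two bodies, neither contained in the other and with overlapping interiors, forces the two boundary caps to be disks (equivalently, forces $\{\rho_1=\rho_2\}$ to be a sphere splitting $S^3$ into two disks). A counterexample already in $\R^3$ (and persisting in $\R^4$): take the ellipsoid $E=\{x^2/100+y^2+z^2\le 1\}$ and the ball $B=\{x^2+y^2+z^2\le 4\}$. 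Both are strictly convex, neither contains the other, their interiors overlap, yet $\partial E\cap\partial B$ is the pair of disjoint circles $x=\pm\sqrt{300/99}$, $y^2+z^2=96/99$: one cap is an annulus, the other a pair of disks. So connectedness of the intersection of two Cygan spheres cannot follow from convexity alone; it requires structure specific to Cygan spheres, namely that they are all bisectors with the common focus $q_\infty$ (covertical in Goldman's sense), which is precisely what the cited theorem exploits. Concretely, writing the two defining equations as $\bigl|\langle{\bf p},{\bf c}_i\rangle\bigr|=r_i^2\bigl|\langle{\bf p},{\bf q}_\infty\rangle\bigr|$ and analysing the fibres over the $z$-coordinate is what controls the intersection; this is the content of Goldman's argument and is not visible from convexity.

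Two secondary problems with the set-up. The ambient space $\HdC\cup\partial\HdC$ is the closed half-space $\{u\ge 0\}$ in horospherical coordinates, and a Cygan sphere is a topological $3$-ball whose boundary (the spinal sphere) lies in $\{u=0\}$, not a closed hypersurface in $\R^4$; consequently $\partial K$ also contains a face in the hyperplane $\{u=0\}$, so the decomposition $\partial K=(S_1\cap\overline{B_2})\cup(\overline{B_1}\cap S_2)$ is incomplete and the radial-projection-to-$S^3$ picture does not directly apply. Relatedly, the expected answer for a generic intersection of two Cygan spheres is a closed $2$-disc with boundary circle in $\partial\HdC$ (this is exactly how the ridges $r_k^\pm$ are described later in the paper), not a $2$-sphere, so the target of your argument is also not the right one.
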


Cygan spheres are examples of bisectors (otherwise called spinal hypersurfaces) 
and their intersection is an example of what Goldman calls an intersection of 
covertical bisectors. Thus Proposition \ref{prop-inter-isom} 
is a restatement of Theorem 9.2.6 of \cite{Go}. There is a natural system 
of coordinates on bisectors in terms of totally geodesic subspaces, 
see Section 5.1 of \cite{Go}. In particular for Cygan spheres, these are
defined as follows:

\begin{defi}\label{def-geog}
Let ${\mathcal S}_{[0,0]}(r)$ be the Cygan sphere with centre the origin $[0,0]$
and radius $r>0$. The point $g(\alpha,\beta,w)$
of ${\mathcal S}_{[0,0]}(r)$ with \emph{geographical coordinates} 
$(\alpha,\beta,w)$ is the point whose lift to $\C^3$ is:
\begin{equation}\label{liftgeog}
 {\bf g}(\alpha,\beta,w)=\begin{bmatrix}
      -r^2e^{-i\alpha}\\
      rwe^{i(-\alpha/2+\beta)}\\
      1
     \end{bmatrix},
\end{equation}
where 
$\beta\in[0,\pi)$, $\alpha\in[-\pi/2,\pi/2]$ and 
$w\in[-\sqrt{2\cos(\alpha)},\sqrt{2\cos(\alpha)}]$, 

Let ${\mathcal S}_{[z,t]}(r)$ be the Cygan sphere with centre $[z,t]$ and 
radius $r$. Then geographical coordinates on ${\mathcal S}_{[z,t]}(r)$ 
are obtained from the ones on ${\mathcal S}_{[0,0]}(r)$ by applying 
the Heisenberg translation $T_{[z,t]}$ to the vector \eqref{liftgeog}.
\end{defi}

We will only be interested in geographical coordinates on 
${\mathcal S}_{[0,0]}(1)$, the unit Cygan sphere centred at the origin.
Note that for the point $g(\alpha,\beta,w)$ of this sphere,  
$\la g(\alpha,\beta,u),g(\alpha,\beta,u)\ra
=w^2-2\cos(\alpha)$. Therefore the 
horospherical coordinates of $g(\alpha,\beta,w)$ are:
$$
\Bigl(we^{i(-\alpha/2+\beta)}/\sqrt{2},\ \sin(\alpha),\, 
\cos(\alpha)-w^2/2\Bigr)
$$
In particular, the points of ${\mathcal S}_{[0,0]}(1)$ on $\partial\HdC$  
are those with $w=\pm\sqrt{2\cos(\alpha)}$.

The level sets of $\alpha$ and $\beta$ are totally geodesic subspaces
of $\HdC$; see Example 5.1.8 of Goldman \cite{Go}.

\begin{prop}\label{prop-geog}
Let ${\mathcal S}_{[w,s]}(r)$ be a Cygan sphere with geographical coordinates
$(\alpha,\beta,w)$. 
\begin{enumerate}
\item For each $\alpha_0\in(-\pi/2,\pi/2)$ the set of
points $L_{\alpha_0}=\bigl\{g(\alpha,\beta,w)\in{\mathcal S}_{[w,s]}(r)\ :\ 
\alpha=\alpha_0\bigr\}$ is a complex line, called
a \emph{slice} of ${\mathcal S}_{[w,s]}(r)$.
\item For each $\beta_0\in[0,\pi)$ the set of points
$R_{\beta_0}=\bigl\{g(\alpha,\beta,w)\in{\mathcal S}_{[w,s]}(r)\ :\ 
\beta=\beta_0\bigr\}$ is a Lagrangian plane, called a 
\emph{meridian} of ${\mathcal S}_{[w,s]}(r)$.
\item The set of points with $w=0$ is the \emph{spine} of
${\mathcal S}_{[w,s]}(r)$. It is a geodesic contained in every
meridian.
\end{enumerate}
\end{prop}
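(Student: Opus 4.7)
The plan is to verify each of the three claims by direct computation with the lift \eqref{liftgeog}. First note that $M_{\beta_0} := \mathrm{diag}(1, e^{i\beta_0}, 1)$ satisfies $M_{\beta_0}^* H M_{\beta_0} = H$ and acts as $\mathbf{g}(\alpha, 0, w) \mapsto \mathbf{g}(\alpha, \beta_0, w)$, so it is a holomorphic isometry carrying $R_0$ to $R_{\beta_0}$. This reduces all meridian questions to the case $\beta_0 = 0$.

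Parts (1) and (3) follow from direct inspection. For (1), fixing $\alpha_0 \in (-\pi/2, \pi/2)$, the first and third coordinates of $\mathbf{g}(\alpha_0,\beta,w)$ are the constants $-r^2 e^{-i\alpha_0}$ and $1$, placing every lift on the projective complex line $\Lambda = \{z_1 + r^2 e^{-i\alpha_0} z_3 = 0\} \subset \C P^2$; a short check shows the second coordinate $rwe^{i(-\alpha_0/2+\beta)}$ covers exactly the closed disk of radius $r\sqrt{2\cos\alpha_0}$, which is precisely the portion of $\Lambda$ contained in $\HdC \cup \partial\HdC$ since $\la [-r^2 e^{-i\alpha_0}, \zeta, 1]^T, [-r^2 e^{-i\alpha_0}, \zeta, 1]^T\ra = |\zeta|^2 - 2r^2\cos\alpha_0$. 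For (3), setting $w = 0$ in \eqref{liftgeog} makes $\mathbf{g}(\alpha,\beta,0) = (-r^2 e^{-i\alpha}, 0, 1)^T$ independent of $\beta$, so the spine lies in every meridian; in horospherical coordinates it is the curve $(0, r^2\sin\alpha, r^2\cos\alpha)$ inside the complex line $\Sigma_\C = \{z = 0\}$, on which the Bergman metric restricts to $(du^2 + dt^2)/u^2$, the upper half plane metric. The resulting semicircle $t^2 + u^2 = r^4$, centred on the boundary $\{u=0\}$, is then a geodesic.

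Part (2) is the main obstacle. After reducing to $\beta_0 = 0$, the decisive step is to rescale the lift by $e^{i\alpha/2}$, obtaining $\tilde{\mathbf{g}}(\alpha, 0, w) := e^{i\alpha/2}\mathbf{g}(\alpha, 0, w) = (-r^2 e^{-i\alpha/2}, rw, e^{i\alpha/2})^T$. A short computation then gives
$$\la\tilde{\mathbf{g}}(\alpha_1, 0, w_1), \tilde{\mathbf{g}}(\alpha_2, 0, w_2)\ra = r^2\bigl(w_1 w_2 - 2\cos((\alpha_1+\alpha_2)/2)\bigr),$$
which is real for every choice of parameters. Writing $\tilde{\mathbf{g}}(\alpha, 0, w) = \cos(\alpha/2)\,\mathbf{e}_1 + w\,\mathbf{e}_2 + \sin(\alpha/2)\,(i\mathbf{e}_3)$ with the real vectors $\mathbf{e}_1 = (-r^2, 0, 1)^T$, $\mathbf{e}_2 = (0, r, 0)^T$, $\mathbf{e}_3 = (r^2, 0, 1)^T$, one sees that the rescaled lifts lie in the real subspace $V_\R := \mathrm{span}_\R(\mathbf{e}_1, \mathbf{e}_2, i\mathbf{e}_3) \subset \C^3$. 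The Gram matrix of $(\mathbf{e}_1, \mathbf{e}_2, i\mathbf{e}_3)$ relative to $H$ is $\mathrm{diag}(-2r^2, r^2, 2r^2)$, so the restriction of $H$ to $V_\R$ is a real symmetric form of signature $(2,1)$, and therefore $\mathbb{P}(V_\R) \cap \HdC$ is a Lagrangian plane. Normalising via $\cos^2(\alpha/2) + \sin^2(\alpha/2) = 1$ and observing that the only projective point of $\mathbb{P}(V_\R)$ not reached, $[\mathbf{e}_2]$, is a positive vector and hence lies outside $\HdC\cup\partial\HdC$, confirms that $R_0$ is precisely this Lagrangian plane. The central difficulty is spotting the phase rescaling $e^{i\alpha/2}$ that makes every Hermitian product real; once this is found, the Lagrangian structure emerges from elementary linear algebra.
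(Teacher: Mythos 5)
The paper offers no proof of this proposition at all: it simply records the statement and refers the reader to Example 5.1.8 of Goldman's book, where slices and meridians of bisectors are treated. Your argument is therefore a genuinely self-contained alternative, and it is correct. Parts (1) and (3) are exactly the expected direct checks (the norm computation $\la\mathbf{z},\mathbf{z}\ra=|\zeta|^2-2r^2\cos\alpha_0$ and the identification of the spine with the semicircle $t^2+u^2=r^4$ in the complex line $z=0$ both check out). For part (2), your reduction to $\beta_0=0$ by the diagonal unitary $\mathrm{diag}(1,e^{i\beta_0},1)$ and the phase rescaling by $e^{i\alpha/2}$ is precisely the right move: the resulting identity $\la\tilde{\mathbf{g}}(\alpha_1,0,w_1),\tilde{\mathbf{g}}(\alpha_2,0,w_2)\ra=r^2\bigl(w_1w_2-2\cos((\alpha_1+\alpha_2)/2)\bigr)$ exhibits the lifts as lying in a totally real subspace of signature $(2,1)$, which is the standard characterisation of (lifts of) Lagrangian planes; this is in effect a hands-on derivation of Goldman's meridianal decomposition rather than an appeal to it. What the citation buys the authors is brevity; what your computation buys is independence from the bisector machinery and an explicit real basis for each meridian, which would in fact be convenient elsewhere in the paper (e.g.\ in Section \ref{section-proof-triple-inside}, where the meridian $\mathfrak{m}$ is used concretely).

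One small point to tighten: the parenthetical claim that $[\mathbf{e}_2]$ is the \emph{only} projective point of $\mathbb{P}(V_\R)$ not reached by the parametrisation is not quite accurate, since the constraint $\alpha\in[-\pi/2,\pi/2]$ also omits all points $[a:b:c]$ with $|c|>|a|$. This does not affect the conclusion, because every omitted point satisfies $-2a^2+b^2+2c^2>0$ and so is a positive vector lying outside $\HdC\cup\partial\HdC$; you should simply state that all points of $\mathbb{P}(V_\R)$ outside the image of the parametrisation are positive, which follows from the same normalisation $a=\cos(\alpha/2)$, $c=\sin(\alpha/2)$, $b^2\le 2a^2-2c^2=2\cos\alpha$.
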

\begin{rem}\label{rem-proj-cygan-sphere}
 From \eqref{unitspinal}, it is easy to see that projections of boundaries of Cygan spheres onto the $z$-factor are 
closed Euclidean discs in $\C$. This correspond to the vertical projection onto $\C$ in the Heisenberg group. This 
fact is often useful to prove that two Cygan spheres are disjoint.
\end{rem}

\subsection{Cartan's angular invariant.}\label{sec-Cartan}

\'Elie Cartan defined an invariant of triples of pairwise distinct points 
$p_1,\,p_2,\,p_3$ in $\partial\HdC$; see Section 7.1 of Goldman \cite{Go}. 
For any lifts $\bp_j$ of $p_j$ to $\C^3$, this invariant is defined by  
$\arg{\left(-\la\bp_1,\bp_2\ra\la\bp_2\bp_3\ra\la\bp_3,\bp_1\ra\right)}$, 
where the argument is chosen to lie in $(-\pi,\pi]$. 
We state here some important properties of $\A$.

\begin{prop}\label{prop-Cartan}[Sections 7.1.1 and 7.1.2 of \cite{Go}]
\begin{enumerate}
 \item $-\pi/2\le \A(p_1,p_2,p_3)\le \pi/2$ for any triple of pairwise 
distinct points $p_1$, $p_2$, $p_3$.
 \item $\A(p_1,p_2,p_3)=\pm\pi/2$ if and only if $p_1$, $p_2$, $p_3$
lie on the same complex line.
\item $\A(p_1,p_2,p_3)=0$ if and only if $p_1$, $p_2$, $p_3$
lie on the same Lagrangian plane.
\item Two triples $p_1$, $p_2$, $p_3$ and $q_1$, $q_2$, $q_3$ 
have $\A(p_1,p_2,p_3)=\A(q_1,q_2,q_3)$ if and only if there exists
$A\in{\rm SU}(2,1)$ so that $A(p_j)=q_j$ for $j=1,\,2,\,3$.
\item Two triples $p_1$, $p_2$, $p_3$ and $q_1$, $q_2$, $q_3$ 
have $\A(p_1,p_2,p_3)=-\A(q_1,q_2,q_3)$ if and only if there exists
an anti-holomorphic isometry $A$ so that $A(p_j)=q_j$ for $j=1,\,2,\,3$.
\end{enumerate}
 \end{prop}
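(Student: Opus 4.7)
The plan is to encode $\A(p_1,p_2,p_3)$ in terms of the Gram matrix $G$ of null lifts of the three points, and then exploit the signature $(2,1)$ of $H$ together with the transitivity of $\mathrm{SU}(2,1)$ on suitable configurations. Fix any lifts $\bp_1,\bp_2,\bp_3$; since each $p_j\in\partial\HdC$, the diagonal entries of the Hermitian matrix $G_{ij}=\la\bp_i,\bp_j\ra$ vanish. Moreover, the $\la\bp_i,\bp_j\ra$ for $i\neq j$ are all non-zero (two distinct points of $\partial\HdC$ cannot be orthogonal for a signature $(2,1)$ form).

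For parts (1) and (2), I would expand $\det G$ by cofactors. Because the diagonal is zero and $G_{ji}=\overline{G_{ij}}$, this yields
\[
\det G \;=\; 2\,\Re\bigl(\la\bp_1,\bp_2\ra\la\bp_2,\bp_3\ra\la\bp_3,\bp_1\ra\bigr).
\]
If the $\bp_j$ are linearly independent, writing $M=[\bp_1\,|\,\bp_2\,|\,\bp_3]$ gives $G=M^*HM$, hence $\det G=|\det M|^2\det H<0$ since $H$ has signature $(2,1)$. Therefore $\Re\bigl(\la\bp_1,\bp_2\ra\la\bp_2,\bp_3\ra\la\bp_3,\bp_1\ra\bigr)<0$, so the argument of the negative of this product lies strictly in $(-\pi/2,\pi/2)$. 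If the $\bp_j$ are linearly dependent then they span a projective line whose intersection with $\HdC\cup\partial\HdC$ is a complex line through the three points, and $\det G=0$ forces the argument to equal $\pm\pi/2$. This proves (1) and (2) simultaneously.

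Parts (4) and (5) split into an easy direction and a normalisation. For any $A\in\mathrm{SU}(2,1)$, the identity $\la A\bp_i,A\bp_j\ra=\la\bp_i,\bp_j\ra$ shows that $\A$ is invariant. For an antiholomorphic isometry, recall from Section \ref{section-isometries} that it is the composition of complex conjugation with an element of $\mathrm{PU}(2,1)$; conjugation sends each $\la\bp_i,\bp_j\ra$ to its complex conjugate, reversing the sign of the argument. For the converse, I would normalise: using the triple-transitivity of $\mathrm{SU}(2,1)$ on ordered pairs of distinct boundary points, send $p_1,p_2$ and $q_1,q_2$ to a common pair (e.g.\ $q_\infty$ and $[0,0]$), then use Lemma \ref{lem-unique-unipotent} to see that the stabiliser of this pair in $\mathrm{PU}(2,1)$ is a one-parameter family (a one-parameter subgroup of dilations combined with a Heisenberg rotation) that acts transitively on the set of third boundary points with a given Cartan invariant. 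The analogous argument in the antiholomorphic case yields (5).

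Finally, part (3) follows by combining the above. If the three points lie in a common Lagrangian plane, apply (4) to reduce to the case $p_j\in\HdR$, where real lifts $\bp_j$ make every $\la\bp_i,\bp_j\ra$ real; the triple product is then real, and by the argument of the second paragraph its real part is negative when the points are distinct and non-collinear, so its negation is positive and $\A=0$. Conversely, if $\A=0$ then (4) conjugates $(p_1,p_2,p_3)$ to any model triple in $\HdR$ with the same invariant. The main obstacle is the normalisation step in (4): one must verify cleanly that after fixing $p_1,p_2$, the Cartan invariant is a faithful parameter on the orbit of third boundary points, and that no accidental symmetry collapses inequivalent triples. Everything else is a direct computation with the Gram matrix.
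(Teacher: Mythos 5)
Your proposal is essentially correct, but note that the paper does not prove this proposition at all: it is quoted verbatim from Sections 7.1.1--7.1.2 of Goldman's book, so there is no internal proof to compare against. Your Gram--matrix argument for (1) and (2) is the standard one and is sound: with null lifts the diagonal of $G$ vanishes, $\det G=2\Re\bigl(\la\bp_1,\bp_2\ra\la\bp_2,\bp_3\ra\la\bp_3,\bp_1\ra\bigr)$, and this equals $-|\det M|^2<0$ when the lifts are independent and $0$ exactly when they span a signature $(1,1)$ plane, i.e.\ when the points lie on a complex line. The easy directions of (4) and (5) and the deduction of (3) are also fine. The one genuine gap is the step you yourself flag: the faithfulness of $\A$ on orbits of third points after normalising $(p_1,p_2)$. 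It closes in two lines. Taking $(p_1,p_2,p_3)=(q_\infty,[0,0],[z,t])$ with the standard lifts \eqref{lift}, one computes $\la\bp_1,\bp_2\ra=\la\bp_3,\bp_1\ra=1$ and $\la\bp_2,\bp_3\ra=-|z|^2-it$, so that
$$
-\la\bp_1,\bp_2\ra\la\bp_2,\bp_3\ra\la\bp_3,\bp_1\ra=|z|^2+it,
\qquad\hbox{hence}\qquad \A=\arg\bigl(|z|^2+it\bigr),
$$
while the stabiliser of the ordered pair $(q_\infty,[0,0])$ in ${\rm PU}(2,1)$ is the two-parameter group $[z,t]\longmapsto[\lambda e^{i\theta}z,\lambda^2t]$ with $\lambda>0$ (not a one-parameter group as you state, though the rotation factor does not affect the invariant). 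This group manifestly preserves $\arg(|z|^2+it)$ and acts transitively on each of its level sets, including the degenerate sets $\{z=0,\ \pm t>0\}$ where $\A=\pm\pi/2$; this is exactly the statement that $\A$ is a complete invariant, and it rules out the ``accidental symmetry'' you worry about. With that supplied, (5) follows because complex conjugation conjugates every $\la\bp_i,\bp_j\ra$ (as $H$ is real) and so negates $\A$, and (3) follows from (4) together with the observation that real lifts of distinct points of $\HdR$ make the triple product a negative real number. Everything else in your write-up is correct.
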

The following proposition will be useful to us when we parametrise the 
family of classes of groups $\Gamma$.

\begin{prop}\label{Cartan-and-quadruples}
Let $(\alpha_1,\alpha_2)\in(-\pi/2,\pi/2)^2$. Then there exists a unique PU(2,1)-class of 
quadruples $(p_1,p_2,p_3,p_4)$ of pairwise distinct boundary points of $\HdC$  
such that
\begin{enumerate}
\item The complex lines  $L_{12}$ and $L_{34}$ respectively spanned by $(p_1,p_2)$ 
and $(p_3,p_4)$ are orthogonal.
 \item $\A(p_1,p_3,p_2)=\alpha_1$ and $\A(p_1,p_3,p_4)=\alpha_2$.
\end{enumerate}
\end{prop}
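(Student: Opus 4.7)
The plan is to normalize the quadruple by the $\mathrm{PU}(2,1)$-action and then use the given invariants to determine the remaining moduli. Since $\mathrm{PU}(2,1)$ acts doubly transitively on $\partial\HdC$, I may first arrange that $p_1=q_\infty$ and $p_3=[0,0]$. The stabiliser of this ordered pair is the two-dimensional subgroup consisting of Heisenberg dilations $(z,t)\mapsto(\lambda z,\lambda^2 t)$, $\lambda>0$, combined with vertical rotations $(z,t)\mapsto(e^{i\theta}z,t)$.

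Writing $p_2=[z_2,t_2]$ and $p_4=[z_4,t_4]$, the standard lifts \eqref{lift} give $\la\bp_1,\bp_3\ra=1$, $\la\bp_3,\bp_j\ra=-|z_j|^2-it_j$, and $\la\bp_j,\bp_1\ra=1$ for $j=2,4$, so that $\A(p_1,p_3,p_j)=\arg(|z_j|^2+it_j)$. Since $\alpha_1,\alpha_2\in(-\pi/2,\pi/2)$, the Cartan invariant conditions become $z_j\neq 0$ and $t_j=|z_j|^2\tan\alpha_j$ for $j=2,4$. Polar vectors to $L_{12}$ and $L_{34}$ are obtained via the box products $\bp_1\boxtimes\bp_2$ and $\bp_3\boxtimes\bp_4$, and a short computation reduces the vanishing of their Hermitian inner product to the single complex equation $|z_4|^2-it_4=2z_4\overline{z_2}$. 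Substituting $t_4=|z_4|^2\tan\alpha_2$ yields
\begin{equation*}
z_4(1+i\tan\alpha_2)=2z_2,\qquad\text{i.e.,}\qquad z_4=2z_2\cos\alpha_2\,e^{-i\alpha_2}.
\end{equation*}

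Finally, I use the two-parameter stabiliser to set $z_2=1$ (a dilation fixes $|z_2|$ and a vertical rotation fixes $\arg z_2$), exhausting all $\mathrm{PU}(2,1)$-freedom. This produces the unique normal-form representative
$$
p_1=q_\infty,\ \ p_3=[0,0],\ \ p_2=[1,\tan\alpha_1],\ \ p_4=[2\cos\alpha_2\,e^{-i\alpha_2},\,2\sin(2\alpha_2)].
$$
Pairwise distinctness is immediate except possibly for $p_2=p_4$, which would force $2\cos\alpha_2\,e^{-i\alpha_2}=1$ and hence both $\cos\alpha_2=1/2$ and $\sin\alpha_2=0$, a contradiction. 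Existence follows from the explicit construction, and uniqueness follows because any quadruple satisfying the hypotheses is $\mathrm{PU}(2,1)$-conjugate to this normal form. The only real technical point is the orthogonality computation via box products; everything else is bookkeeping with the standard formulas for Cartan invariants and Hermitian duality.
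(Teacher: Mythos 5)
Your proof is correct and follows essentially the same route as the paper's: normalise two of the points to $q_\infty$ and $[0,0]$ using double transitivity, express the orthogonality of $L_{12}$ and $L_{34}$ via the Hermitian cross product, and observe that the two Cartan invariants then pin down the remaining coordinates up to the residual stabiliser. The only difference is cosmetic — the paper normalises the pair $(p_1,p_2)$ and uses the leftover dilation--rotation freedom on $p_3$, whereas you normalise $(p_1,p_3)$ and spend that freedom on $p_2$ — and your explicit normal form with the distinctness check is a slightly more complete account of existence.
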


\begin{proof}
Since PU(2,1) acts transitively on pairs of distinct points of $\partial\HdC$,
we may assume using the Siegel model, that the points $p_i$ are given in 
Heisenberg coordinates by:
\begin{equation}\label{}
 p_1=q_\infty, \quad p_2=[0,0], \quad p_3=[1,t], \quad p_4=[z,s].
\end{equation}
Using the standard lifts given in Section \ref{section-proj-model} (denoted 
by $\bp_i$), we see by a direct computation using the Hermitian cross-product 
that
$$
\la\bp_1\boxtimes\bp_2,\bp_3\boxtimes\bp_4\ra=|z|^2-1+i(t-s).
$$
Thus the condition $L_{12}\perp L_{34}$ gives $|z|=1$ and $t=s$. We thus write 
$z=e^{i\theta}$ with $\theta\in[0,2\pi)$. Now computing the triple products 
we see that
$$
\A (p_1,p_3,p_2)=\arg(1-it)\quad \mbox{ and }\quad 
\A(p_1,p_3,p_4)=\arg(1-\overline{z})
=\arg\Bigl( 2ie^{i\frac{\theta}{2}}\sin{\bigl(\theta/2\bigr)}\Bigr).
$$
In particular $\alpha_1$ and $\alpha_2$ determine the values of $t$ and 
$\theta$.
\end{proof}

\medskip

\section{The parameter space\label{section-parameter-space}}

\subsection{Coordinates\label{section-coordinates}}
Our space of interest is the following.

\begin{defi}
Let $\mathcal{U}$ be the set of ${\rm PU}(2,1)$-conjugacy classes of 
non-elementary pairs $(A,B)$ such that $A$, $B$ and $AB$ are unipotent. 
\end{defi}

Here, by non-elementary, we mean that the two isometries $A$ and $B$ have 
no common fixed point in $\partial\HdC$. In fact, a slightly stronger 
statement will follow from Theorem \ref{theoclassgroups} below.
Namely $A$ and $B$ do not preserve a common complex line and so
the pair $A$, $B$ have no common fixed point in $\C P^2$ 
(see Section \ref{section-subspaces}). Another way to see this is that
if $A$ in ${\rm PU}(2,1)$ is unipotent and preserves a complex line, then
its action on that complex line is via a unipotent element of 
${\rm SL}(2,{\mathbb R})$ (that is parabolic with trace $+2$). 
It is well known that if $A$ and $B$ are unipotent
elements of ${\rm SL}(2,{\mathbb R})$ whose product is also unipotent
then $A$ and $B$ must share a fixed point (if $A$, $B$ and $AB$ are all
parabolic with distinct fixed points, at least one of them
should have trace $-2$).

Note that $BA=A^{-1}(AB)A=B(AB)B^{-1}$
and so if $AB$ is unipotent then so is $BA$. If $p_{AB}$ and $p_{BA}$
in $\partial\HdC$ are the fixed points of $AB$ and $BA$ then we have
$A(p_{BA})=p_{AB}$ and $B(p_{AB})=p_{BA}$. From 
Proposition \ref{cor-unique-unipotent} this means that $A$ and $B$ are
uniquely determined by the fixed points of $A$, $B$, $AB$ and $BA$.
We describe a set of coordinates on $\mathcal{U}$ expressed in terms 
of the Cartan invariants of triples of these fixed points.

\begin{theo}\label{theoclassgroups}
There is a bijection between $\mathcal{U}$ and the open square 
$(\alpha_1,\alpha_2)\in(-\pi/2,\pi/2)^2$, which is given by the map 
$$
\Lambda : (A,B)\longmapsto \left(\A(p_A,p_{AB},p_B),
\A(p_A,p_{AB},p_{BA})\right),
$$ 
where $p_A$, $p_B$, $p_{AB}$ and $p_{BA}$ are the parabolic fixed points of 
the corresponding isometries.
\end{theo}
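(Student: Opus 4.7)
The plan is to establish the bijection by choosing a canonical representative of each ${\rm PU}(2,1)$-class and computing the Cartan invariants explicitly. First, $\Lambda$ is well-defined on $\mathcal{U}$ because the four parabolic fixed points $p_A$, $p_B$, $p_{AB}$, $p_{BA}$ are intrinsically attached to $(A,B)$ and the Cartan invariant is ${\rm PU}(2,1)$-invariant by Proposition~\ref{prop-Cartan}(4).

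Next I would check that the image lies in the open square. If $\alpha_1 = \pm\pi/2$, then by Proposition~\ref{prop-Cartan}(2) the points $p_A$, $p_{AB}$, $p_B$ lie on a common complex line $L$. Conjugating so that $p_A = q_\infty$ and $p_B = [0,0]$, the line $L$ is the vertical axis through these two points and hence $p_{AB} = [0, s_0]$ for some $s_0 \neq 0$. Writing $A = T_{[a,\tau]}$ by Lemma~\ref{lem-unique-unipotent} and $B$ as the unique lower-triangular unipotent element of ${\rm SU}(2,1)$ fixing $[0,0]$ (with parameters $(b,\sigma)$), imposing $(AB)\bp_{AB} = \bp_{AB}$ on the standard lift of $[0, s_0]$ forces $b = 0$ and $\sigma = 0$, i.e. $B = I$, contradicting the non-elementary hypothesis. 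The argument for $\alpha_2$ is symmetric, swapping the roles of $A$ and $B$.

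To establish the bijection itself, I normalize $p_A = q_\infty$ and $p_B = [0,0]$; the residual stabilizer consists of the Heisenberg dilations $(z,t,u) \mapsto (\lambda z, |\lambda|^2 t, |\lambda|^2 u)$, which act on the first entry of $A = T_{[a,\tau]}$ by $a \mapsto \lambda a$ and can therefore be used to set $a = 1$ (the case $a=0$ is excluded by a matching fixed-point argument). Writing $B$ in lower-triangular form with parameters $(b,\sigma)$ and imposing $\tr(AB) = 3$ yields the two real conditions
\[
\sigma + \tau|b|^2 = 0, \qquad |b|^2(\tau^2+1) + 4\Re b = 0,
\]
leaving two free real parameters $(\tau, \Im b)$. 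Solving $(AB)\bp = \bp$ directly produces the explicit lift $\bp_{AB} = \bigl(1/b,\, (1+i\tau)/\sqrt{2},\, 1\bigr)^T$ and then $\bp_{BA} = A^{-1}\bp_{AB}$. A direct computation of the relevant triple Hermitian products gives
\[
\alpha_2 = \arctan\tau, \qquad \alpha_1 = \arg(-1/b),
\]
so that $(\tau, b) \mapsto (\alpha_1, \alpha_2)$ is a smooth bijection onto $(-\pi/2, \pi/2)^2$, with inverse $\tau = \tan\alpha_2$ and $b$ determined uniquely from $\alpha_1$ together with the two unipotency relations.

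The main obstacle is the matrix bookkeeping: one must carry out the product $AB$, impose the two trace equations, solve the fixed point equation, and verify that the first row of $(AB - I)\bp = 0$ is satisfied automatically by the solutions of the other two rows, an identity that relies crucially on both unipotency constraints. Once this simplification is in hand, the triple product computations collapse to the clean closed forms above, yielding both injectivity and surjectivity of $\Lambda$ at once.
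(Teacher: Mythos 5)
Your proposal is correct in substance but reaches the result by a genuinely different route from the paper's. The paper never puts the matrices of $A$ and $B$ into a normal form at this stage: it excludes $\alpha_i=\pm\pi/2$ by an eigenvector argument (using $A\bp_{BA}=\lambda\bp_{AB}$, $B\bp_{AB}=\mu\bp_{BA}$ and $\lambda\mu=1$ to exhibit explicit polar vectors ${\bf n}_1,{\bf n}_2$ of the complex lines $L_1,L_2$ spanned by $(p_A,p_B)$ and $(p_{AB},p_{BA})$, and checking that $p_{AB}\notin L_1$ and $p_A\notin L_2$), which yields the orthogonality $L_1\perp L_2$ as a byproduct; injectivity is then reduced to the classification of orthogonal quadruples (Proposition \ref{Cartan-and-quadruples}) together with Proposition \ref{cor-unique-unipotent}, and surjectivity is settled by exhibiting the matrices \eqref{normalAB}. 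You instead normalise the pair itself to a two-parameter family $(\tau,b)$ and invert the explicit formulas $\alpha_1=\arg(-1/b)$, $\alpha_2=\arctan\tau$. I have checked the key computations: your constraint $|b|^2(\tau^2+1)+4\Re b=0$ is exactly the condition that your $\bp_{AB}$ be a null vector, and the triple Hermitian products do collapse to the stated closed forms. Your route gets injectivity and surjectivity in one stroke and makes the openness of the square automatic; the paper's route costs two auxiliary propositions but produces the orthogonality $L_1\perp L_2$ and the normal form \eqref{normalAB}, both of which are reused heavily later in the paper.

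Two steps need repair. First, your claim that the exclusion of $\alpha_2=\pm\pi/2$ follows ``symmetrically, swapping the roles of $A$ and $B$'' does not work: swapping $A$ and $B$ turns $\A(p_A,p_{AB},p_B)$ into $\A(p_B,p_{BA},p_A)$, which up to a cyclic permutation concerns the triple $(p_A,p_B,p_{BA})$, i.e.\ whether $p_{BA}$ lies on the \emph{same} line $L_1$ through $p_A$ and $p_B$ --- not whether $p_A$ lies on the line $L_2$ through $p_{AB}$ and $p_{BA}$, which is what $\alpha_2=\pm\pi/2$ means by Proposition \ref{prop-Cartan}. Fortunately this step is redundant in your architecture: once the normal form is in place, $\alpha_2=\arctan\tau\in(-\pi/2,\pi/2)$ and $\Re(-1/b)=(1+\tau^2)/4>0$ make both exclusions automatic, so you should simply delete the ad hoc preliminary argument rather than fix it. Second, the case $a=0$ must genuinely be ruled out before you may normalise $a=1$, since the entire normal form rests on it; this is quick (with $a=0$ the condition $\tr(AB)=3$ forces $\sigma=0$ and $b=0$, i.e.\ $B=\mathrm{id}$, contradicting non-commutativity), but it should be written out rather than asserted.
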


This result can be see as a special case of the main result of \cite{ParkWi}. 
For completeness, we include here a direct proof. 

\begin{proof}
First, the two quantities $\alpha_1=\A(p_A,p_{AB},p_B)$ and 
$\alpha_2=\A(p_A,p_{AB},p_{BA})$ are invariant under ${\rm PU}(2,1)$-conjugation 
and thus the map $\Lambda$ is well-defined. Let us first prove that the
image of $\Lambda$ is contained in $(-\pi/2,\pi/2)^2$. In other words, we must
show $\alpha_1\neq\pm\pi/2$ and $\alpha_2\neq\pm\pi/2$.

Fix a choice of lifts $\bp_A$, $\bp_B$, $\bp_{AB}$ and $\bp_{BA}$ for the 
fixed points of $A$, $B$, $AB$ and $BA$. Since the fixed points are
assumed to be distinct, we see that the Hermitian product of each 
pair of these vectors does not vanish. The conditions $A(p_{BA})=p_{AB}$ 
and $B(p_{AB})=p_{BA}$ imply that there exist two non-zero complex numbers 
$\lambda$ and $\mu$ satisfying  
$$
A\bp_{BA}=\lambda \p_{AB}\quad \mbox{ and }\quad B\bp_{AB}=\mu\bp_{BA}.
$$ 
As $AB$ is unipotent, its eigenvalue associated to $\bp_{AB}$ is $1$, 
and therefore $\lambda\mu=1$. 
Moreover, using the fact that $\bp_{A}$ and $\bp_{B}$ are eigenvectors
of $A$ and $B$ with eigenvalue $1$, we have
\begin{equation}\label{eq-eigenvectors}
\la\bp_{BA},\bp_{A}\ra=\la A\bp_{BA},A\bp_{A}\ra=\lambda\la\bp_{AB},\bp_{A}\ra,
\quad
\la\bp_{AB},\bp_{B}\ra=\la B\bp_{AB},B\bp_{B}\ra=\mu\la\bp_{BA},\bp_{B}\ra.
\end{equation}
Using $\lambda\mu=1$ and \eqref{eq-eigenvectors}, it is not hard to show 
that ${\bf n}_1=\lambda\bp_{AB}-\bp_{BA}$ is a
polar vector for the complex line $L_1$ spanned by $\bp_A$ and $\bp_B$ (see Section \ref{section-subspaces}). 
Moreover, $\la\bp_{AB},{\bf n}_1\ra=-\la\bp_{AB},\bp_{BA}\ra\neq 0$.
Thus $p_{AB}$ does not lie on $L_1$. That is, the three of points 
$p_A,\,p_B,\,p_{AB}$ do not lie on the same complex line and so
$\alpha_1\neq\pm\pi/2$.

Likewise, again using $\lambda\mu=1$ and \eqref{eq-eigenvectors}
we find ${\bf n}_2=\la\bp_B,\bp_{AB}\ra\bp_A-\la\bp_A,\bp_{AB}\ra\bp_B$ 
is a polar vector for $L_2$ and 
$\la\bp_A,{\bf n}_2\ra=-\la\bp_A,\bp_{AB}\ra\la\bp_A,\bp_B\ra\neq 0$.
Hence $p_A$ does not lie on $L_2$ and so $\alpha_2\neq\pm\pi/2$.
We remark that, by construction, we have $\langle{\bf n}_1,{\bf n}_2\rangle=0$ 
and so in fact $L_1$ and $L_2$ are orthogonal.

To see that $\Lambda$ is surjective, fix $(\alpha_1,\alpha_2)$ in 
$(-\pi/2,\pi/2)^2$ and define
\begin{equation}\label{eq-notation-x1x2}
 x_1=\sqrt{2\cos(\alpha_1)} \quad\mbox{ and }\quad
x_2=\sqrt{2\cos(\alpha_2)},\quad\mbox{ for } \alpha_i\in(-\pi/2,\pi/2), \mbox{ so } x_1, x_2 \in\R_+^*.
\end{equation}
Now consider the following elements of ${\rm SU}(2,1)$:
\begin{equation}\label{normalAB}
A = \begin{bmatrix}
1 & -x_1x_2^2 & -x_1^2x_2^2 e^{-i\alpha_2}\\
0 & 1 & x_1x_2^2\\          
0 & 0 & 1\end{bmatrix}
\mbox{ and } 
B  = \begin{bmatrix}
1 & 0 & 0\\
x_1x_2^2e^{-i\alpha_1} & 1 & 0\\
-x_1^2x_2^2 e^{i\alpha_2} & -x_1x_2^2e^{i\alpha_1} & 1\end{bmatrix},
\end{equation}
Clearly, $A$ and $B$ are unipotent, and since $\tr(AB)=3$, $AB$ is also 
unipotent. The four fixed points can be lifted to the vectors 
\begin{equation}\label{fixedpoints}
\bp_A=\begin{bmatrix}1 \\ 0 \\ 0\end{bmatrix}, \quad 
\bp_B=\begin{bmatrix}0 \\ 0 \\ 1\end{bmatrix}, \quad
\bp_{AB}=\begin{bmatrix} -e^{i\alpha_1}\\x_1e^{i\alpha_2}\\1\end{bmatrix}, \quad
\bp_{BA}=\begin{bmatrix} -e^{i\alpha_1}\\-x_1e^{-i\alpha_2}\\1\end{bmatrix}.
\end{equation}
They satisfy $\A(p_A,p_{AB},p_B)=\alpha_1$ and $\A(p_A,p_{AB},p_{BA})=\alpha_2$.
Note that when either $\alpha_1$ or $\alpha_2$ tends to $\pm/\pi/2$  
(that is $x_1$ or $x_2$ respectively tends to $0$), $A$ and $B$ both
tend to the identity matrix.

To see that $\Lambda$ is injective, it suffices to prove that 
the quadruple $(p_A,p_B,p_{AB},p_{BA})$ is uniquely determined 
by $(\alpha_1,\alpha_2)$ up to isometry. Indeed, once 
this quadruple is fixed, $A$ and $B$ are uniquely determined by 
Proposition \ref{cor-unique-unipotent}. The above discussion has proved that 
for any pair $(A,B)$ in $\mathcal{U}$ the two complex lines spanned 
respectively by $(p_A,p_B)$ and $(p_{AB},p_{BA})$ are orthogonal. The result 
then follows straightforwardly from Proposition \ref{Cartan-and-quadruples}.
 \end{proof}

\medskip

From now on, we will identify any conjugacy class of pair in 
$\mathcal{U}$ with its representative given by \eqref{normalAB}.
We will repeatedly use the notation $x_i=\sqrt{2\cos(\alpha_i)}$ from 
\eqref{eq-notation-x1x2} and, 
when necessary, we will freely combine $x_i$ with trigonometric notation.
It should be noted that the unipotent isometry $A$ given by 
\eqref{normalAB} is equal to the Heisenberg translation $T_{[\ell_A,t_A]}$ 
(see Lemma \ref{lem-unique-unipotent}), where
\begin{equation}\label{eq-def-lA}
 \ell_A=x_1x_2^2/\sqrt{2}=2\cos(\alpha_1)\cos^2(\alpha_2)\quad
 \mbox{ and } \quad
t_A=x_1^2x_2^2\sin(\alpha_2)=4\cos(\alpha_1)\cos(\alpha_2)\sin(\alpha_2).
\end{equation}

\subsection{Products of order 3 elliptics.}\label{normal}

The following proposition gives a decomposition of pairs in $\mathcal{U}$ 
that we will use in the rest of this work.

\begin{prop}\label{ST}
 For any pair $(A,B)\in\mathcal{U}$, their exists a unique pair of isometries 
$(S,T)$ such that:
\begin{enumerate}
 \item Both $S$ and $T$ have order three, and they cyclically 
permute $(p_A,p_{AB},p_B)$ and $(p_A,p_B,p_{BA})$, respectively.
\item $A=ST$ and $B=TS$.
\end{enumerate}
\end{prop}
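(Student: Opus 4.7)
The plan is to construct $S$ first and then set $T := S^{-1}A$, so that the identity $A = ST$ holds by construction; the remaining work is to verify the other properties. Since $\alpha_1 = \A(p_A,p_{AB},p_B)\in(-\pi/2,\pi/2)$ by Theorem~\ref{theoclassgroups}, Proposition~\ref{prop-Cartan}(2) implies that the three points $p_A$, $p_{AB}$, $p_B$ do not lie on a common complex line. Lemma~\ref{zerotrace}(2) then produces a unique order-three regular elliptic isometry $S$ cyclically permuting them, with $S(p_A)=p_{AB}$, $S(p_{AB})=p_B$, $S(p_B)=p_A$.

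I would first evaluate $T = S^{-1}A$ on two of the distinguished boundary points. Using $A(p_A) = p_A$ together with $A(p_{BA}) = p_{AB}$ (which follows from $BA = A^{-1}(AB)A$, so $p_{BA} = A^{-1}(p_{AB})$), one immediately gets $T(p_A)=S^{-1}(p_A)=p_B$ and $T(p_{BA})=S^{-1}(p_{AB})=p_A$. The key step is to show that $T$ has order three, which by Lemma~\ref{zerotrace}(1) is equivalent to $\tr T = 0$. I would verify this by a direct matrix computation in the normalised coordinates of \eqref{normalAB}: the cyclic action on $(p_A,p_{AB},p_B)$ together with the conditions $S \in \mathrm{SU}(2,1)$ and $\tr S = 0$ pin down $S$ up to a cube root of unity, and multiplying $S^{-1}$ by $A$ one finds that the $(1,1)$ entry of $T$ vanishes while the $(2,2)$ and $(3,3)$ entries are negatives of each other. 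Once $T$ is regular elliptic of order three, its orbits on $\partial\HdC$ must have size $1$ or $3$; combined with $T(p_A)=p_B\neq p_A$ and $T(p_{BA})=p_A$, this forces the orbit of $p_A$ to be exactly $\{p_A,p_B,p_{BA}\}$ with $T(p_B)=p_{BA}$, so $T$ cycles the triple as required.

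It remains to verify $TS = B$. The map $TS = S^{-1}AS$ is a conjugate of $A$, hence unipotent; it fixes $S^{-1}(p_A) = p_B$, and sends $p_{AB}$ to $T(S(p_{AB})) = T(p_B) = p_{BA}$. By Proposition~\ref{cor-unique-unipotent} the unique unipotent isometry fixing $p_B$ and sending $p_{AB}$ to $p_{BA}$ is $B$ itself, whence $TS = B$. Uniqueness of the decomposition is then automatic: any other admissible $S'$ satisfies the hypotheses of Lemma~\ref{zerotrace}(2) for the triple $(p_A,p_{AB},p_B)$ and so coincides with $S$, after which the relation $A = S'T'$ forces $T' = (S')^{-1}A = T$.

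The main obstacle is the trace computation: the vanishing of $\tr(S^{-1}A)$ is a genuine cancellation that uses both the unipotent structure of $A$ and the explicit form of $S$ imposed by the cyclic permutation and the condition $\tr S = 0$, and it seems difficult to circumvent some explicit matrix manipulation in the normal form. Every other step is essentially dictated by Lemma~\ref{zerotrace}, Proposition~\ref{cor-unique-unipotent}, and the orbit-length constraint for order-three isometries.
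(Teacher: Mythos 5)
Your argument is correct and rests on the same ingredients as the paper's proof---Lemma~\ref{zerotrace}(2), Proposition~\ref{cor-unique-unipotent}, and a single explicit computation in the normal form \eqref{normalAB}; indeed the matrix \eqref{S} confirms that $S^{-1}A$ has vanishing trace, since the paper's $T$ satisfies $ST=A$ and $\tr T=0$. The only (mild) difference in organisation is that the paper defines $T$ symmetrically as the order-three regular elliptic map cyclically permuting $(p_A,p_B,p_{BA})$ and then verifies $ST=A$ and $TS=B$ by multiplying the matrices in \eqref{S}, whereas you set $T:=S^{-1}A$ so that $A=ST$ is automatic and the unipotency of $TS=S^{-1}AS$ comes for free by conjugation, relocating the one unavoidable computation to the vanishing of $\tr\bigl(S^{-1}A\bigr)$.
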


\begin{proof}
The first item is a direct consequence of Lemma \ref{zerotrace} (note that 
 neither of the triples $(p_A,p_{AB},p_B)$ and $(p_A,p_B,p_{BA})$ is 
contained in a complex line by Theorem \ref{theoclassgroups}). The action of 
$S$ and $T$ is summed up on Figure \ref{picture-action-ST}. From this, we see 
that $ST$ (resp. $TS$) fixes $p_A$ (resp. $p_B$) and maps $p_{BA}$ to $p_{AB}$ 
(resp. $p_AB$ to $p_{BA}$). Provided $ST$ and $TS$ are unipotent, this suffices 
to prove the second item by Proposition \ref{cor-unique-unipotent}. To see 
that $ST$ and $TS$ are indeed unipotent, we can use the lifts of $p_A$, 
$p_B$, $p_{AB}$ and $p_{BA}$ given by \eqref{fixedpoints}. In this case we have
\begin{equation}\label{S}
 S=e^{-i\alpha_1/3}\begin{bmatrix}
e^{i\alpha_1} & x_1e^{i\alpha_1-i\alpha_2} & -1\\
-x_1e^{i\alpha_2} & -e^{i\alpha_1} & 0\\
-1 & 0 & 0   
 \end{bmatrix}, \quad
 T=e^{i\alpha_1/3}\begin{bmatrix}
0 & 0 & -1\\
0 & -e^{-i\alpha_1} & -x_1e^{-i\alpha_1-i\alpha_2}\\
-1 & x_1e^{i\alpha_2} & e^{-i\alpha_1}
 \end{bmatrix},
\end{equation}
where, as usual, $x_i=\sqrt{2\cos(\alpha_i)}$; see \eqref{eq-notation-x1x2}. 
Computing the products 
$ST$ and $TS$ gives the result.
\end{proof}

We will use the notation $S$ and $T$ for these two order three symmetries 
throughout the paper.





\medskip

A more geometric proof of the existence of order three elliptic isometries 
decomposing pairs of parabolics as above can be found in a slightly more 
general context in \cite{ParkWi}.
\begin{figure}
 \begin{center}
 \scalebox{0.4}{\includegraphics{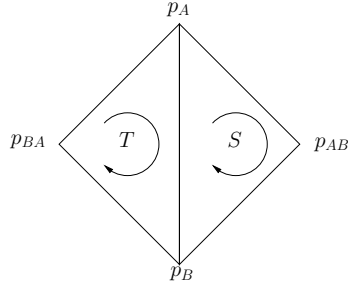}}
\caption{Action of $S$ and $T$ on the tetrahedron $(p_A,p_B,p_{AB},p_{BA})$.
\label{picture-action-ST}}
\end{center}
\end{figure}

One consequence of the existence of this decomposition as a product of order 
three elliptic is that any group generated 
in bu a pair $(A,B)$ in $\mathcal{U}$ is the image of the fundamental group 
of the Whitehead link complement by a morphism  
to PU(2,1). This follows directly from the following.

\begin{prop}\label{prop-quotient}
The free product $\Z_3\ast\Z_3$ is a quotient of the fundamental group of 
the Whitehead link complement.
\end{prop}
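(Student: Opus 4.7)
The plan is to construct an explicit surjective homomorphism
$$
\phi : \pi_1(S^3 \setminus W) \twoheadrightarrow \mathbb{Z}_3 \ast \mathbb{Z}_3 = \langle s, t \mid s^3 = t^3 = 1 \rangle
$$
sending the two meridians of the Whitehead link $W$ to the canonical generators of the two cyclic factors.

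To set this up, I would start from a standard two-generator one-relator presentation
$$
\pi_1(S^3 \setminus W) = \langle m_1, m_2 \mid r(m_1, m_2) \rangle,
$$
in which $m_i$ is a meridian around the $i$-th component of $W$. Such a presentation arises from the Wirtinger presentation associated to a standard diagram of $W$, reduced by Tietze moves; it can also be read off from the ideal octahedral decomposition of $S^3 \setminus W$ recalled in Section 10.3 of \cite{Rat}. I would then define $\phi(m_1) = s$ and $\phi(m_2) = t$. Surjectivity is automatic, because the images already generate the target group.

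The substantive step is to verify that the single relator $r(m_1, m_2)$ of $\pi_1(S^3 \setminus W)$ is sent to the identity in $\mathbb{Z}_3 \ast \mathbb{Z}_3$. This is a mechanical but not short normal-form calculation in the free product once an explicit relator word is in hand. The main obstacle is therefore the combinatorial length of this check rather than any conceptual difficulty.

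A conceptually cleaner alternative, which is the one pointed to by the discussion surrounding the statement, is to recognise $\phi$ as the quotient homomorphism induced by the $(-3,-3)$ Dehn filling of $S^3 \setminus W$ in the standard meridian-longitude marking. Dehn filling along a pair of slopes $(\sigma_1, \sigma_2)$ always produces a canonical surjection
$$
\pi_1(S^3 \setminus W) \twoheadrightarrow \pi_1(S^3 \setminus W) \bigm/ \langle\!\langle \sigma_1, \sigma_2 \rangle\!\rangle,
$$
so it suffices to identify the $(-3,-3)$ filling and check that the fundamental group of the resulting (Seifert fibred) space is isomorphic to $\mathbb{Z}_3 \ast \mathbb{Z}_3$. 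This last identification is a short SnapPy computation and also follows from the classification of non-hyperbolic Dehn fillings on $S^3 \setminus W$ given in \cite{MP}. Once the filling is identified, $\phi$ is exactly the induced Dehn-filling quotient, and the proposition is proved.
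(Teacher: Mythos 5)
Your central step does not go through: the assignment $\phi(m_1)=s$, $\phi(m_2)=t$ does not kill the relator, so it does not even define a homomorphism. With the standard two-generator meridional presentation the paper uses, $\pi=\la u,v \mid {\rm rel}(u,v)\ra$ where ${\rm rel}(u,v)=[u,v]\,[u,v^{-1}]\,[u^{-1},v^{-1}]\,[u^{-1},v]$, the substitution $u\mapsto s$, $v\mapsto t$ sends the relator to
$$
s\,t\,s^{-1}t^{-1}\,s\,t^{-1}s^{-1}t\,s^{-1}t^{-1}\,s\,t\,s^{-1}t\,s\,t^{-1},
$$
an alternating product of $16$ nontrivial syllables from the two $\Z_3$ factors. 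This is already in normal form for the free product, hence is not the identity in $\Z_3\ast\Z_3$. (The same computation kills the other standard presentation $\la a,b\mid [a,\,bab^{-1}a^{-1}b^{-1}ab]\ra$.) The check you deferred as ``mechanical'' is thus exactly where the proof lives, and it fails for your choice of images. The map that actually works — the one the paper writes down — sends the meridians to $u\mapsto st$ and $v\mapsto tst$, under which the relator becomes $[st,\,s^{-1}t^{-3}s^{-2}]$ and visibly dies when $s^3=t^3=1$. Note that these images have infinite order (they are cyclically reduced of syllable length $2$ up to conjugacy), consistent with the meridians being carried to the unipotent parabolics $A=ST$ and $B=TS$ by the holonomy; sending a meridian to $s$ or $t$ would force its image to have order $3$. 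With the correct images, surjectivity is no longer automatic and needs the small extra argument that $t$ and $s$ are the images of $vu^{-1}$ and $u^2v^{-1}$.

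Your fallback via the $(-3,-3)$ Dehn filling is sound in outline: the filling quotient is tautologically a surjection, and identifying the fundamental group of the filled manifold as $\Z_3\ast\Z_3$ (by SnapPy or by the Martelli--Petronio classification, both of which the paper cites) would complete the argument. But it inherits the same misconception — under that quotient the meridians satisfy $\mu_i^3=\lambda_i$ and again land on infinite-order elements rather than on $s$ and $t$ — and it replaces a three-line algebraic verification by an appeal to outside computation. To salvage your first approach, change the images of the meridians to $st$ and $tst$ and then actually carry out the relator computation.
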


\begin{proof}
 The fundamental group of the Whitehead link complement is presented by 
$\pi=\la u,v |{\rm rel}(u,v)\ra$, where
\begin{equation}\label{fund-WLC}
{\rm rel} (u,v)=[u,v]\cdot[u,v^{-1}]\cdot[u^{-1},v^{-1}]\cdot[u^{-1},v]
\end{equation}
Making the substitution $u=st$ and $v=tst$, the relation becomes ${\rm rel }(st,tst)=[st,s^{-1}t^{-3}s^{-2}]$. 
This relation is trivial whenever $s^3=t^3=1$. Therefore, one defines a 
morphism $\mu : \pi\longrightarrow \Z_3\ast\Z_3$ by setting $\mu(u)=st$ and 
$\mu(v)=tst$. The morphism $\mu$ is surjective: $t$ is the image of 
$vu^{-1}$ and $s$ the image of $u^2v^{-1}$.
\end{proof}

\subsection{Symmetries of the moduli space}
\label{sectionsymmetries}

The parameters $(\alpha_1,\alpha_2)$ determine $\Gamma$ up to
${\rm PU}(2,1)$ conjugation. We now show that there is an antiholomorphic
conjugation that changes the sign of both $\alpha_1$ and $\alpha_2$.

\begin{prop}\label{prop-involution}
There is an antiholomorphic involution $\iota$ with the properties:
\begin{enumerate}
\item $\iota$ interchanges $p_A$ and $p_B$ and interchanges 
$p_{AB}$ and $p_{BA}$;
\item $\iota$ conjugates $S$ to $T$ and $A$ to $B$ (and vice versa);
\item $\iota$ conjugates the group $\Gamma$ with parameters 
$(\alpha_1,\alpha_2)$ to the group with parameters $(-\alpha_1,-\alpha_2)$.
\end{enumerate}
\end{prop}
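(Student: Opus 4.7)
The plan is to exhibit $\iota$ explicitly in the normalisation \eqref{normalAB}--\eqref{fixedpoints}, and then deduce properties (2) and (3) from uniqueness statements together with the behaviour of Cartan's invariant under antiholomorphic isometries.

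\textbf{Construction of $\iota$.} I look for $\iota$ of the form $\iota(\bv) = M\overline{\bv}$ with $M \in \mathrm{U}(2,1)$. Imposing $\iota(p_A) = p_B$ and $\iota(p_B) = p_A$ on the lifts from \eqref{fixedpoints} forces $M$ to have nonzero entries only at positions $(1,3)$, $(2,2)$ and $(3,1)$. Imposing further $\iota(p_{AB}) = p_{BA}$ and $\iota(p_{BA}) = p_{AB}$ and dividing out the common factor $x_1\cos\alpha_2 \neq 0$ pins down these three remaining entries and yields
$$M = \begin{bmatrix} 0 & 0 & e^{i\alpha_1} \\ 0 & 1 & 0 \\ e^{i\alpha_1} & 0 & 0 \end{bmatrix}.$$
A direct computation then verifies $M^* H M = H$, so $\iota$ is an antiholomorphic isometry of $\HdC$, and $M\overline{M} = I$, so $\iota^2 = \mathrm{id}$. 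This establishes property (1).

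\textbf{Properties (2) and (3).} I avoid computing $M\overline{A} M^{-1}$ by invoking uniqueness. The element $\iota A \iota^{-1}$ is a holomorphic isometry with a single boundary fixed point (namely $\iota(p_A) = p_B$) and trace $\overline{\tr A} = 3$, hence it is unipotent by Proposition \ref{tracefunction}. Since it also sends $\iota(p_{BA}) = p_{AB}$ to $\iota(p_{AB}) = p_{BA}$, Proposition \ref{cor-unique-unipotent} identifies it with $B$, and symmetrically $\iota B \iota^{-1} = A$. The element $\iota S \iota^{-1}$ is holomorphic of trace $0$, hence is regular elliptic of order three by Lemma \ref{zerotrace}; it satisfies $p_B \mapsto p_{BA} \mapsto p_A$, so the second clause of Lemma \ref{zerotrace} identifies it with $T$, and $\iota T \iota^{-1} = S$ follows in the same way. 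Property (3) is then immediate: the conjugated group has generators $(B, A)$, and its parameters are
$$\Lambda(B,A) = \bigl(\A(p_B, p_{BA}, p_A),\, \A(p_B, p_{BA}, p_{AB})\bigr) = (-\alpha_1, -\alpha_2),$$
where the final equality applies Proposition \ref{prop-Cartan}(5) to the images under $\iota$ of the triples $(p_A, p_{AB}, p_B)$ and $(p_A, p_{AB}, p_{BA})$.

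The only step requiring genuine computation is the construction of $M$ and the verifications that $M \in \mathrm{U}(2,1)$ and $M\overline{M} = I$; everything else follows from the uniqueness statements of Proposition \ref{cor-unique-unipotent} and Lemma \ref{zerotrace} together with the standard behaviour of the Cartan invariant under antiholomorphic maps.
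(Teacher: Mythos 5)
Your proof is correct and follows essentially the same route as the paper: an explicit antiholomorphic matrix (yours is $e^{i\alpha_1}$ times the paper's, hence the same map projectively), identification of $\iota A\iota$ with $B$ via the uniqueness of unipotent maps in Proposition \ref{cor-unique-unipotent}, and the sign change of the Cartan invariants via Proposition \ref{prop-Cartan}(5). The only (immaterial) difference is that you identify $\iota S\iota=T$ directly from the uniqueness clause of Lemma \ref{zerotrace}, whereas the paper deduces it from the uniqueness of the $(S,T)$ decomposition in Proposition \ref{ST}.
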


\begin{proof}
The action on $\C^3$ of $\iota$ is:
$$
\iota:\left[\begin{matrix} z_1 \\ z_2 \\ z_3 \end{matrix}\right]
\longmapsto \left[\begin{matrix} 
\overline{z}_3 \\ e^{-i\alpha_1}\overline{z}_2 \\ \overline{z}_1
\end{matrix}\right].
$$
It is easy to see that $\iota^2$ is the identity and that $\iota$ sends
${\bf p}_A$ to ${\bf p}_B$ and sends ${\bf p}_{AB}$ to
$(-e^{-i\alpha_1}){\bf p}_{BA}$. Projectivising gives the first part.

Since $A$ is the unique unipotent map fixing $p_A$ and sending $p_{BA}$ to
$p_{AB}$, we see $\iota A\iota$ is the unique unipotent map fixing
$\iota(p_{A})=p_B$ and sending $\iota(p_{BA})=p_{AB}$ to 
$\iota(p_{AB})=p_{BA}$. Thus $\iota A\iota=B$ and so $\iota B\iota=A$.
Applying Proposition \ref{ST} we see that $\iota S\iota=T$ and
$\iota T\iota=S$, proving the second part.

The parameters associated to the group $\iota\Gamma\iota$ are
$\A(\iota p_A,\iota p_{AB},\iota p_B)=\A(p_B,p_{BA},p_A)=-\alpha_1$ and 
$\A(\iota p_A,\iota p_{AB},\iota p_{BA})=\A(p_B,p_{BA},p_{AB})=-\alpha_2$.
This completes the proof.
\end{proof}

There are other symmetries of the parameter space $\mathcal{U}$ that, in general, 
do not arise from conjugation by isometries.

\begin{prop}\label{prop-sym-moduli}
Let $\phi_h:(\alpha_1,\alpha_2)\longmapsto(\alpha_1,-\alpha_2)$ and
$\phi_v:(\alpha_1,\alpha_2)\longmapsto(-\alpha_1,\alpha_2)$ 
denote the symmetries about the horizontal and vertical axes of the 
$(\alpha_1,\alpha_2)$-square.
Then $\phi_h\circ\phi_v$ induces the conjugation by $\iota$ given in
Proposition \ref{prop-involution}. Moreover:
\begin{enumerate}
 \item $\phi_h$ induces the change of generators 
$(S,T)\longmapsto (T^{-1},S^{-1})$ and $(A,B)\longmapsto (A^{-1},B^{-1})$.
 \item $\phi_v$ induces the change of generators 
$(S,T)\longmapsto (S^{-1},T^{-1})$ and $(A,B)\longmapsto (B^{-1},A^{-1})$, 
\end{enumerate}
\end{prop}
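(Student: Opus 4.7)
The plan is to verify each statement by producing, for any representative pair $(A,B)$ of the class indexed by $(\alpha_1,\alpha_2)$, an explicit representative of the image class, and then reading off the induced transformation of $(S,T)$ from the uniqueness part of Proposition \ref{ST}. Throughout, the idea is to identify conjugacy classes via Theorem \ref{theoclassgroups} and to use the standard lifts of \eqref{fixedpoints} for the few explicit Cartan-invariant computations required.

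For $\phi_h$, I would claim that $(A^{-1},B^{-1})$ represents the image class. Since $BA=B(AB)B^{-1}$ is conjugate to $AB$, the product $A^{-1}B^{-1}=(BA)^{-1}$ is unipotent, so $(A^{-1},B^{-1})\in\mathcal{U}$ with fixed-point quadruple $(p_A,p_B,p_{BA},p_{AB})$ for $(A^{-1},B^{-1},A^{-1}B^{-1},B^{-1}A^{-1})$ respectively. Its Cartan parameters are therefore $\A(p_A,p_{BA},p_B)$ and $\A(p_A,p_{BA},p_{AB})$. The second equals $-\A(p_A,p_{AB},p_{BA})=-\alpha_2$ by the antisymmetry of $\A$ under transpositions of its arguments. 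For the first, direct evaluation of the Hermitian products of the lifts in \eqref{fixedpoints} gives $\langle\bp_A,\bp_{BA}\rangle=\langle\bp_A,\bp_{AB}\rangle=1$ and $\langle\bp_B,\bp_{BA}\rangle=\langle\bp_B,\bp_{AB}\rangle=-e^{-i\alpha_1}$, whence $\A(p_A,p_{BA},p_B)=\A(p_A,p_{AB},p_B)=\alpha_1$. Theorem \ref{theoclassgroups} then identifies $(A^{-1},B^{-1})$ with the class $\phi_h(\alpha_1,\alpha_2)$. The induced transformation of $(S,T)$ follows from Proposition \ref{ST}: since $A^{-1}=(ST)^{-1}=T^{-1}S^{-1}$ and $B^{-1}=(TS)^{-1}=S^{-1}T^{-1}$, the unique order-three decomposition of the new pair is $(T^{-1},S^{-1})$.

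For $\phi_v$, the quickest route is to observe that on the parameter space $\phi_v=\iota\circ\phi_h$, where by Proposition \ref{prop-involution} the involution $\iota$ acts on representatives by $(A,B)\mapsto(B,A)$ and $(S,T)\mapsto(T,S)$. Composing the two moves yields $(A,B)\mapsto(A^{-1},B^{-1})\mapsto(B^{-1},A^{-1})$ and $(S,T)\mapsto(T^{-1},S^{-1})\mapsto(S^{-1},T^{-1})$, which gives the claim for $\phi_v$. Finally, applying $\phi_v$ first and $\phi_h$ second at the level of pairs produces $(A,B)\mapsto(B^{-1},A^{-1})\mapsto(B,A)$, which coincides with the action of conjugation by $\iota$; this proves that $\phi_h\circ\phi_v$ induces the involution of Proposition \ref{prop-involution}. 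The only genuine obstacle is the Hermitian-product computation in the second paragraph, which is where the sign behaviour of $\alpha_2$ under $\phi_h$ (and, by composition, of $\alpha_1$ under $\phi_v$) is pinned down; everything else reduces to the antisymmetry of $\A$ and the uniqueness statements of Theorem \ref{theoclassgroups} and Proposition \ref{ST}.
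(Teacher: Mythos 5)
Your proof is correct and follows essentially the same route as the paper: both arguments identify the class of $\phi_h(\alpha_1,\alpha_2)$ with the pair $(A^{-1},B^{-1})$ by tracking the fixed-point quadruple and the Cartan invariants computed from the explicit lifts \eqref{fixedpoints}, deduce the $(S,T)$-statement from the uniqueness in Proposition \ref{ST}, and obtain $\phi_v$ by composing $\phi_h$ with the conjugation by $\iota$ of Proposition \ref{prop-involution}. The only cosmetic difference is that the paper verifies the chain of equivalences starting from the action on the fixed points (normalising with ${\rm diag}[1,-1,1]$), whereas you start from the pair $(A^{-1},B^{-1})$ and compute its invariants directly; your Hermitian-product values and the antisymmetry argument for $\A$ are both correct.
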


\begin{proof}
Making the change $\phi_h$ to the points in 
\eqref{fixedpoints} and then applying ${\rm diag}[1,\,-1,\,1]\in{\rm PU}(2,1)$
fixes $p_A$ and $p_B$ and swaps $p_{AB}$ and $p_{BA}$. Therefore it
sends $S$ to the map cyclically permuting $(p_A,p_{BA},p_B)$, which is
$T^{-1}$. Similarly it sends $T$ to $S^{-1}$.

It is clear that the change of generators $(S,T)\longmapsto (T^{-1},S^{-1})$ 
sends $A=ST$ to $T^{-1}S^{-1}=A^{-1}$ and $B=TS$ to $S^{-1}T^{-1}=B^{-1}$.

The change of generators $(A,B)\longmapsto (A^{-1},B^{-1})$ fixes
$p_A$ and $p_B$. Since it sends $AB$ to $A^{-1}B^{-1}=(BA)^{-1}$ it sends 
$p_{AB}$ to $p_{BA}$ and similarly sends $p_{BA}$ to $p_{AB}$. 
From this we can calculate the new Cartan invariants and we obtain 
the symmetry $\phi_h$.

Hence all three conditions in the first part are equivalent. 
The second part then follows the first part and 
Proposition \ref{prop-involution} by first applying $\phi_h$ and then
conjugating by $\iota$.
\end{proof}

The fixed point sets of these automorphisms are related to $\R$-decomposability and 
$\C$-decomposability of $\Gamma$.

\begin{defi}[Compare Will \cite{Wi4}]
A pair $(S,T)$ of elements in ${\rm PU}(2,1)$ is $\R$-decomposable if 
there exist three antiholomorphic involutions 
$(\iota_1,\iota_2,\iota_3)$ such that $S=\iota_2\iota_1$ and 
$T=\iota_1\iota_3$.

A pair $(S,T)$ of elements in ${\rm PU}(2,1)$ is $\C$-decomposable if 
there exists three involutions $(I_1,I_2,I_3)$ in ${\rm PU}(2,1)$ 
such that $S=I_2I_1$ and $T=I_1I_3$. 
\end{defi}

The properties of $\R$ and $\C$-decomposability have also been 
studied (in the special case of pairs of loxodromic isometries) from the 
point of view of traces in ${\rm SU}(2,1)$ in \cite{Wi4}, and (in the general 
case) using cross-ratios in \cite{PaW}. We could take either point of view 
here, but instead we choose to argue directly with fixed points.

\begin{prop}\label{prop-dec-ST}
Let $(A,B)$ be in $\mathcal{U}$, and $(S,T)$ be the corresponding elliptic 
isometries.
\begin{enumerate}
\item 
If $\alpha_1=0$, then the pair $(S,T)$ is $\C$-decomposable
and the pair $(A,B)$ is $\R$-decomposable. 
In particular, $\la S,T\ra$ has index $2$ in a $(3,3,\infty)$-triangle group.
\item If $\alpha_2=0$, then the pair $(S,T)$ is $\R$-decomposable
and the pair $(A,B)$ is $\C$-decomposable. In particular 
$\la A,B\ra$ has index two in a complex hyperbolic ideal triangle group.
\end{enumerate}
\end{prop}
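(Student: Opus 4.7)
The plan is to leverage the symmetries $\phi_{v}$, $\phi_{h}$ from Proposition \ref{prop-sym-moduli} together with the antiholomorphic involution $\iota$ from Proposition \ref{prop-involution}. When $\alpha_{i}=0$, the symmetry fixing that axis also fixes the moduli class, so Theorem \ref{theoclassgroups} upgrades the corresponding change of generators into a genuine $\mathrm{PU}(2,1)$-conjugation, which will serve as the distinguished common involution $I_{1}$ (respectively $\iota_{1}$) of the decomposition.

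For Part 1 ($\alpha_{1}=0$), the pair $(B^{-1},A^{-1})$ has parameters $\phi_{v}(0,\alpha_{2})=(0,\alpha_{2})$, so Theorem \ref{theoclassgroups} produces $g\in\mathrm{PU}(2,1)$ with $gAg^{-1}=B^{-1}$, $gBg^{-1}=A^{-1}$, and by Proposition \ref{ST} also $gSg^{-1}=S^{-1}$, $gTg^{-1}=T^{-1}$. I would first show $g^{2}=1$ by a centraliser argument: $g^{2}$ commutes with $A$ and $B$, and any element of $\mathrm{PU}(2,1)$ centralising $\langle A,B\rangle$ must fix the four distinct boundary points $p_{A},p_{B},p_{AB},p_{BA}$, which pins it down to the identity inside the two-dimensional stabiliser of $\{p_{A},p_{B}\}$ (a Heisenberg dilation-rotation group). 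I would next rule out the possibility that $g$ is a central inversion: such an involution at an interior point $q$ commutes with every isometry fixing $q$ as a rotation, so $gSg^{-1}=S^{-1}$ contradicts both $q=q_{S}$ (which forces $S^{2}=1$) and $q\neq q_{S}$ (for then the fixed points of $S$ and $gSg^{-1}$ would differ). Hence $g$ is a complex involution. Setting $I_{1}=g$, $I_{2}=Sg$ and $I_{3}=gT$ and repeating the central-inversion ruling produces three complex involutions with $S=I_{2}I_{1}$, $T=I_{1}I_{3}$; since $I_{2}I_{3}=ST=A$ is unipotent, $\langle I_{1},I_{2},I_{3}\rangle$ is a $(3,3,\infty)$-triangle group containing $\langle S,T\rangle$ as the index-two even-word subgroup. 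For the $\mathbb{R}$-decomposition of $(A,B)$, set $\iota_{1}=g\iota$; the identities $\iota A\iota=B$ and $gBg^{-1}=A^{-1}$ give $\iota_{1}A\iota_{1}^{-1}=A^{-1}$ (and similarly for $B$), and $\iota_{1}^{2}=1$ again by triviality of the centraliser, so $\iota_{2}=A\iota_{1}$ and $\iota_{3}=\iota_{1}B$ complete the decomposition.

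Part 2 ($\alpha_{2}=0$) is parallel with $\phi_{h}$ replacing $\phi_{v}$. Now $\phi_{h}$ directly inverts $A$ and $B$, producing $g'\in\mathrm{PU}(2,1)$ with $g'A(g')^{-1}=A^{-1}$ and $g'B(g')^{-1}=B^{-1}$; since $g'$ fixes the boundary points $p_{A}$ and $p_{B}$ while central inversions fix no boundary point, $g'$ is automatically a complex involution. Setting $I_{1}=g'$, $I_{2}=Ag'$, $I_{3}=g'B$ then furnishes the $\mathbb{C}$-decomposition of $(A,B)$ inside an ideal $(\infty,\infty,\infty)$-triangle group. For $(S,T)$, which $\phi_{h}$ sends to $(T^{-1},S^{-1})$, the composition $\iota g'$ is the antiholomorphic involution inverting both $S$ and $T$, yielding the $\mathbb{R}$-decomposition as before. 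The main obstacle throughout is distinguishing the two kinds of holomorphic involutions in $\mathrm{PU}(2,1)$ -- complex involutions fixing a complex line versus central inversions at an interior point -- and this is resolved uniformly by exploiting either the order-three constraint on $S,T$ (in Part 1) or the boundary fixed points of $A,B$ (in Part 2), both arguments relying on the rigidity that a central inversion at $q$ commutes with every isometry fixing $q$.
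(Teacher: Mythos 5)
Your proof is correct, but it takes a genuinely different route from the paper's. The paper simply writes down the decomposing involutions explicitly in the normalisation \eqref{fixedpoints}: the antiholomorphic map $\iota_1:[z_1,z_2,z_3]\mapsto[\overline{z}_1,-\overline{z}_2,\overline{z}_3]$ and the holomorphic map $I_1'={\rm diag}(1,-1,1)$, each composed with $\iota$ as needed; one then reads off directly that these fix or swap the four parabolic fixed points appropriately, and the involution property of $A\iota_1$, $SI_1$, etc.\ follows exactly as in your last step. You instead derive the existence of the common involution abstractly: the axis $\alpha_i=0$ is the fixed locus of the moduli symmetry $\phi_v$ (resp.\ $\phi_h$) of Proposition \ref{prop-sym-moduli}, so injectivity of $\Lambda$ (Theorem \ref{theoclassgroups}) supplies a conjugator $g$ realising the change of generators, and you then must prove that $g$ is an involution (trivial centraliser of $\langle A,B\rangle$, which you argue correctly via the four fixed points) and that it is a complex reflection in a line rather than a central inversion. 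This is more conceptual -- it explains why decomposability occurs precisely on the coordinate axes -- at the cost of the extra type analysis, which the paper avoids entirely because its involutions are explicit; the explicit formulas also pay off later, since $\iota_1$ and $I_2=SI_1$ are reused in the proofs of Propositions \ref{prop-part-outside} and \ref{prop-inter-F0-I0}. Two small remarks: in Part 1 your exclusion of the central inversion can be done in one line by the same device you use in Part 2, since $g$ fixes the two boundary points $p_{AB}$ and $p_{BA}$ (it conjugates $AB$ and $BA$ to their inverses) and a central inversion fixes no boundary point; and note that the type analysis is only needed for the ``in particular'' triangle-group conclusions, as the paper's definitions of $\R$- and $\C$-decomposability ask only for involutions.
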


\begin{proof}
Consider the antiholomorphic involution
$\iota_1:[z_1,\,z_2,\,z_3]\longmapsto 
\bigl[\overline{z}_1,\,-\overline{z}_2,\,\overline{z}_3 \bigr]$. 
Applying $\iota_1$ to the points in
\eqref{fixedpoints} with $\alpha_1=0$, we see that
$\iota_1$ fixes $p_A$ and $p_B$ and interchanges $p_{AB}$ and $p_{BA}$.
Therefore $\iota_1$ conjugates $A$ to $A^{-1}$ and $B$ to $B^{-1}$.
Hence $A\iota_1A\iota_1$ and $\iota_1B\iota_1B$ are the identity.
That is $\iota_2=A\iota_1$ and $\iota_3=\iota_1B$ are involutions.
Hence $(A,B)$ is $\R$-decomposable.

Again assuming $\alpha_1=0$, consider the holomorphic involution
defined by $I_1=\iota_1\iota$ (where $\iota$ is the involution 
defined in Proposition \ref{prop-involution}). Then $I_1$
fixes $p_{AB}$ and $p_{BA}$ and interchanges $p_A$ and
$p_B$. Therefore, it conjugates $S$ to $S^{-1}$ and $T$ to $T^{-1}$.
This means $I_2=SI_1$ and $I_3=I_1T$ are involutions. Hence
$(S,T)$ is $\C$-decomposable.
 
Now consider the holomorphic involution
$I_1':[z_1,\,z_2,\,z_3]\longmapsto [z_1,\,-z_2,\,z_3]$. This fixes
$p_A$ and $p_B$ and when $\alpha_2=0$ it interchanges $p_{AB}$ and
$p_{BA}$. As above this means $I_2'=AI_1'$ and $I_3'=I_1'B$ are
involutions and $(A,B)$ is $\C$-decomposable.
Finally, define $\iota_1'=I_1'\iota$. Arguing as above, again with
$\alpha_2=0$, we see that $\iota'_2=S\iota'_1$ and $\iota'_3=\iota'_1T$ 
are involutions. Hence $(S,T)$ is $\R$-decomposable.
\end{proof}

As indicated above, when $\alpha_1=0$ the group generated by 
$(I_1,I_2,I_3)$ is a $(3,3,\infty)$ reflection triangle group. 
This group can be thought of as 
a limit as $n$ tends to infinity of the $(3,3,n)$ triangle groups which 
have been studied by Parker, Wang and Xie in \cite{PWX}. The special 
case $(3,3,4)$ has been studied by Falbel and Deraux in \cite{DF8}. 
Both \cite{DF8} and \cite{PWX} constructed Dirichlet domains, and the 
Ford domain we construct can be seen as a limit of these. 
Moreover, $\R$-decomposability of the pair $(A,B)$ when $\alpha_1=0$ can 
be used to show that these groups correspond to the bending representations 
of the fundamental group of a 3-punctured sphere that have been studied in 
\cite{Wi7}. Ideal triangle groups have been studied in great detail 
in \cite{GP,S,S1,S3,Schbook}.

\subsection{Isometry type of the commutator.\label{section-type-commutator}}

The isometry type of the commutator will play an important role in the rest 
of this paper. It is easily described using the order three elliptic maps given 
by Proposition \ref{ST}.

\begin{prop}\label{proptypecomm}
The commutator $[A,B]$ has the same isometry type as $ST^{-1}$. More 
precisely, consider
$\mathcal{G}(x_1^4,x_2^4)
=\mathcal{G}\bigl(4\cos^2(\alpha_1),4\cos^2(\alpha_2)\bigr)$ 
where
$$
\mathcal{G}(x,y)=x^2y^4-4x^2y^3+18xy^2-27.
$$
Then $[A,B]$ is loxodromic (respectively parabolic, elliptic) if and only if 
$\mathcal{G}(x_1^4,x_2^4)$ is positive (respectively zero, negative).
\end{prop}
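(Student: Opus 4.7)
The plan is to reduce the classification of $[A,B]$ to the computation of a single trace and then apply the trace criterion from Proposition~\ref{tracefunction}.

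First, I would establish the purely algebraic identity $[A,B]=(ST^{-1})^{3}$. Starting from $A=ST$ and $B=TS$, with $S^{3}=T^{3}=\operatorname{Id}$, direct expansion gives
\begin{equation*}
[A,B]=ST\cdot TS\cdot T^{-1}S^{-1}\cdot S^{-1}T^{-1}=S T^{2} S T^{-1} S^{-2} T^{-1},
\end{equation*}
and then $T^{2}=T^{-1}$ and $S^{-2}=S$ rewrite this as $(ST^{-1})^{3}$. This was already observed in the introduction.

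Second, I would argue that $M\in\operatorname{SU}(2,1)$ and $M^{3}$ share the same isometry type, unless $M^{3}=\operatorname{Id}$. If $M$ is loxodromic, its eigenvalues $\lambda, e^{i\theta}, \lambda^{-1}$ have moduli $|\lambda|,1,|\lambda|^{-1}$ with $|\lambda|\neq 1$, and cubing preserves this. If $M$ is parabolic, it is non-semisimple with all eigenvalues on the unit circle, and so is $M^{3}$. If $M$ is elliptic, all its eigenvalues lie on the unit circle and so do their cubes, so $M^{3}$ is elliptic or the identity. The identity case is excluded in $\mathcal{U}$, since $[A,B]=\operatorname{Id}$ would force $A$ and $B$ to commute. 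Applied to $M=ST^{-1}$ this shows that $[A,B]$ and $ST^{-1}$ always have the same isometry type.

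Third, I would compute the complex number $\tau=\operatorname{tr}(ST^{-1})$ from the matrix formulas \eqref{S} and then use the Newton-type identity
\begin{equation*}
\operatorname{tr}(M^{3})=\tau^{3}-3|\tau|^{2}+3,
\end{equation*}
valid in $\operatorname{SU}(2,1)$ because $\det M=1$ and $\operatorname{tr}(M^{-1})=\overline{\tau}$. This yields $\operatorname{tr}[A,B]$ explicitly as a polynomial in $\cos\alpha_{1}$, $\cos\alpha_{2}$, $\sin\alpha_{1}$, $\sin\alpha_{2}$. I would then evaluate $\mathcal{F}(\operatorname{tr}[A,B])$ from Proposition~\ref{tracefunction}, substitute $x_{i}^{2}=2\cos\alpha_{i}$, and verify that the result coincides, up to a strictly positive factor in the parameters, with $\mathcal{G}(x_{1}^{4},x_{2}^{4})=\mathcal{G}\bigl(4\cos^{2}\alpha_{1},4\cos^{2}\alpha_{2}\bigr)$. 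The conclusion on the isometry type then follows from Proposition~\ref{tracefunction}, combined with the equivalence between the type of $[A,B]$ and that of $ST^{-1}$ proved in the second step.

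The main obstacle is the polynomial bookkeeping in this final step: $\tau$ is a non-real complex expression in the angular parameters, and it requires careful application of $\sin^{2}\alpha_{i}+\cos^{2}\alpha_{i}=1$ and of $x_{i}^{2}=2\cos\alpha_{i}$ to collapse the cross-terms of $\mathcal{F}(\tau^{3}-3|\tau|^{2}+3)$ and to identify it with a positive multiple of $\mathcal{G}(x_{1}^{4},x_{2}^{4})$.
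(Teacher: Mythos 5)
Your first two steps are exactly the paper's: the identity $[A,B]=(ST^{-1})^3$ and the remark that $[A,B]$ and $ST^{-1}$ share an isometry type because $[A,B]\neq\operatorname{Id}$ (else $A$ and $B$ would commute). The gap is in your final step. The paper applies the trace criterion to $ST^{-1}$ itself: from \eqref{S} one computes $\operatorname{tr}(ST^{-1})=x_1^2x_2^4e^{i\alpha_1/3}$, and $\mathcal{F}$ evaluated at this number is \emph{exactly} $\mathcal{G}(x_1^4,x_2^4)$, using $\cos\alpha_1=x_1^2/2$. You instead propose to evaluate $\mathcal{F}$ at $\operatorname{tr}[A,B]=\tau^3-3|\tau|^2+3$ and to verify that the result is a strictly positive multiple of $\mathcal{G}$. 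That verification would fail. Since $\mathcal{F}(\operatorname{tr}M)$ is the discriminant $\prod_{i<j}(\lambda_i-\lambda_j)^2$ of the characteristic polynomial of $M\in{\rm SU}(2,1)$, one has
$$
\mathcal{F}\bigl(\operatorname{tr}(M^3)\bigr)=\mathcal{F}\bigl(\operatorname{tr}M\bigr)\cdot\Bigl(\textstyle\prod_{i<j}\bigl(\lambda_i^2+\lambda_i\lambda_j+\lambda_j^2\bigr)\Bigr)^{2},
$$
and the second factor, while real and non-negative, vanishes whenever two eigenvalues of $ST^{-1}$ have ratio a primitive cube root of unity, i.e.\ whenever $(ST^{-1})^3$ acquires a repeated eigenvalue. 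This happens inside the elliptic region: at the $(3,3,6)$ point $(\alpha_1,\alpha_2)=(0,\pi/3)$ one gets $(x_1^4,x_2^4)=(4,1)$, the eigenvalues of $ST^{-1}$ are $1,e^{\pm i\pi/3}$, so $\operatorname{tr}[A,B]=-1$ and $\mathcal{F}(-1)=0$, whereas $\mathcal{G}(4,1)=-3<0$. So $\mathcal{F}(\operatorname{tr}[A,B])$ is not a positive multiple of $\mathcal{G}$, and its sign cannot detect the type of $[A,B]$ (it cannot tell a special elliptic cube from a parabolic one).

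The repair is immediate and is what the paper does: having reduced to $ST^{-1}$ in your second step, apply Proposition \ref{tracefunction} to $ST^{-1}$ rather than to its cube. Its trace is the single term $x_1^2x_2^4e^{i\alpha_1/3}$, whose modulus and cubed real part are monomials in $x_1^4$ and $x_2^4$, and $\mathcal{F}$ of it is literally $\mathcal{G}(x_1^4,x_2^4)$ with no extra factor and no bookkeeping. Your Newton identity $\operatorname{tr}(M^3)=\tau^3-3|\tau|^2+3$ is correct but unnecessary.
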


\begin{proof}
First, from $A=ST$, $B=TS$ and the fact that $S$ and $T$ have order 3, 
we see that 
\begin{equation*}
[A,B]=ABA^{-1}B^{-1} = STTST^{-1}S^{-1}S^{-1}T^{-1}=(ST^{-1})^3.
\end{equation*}
This implies that $[A,B]$ has the same isometry type as $ST^{-1}$ unless 
$ST^{-1}$ is elliptic of order three, in which case $[A,B]$ is the identity. 
This would mean that $A$ and $B$ commute, which can not be because their 
fixed point sets are disjoint. 

Representatives of $S$ and $T$ in ${\rm SU}(2,1)$ are given in \eqref{S}.
A direct calculation using these matrices shows that 
${\rm tr}(ST^{-1})=x_1^2x_2^4e^{i\alpha_1/3}$.
The function $\mathcal{G}(x_1^4,x_2^4)$ above is obtained by plugging this 
value in the function $\mathcal{F}$ given in Proposition \ref{tracefunction}.
\end{proof}

The null locus of $\mathcal{G}\bigl(4\cos^2(\alpha_1),4\cos^2(\alpha_2)\bigr)$
in the square $(-\pi/2,\pi/2)^2$ is a curve, which 
we will refer to as the \textit{parabolicity curve} and denote by 
$\mathcal{P}$. It is depicted on Figure \ref{peachgraph}.  
Similarly, the region where $\mathcal{G}$ is positive (thus $[A,B]$ loxodromic) 
will be denoted by $\mathcal{L}$. It is a topological disc, which is the 
connected component of the complement of the curve $\mathcal{P}$ 
that contains the origin. The region where $[A,B]$ is elliptic will be 
denoted by $\mathcal{E}$.

\section{Isometric spheres and their intersections\label{section-isom-spheres}}

\subsection{Isometric spheres for $S$, $S^{-1}$ and their $A$-translates.}

In this section we give details of the isometric spheres that will contain
the sides of our polyhedron $D$. The polyhedron $D$ is our guess for the
Ford polyhedron of $\Gamma$, subject to the combinatorial restriction
discussed in Section \ref{section-def-Z}.

We start with the isometric spheres $\I(S)$ and $\I(S^{-1})$ for $S$
and its inverse. From the matrix for $S$ given in \eqref{S}, using
Lemma \ref{centrisomsphere} we see that $\I(S)$ and $\I(S^{-1})$
have radius $1/|-e^{-i\alpha_1/3}|^{1/2}=1$ and centres
$S^{-1}(q_\infty)=p_B$ and $S(q_\infty)=p_{AB}$ respectively; see \eqref{fixedpoints}. 
In particular, $\I(S)$ is the Cygan sphere $\mathcal{S}_{[0,0]}(1)$ of radius $1$
centred at the origin; see \eqref{unitspinal}. In our computations
we will use geographical coordinates in $\I(S)$ as in 
Definition \ref{def-geog}. The polyhedron $D$
will be the intersection of the exteriors of $\I(S^{\pm 1})$ and all 
their translates by powers of $A$. We now fix some notation: 

\begin{defi}
For $k\in\Z$ let $\Isom_k^+$ be the isometric sphere $\I(A^kSA^{-k})=A^k\I(S)$
and let $\Isom_k^-$ be the isometric sphere $\I(A^kS^{-1}A^{-k})=A^k\I(S^{-1})$.
\end{defi}

With this notation, we have:

\begin{prop}\label{centre-translates-S-Sm}
For any integer $k\in\Z$, the isometric sphere $\Isom_k^+$ has radius 
$1$ and is centred at the point with Heisenberg coordinates 
$[k\ell_A,kt_A]$, where $\ell_A$ and $t_A$ are as in \eqref{eq-def-lA}.
Similarly, the isometric sphere $\Isom_k^-$ has radius $1$ and centre 
the point with Heisenberg coordinates 
$[k\ell_A+\sqrt{\cos(\alpha_1)}e^{i\alpha_2},-\sin(\alpha_1)]$.
\end{prop}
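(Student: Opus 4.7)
The plan is to reduce the general $k$ to the case $k=0$ using Lemma~\ref{lem-isomA}(2), since $A$ is unipotent and hence fixes $q_\infty$ with eigenvalue $1$, and then to read off the translated centres from the Heisenberg action of $A^k$.

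For $k=0$, the bottom-left entry of the matrix $S$ in \eqref{S} is $-e^{-i\alpha_1/3}$, which has modulus one, so Lemma~\ref{centrisomsphere} directly yields that $\I(S)$ has radius $1$ and centre $S^{-1}(q_\infty)=p_B$. The identical argument applies to $S^{-1}$, whose centre is $S(q_\infty)=p_{AB}$. Rewriting the lift $\bp_{AB}$ from \eqref{fixedpoints} in the standard form \eqref{lift} gives the Heisenberg coordinates $p_B=[0,0]$ and $p_{AB}=[\sqrt{\cos(\alpha_1)}e^{i\alpha_2},-\sin(\alpha_1)]$, which match the statement for $k=0$.

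For arbitrary $k$, Lemma~\ref{lem-isomA}(2) applied to the unipotent map $A^k$ yields
\[
\Isom_k^{\pm}=\I\bigl(A^k S^{\pm 1}A^{-k}\bigr)=\I\bigl(S^{\pm 1}A^{-k}\bigr).
\]
Because $A^{-k}$ is upper triangular with first column $(1,0,0)^T$, the bottom-left entry of $S^{\pm 1}A^{-k}$ coincides with that of $S^{\pm 1}$, and hence by Lemma~\ref{centrisomsphere} the radius is still $1$. The centre is
\[
(S^{\pm 1}A^{-k})^{-1}(q_\infty)=A^kS^{\mp 1}(q_\infty),
\]
which is $A^k(p_B)$ or $A^k(p_{AB})$ respectively.

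Finally, one computes these translated points by iterating the Heisenberg group law $[w,s]\cdot[z,t]=[w+z,\,s+t-2\Im(z\overline{w})]$. Since $\ell_A$ is real, iteration from the origin shows that $A^k$ acts as the Heisenberg translation $T_{[k\ell_A,kt_A]}$, so $A^k([0,0])=[k\ell_A,kt_A]$, which is the stated centre of $\Isom_k^+$. For $\Isom_k^-$ the same product gives
\[
A^k(p_{AB})=\bigl[\,k\ell_A+\sqrt{\cos(\alpha_1)}e^{i\alpha_2},\;kt_A-\sin(\alpha_1)-2k\ell_A\sqrt{\cos(\alpha_1)}\sin(\alpha_2)\,\bigr].
\]
The one step requiring a little care is to see that the $k$-dependent contributions to the $t$-coordinate cancel: substituting the expressions for $\ell_A$ and $t_A$ from \eqref{eq-def-lA}, one verifies that $2\ell_A\sqrt{\cos(\alpha_1)}\sin(\alpha_2)=t_A$, so the term $kt_A$ is exactly killed and the $t$-coordinate reduces to $-\sin(\alpha_1)$, independently of $k$. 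This cancellation reflects the geometric fact that $p_{AB}$ is translated by $A$ purely in the horizontal Heisenberg direction, and it is the main feature of the computation worth highlighting.
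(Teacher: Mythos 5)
Your proof is correct and follows essentially the same route as the paper's: both reduce to the case $k=0$ using the fact that $A$ is a unipotent Cygan isometry fixing $q_\infty$ (you via Lemma \ref{lem-isomA}(2), the paper by noting directly that $\Isom_k^\pm=A^k\I(S^{\pm1})$), and then read off the centres from the left Heisenberg translation $A^k=T_{[k\ell_A,kt_A]}$. The cancellation $2\ell_A\sqrt{\cos(\alpha_1)}\sin(\alpha_2)=t_A$ that you highlight is precisely the ``straightforward verification'' the paper leaves implicit, and it does hold when one uses $\ell_A=x_1x_2^2/\sqrt{2}$ from \eqref{eq-def-lA}.
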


\begin{proof}
As $A$ is unipotent and fixes $q_\infty$, it is a Cygan isometry, and thus 
preserves the radius of isometric spheres. This gives the part about radius. 
Moreover, it follows directly from Proposition \ref{normalAB} that $A^k$ acts 
on the boundary of $\HdC$ by left Heisenberg multiplication by $[k\ell_A,kt_A]$.
This gives the part about centres by a straightforward verification.
\end{proof}

The following proposition describes a symmetry of the family 
$\{\I_k^{\pm}\ :\ k\in\Z\}$ which will be useful in the study of 
intersections of the isometric spheres $\Isom_k^{\pm}$.

\begin{prop}\label{prop-special-symmetry}
Let $\varphi$ be the antiholomorphic isometry $S\iota=\iota T$, where 
$\iota$ is as in Proposition \ref{prop-involution}. Then $\varphi^2=A$, and $\varphi$
acts on the Heisenberg group as a screw 
motion preserving the affine line parametrised by 
\begin{equation}\label{defi-deltaphi}
 \Delta_\varphi=
\Bigl\lbrace \delta_\varphi(x)
=\Bigl[x+i\dfrac{\sqrt{\cos(\alpha_1)}\sin(\alpha_2)}{2},
x\sqrt{\cos(\alpha_1)}\sin(\alpha_2)-\dfrac{\sin(\alpha_1)}{2}\Bigr]\ : \ 
x\in\R\Bigr\rbrace.
\end{equation}
Moreover, $\varphi$ acts on isometric spheres as
$\varphi(\I_k^+)=\I_k^-$ and $\varphi(\I_k^-)=\I_{k+1}^+$ for all $k\in\Z$.
\end{prop}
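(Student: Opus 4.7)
The plan is to prove the three assertions in the order stated: first $\varphi^2=A$, then the preservation of $\Delta_\varphi$, then the action on isometric spheres. Step one is purely algebraic: Proposition~\ref{prop-involution} gives $\iota S \iota = T$ and $\iota^2=\mathrm{Id}$, so multiplying on the right by $\iota$ yields $S\iota = \iota T$ (verifying that the two formulas for $\varphi$ coincide), and then $\varphi^2 = (S\iota)(\iota T) = ST = A$. In particular $\varphi$ commutes with $A$, which will be used repeatedly below.

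For step two I would obtain an explicit formula for $\varphi$ in Heisenberg coordinates. Writing the antiholomorphic involution as $\iota(\mathbf{z}) = M_\iota\overline{\mathbf{z}}$ for the obvious matrix $M_\iota$, one computes the product with the matrix $S$ of \eqref{S} and finds that $SM_\iota$ is upper triangular, so in particular $\varphi$ fixes $q_\infty$. Applying $SM_\iota\overline{\mathbf{p}}$ to the standard lift \eqref{lift} of $[z,t]$ and renormalising so that the third coordinate is $1$ yields
\begin{equation*}
\varphi[z,t] = \bigl[\,\overline{z}+c,\ -t + 2\,\Im(cz) - \sin\alpha_1\,\bigr],\qquad c = \frac{x_1 e^{i\alpha_2}}{\sqrt{2}} = \sqrt{\cos\alpha_1}\,e^{i\alpha_2}.
\end{equation*}
Substituting a generic point $\delta_\varphi(x) = [x + iy,\ 2xy - \sin\alpha_1/2]$ of $\Delta_\varphi$ (where $y=\Im(c)/2$) into this formula, and expanding using $\Re(c) = \sqrt{\cos\alpha_1}\cos\alpha_2$, yields $\varphi(\delta_\varphi(x)) = \delta_\varphi(x + \Re(c))$. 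Thus $\Delta_\varphi$ is preserved and $\varphi$ acts on it as a translation by $\Re(c) = \ell_A/2$, consistent with $\varphi^2=A$.

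For step three I would exploit that $\varphi$ is a Cygan isometry. The direct check $M_\iota^* H M_\iota = H$ gives $\langle\iota\mathbf{p},\iota\mathbf{q}\rangle = \overline{\langle\mathbf{p},\mathbf{q}\rangle}$, so combined with $S\in\mathrm{SU}(2,1)$ one obtains $|\langle\varphi\mathbf{p},\varphi\mathbf{q}\rangle| = |\langle\mathbf{p},\mathbf{q}\rangle|$ on standard lifts; since $\varphi$ also fixes $q_\infty$, it sends any Cygan sphere of radius $1$ to the Cygan sphere of radius $1$ centred at the image of its centre. Two base computations then suffice: $\varphi(p_B) = S(\iota(p_B)) = S(q_\infty) = p_{AB}$, the centre of $\Isom_0^-$; and evaluating the Heisenberg formula at $p_{AB}=[c,-\sin\alpha_1]$ gives $\varphi(p_{AB}) = [\ell_A,t_A] = A(p_B)$, the centre of $\Isom_1^+$. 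This proves $\varphi(\Isom_0^+) = \Isom_0^-$ and $\varphi(\Isom_0^-) = \Isom_1^+$; the general assertions then follow by conjugating with $A^k$, using $\Isom_k^{\pm} = A^k(\Isom_0^{\pm})$ and the commutation of $A$ with $\varphi$. The main computational obstacle is carrying out the product $SM_\iota$ and the associated projective renormalisation correctly in step two; after that, every other step reduces to an algebraic identity relating $c$, $\ell_A$, $t_A$, and trigonometric functions of $\alpha_1$, $\alpha_2$.
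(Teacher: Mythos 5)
Your proof is correct and follows essentially the same route as the paper: establish $\varphi^2=ST=A$ algebraically, observe that $\varphi$ fixes $q_\infty$ and is therefore a Cygan isometry, verify the translation along $\Delta_\varphi$ by direct computation, and deduce the action on the isometric spheres by tracking their centres ($p_B\mapsto p_{AB}\mapsto A(p_B)$) and propagating with powers of $\varphi^2=A$. The only difference is one of presentation: you make the Heisenberg formula for $\varphi$ explicit via the matrix $SM_\iota$, where the paper leaves the corresponding "direct calculation" implicit and checks the centres $p_{BA}\mapsto p_B\mapsto p_{AB}$ instead, which is the same computation shifted by $A$.
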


\begin{proof}
Using the fact that $T=\iota S\iota$ we see that 
$A=ST=S\iota S\iota=\varphi^2$. Moreover
$\varphi(p_A)=S\iota(p_A)=S(p_B)=p_A$. Hence $\varphi$ is a Cygan
isometry. It follows by direct calculation that $\varphi$ 
sends $\delta_\varphi(x)$ to $\delta_\varphi(x+\ell_A/2)$, and so preserves
$\Delta_\varphi$. Moreover,
$$
\varphi(p_{BA})=S\iota(p_{BA})=S(p_{AB})=p_B,\quad
\varphi(p_B)=S\iota(p_B)=S(p_A)=p_{AB}.
$$
Hence $\varphi$ sends $\I_{-1}^-$ to $\I_0^+$ since it is a Cygan
isometry mapping the centre of $\I_{-1}^-$ to the centre of $\I_0^+$.
Similarly, $\varphi$ sends $\I_0^+$ to $\I_0^-$. The action on other
isometric spheres follows since $\varphi^2=A$.
\end{proof}

\subsection{A combinatorial restriction.}\label{section-def-Z}

The following section is the crucial technical part of our work. As most 
of the proofs are computational, we will omit many of them here; they will 
be provided in Section \ref{section-technicalities}. We are now going to 
restrict our attention to those parameters in the region $\mathcal{L}$ 
such that the three isometric spheres $\I_0^+=\I(S)$, $\I_{0}^-=\I(S^{-1})$ 
and $\I_{-1}^-=\I(T)$ have no triple intersection. We will describe the 
region we are interested in by an inequality on $\alpha_1$ and $\alpha_2$.  
Prior to stating it, let us fix a little notation.

We denote by $\alpha_{2}^{\lim}=\arccos\bigl(\sqrt{3/8}\bigr)$. Then
$\mathcal{G}\bigl(4\cos^2(0),4\cos^2(\pm\alpha_2^{\lim})\bigr)
=\mathcal{G}(4,3/2)=0$ and so the two points 
$(0,\pm\alpha_2^{\lim})$ are the two cusps of the curve $\mathcal{P}$ located 
on the vertical axis (see  figure \ref{peachgraph}). 
Now, let $\mathcal{R}$ be the rectangle (depicted in 
Figure \ref{picture-peachandstone}) defined by
\begin{equation}
 \mathcal{R}=\left\{(\alpha_1,\alpha_2)\ :\ |\alpha_1|\leqslant \pi/6,\, 
|\alpha_2|\leqslant \alpha_2^{\lim}\right\}.
\end{equation}
We remark that in Lemma \ref{lem-R-in-L} we will prove that when 
$(\alpha_1,\alpha_2)\in\mathcal{R}$, the commutator $[A,B]$ is non elliptic. 
This means that $\mathcal{R}$ is contained in  the closure of $\mathcal{L}$.

\begin{defi}\label{def-Z}
Let $\mathcal{Z}$ denote the subset of $\mathcal{R}$ where the triple 
intersection $\Isom_0^+\cap\Isom_{-1}^{-}\cap\Isom_0^-$ is empty.
\end{defi}

\begin{figure}
\begin{center}
 \scalebox{0.3}{\includegraphics{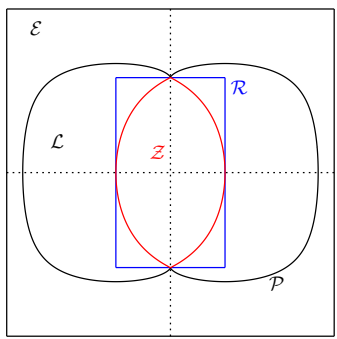}}
\caption{The parameter space, with the parabolicity curve $\mathcal{P}$ and 
the regions $\mathcal{E}$, $\mathcal{L}$. The region $\mathcal{Z}$ is the 
central region, which is contained in the rectangle $\mathcal{R}$.
\label{peachgraph}
\label{picture-peachandstone}}
\end{center}
\end{figure}

The following proposition characterises those points 
$(\alpha_1,\alpha_2)$ that lie in $\mathcal{Z}$. 

\begin{prop}\label{prop-triple-inter}
A parameter $(\alpha_1,\alpha_2)\in\mathcal{R}$ is in $\mathcal{Z}$ if and 
only if it satisfies
$$
\mathcal{D}(x_1^4,x_2^4)=
\mathcal{D}\bigl(4\cos^2(\alpha_1),4\cos^2(\alpha_1)\bigr)>0,
$$
where $\mathcal{D}$ is the polynomial given by 
$$
\mathcal{D}(x,y)=x^3y^3-9x^2y^2-27xy^2+81xy-27x-27.
$$
\end{prop}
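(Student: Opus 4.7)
The plan is to reduce the triple intersection $\Isom_0^+\cap\Isom_{0}^-\cap\Isom_{-1}^-$ to an algebraic problem using geographical coordinates on $\Isom_0^+=\mathcal{S}_{[0,0]}(1)$ as in Definition \ref{def-geog}. For a point $p\in\Isom_0^+$ with lift ${\bf g}(\alpha,\beta,w)=[-e^{-i\alpha},we^{i(-\alpha/2+\beta)},1]^T$, one has $|\langle{\bf g},{\bf q}_\infty\rangle|=1$ automatically. Since $\Isom_0^-=\Isom(S^{-1})$ has centre $S(q_\infty)=p_{AB}$ and $\Isom_{-1}^-$ has centre $A^{-1}(p_{AB})$, both of radius $1$, Definition \ref{Isomsphere} gives the two conditions
\begin{equation*}
|\langle{\bf g},\bp_{AB}\rangle|^2=1 \quad\text{and}\quad |\langle{\bf g},A^{-1}\bp_{AB}\rangle|^2=1,
\end{equation*}
using the standard lifts from \eqref{lift} and \eqref{fixedpoints} together with the matrix of $A$ from \eqref{normalAB}.

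I would then expand both conditions into real polynomial equations in $(\cos\alpha,\sin\alpha,\cos\beta,\sin\beta,w)$, whose coefficients are polynomial in $x_1^2=2\cos\alpha_1$, $x_2^2=2\cos\alpha_2$, $\sin\alpha_1$ and $\sin\alpha_2$. Both equations are quadratic in $w$ with the same leading coefficient $x_1^2$, so their difference is linear in $w$; solving this linear relation for $w$ and substituting back produces a single polynomial relation in $(\cos\alpha,\sin\alpha,\cos\beta,\sin\beta)$ with parameters $(\alpha_1,\alpha_2)$. Non-emptiness of the triple intersection is then equivalent to this relation admitting a real solution with $\alpha\in[-\pi/2,\pi/2]$, $\beta\in[0,\pi)$ and $w^2\leq 2\cos\alpha$; the critical boundary case corresponds to tangency of $\Isom_0^-\cap\Isom_{-1}^-$ with $\Isom_0^+$, at which the reduced system becomes singular. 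The hypothesis $(\alpha_1,\alpha_2)\in\mathcal{R}$ is used to ensure that all auxiliary discriminants retain definite signs, so that the geometric tangency condition agrees with the algebraic one.

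The final step is to extract $\mathcal{D}$ itself: computing the discriminant of the reduced polynomial and clearing positive factors picked up from the trigonometric substitutions should yield a positive multiple of $\mathcal{D}(x_1^4,x_2^4)=\mathcal{D}(4\cos^2\alpha_1,4\cos^2\alpha_2)$. The main obstacle is purely the length and bookkeeping of the symbolic computation. To keep things manageable, I would work with Hermitian cross products rather than fully expanding in trigonometric functions: the polar vectors $\bp\boxtimes\bq = H\overline{\bp\wedge\bq}$ of the complex lines joining pairs of centres give compact expressions, and the tangency condition becomes a resultant in which $\mathcal{D}$ appears as the non-trivial factor after removing the obvious common divisors such as $\cos\alpha_1$ and $\cos\alpha_2$. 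The symmetry $\varphi$ from Proposition \ref{prop-special-symmetry} can be used as a consistency check, since $\varphi$ permutes the family $\{\Isom_k^\pm\}$ in a controlled way and must preserve any intrinsic locus in parameter space carved out by combinatorial changes in the configuration of isometric spheres.
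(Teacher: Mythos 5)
Your overall strategy --- geographical coordinates on $\I_0^+$, the two unit-distance conditions to the centres $p_{AB}$ and $A^{-1}(p_{AB})$, elimination of variables, and a discriminant computation producing $\mathcal{D}$ --- is the same skeleton as the paper's proof (Lemmas \ref{lem-intersections}, \ref{triple-int-poly}, \ref{lem-tech-proof-theo-triple} and \ref{lem-L-in-Z}), but as written the plan has concrete gaps. First, the difference of your two quadratics in $w$ is not a generic linear equation: it equals $-8wx_1\cos(\alpha/2-\alpha_1/2)\cos(\beta+\alpha_1/2)\cos(\alpha_2)$, a \emph{homogeneous} linear form in $w$. ``Solving for $w$ and substituting back'' therefore captures only the spine $w=0$, whereas the genuine triple-intersection points (for instance $p_{ST^{-1}}$ and $p_{S^{-1}T}$ in the limit case, Lemma \ref{lem-triple-limit}) live on the other branch $\cos(\beta+\alpha_1/2)=0$, i.e.\ the meridian $\beta=(\pi-\alpha_1)/2$, on which $w$ remains free. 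So this elimination does not go through, and you are left with a two-variable problem in $(\alpha,w)$ constrained by the inequality $w^2\le 2\cos\alpha$.

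Second, the key idea your proposal is missing is the paper's Proposition \ref{prop-triple-inside}(2): if the triple intersection is non-empty it must contain a point of $\partial\HdC$, i.e.\ one with $w^2=2\cos\alpha$. This is what collapses the problem to a single variable; it is proved by showing the sum \eqref{sum-f0-fm1} restricted to the meridian is increasing in $\cos(\alpha/2-\alpha_1/2)$ and applying the intermediate value theorem, and the paper explicitly warns that it fails for general triples of bisectors, so it cannot be assumed. Only after this reduction does one obtain the quartic $L_{\alpha_1,\alpha_2}(T)$ in $T=\tan(\alpha/2)$ of \eqref{poly-condition}, whose discriminant \eqref{full-resultant} is a positive multiple of $\mathcal{D}(x_1^4,x_2^4)$ times the extra factor $(4-x_2^4)^2=16\sin^4(\alpha_2)$. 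Finally, positivity of the discriminant only says the root configuration of $L$ cannot change type within a connected region; one must still check on the coordinate axes that there are no roots in $[-1,1]$ (Lemma \ref{lem-L-axes}), and on $\alpha_2=0$ the discriminant vanishes identically because of the $\sin^4(\alpha_2)$ factor, $L$ degenerating there to a perfect square whose real double roots happen to lie outside $[-1,1]$. Your phrase ``the geometric tangency condition agrees with the algebraic one'' conceals exactly these verifications.
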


The region $\mathcal{Z}$ is depicted in Figure \ref{picture-peachandstone}: 
it is the interior of the central region of the figure. In fact, 
$\mathcal{Z}$ is the region in all of $\mathcal{L}$ where 
$\Isom_0^+\cap\Isom_{-1}^{-}\cap\Isom_0^-$ is empty, but as proving this is
more involved, we restrict ourselves to the rectangle $\mathcal{R}$. 
This provides a priori bounds on the parameters $\alpha_1$ and $\alpha_2$ 
that will make our computations easier. We will prove 
Proposition \ref{prop-triple-inter} in Section \ref{section-no-triple}.  
It relies on Proposition \ref{prop-Z-in-L}, describing the set of points 
where $\mathcal{D}(x_1^4,x_2^4)>0$
and on Proposition \ref{prop-triple-inside}, which gives geometric 
properties of the triple intersection. Proofs of 
Proposition \ref{prop-Z-in-L} and Proposition \ref{prop-triple-inside} 
will be given in Section \ref{section-proof-ZinL} and
Section \ref{section-proof-triple-inside} respectively.

\begin{prop}\label{prop-Z-in-L} 
The region $\mathcal{Z}$ is an open topological disc in $\mathcal{R}$, 
symmetric about the axes and intersecting them
in the intervals $\{\alpha_2=0,\,-\pi/6<\alpha_1<\pi/6\}$ and
$\{\alpha_1=0,\,-\alpha_2^{\lim}<\alpha_2<\alpha_2^{\lim}\}$. 

Moreover, the intersection  of the closure of $\mathcal{Z}$ with the 
parabolicity curve $\mathcal{P}$ consists of the two points 
$(0,\pm\alpha_2^{\lim})$.
\end{prop}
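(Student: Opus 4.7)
My plan is to work entirely in the coordinates $u=4\cos^2\alpha_1\in[3,4]$ and $v=4\cos^2\alpha_2\in[3/2,4]$, so that $\mathcal{R}$ becomes the rectangle $[3,4]\times[3/2,4]$ and both $\mathcal{D}$ and $\mathcal{G}$ become polynomials in $(u,v)$. The symmetries of $\mathcal{Z}$ under $\alpha_i\mapsto -\alpha_i$ are then immediate, and it suffices to work in the first quadrant. The plan has three steps: factor $\mathcal{D}$ on each edge of $\mathcal{R}$ to pin down the axial intersections, show that $\mathcal{D}(u,\cdot)$ has a unique root in $[3/2,4]$ for each $u\in(3,4]$ to get the disc property, and finally use Lemma \ref{lem-R-in-L} to locate $\mathcal{P}\cap\mathcal{R}$.

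First I would compute directly
\[
\mathcal{D}(u,4)=(u-3)(8u+3)^2, \qquad \mathcal{D}(4,v)=4(16v-15)(v-3/2)^2,
\]
\[
\mathcal{D}(3,v)=27(v-1)^2(v-4), \qquad \mathcal{D}(u,3/2)=(27/8)(u-4)(u^2-2u+2).
\]
The first two factorisations give immediately the stated axial intersections of $\mathcal{Z}$; the other two show the remaining edges of $\mathcal{R}$ lie in $\{\mathcal{D}\leq 0\}$, with equality only at the four points $(\pm\pi/6,0)$ and $(0,\pm\alpha_2^{\lim})$.

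Next, for fixed $u\in(3,4]$ the polynomial $v\mapsto\mathcal{D}(u,v)$ is a cubic with positive leading coefficient, non-positive at $v=3/2$ and strictly positive at $v=4$. I would establish uniqueness of its root in $[3/2,4]$ via the sign of
\[
\partial_v\mathcal{D}(u,v) = 3u\bigl(u^2v^2 - (6u+18)v + 27\bigr).
\]
The bracketed quadratic has discriminant $36(-2u^2+6u+9)>0$ on $[3,4]$ and roots $v_\pm(u)$ with $v_-(3)=1$, $v_+(3)=3$, $v_-(4)=9/8$, $v_+(4)=3/2$; in particular $v_-(u)<3/2\leq v_+(u)\leq 3$ throughout the interval. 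Hence on $[3/2,4]$ the derivative $\partial_v\mathcal{D}$ vanishes only at $v_+(u)$, so $\mathcal{D}(u,\cdot)$ is first decreasing then increasing and admits a unique zero $v^*(u)\in[v_+(u),4]$, varying smoothly in $u$ by the implicit function theorem with $v^*(3)=4$ and $v^*(4)=3/2$. The first-quadrant part of $\mathcal{Z}$ is then the region above the graph of $v^*$ and below $v=4$, an open topological triangle; gluing the four symmetric copies across the axes produces the open topological disc.

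For the last claim, Lemma \ref{lem-R-in-L} gives $\mathcal{G}\geq 0$ on $\mathcal{R}$. A gradient computation yields
\[
\partial_u\mathcal{G}=2v^2\bigl[uv(v-4)+9\bigr], \qquad \partial_v\mathcal{G}=4uv\bigl[uv(v-3)+9\bigr],
\]
and requiring both to vanish with $u,v>0$ forces $uv[(v-4)-(v-3)]=0$, i.e.\ $uv=0$, contradiction. So $\nabla\mathcal{G}$ is non-vanishing in the interior of $\mathcal{R}$, and since $\mathcal{G}\geq 0$, every zero of $\mathcal{G}$ in $\mathcal{R}$ must lie on $\partial\mathcal{R}$. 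A short edgewise analysis ($\mathcal{G}(u,4)=288u-27$; $\mathcal{G}(4,v)=(2v-3)^3(2v+1)$; $\mathcal{G}(u,3/2)=(135/16)(u-4/5)(4-u)$; and $\mathcal{G}(3,\cdot)$ strictly increasing and positive on $[3/2,4]$) identifies $\mathcal{P}\cap\mathcal{R}$ with the single $(u,v)$-point $(4,3/2)$, i.e.\ with the two points $(0,\pm\alpha_2^{\lim})$, which lie in $\overline{\mathcal{Z}}$ as endpoints of the axial interval at $\alpha_1=0$. The main obstacle is the one-sign-change argument for $\partial_v\mathcal{D}$, which hinges on the precise bounds on $v_+(u)$ inside $[3/2,3]$; everything else reduces to sign checks of explicit one-variable polynomials.
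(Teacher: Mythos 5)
Your proposal is correct and follows essentially the same route as the paper: the same coordinates $x=4\cos^2\alpha_1$, $y=4\cos^2\alpha_2$, the same edge factorisations of $\mathcal{D}$ on $[3,4]\times[3/2,4]$, a monotonicity argument showing the zero locus of $\mathcal{D}$ is a graph joining $(3,4)$ to $(4,3/2)$, and the no-interior-critical-points plus boundary check for $\mathcal{G}$ — which is precisely the content of Lemma \ref{lem-R-in-L}, so your final paragraph partly re-proves what you also cite. The one under-justified step is the claim that $v_-(u)<3/2\le v_+(u)\le 3$ \emph{throughout} $[3,4]$: the endpoint values you compute at $u=3$ and $u=4$ do not by themselves control the roots for intermediate $u$, and your entire single-sign-change argument for $\partial_v\mathcal{D}$ rests on this. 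The claim is true and the patch is one line: writing $q_u(v)=u^2v^2-(6u+18)v+27$, one has $q_u(3/2)=\tfrac{9u}{4}(u-4)\le 0$ and $q_u(3)=9(u-3)(u+1)\ge 0$, which places $3/2$ between the two roots and $3$ at or above the larger one. The paper sidesteps the issue with a slicker observation: for fixed $x_0\in[3,4)$ the function $\mathcal{D}(x_0,\cdot)$ is a cubic with positive leading coefficient that is negative and has negative $y$-derivative at $y=3/2$, hence has exactly one root in $(3/2,\infty)$, with no need to locate the critical points at all. (The paper also proves the analogous uniqueness with the roles of the two variables exchanged, but, as your argument shows, one direction suffices for the disc conclusion.)
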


\begin{prop}\label{prop-triple-inside}
\begin{enumerate}
 \item The triple intersection $\I_0^+\cap\I_0^-\cap\I_{-1}^-$ is contained 
in the meridian $\mathfrak{m}$ of $\I_0^+$ defined in geographical coordinates 
by $\beta=(\pi-\alpha_1)/2$.
\item If the triple intersection $\I_0^+\cap\I_0^-\cap\I_{-1}^-$ is non-empty, 
it contains a point in $\partial\HdC$.
\end{enumerate}
\end{prop}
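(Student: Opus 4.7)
The plan is to prove both parts of the proposition by direct computation in the geographical coordinates $(\alpha,\beta,w)$ on the unit Cygan sphere $\I_0^+=\mathcal{S}_{[0,0]}(1)$. For part~(1), I would parametrise a point $g(\alpha,\beta,w)\in\I_0^+$ via the lift \eqref{liftgeog} and compute the Hermitian pairings $\langle\mathbf{g},\mathbf{p}_{AB}\rangle$ and $\langle\mathbf{g},\mathbf{p}_{BA}\rangle$ using the explicit fixed point lifts in \eqref{fixedpoints}. The conditions for $g$ to lie on $\I_0^-$ and on $\I_{-1}^-$ are $|\langle\mathbf{g},\mathbf{p}_{AB}\rangle|^2=1$ and $|\langle\mathbf{g},\mathbf{p}_{BA}\rangle|^2=1$ respectively, since these two spheres have radius $1$ and centres $p_{AB}$ and $p_{BA}=A^{-1}(p_{AB})$. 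Because $\mathbf{p}_{BA}$ differs from $\mathbf{p}_{AB}$ only by a sign change and conjugation in the middle coordinate, the two expansions share their constant and quadratic-in-$w$ terms and differ only in their linear-in-$w$ cross terms. Subtracting them and simplifying via standard sum-to-product identities yields, up to a non-vanishing positive constant factor, the factored relation
$$
x_1\,w\,\cos\alpha_2\cdot\cos\!\Bigl(\tfrac{\alpha-\alpha_1}{2}\Bigr)\cdot\cos\!\Bigl(\beta+\tfrac{\alpha_1}{2}\Bigr)\;=\;0.
$$
In $\mathcal{R}$ we have $\cos\alpha_2>0$, and since $|\alpha|\leq\pi/2$ and $|\alpha_1|\leq\pi/6$ also $\cos((\alpha-\alpha_1)/2)>0$. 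For $w\neq 0$ this forces $\cos(\beta+\alpha_1/2)=0$, giving $\beta=(\pi-\alpha_1)/2$ as $\beta\in[0,\pi)$. For $w=0$, the point lies on the spine of $\I_0^+$, which by Proposition~\ref{prop-geog} is contained in every meridian, in particular $\mathfrak{m}$.

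For part~(2), the crucial further observation is that once we set $\beta=(\pi-\alpha_1)/2$, the two conditions $|\langle\mathbf{g},\mathbf{p}_{AB}\rangle|^2=1$ and $|\langle\mathbf{g},\mathbf{p}_{BA}\rangle|^2=1$ collapse to the same equation. A short computation using $\cos(\theta\pm\pi/2)=\mp\sin\theta$ and one application of sum-to-product reduces both to
$$
\bigl(2\cos\theta-x_1 w\sin\alpha_2\bigr)^2+\bigl(x_1 w\cos\alpha_2\bigr)^2\;=\;1,\qquad\theta:=(\alpha-\alpha_1)/2.
$$
Consequently the triple intersection equals $\mathfrak{m}\cap\I_0^-$, and in the coordinates $(\cos\theta,w)$ it traces a genuine ellipse, smoothly parametrised by $t$ via $x_1 w\cos\alpha_2=\sin t$ and $2\cos\theta-x_1 w\sin\alpha_2=\cos t$, whence $w(t)=\sin t/(x_1\cos\alpha_2)$ and $\cos\theta(t)=\cos(t-\alpha_2)/(2\cos\alpha_2)$.

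It remains to show that whenever this ellipse has a point in the closed Lagrangian disc $\overline{\mathfrak{m}}$, it must also meet the boundary circle $\partial\mathfrak{m}\cap\partial\HdC$ given by $w^2=2\cos(\alpha_1+2\theta)$. The idea is to substitute this boundary relation into the ellipse equation; after squaring in order to eliminate the sign of $w$, one obtains a single polynomial equation in $\cos\theta$ whose solvability is governed by the discriminant $\mathcal{D}(x_1^4,x_2^4)$ appearing in Proposition~\ref{prop-triple-inter}. One then checks that the same polynomial controls the existence of interior triple intersection points, so that both types of intersection appear and disappear simultaneously. Alternatively, and perhaps more cleanly, one can argue by continuity: a direct check at $(\alpha_1,\alpha_2)=(0,0)$ shows that the ellipse lies strictly outside the Lagrangian disc (so the triple intersection is empty), and the ellipse can only migrate inside as the parameters vary in $\mathcal{R}$ through a tangential contact with the boundary circle. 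I expect the main obstacle will be making this continuity/algebraic argument rigorous without redoing the computations behind Proposition~\ref{prop-triple-inter}; the explicit algebraic reduction linking the interior and boundary conditions to the same polynomial is likely the cleanest route.
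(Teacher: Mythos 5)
Your proof of part (1) is correct and is essentially the paper's computation: the difference of the two membership conditions on $\I_0^+$ factors as a non-vanishing multiple of $w\cos\bigl((\alpha-\alpha_1)/2\bigr)\cos(\beta+\alpha_1/2)\cos(\alpha_2)$, forcing $w=0$ (the spine, contained in every meridian) or $\beta=(\pi-\alpha_1)/2$. Your further observation that on $\mathfrak{m}$ the two conditions collapse to a single equation, an ellipse in the variables $\bigl(\cos((\alpha-\alpha_1)/2),w\bigr)$, is also correct and is consistent with the paper, which works with the sum $f^{[0]}_{\alpha_1,\alpha_2}+f^{[-1]}_{\alpha_1,\alpha_2}$ of \eqref{sum-f0-fm1} (equal to twice either function on the meridian).

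The gap is in part (2), and you have in effect flagged it yourself. What must be shown is that the ellipse cannot meet the open region $w^2<2\cos\alpha$ without also meeting the circle $w^2=2\cos\alpha$, i.e.\ that it cannot be swallowed entirely by the closed disc. Neither of your routes establishes this. Route (a) is circular in the paper's logical order: Proposition \ref{prop-triple-inter} and the discriminant computation behind it (Lemmas \ref{triple-int-poly} through \ref{lem-L-in-Z}) are obtained precisely by restricting attention to $\partial\HdC$, which is legitimate only once part (2) is known; the sentence ``one then checks that the same polynomial controls the existence of interior triple intersection points'' is the assertion to be proved, not a check. (There is also a technical snag: for $\alpha_1\neq0$ the boundary relation $w^2=2\cos(\alpha_1+2\theta)$ is not a function of $\cos\theta$ alone, so substituting it into the ellipse equation does not yield a single polynomial in $\cos\theta$; the paper has to pass to $T=\tan(\alpha/2)$ for this reason.) Route (b) proves a different statement: continuity in $(\alpha_1,\alpha_2)$ shows the transition from empty to non-empty triple intersection occurs through a tangency, but it does not exclude that at some later parameter the part of the ellipse lying in the closed disc has moved entirely into its interior, which is exactly the situation part (2) must rule out. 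The paper closes this gap with a pointwise monotonicity argument: for fixed $w$, the sum \eqref{sum-f0-fm1} is an increasing function of $\cos(\alpha/2-\alpha_1/2)$ (its derivative exceeds $16\cos(\alpha/2-\alpha_1/2)-16\sqrt{\cos\alpha\cos\alpha_1}\geq0$), so replacing $\alpha$ by $\tilde\alpha$ with $2\cos\tilde\alpha=w^2$ and $\tilde\alpha$ of sign opposite to $\alpha_1$ strictly decreases this quantity and produces a point of $\partial\mathfrak{m}$ where the sum is negative; since the sum is positive elsewhere on $\partial\mathfrak{m}$ (e.g.\ where $w\sin\alpha_2\leq0$), the intermediate value theorem yields a zero on $\partial\mathfrak{m}\subset\partial\HdC$. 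You need an argument of this kind in place of your sketch.
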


The second part of Proposition \ref{prop-triple-inside} is not true for 
general triples of bisectors. It will allow us to restrict ourselves to 
the boundary of $\HdC$ to prove Proposition \ref{prop-triple-inter}. 
Restricting ourselves to the region $\mathcal{Z}$ will considerably 
simplify the combinatorics of the family of isometric spheres 
$\{\I_k^{\pm}\ :\ k\in\Z\}$. The following fact will be crucial in our study;
compare Figure \ref{fig-proj-isom}.

\begin{prop}\label{prop-pairwise-intersections}
Fix $(\alpha_1,\alpha_2)$ a point in $\mathcal{Z}$. Then the isometric sphere 
$\I_0^+$ is contained in the exterior of the isometric spheres 
$\I_k^\pm$ for all $k$, except for $\I_1^+$, $\I_{-1}^+$, $\I_0^-$ and 
$\I_{-1}^-$.  
\end{prop}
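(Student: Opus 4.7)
The argument combines the projection to the $z$-plane of Remark \ref{rem-proj-cygan-sphere} with direct algebraic verification using the defining inequality $\mathcal{D}(4\cos^2\alpha_1,4\cos^2\alpha_2)>0$ of $\mathcal{Z}$. By Proposition \ref{centre-translates-S-Sm}, every $\I_k^\pm$ is a Cygan sphere of radius $1$, whose vertical projection is a closed Euclidean disc of radius $1$ centred at $k\ell_A$ (for $\I_k^+$) or at $k\ell_A+\sqrt{\cos\alpha_1}\,e^{i\alpha_2}$ (for $\I_k^-$). Whenever the Euclidean distance between two such projected centres exceeds $2$, the corresponding Cygan spheres are disjoint.

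I would first handle the far indices by the projection argument. Using the lower bound on $\ell_A=2\cos\alpha_1\cos^2\alpha_2$ available on $\mathcal{Z}\subset\mathcal{R}$, the quantity $|k|\ell_A$ exceeds $2$ for every $|k|$ above a small explicit threshold, disposing of $\I_k^+$ and (after a triangle inequality accounting for the extra translate $\sqrt{\cos\alpha_1}\,e^{i\alpha_2}$) of most $\I_k^-$. For the remaining close pairs, the symmetry $\varphi$ of Proposition \ref{prop-special-symmetry}, which satisfies $\varphi(\I_k^+)=\I_k^-$, $\varphi(\I_k^-)=\I_{k+1}^+$ and $\varphi^2=A$, halves the work: $\varphi$ carries $\I_0^+\cap\I_k^-$ to $\I_0^-\cap\I_{k+1}^+=A(\I_{-1}^-\cap\I_k^+)$, so the $\I_k^-$ cases reduce to $\I_k^+$ cases. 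I would then parametrise $\I_0^+$ in geographical coordinates (Definition \ref{def-geog}) via the lift \eqref{liftgeog}, and substitute into the defining equality \eqref{def-Isom} of $\I_k^\pm$ with the matrices \eqref{normalAB} and \eqref{S}. Each intersection condition becomes a trigonometric equation in $(\alpha,\beta,w)$ whose coefficients are polynomials in $\cos\alpha_i,\sin\alpha_i$; after eliminating $\beta$, the absence of admissible $w$-solutions (with $w^2\leq 2\cos\alpha$) reduces to the inequality $\mathcal{D}(4\cos^2\alpha_1,4\cos^2\alpha_2)>0$, which is exactly the defining condition of $\mathcal{Z}$. To upgrade pairwise disjointness to \emph{containment in the exterior}, I observe that the complement of any single $\I_k^\pm$ in $\HdC\cup\partial\HdC$ has exactly two connected components, the exterior being the one containing $q_\infty$; since $\I_0^+$ is connected and disjoint from $\I_k^\pm$ it lies in one of them, and exhibiting a path from a point of $\I_0^+$ to a neighbourhood of $q_\infty$ (for instance along a ray of increasing $u$ in horospherical coordinates) that avoids $\I_k^\pm$ identifies this component as the exterior.

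The principal obstacle is the algebraic step linking the trigonometric intersection conditions to $\mathcal{D}$ for the closest pairs, notably $(\I_0^+,\I_1^-)$ and its $\varphi$-conjugates. Near the cusps of $\mathcal{Z}$ on the vertical axis, $\ell_A$ approaches its minimum and even the Cygan distance between the two centres falls below $2$, so no coarse geometric estimate suffices; the disjointness is then a genuine algebraic consequence of $\mathcal{D}>0$, and its extraction by direct polynomial manipulation is the type of technical verification naturally deferred to Section \ref{section-technicalities}.
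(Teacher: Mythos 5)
Your strategy stalls exactly where the proof has to do work, and the reasons you give for deferring those cases rest on two incorrect beliefs. The pairs you flag as delicate, notably $(\I_0^+,\I_1^-)$, are in fact disposed of by the coarse estimate you claim must fail: the paper simply computes the Cygan distance between the centres $p_B$ of $\I_0^+$ and $A^k(p_{AB})$ of $\I_k^-$ and finds $d_{\rm Cyg}^4\ge\bigl(\tfrac{3k(k+1)}{2}+1\bigr)^2$, which is at least $16$ for $k\ge 1$ or $k\le -2$, with equality only at $(x_1^4,x_2^4)=(4,3/2)$; a similar computation gives $d_{\rm Cyg}^4\ge 27k^4/16$ for the centres of $\I_0^+$ and $\I_k^+$. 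The sole algebraic input is Lemma \ref{lem-est-1}: $\mathcal{D}(x,y)\ge 0$ with $0<x\le 4$ forces $xy\ge 6$. On $\mathcal{Z}$ one has $x_1^4x_2^4>6$ strictly, so the distance between the centres of $\I_0^+$ and $\I_1^-$ is strictly greater than $2$; it decreases \emph{to} $2$ at the cusp (where the two spheres become tangent, cf.\ Lemma \ref{lem-tangency} transported by $\varphi$) but never falls below it on $\overline{\mathcal{Z}}$, contrary to your claim. Relatedly, disjointness of $\I_0^+$ and $\I_1^-$ does not ``reduce to $\mathcal{D}>0$'': it holds on the strictly larger locus $x_1^4x_2^4>6$. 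The polynomial $\mathcal{D}$ characterises emptiness of the \emph{triple} intersection $\I_0^+\cap\I_0^-\cap\I_{-1}^-$ (Proposition \ref{prop-triple-inter}); for the pairwise statement only the much weaker consequence $xy\ge 6$ is used, and the geographical-coordinate elimination you propose is the machinery the paper reserves for that triple intersection (Sections \ref{section-proof-triple-inside} and \ref{section-no-triple}), not for this proposition.

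The parts you do carry out are either insufficient or unnecessary. The vertical projection handles large $|k|$ but leaves precisely the close pairs to the computation you never perform, so the proof as written has a genuine gap at its decisive step. The reduction by $\varphi$ sends $\I_0^+\cap\I_k^-$ to $\I_0^-\cap\I_{k+1}^+$, a statement about $\I_0^-$ rather than $\I_0^+$, so it only helps if you establish the whole $\varphi$-orbit of statements simultaneously; in the paper this symmetry is used to \emph{deduce} Corollary \ref{coro-pairwise-intersection} from the proposition, not to prove the proposition. Finally, the connectivity-and-path argument for ``contained in the exterior'' is superfluous: once the centres are at Cygan distance greater than $2$ and both radii equal $1$, the triangle inequality for the Cygan metric places every point of $\I_0^+$ at distance greater than $1$ from the centre of $\I_k^\pm$, hence in its exterior. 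I recommend replacing the deferred elimination by the direct distance computation, using the centres given in Proposition \ref{centre-translates-S-Sm}.
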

The proof of Proposition \ref{prop-pairwise-intersections} will be detailed 
in Section \ref{section-proof-pairwise}. We can give more information about 
the intersections $\I_0^\pm$ with these four other isometric spheres;
compare Figure \ref{fig-proj-isom}.

\begin{prop}\label{prop-interior}
If $(\alpha_1,\alpha_2)\in\mathcal{Z}$, then the intersection 
$\I_{-1}^-\cap\I_0^-$ is contained in the interior of $\I_0^+$.
\end{prop}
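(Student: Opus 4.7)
The plan is a connectedness and continuity argument, reducing the statement to a check at a single base point.

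First I would note that by Proposition \ref{prop-inter-isom} the set $\I_{-1}^- \cap \I_0^-$ is connected, since it is the intersection of two Cygan spheres. The defining property of $\mathcal{Z}$ in Definition \ref{def-Z} says that the triple intersection $\I_0^+\cap\I_0^-\cap\I_{-1}^-$ is empty, so $\I_{-1}^-\cap\I_0^-$ is a connected subset of $(\HdC\cup\partial\HdC)\setminus\I_0^+$. The latter has exactly two connected components, the interior and the exterior of $\I_0^+$; by connectedness, $\I_{-1}^-\cap\I_0^-$ lies entirely in one of them. It remains to identify which.

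Next I would argue that the ``side'' of $\I_0^+$ on which $\I_{-1}^-\cap\I_0^-$ sits is locally constant on $\mathcal{Z}$ as a function of $(\alpha_1,\alpha_2)$. Indeed, the three isometric spheres depend continuously on $(\alpha_1,\alpha_2)$, and any change of side would force a point of $\I_{-1}^-\cap\I_0^-$ to pass through $\I_0^+$ at some intermediate parameter, contradicting the very definition of $\mathcal{Z}$. Since $\mathcal{Z}$ is a connected topological disc by Proposition \ref{prop-Z-in-L}, the side is the same for all $(\alpha_1,\alpha_2)\in\mathcal{Z}$, so it suffices to check the statement at any convenient base point.

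Finally I would verify the claim at the $\R$-Fuchsian parameter $(\alpha_1,\alpha_2)=(0,0)$. Here \eqref{eq-def-lA} gives $\ell_A=2$ and $t_A=0$, and Proposition \ref{centre-translates-S-Sm} places the three unit-radius Cygan spheres $\I_0^+$, $\I_0^-$ and $\I_{-1}^-$ at centres $[0,0]$, $[1,0]$ and $[-1,0]$ respectively. The Heisenberg origin $[0,0]$ is the centre of $\I_0^+$, hence in its interior, and formula \eqref{Cygan} gives $d_{\rm Cyg}([0,0],[\pm 1,0])=1$, so $[0,0]$ lies on both $\I_0^-$ and $\I_{-1}^-$. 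Thus $\I_{-1}^-\cap\I_0^-$ meets the interior of $\I_0^+$ at the base point, and by the previous step the whole intersection lies in the interior throughout $\mathcal{Z}$. The only potential obstacle is a clean formulation of the no-switching step; this is where some care is needed, but it follows from Hausdorff-continuity of the intersection as $(\alpha_1,\alpha_2)$ varies together with the fact that a side change would force a triple intersection point at the transition parameter.
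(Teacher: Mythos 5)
Your proof is correct, and its first half --- connectedness of $\I_{-1}^-\cap\I_0^-$ via Proposition \ref{prop-inter-isom} together with emptiness of the triple intersection from the definition of $\mathcal{Z}$, forcing the whole intersection onto one side of $\I_0^+$ --- is exactly the paper's argument. Where you diverge is in identifying which side: you check it only at the $\R$-Fuchsian point $(\alpha_1,\alpha_2)=(0,0)$ and then propagate by a no-switching/continuity argument over the connected disc $\mathcal{Z}$. The paper instead observes that your base-point computation works uniformly in the parameters: the centre $p_B$ of $\I_0^+$ satisfies $|\la\bp_{AB},\bp_B\ra|=|\la\bp_{BA},\bp_B\ra|=1$ for \emph{every} $(\alpha_1,\alpha_2)$, so $p_B$ lies on both $\I_0^-$ and $\I_{-1}^-$, hence $\I_{-1}^-\cap\I_0^-$ always contains a point interior to $\I_0^+$ and the side is determined pointwise with no deformation. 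This matters for two reasons. First, your continuity step is the only place where genuine care is needed: you must justify Hausdorff upper semicontinuity of the compact intersection, and you must know the intersection is non-empty throughout $\mathcal{Z}$ for the ``side'' to be a well-defined locally constant function (if it were empty at some parameter your two alternatives would not be mutually exclusive and local constancy could fail). Second, the fact that rescues that step --- non-emptiness witnessed by $p_B$ at every parameter --- is precisely the fact that makes the whole detour unnecessary. So your argument closes, but the continuity scaffolding can be deleted in favour of the one-line uniform computation.
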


\begin{proof}
Since the point $p_B$ is the centre of $\I_0^+$, it lies in its interior. Moreover, $p_B$ lies on both 
$\I_{-1}^-$ and $\I_0^-$: indeed, $\la \bp_{AB},\bp_B\ra=\la \bp_{BA},\bp_B\ra=1$. By convexity of Cygan spheres
(see Proposition \ref{prop-inter-isom}), the intersection of the latter two 
isometric spheres is connected. This implies that $\I_{-1}^-\cap\I_0^-$ is 
contained in the interior of $\I_0^+$ for otherwise 
$\I_0^+\cap\I_{-1}^-\cap\I_0^-$ would not be empty.
\end{proof}

Using Proposition \ref{prop-special-symmetry}, applying powers of $\varphi$ 
to Propositions \ref{prop-pairwise-intersections} and \ref{prop-interior}
gives the following results describing all pairwise intersections. 

\begin{coro}\label{coro-pairwise-intersection}
Fix $(\alpha_1,\alpha_2)\in \mathcal{Z}$. Then for all $k\in\Z$:
\begin{enumerate}
\item $\Isom_k^+$ is contained in the exterior of all isometric spheres in 
$\{\I_k^{\pm}\ :\ k\in\Z\}$ except 
$\Isom_{k-1}^+$, $\Isom_{k-1}^-$, $\Isom_k^-$ and $\Isom_{k+1}^+$.
Moreover, $\Isom_k^+\cap\Isom_{k-1}^-\cap\Isom_k^-=\emptyset$ and
$\Isom_k^+\cap\Isom_{k-1}^+$ (respectively $\Isom_k^+\cap\Isom_{k+1}^+$)
is contained in the interior of $\Isom_{k-1}^-$ (respectively 
$\Isom_k^-$).
\item $\Isom_k^-$ is contained in the exterior of all isometric spheres in 
$\{\I_k^{\pm}\ :\ k\in\Z\}$ except $\Isom_{k-1}^-$, $\Isom_k^+$, $\Isom_{k+1}^+$,
and $\Isom_{k+1}^-$. Moreover, $\Isom_k^-\cap\Isom_k^-\cap\Isom_{k+1}^-=\emptyset$
and $\Isom_k^-\cap\Isom_{k-1}^-$ (respectively $\Isom_k^-\cap\Isom_{k+1}^-$)
is contained in the interior of $\Isom_k^+$ (respectively $\Isom_{k+1}^+$).
\end{enumerate}
\end{coro}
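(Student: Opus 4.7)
The proof is a direct application of the $\varphi$-symmetry from Proposition \ref{prop-special-symmetry}. First I would note that $\varphi(p_A)=p_A$ (established in the proof of that proposition), so $\varphi$ fixes $q_\infty$ and is a Cygan isometry; in particular it sends isometric spheres to isometric spheres and preserves the interior/exterior decomposition from Definition \ref{Isomsphere}.

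The key observation is that the relations $\varphi(\I_k^+)=\I_k^-$ and $\varphi(\I_k^-)=\I_{k+1}^+$ arrange the entire family $\{\I_k^\pm : k\in\Z\}$ into a single $\varphi$-orbit, totally ordered as
$$
\ldots,\ \I_{-1}^+,\ \I_{-1}^-,\ \I_0^+,\ \I_0^-,\ \I_1^+,\ \I_1^-,\ \I_2^+,\ \ldots,
$$
on which $\varphi$ acts as a shift by one. In this ordering the four exceptional spheres $\I_{-1}^+,\,\I_{-1}^-,\,\I_0^-,\,\I_1^+$ of Proposition \ref{prop-pairwise-intersections} are precisely the four nearest neighbours of $\I_0^+$ (at positions $\pm 1$ and $\pm 2$); the empty triple intersection of Definition \ref{def-Z} concerns three consecutive spheres centred at $\I_0^+$; and the pair $\I_{-1}^-,\,\I_0^-$ of Proposition \ref{prop-interior} consists of the two immediate neighbours of $\I_0^+$.

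I would then transport each of these three base statements around the orbit by applying the appropriate power of $\varphi$. To obtain part (1) of the corollary, apply $\varphi^{2k}$ to Proposition \ref{prop-pairwise-intersections} (yielding the exterior statement for $\I_k^+$ with the prescribed four exceptional neighbours), to Definition \ref{def-Z} (yielding $\I_k^+\cap\I_{k-1}^-\cap\I_k^-=\emptyset$), and apply $\varphi^{2k-1}$ and $\varphi^{2k+1}$ respectively to Proposition \ref{prop-interior} to get $\I_k^+\cap\I_{k-1}^+\subset\text{interior}(\I_{k-1}^-)$ and $\I_k^+\cap\I_{k+1}^+\subset\text{interior}(\I_k^-)$. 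Part (2) follows analogously by applying $\varphi^{2k+1}$ to the exterior statement and to Definition \ref{def-Z}, and $\varphi^{2k}$, $\varphi^{2k+2}$ to Proposition \ref{prop-interior}.

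The only obstacle is bookkeeping: because $\varphi$ toggles the $\pm$ superscript but advances the $k$-index only on every second application, one must verify that the four nearest neighbours and the two flanking spheres of each $\I_k^\pm$ in the $\varphi$-ordering match exactly the lists written in the corollary. This is a routine mod-$2$ case check on the shift index, automatic once the cyclic ordering above is in hand, and therefore poses no real difficulty.
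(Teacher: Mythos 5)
Your proposal is correct and is essentially the paper's own proof: the authors likewise dispose of the corollary in one line by applying powers of $\varphi$ from Proposition \ref{prop-special-symmetry} to Propositions \ref{prop-pairwise-intersections} and \ref{prop-interior} (and, implicitly, to the triple-intersection condition defining $\mathcal{Z}$). Your explicit bookkeeping of the $\varphi$-ordering checks out, and in fact the transported triple intersection you obtain for part (2), namely $\I_k^+\cap\I_k^-\cap\I_{k+1}^+=\emptyset$, is the intended statement where the corollary as printed contains a typo.
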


Proposition \ref{prop-pairwise-intersections} and 
Corollary \ref{coro-pairwise-intersection} are 
illustrated in Figure \ref{proj-spheres-in-Z}.

\begin{figure}
\begin{center}
\scalebox{0.8}{\includegraphics{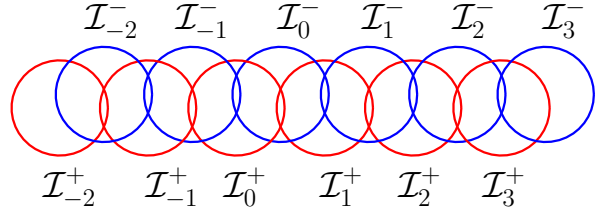}}
\caption{Vertical projections of the isometric spheres $\I_k^\pm$ for small 
values of $k$ at the point $(\alpha_1,\alpha_2)=(0.4,0.3)$\label{proj-spheres-in-Z}
\label{fig-proj-isom}}
\end{center}
\end{figure}

\section{Applying the Poincar\'e polyhedron theorem inside $\mathcal{Z}$.\label{section-poincare}}

\subsection{The Poincar\'e polyhedron theorem\label{section-general-Poincare}}
For the proof of our main result we need to use the Poincar\'e polyhedron
theorem for coset decompositions. The general principle of this result is 
described in Section 9.6 of \cite{Bear} in the context of the Poincar\'e disc. 
A generalisation to the case of $\HdC$ has already appeared in Mostow 
\cite{Most} and Deraux, Parker, Paupert \cite{DPP2}. In these cases it was 
assumed that the stabiliser of the polyhedron is finite.
In our case the stabiliser is the infinite cyclic group generated 
by the unipotent parabolic map $A$. 
There are two main differences from the version given in \cite{DPP2}.
First, we allow the polyhedron $D$ to have infinitely many facets, 
the stabiliser group $\Upsilon$ is also infinite, but we require that there 
are only finitely many $\Upsilon$-orbits of facets.  Secondly, we consider
polyhedra $D$ whose boundary intersects $\partial\HdC$ in an open set, which we
refer to as the ideal boundary of $D$. In fact, the version we need has many 
things in common with the version given by Parker, Wang and Xie \cite{PWX}.
A more general statement will appear in Parker's book \cite{Parbook}. 
In what follows we will adapt our statement of the Poincar\'e theorem
to the case we have in mind. 

\paragraph{The polyhedron and its cell structure}

Let $D$ be an open polyhedron in $\HdC$  and let
$\overline{D}$ denote its closure in $\overline{\HdC}=\HdC\cup\partial\HdC$.
We define the ideal boundary 
$\partial_\infty D$ of $D$ to be the intersection of $\overline{D}$
with $\partial\HdC$. This polyhedron has a natural cell 
structure which we suppose is locally finite inside $\HdC$.
We suppose that the facets of $D$ of all dimensions are piecewise 
smooth submanifolds of $\overline{\HdC}$. Let ${\mathcal F}_k(D)$ be 
the collection of facets of codimension $k$ having non-trivial intersection
with $\HdC$. We suppose that facets are closed subsets of
$\overline{\HdC}$. We write $f^\circ$ to denote the interior of a facet $f$, 
that is the collection of points of $f$ that are not contained in 
$\partial\HdC$ or any facet of a lower dimension (higher codimension). 
Elements of ${\mathcal F}_1(D)$ and ${\mathcal F}_2(D)$ are respectively called \emph{sides}
and \emph{ridges} of $D$. Since $D$ is a polyhedron, ${\mathcal F}_0(D)=\overline{D}$ and
each ridge in ${\mathcal F}_2(D)$ lies in exactly two sides in
${\mathcal F}_1(D)$. Similarly, the intersection of facets of $D$
with $\partial\HdC$ gives rise to a polyhedral structure on a subset of 
$\partial_\infty D$.
We let ${\mathcal {IF}}_k(D)$ denote the ideal facets of 
$\partial_\infty D$ of
codimension $k$ so that each facet in ${\mathcal {IF}}_k(D)$ is contained
in some facet of ${\mathcal F}_\ell(D)$ with $\ell<k$. 
In particular, we will also need to consider \emph{ideal vertices} in 
${\mathcal {IF}}_4(D)$. These are points of either the endpoints of
facets in ${\mathcal F}_3(D)$ or else they are points of $\partial\HdC$ 
contained in (at least) two facets of $D$ that do not intersect 
inside $\HdC$. Note that, since we have defined ideal facets to be
subsets of facets, it may be that $\partial\HdC$ contains points of 
$\partial_\infty D$ not contained in any ideal facet. In the case we consider, 
there will be one such point, namely the point at $\infty$ fixed by $A$.

\paragraph{The side pairing.}

We suppose that there is a \emph{side pairing} 
$\sigma:{\mathcal F}_1(D)\longrightarrow \Pu$ satisfying the following
conditions:
\begin{enumerate}
\item[(1)] For each side $s\in{\mathcal F}_1(D)$ with $\sigma(s)=S$ 
there is another side $s^-\in{\mathcal F}_1(D)$ so that $S$ maps $s$ 
homeomorphically onto $s^-$ preserving the cell structure. 
Moreover, $\sigma(s^-)=S^{-1}$.
Furthermore, if $s=s^-$ then $S=S^{-1}$ and $S$ is an involution.
In this case, we call $S^2=id$ a \emph{reflection relation}. 
\item[(2)] For each $s\in{\mathcal F}_1(D)$ with $\sigma(s)=S$ we have 
$\overline{D}\cap S^{-1}(\overline{D})=s$ and $D\cap S^{-1}(D)=\emptyset$.
\item[(3)] For each $w$ in the interior $s^\circ$ of $s$ 
there is an open neighbourhood $U(w)\subset \HdC$ of $w$ contained in 
$\overline{D} \cup S^{-1}(\overline{D})$.
\end{enumerate} 

In the example we consider, $D$ will be the Ford domain of a group.
In particular, each side $s$ will be contained in the
isometric sphere $\Isom(S)$ of $S=\sigma(s)$. Indeed,  
$s=\Isom(S)\cap\overline{D}$. By construction we have 
$S:\Isom(S)\longmapsto \Isom(S^{-1})$ and in this case
$s^-=\Isom(S^{-1})\cap\overline{D}$. The polyhedron $D$ will be
the (open) infinite sided polyhedron formed by the intersection 
of the exteriors of all the $\Isom(S)$ where $S=\sigma(s)$ and $s$
varies over ${\mathcal F}_1(D)$. By construction, the sides of $D$
are smooth hypersurfaces (with boundary) in $\HdC$.

Suppose that $D$ is invariant under a group $\Upsilon$ that 
is \emph{compatible} with the side pairing map in the sense that
for all $P\in\Upsilon$ and $s\in{\mathcal F}_1(D)$ we have
$P(s)\in{\mathcal F}_1(D)$ and $\sigma(Ps)=P\sigma(s)P^{-1}$.
We call the latter a \emph{compatibility relation}.
We suppose that there are finitely many $\Upsilon$-orbits of facets
in each ${\mathcal F}_k(D)$.  Since $P\in\Upsilon$ cannot fix a
side $s\in{\mathcal F}_1(D)$ pointwise, subdividing sides if necessary, 
we suppose that if $P\in\Upsilon$ maps a side in ${\mathcal F}_1(D)$ to itself
then $P$ is the identity. In particular, given sides $s_1$ and $s_2$ in 
${\mathcal F}_1(D)$, there is at most one $P\in\Upsilon$ sending $s_1$ to 
$s_2$. In the example of a Ford domain $\Upsilon$ will 
be $\Gamma_\infty$, the stabiliser of the point $\infty$ in the group $\Gamma$.

\paragraph{Ridges and cycle relations.}

Consider a ridge $r_1\in{\mathcal F}_2(D)$. 
Then, $r_1$ is contained in precisely two sides of $D$,
say $s_0^-$ and $s_1$. Consider the ordered triple $(r_1,s_0^-,s_1)$. 
The side pairing map $\sigma(s_1)=S_1$ sends $s_1$ to the side $s_1^-$ 
preserving its cell structure. In particular, $S_1(r_1)$ is a ridge of 
$s_1^-$, say $r_2$. Let $s_2$ be the other side containing $r_2$. Then we
obtain a new ordered triple $(r_2,s_1^-,s_2)$. Now apply $\sigma(s_2)=S_2$
to $r_2$ and repeat. Because there are only finitely many $\Upsilon$-orbits
of ridges, we eventually find an $m$ so that the ordered triple
$(r_{m+1},s_m^-,s_{m+1})=(P^{-1}r_1,P^{-1}s_0^-,P^{-1}s_1)$ for some $P\in\Upsilon$
(note that, by hypothesis, $P$ is unique).
We define a map $\rho:{\mathcal F}_2(D)\longrightarrow\Pu$ called
the \emph{cycle transformation} by
$\rho(r_1)=P\circ S_m\circ\cdots\circ S_1$.
(Note that for any ridge $r_1=s_0^-\cap s_1$, the cycle transformation map
$\rho(r_1)=R$ depends on a choice of one of the sides $s_0^-$ and $s_1$. If
we choose the other one then the ridge cycle becomes $R^{-1}$. This follows
from the fact that then $\sigma(s_j^-)=\sigma(s_j)^{-1}$ and from the
compatibility relations.) By construction, the cycle transformation 
$R=\rho(r_1)$ maps the ridge $r_1$ to itself setwise.
However, $R$ may not be the identity on $r_1$, nor on $\HdC$.
Nevertheless, we suppose that $R$ has order $n$. The relation
$R^n=id$ is called the \emph{cycle relation} associated to $r_1$.

Writing the cycle transformation $\rho(r_1)=R$ in terms of $P$ and the $S_j$, 
we let ${\mathcal C}(r_1)$ be the collection of suffix subwords of $R^n$. 
That is
$$
{\mathcal C}(r_1)=\Bigl\{S_j\circ\cdots\circ S_1\circ R^k\ :\ 
0\le j\le m-1,\ 0\le k\le n-1\Bigr\}.
$$
We say that \emph{the cycle condition} is satisfied at $r_1$ provided:
\begin{enumerate}
\item[(1)] 
$$
r_1=\bigcap_{C\in{\mathcal C}(r_1)}C^{-1}(\overline{D}).
$$
\item[(2)] If $C_1,\,C_2\in{\mathcal C}(r_1)$ with $C_1\neq C_2$ then
$C_1^{-1}(D)\cap C_2^{-1}(D)=\emptyset$.
\item[(3)] For each $w\in r_1^\circ$ there is an open neighbourhood
$U(w)$ of $w$ so that
$$
U(w)\subset \bigcup_{C\in{\mathcal C}(r_1)}C^{-1}(\overline{D}).
$$
\end{enumerate}

\paragraph{Ideal vertices and consistent horoballs.} 

 Suppose that the set ${\mathcal {IF}}_4(D)$ of ideal vertices of $D$ 
is non-empty. In our applications, there are no edges (that is 
${\mathcal F}_3(D)$ is empty) and the only ideal vertices arise 
as points of tangency between the ideal boundaries of ridges in
${\mathcal F}_2(D)$. In order to simplify our discussion below, we
will only treat this case. We require that 
there is a system of \emph{consistent horoballs} based at the ideal
vertices and their images under the side pairing maps (see page 152
of \cite{EP} for definition). 
For each ideal vertex $\xi\in{\mathcal {IF}}_4(D)$, 
the consistent horoball $H_\xi$ is a horoball based at $\xi$ with the 
following property. Let $\xi\in{\mathcal {IF}}_4(D)$ and let 
$s\in{\mathcal F}_1(D)$ be a side with $\xi\in s$.  
Then  the side pairing $S=\sigma(s)$ maps $\xi$ to a point $\xi^-$ in $s^-$.  
Note that $\xi^-$ is not necessarily an ideal vertex (since it could be that $\xi$ is 
a point of tangency between two sides whose closures in $\overline{\HdC}$
are otherwise disjoint and $\xi^-$ may be a point of tangency between 
two nested bisectors only one of which contributes a side of $D$). In our case
this does not happen and so we may assume $\xi^-$ also lies
in ${\mathcal {IF}}_4(D)$ and so has a consistent horoball $H_{\xi^-}$.
In order for these horoballs to form a system of consistent horoballs we 
require that for each ideal vertex $\xi$ and each side $s$ with $\xi\in s$
the side pairing map $\sigma(s)$ should map the horoball $H_\xi$ onto 
the horoball $H_{\xi^-}$. In particular, any cycle of side pairing maps
sending $\xi$ to itself must also send $H_\xi$ to itself.

\paragraph{Statement of the Poincar\'e polyhedron theorem.}

We can now state the version of the Poincar\'e polyhedron theorem that we
need (compare \cite{Most} or \cite{DPP2}).

\begin{theo}\label{theo-Poincare-cosets}
Let $D$ be a smoothly embedded polyhedron $D$ in $\HdC$ together with a side
pairing $\sigma:{\mathcal F}_1(D)\longrightarrow \Pu$. Let $\Upsilon<\Pu$
be a group of automorphisms of $D$ compatible with the side pairing
and suppose that each ${\mathcal F}_k(D)$ contains finitely many 
$\Upsilon$-orbits.  Fix a presentation for $\Upsilon$ with generating set
${\mathcal P}_\upsilon$ and relations ${\mathcal R}_\Upsilon$.
Let $\Gamma$ be the group generated by ${\mathcal P}_\Upsilon$ and the side 
pairing maps $\{\sigma(s)\}$. Suppose that the cycle condition is satisfied
for each ridge in ${\mathcal F}_2(D)$ and that there is a system of 
consistent horoballs at all the ideal vertices of $D$ (if any).
Then: 
\begin{enumerate}
\item[(1)] The images of $D$ under the cosets of $\Upsilon$ in $\Gamma$ 
tessellate $\HdC$. That is $\HdC\subset\bigcup_{A\in\Gamma}A(\overline{D})$ and 
$D\cap A(D)=\emptyset$ for all $A\in\Gamma-\Upsilon$.
\item[(2)] The group $\Gamma$ is discrete and a fundamental domain for its 
action on $\HdC$ is obtained from the intersection of $D$ with a fundamental 
domain for $\Upsilon$.
\item[(3)] A presentation for $\Gamma$ (with respect to the generating set 
${\mathcal P}_\Upsilon\cup\{\sigma(s)\}$) has the 
following set of relations: the relations ${\mathcal R}_\Upsilon$ in
$\Upsilon$, the compatibility relations between $\sigma$ and $\Upsilon$, 
the reflection relations and the cycle relations.
\end{enumerate}
\end{theo}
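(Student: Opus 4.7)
The plan is to adapt the classical Poincar\'e polyhedron argument to handle the infinite stabiliser $\Upsilon$ and the infinitely many sides by building an abstract developing space. Let $\tilde{\Gamma}$ denote the abstract group defined by the presentation in statement (3), and let $\phi : \tilde{\Gamma} \to \Pu$ be the natural homomorphism sending abstract generators to the corresponding side pairings and elements of $\Upsilon$. The homomorphism is well-defined because every relation in the presentation is already satisfied in $\Pu$: the relations $\mathcal{R}_\Upsilon$ by hypothesis, the compatibility relations by definition of the side pairing, and the reflection and cycle relations by construction. The target image is precisely the group $\Gamma$ of the statement, and the plan is to produce a tessellation of $\HdC$ indexed by cosets $\tilde{\Gamma}/\Upsilon$ from which discreteness, the fundamental domain assertion, and injectivity of $\phi$ (hence the presentation) all follow.

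The first step is to build an abstract space $X = \bigl(\tilde{\Gamma} \times \overline{D}\bigr)/\sim$, where $\sim$ is the equivalence relation generated by the identifications $(g\sigma(s), x) \sim (g, \sigma(s)x)$ for $x \in s$ and $(gp, x) \sim (g, px)$ for $p \in \Upsilon$ and $x \in \overline{D}$. The evaluation map $(g,x) \mapsto \phi(g)(x)$ descends to a well-defined map $\pi : X \to \overline{\HdC}$, precisely because the cycle, compatibility, and reflection relations are imposed in $\tilde{\Gamma}$. The key analytical step is to prove that the restriction $\pi|_{X_0}$ to $X_0 := \pi^{-1}(\HdC)$ is a local homeomorphism, by verifying the local model at each cell stratum: interior points of $D$ are trivial; interior points of a side $s^\circ$ are handled by conditions (2) and (3) on the side pairing, which together produce exactly two sheets meeting along $s$; interior points of a ridge $r^\circ$ are handled by the three parts of the cycle condition, which ensure the translates $C^{-1}(\overline{D})$ for $C \in \mathcal{C}(r)$ tile a neighbourhood of $r$ without overlap and close up because $R^n = \mathrm{id}$; ideal vertices are handled by the consistent horoball system.

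Once local homeomorphism is established, connectedness of $X_0$ (inherited from $\overline{D}$ and the gluings) together with simple connectedness of $\HdC$ upgrades $\pi|_{X_0}$ to a covering map and then to a homeomorphism, provided the image is shown to be all of $\HdC$ (openness is automatic from local homeomorphism, and closedness follows by a standard argument using local finiteness of the tessellation). From this homeomorphism, conclusion (1) is immediate: the images $\phi(g)\overline{D}$ cover $\HdC$ and pairwise intersect only in lower-dimensional strata for $g$ in distinct $\Upsilon$-cosets. Injectivity of $\phi$ then follows: if $\phi(g) = \mathrm{id}$ with $g \notin \Upsilon$, the two cells $\{e\} \times D$ and $\{g\} \times D$ in $X_0$ would map to the same open set of $\HdC$, contradicting injectivity of $\pi$. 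This yields the presentation in (3), and discreteness together with the fundamental domain description in (2) follow from local finiteness of the resulting tessellation.

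The main obstacle is the local verification at ideal vertices, where neither the finite-stabiliser arguments of \cite{DPP2,Most} nor the compact-polyhedron arguments apply directly: a priori, infinitely many translates of $\overline{D}$ could accumulate at a boundary point, destroying local finiteness. The consistent horoball hypothesis is precisely the substitute — cutting $\overline{D}$ by each horoball $H_\xi$ replaces the analysis near $\xi$ by a combinatorial problem on the horosphere $\partial H_\xi$, where only finitely many cells meet and the usual ridge-type cycle argument closes up. A secondary technical point is that, although there are infinitely many ridges, there are only finitely many $\Upsilon$-orbits; the compatibility relations $\sigma(Ps) = P\sigma(s)P^{-1}$ guarantee that cycle transformations at $\Upsilon$-related ridges are conjugate, so verifying the cycle condition on one representative per orbit suffices.
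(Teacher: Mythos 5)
First, a point of comparison: the paper does not prove Theorem \ref{theo-Poincare-cosets} at all. It is stated as a known result, with the proof deferred to Mostow, to Deraux--Parker--Paupert, to Parker--Wang--Xie, to Epstein--Petronio (for the consistent-horoball condition) and to Parker's forthcoming book. So your proposal cannot be checked against an argument in this paper; it can only be judged against the standard developing-space proofs in those references, whose overall strategy (abstract group $\tilde\Gamma$, quotient space $X=(\tilde\Gamma\times\overline{D})/\sim$, evaluation map $\pi$, local models at each stratum, then globalisation) you are indeed following.

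Within that strategy there is one genuine gap, and it sits at the crucial step. You pass from ``$\pi|_{X_0}$ is a local homeomorphism'' to ``$\pi|_{X_0}$ is a covering map'' by invoking connectedness of $X_0$ and simple connectedness of $\HdC$, with surjectivity supplied by an open-and-closed argument. This implication is false in general: a surjective local homeomorphism from a connected space onto a simply connected space need not be a covering map, because the number of sheets can fail to be locally constant. What the standard proofs actually establish is a completeness (equivalently, path-lifting) property for $X_0$: every path in $\HdC$ issuing from a point of the image lifts to $X_0$, and it is a \emph{complete} local homeomorphism that is automatically a covering. Proving this completeness is where the real content of the theorem lies --- one must exclude a path that meets infinitely many translates $\phi(g)(\overline{D})$ in finite time --- and it is exactly here that the local finiteness of the cell structure, the finiteness of the set of $\Upsilon$-orbits of facets, and the consistent horoballs at ideal vertices are consumed. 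Your write-up instead \emph{assumes} ``local finiteness of the resulting tessellation'' both to get closedness of the image and, later, to get discreteness and statement (2); but local finiteness of the family $\{\phi(g)(\overline{D})\}$ is essentially equivalent to the conclusion being proved, so as written the argument is circular at this point. The remaining parts of your outline --- the local models at interiors of sides and of ridges via the side-pairing and cycle conditions, the reduction to one ridge per $\Upsilon$-orbit through the compatibility relations, and the deduction of injectivity of $\phi$ and hence of the presentation once the tessellation is in hand --- are correct and agree with the treatments cited in the paper.
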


\subsection{Application to our examples.}\label{sec-apply-poincare}
We are now going to apply Theorem \ref{theo-Poincare-cosets} to the group 
generated by $S$ and $A$. Explicit matrices for these transformations are 
provided in equations \eqref{normalAB} and \eqref{S}. Our aim is to prove:

\begin{theo}\label{thm-main-Z}
Suppose that $(\alpha_1,\alpha_2)$ is in $\mathcal{Z}$. That is,
$\mathcal{D}\bigl(4\cos^2(\alpha_1),4\cos^2(\alpha_1)\bigr)>0$,
where $\mathcal{D}(x,y)$ is the polynomial defined in
Proposition \ref{prop-triple-inter}.
Then the group $\Gamma=\langle S,A\rangle$ associated to the parameters
$(\alpha_1,\alpha_2)$ is discrete and has the presentation
\begin{equation}\label{eq-pres-poin}
\langle S,\,A\ :\ S^3=(A^{-1}S)^3=id\rangle.
\end{equation}
\end{theo}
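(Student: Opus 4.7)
The plan is to apply the Poincar\'e polyhedron theorem (Theorem \ref{theo-Poincare-cosets}) to the polyhedron
$$
D = \bigcap_{k\in\Z}\Bigl(\mathrm{ext}(\I_k^+)\cap\mathrm{ext}(\I_k^-)\Bigr)
$$
with stabilising subgroup $\Upsilon = \langle A\rangle \cong \Z$. Since $\Upsilon$ is free cyclic it contributes no relations, the side-pairings will not be involutions so no reflection relations arise, and the $A$-compatibility is built into the definition of the pairings, so the Poincar\'e theorem will output a presentation whose generators are $A$ together with one pairing per $\Upsilon$-orbit of sides and whose only relations are the ridge-cycle relations.

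The combinatorial structure of $D$ is read off Corollary \ref{coro-pairwise-intersection} and Proposition \ref{prop-interior}. There are two $\Upsilon$-orbits of sides, $s_k^\pm = \I_k^\pm\cap\overline{D}$, represented by $s_0^+$ and $s_0^-$. Among the four isometric spheres potentially meeting $\I_0^+$, the intersections $\I_0^+\cap\I_{-1}^+$ and $\I_0^+\cap\I_1^+$ are swallowed by the interiors of $\I_{-1}^-$ and $\I_0^-$, so the only ridges of $D$ up to $\Upsilon$-action are $r_0 = \I_0^+\cap\I_0^-$ and $r_{-1} = \I_0^+\cap\I_{-1}^-$. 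I would define the side-pairings by $\sigma(s_k^+) = A^kSA^{-k}$, which automatically maps $\I_k^+$ to $\I_k^-$ as isometric spheres and is compatible with $\Upsilon$.

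The crux is the ridge-cycle computation, and at each ridge it reduces to the observation that a regular elliptic element of order three cyclically permutes the three parabolic fixed points whose triple equidistant locus cuts out the ridge. At $r_0$, the defining equation $|\langle q,{\bf q}_\infty\rangle|=|\langle q,{\bf p}_B\rangle|=|\langle q,{\bf p}_{AB}\rangle|$ is preserved by $S$ because $S$ cyclically permutes $\{q_\infty,p_B,p_{AB}\}$; hence $S(r_0)=r_0$ setwise, the cycle procedure returns to $(r_0,s_0^-,s_0^+)$ after one step with $P = \mathrm{id}$, and $\rho(r_0)=S$ of order three gives the relation $S^3=\mathrm{id}$. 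At $r_{-1}$ one uses instead that $T$ cyclically permutes $\{q_\infty,p_B,p_{BA}\}$ together with the identification $\I_{-1}^- = A^{-1}\I(S^{-1}) = \I(T)$; the same triple-locus argument shows $T^{-1}(r_{-1}) = r_{-1}$ setwise, which rearranges (via $T = S^{-1}A$) to $S(r_{-1}) = A(r_{-1}) = \I_1^+\cap\I_0^-$. Thus one step of the cycle procedure sends $(r_{-1},s_{-1}^-,s_0^+)$ to $A\cdot(r_{-1},s_{-1}^-,s_0^+) = (Ar_{-1},s_0^-,s_1^+)$, giving $P = A^{-1}$ and $\rho(r_{-1}) = A^{-1}S = T^{-1}$ of order three, i.e.\ the relation $(A^{-1}S)^3 = \mathrm{id}$.

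The main remaining obstacle is verifying the three local cycle conditions at each ridge: that the images of $\overline{D}$ under the subwords of the cycle transformation have pairwise disjoint interiors and together cover an open neighbourhood of every ridge-interior point. This reduces to checking that the three Cygan bisectors meeting at the triple equidistant locus of each ridge -- two of which are sides of $D$ and the third of which is a side of the neighbouring tile $SD$ (resp.\ $TD$) -- separate $D$, $SD$ and $S^{-1}D$ (resp.\ $D$, $T^{-1}D$ and $TD$) into three disjoint wedges with total dihedral angle $2\pi$, and this follows from convexity of Cygan-ball interiors (Proposition \ref{prop-inter-isom}) together with Corollary \ref{coro-pairwise-intersection}. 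For the ideal-boundary hypotheses, $q_\infty$ is the only parabolic fixed point appearing on $\partial_\infty D$ (the other candidates $p_B, p_{AB}, p_{BA}$ lie inside isometric spheres, and $[A,B]$ is loxodromic throughout $\mathcal{Z}$ by Proposition \ref{prop-Z-in-L}), and the $A$-invariant horoball $\{u\geqslant c\}$ at $q_\infty$ provides a consistent horoball system. Applying Theorem \ref{theo-Poincare-cosets} then produces the presentation \eqref{eq-pres-poin} and discreteness of $\Gamma$.
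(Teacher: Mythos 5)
Your proposal is correct and follows essentially the same route as the paper: the same polyhedron $D$, the same side pairings $\sigma(s_k^\pm)=A^kS^{\pm 1}A^{-k}$ with $\Upsilon=\langle A\rangle$, the same identification $\I_{-1}^-=\I(T)$, the same two ridge-cycle computations yielding $S^3=id$ and $(A^{-1}S)^3=id$, and the same observation that $D$ has no ideal vertices for parameters in the interior of $\mathcal{Z}$. The only cosmetic difference is at the local tessellation step, where the paper replaces your dihedral-angle/wedge picture by the equivalent (and slightly cleaner) observation that, by Corollary \ref{coro-pairwise-intersection}, near $r_0^+$ the domain $D$ coincides with the Ford domain of the finite cyclic group $\langle S\rangle$, whose three images under $id$, $S$, $S^{-1}$ partition a neighbourhood of the ridge according to which of the three quantities $\bigl|\langle{\bf z},{\bf q}_\infty\rangle\bigr|$, $\bigl|\langle{\bf z},S^{-1}{\bf q}_\infty\rangle\bigr|$, $\bigl|\langle{\bf z},S{\bf q}_\infty\rangle\bigr|$ is smallest.
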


We obtain the presentation $\langle S,\,T\ :\ S^3=T^3=id\rangle$ by changing generators to $S$ and $T=A^{-1}S$.

\paragraph{Definition of the polyhedron and its cell structure. }
The infinite polyhedron we consider is the intersection of the exteriors
of all the isometric spheres in $\{\I_k^\pm\ :\ k\in\Z\}$.

\begin{defi}\label{defi-F}
We call $D$  the intersection of the exteriors of all isometric spheres 
$\I_k^{+}$ and $\I_k^-$ with centres $A^kS^{-1}(q_\infty)$ and $A^kS(q_\infty)$ 
respectively :
\begin{equation}\label{eq-D}
D=\Bigl\{q\in \HdC\ :\ d_{\rm Cyg}\bigl(q,A^kS^{\pm 1}(q_\infty)\bigr)> 1 
\hbox{ for all }k\in\Z\Bigr\}. 
\end{equation}
The set of sides of $D$ is ${\mathcal F}_1(D)=\{s_k^+,\,s_k^-\ :\ k\in\Z\}$ 
where $s_k^+=\Isom_k^+\cap \overline{D}$ and $s_k^-=\Isom_k^-\cap \overline{D}$. 
\end{defi}

Using Corollary \ref{coro-pairwise-intersection} we can completely
describe $s_k^+$ and $s_k^-$.

\begin{prop}\label{prop-describe sides}
The side $s_k^\pm$ is topologically a solid cylinder 
in $\HdC\cup\partial\HdC$. More precisely, $s_k^\pm$ is a product 
$D\times [0,1]$ where for each $t\in[0,1]$, the fibre $D\times \{t\}$ is 
homeomorphic to a closed disc in $\overline{\HdC}$ whose boundary is 
contained in $\partial\HdC$. The intersection of $\partial s_k^+$ 
(respectively $\partial s_k^-$) with $\HdC$ is the disjoint union of 
the topological discs $s_k^+\cap s_{k-1}^-$ and $s_k^+\cap s_k^-$ 
(respectively $s_k^-\cap s_k^+$ and $s_k^-\cap s_{k+1}^-$).
\end{prop}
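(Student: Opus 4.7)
The plan is a topological argument combining Corollary \ref{coro-pairwise-intersection} (which identifies exactly which isometric spheres bound $s_k^\pm$) with the disjointness of pairwise intersections forced by $(\alpha_1,\alpha_2)\in\mathcal{Z}$ via Proposition \ref{prop-triple-inter}.

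I would first treat $s_k^+$. By Corollary \ref{coro-pairwise-intersection}, $\Isom_k^+$ lies in the exterior of every $\Isom_j^\pm$ except $\Isom_{k-1}^+$, $\Isom_{k-1}^-$, $\Isom_k^-$ and $\Isom_{k+1}^+$; moreover $\Isom_k^+\cap\Isom_{k-1}^+$ sits in the interior of $\Isom_{k-1}^-$ and $\Isom_k^+\cap\Isom_{k+1}^+$ in the interior of $\Isom_k^-$. Hence neither $+$-neighbour contributes a new piece of $\partial s_k^+$, and $s_k^+$ is just the intersection of the closed topological 3-ball $\Isom_k^+\subset\overline{\HdC}$ with the exteriors of $\Isom_{k-1}^-$ and $\Isom_k^-$. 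Write $d_{k-1}=\Isom_k^+\cap\Isom_{k-1}^-$ and $d_k=\Isom_k^+\cap\Isom_k^-$. By Proposition \ref{prop-inter-isom} each is a connected closed 2-disc whose boundary circle lies on the boundary 2-sphere $\partial\Isom_k^+\cap\partial\HdC$; the $\mathcal{Z}$-hypothesis together with Proposition \ref{prop-triple-inter} forces $d_{k-1}$ and $d_k$ to be disjoint in $\overline{\HdC}$.

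The two disjoint boundary circles split the boundary 2-sphere of $\Isom_k^+$ into two disjoint caps (lying in the interiors of $\Isom_{k-1}^-$ and $\Isom_k^-$) and a connecting annulus $A_k^+\subset\partial\HdC$. Correspondingly, the two disjoint properly embedded 2-discs $d_{k-1}$ and $d_k$ cut the 3-ball $\Isom_k^+$ into three closed 3-balls: two ``lens'' regions lying inside $\Isom_{k-1}^-$ and $\Isom_k^-$ respectively, and a central region equal to $s_k^+$. Its boundary 2-sphere is the union $d_{k-1}\cup A_k^+\cup d_k$, with the two discs in $\HdC$ and the annulus in $\partial\HdC$. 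A 3-ball whose boundary 2-sphere is decomposed as two disjoint discs plus a connecting annulus is homeomorphic to a solid cylinder $\Delta\times[0,1]$, with $\Delta\times\{0,1\}$ identified with $d_{k-1}\sqcup d_k$ and $\partial\Delta\times[0,1]$ with $A_k^+$; this gives the required cylindrical structure in which fibre discs have boundary in $\partial\HdC$. The claim about $\partial s_k^+\cap\HdC$ is then immediate from $d_{k-1}=s_k^+\cap s_{k-1}^-$ and $d_k=s_k^+\cap s_k^-$. The statement for $s_k^-$ follows by the symmetric half of Corollary \ref{coro-pairwise-intersection}, or by transporting the argument through the involution $\varphi$ of Proposition \ref{prop-special-symmetry}.

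The main technical point is realising the solid-cylinder product structure concretely, i.e.\ producing a continuous one-parameter family of closed discs in $s_k^+$ that interpolates between $d_{k-1}$ and $d_k$ and keeps boundaries on $A_k^+\subset\partial\HdC$. A natural candidate is the family of intersections of $\Isom_k^+$ with a continuous path of unit-radius Cygan spheres whose centres move along a segment in the Heisenberg group from the centre of $\Isom_{k-1}^-$ to that of $\Isom_k^-$. Each such intersection is a connected 2-disc by Proposition \ref{prop-inter-isom}, and Proposition \ref{prop-triple-inside} controls where tangencies could possibly occur (on the meridian $\beta=(\pi-\alpha_1)/2$), which should suffice to show that the interpolating family of discs remains pairwise disjoint and varies continuously, yielding the desired foliation.
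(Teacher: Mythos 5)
Your proposal is correct and follows essentially the same route as the paper: restrict the relevant neighbours via Corollary \ref{coro-pairwise-intersection}, use Proposition \ref{prop-inter-isom} for connectedness of the pairwise intersections and the emptiness of the triple intersection (the defining property of $\mathcal{Z}$) for their disjointness, and conclude the cylinder structure. The paper's proof is just a terser version of this; your extra paragraph on an explicit interpolating family of Cygan spheres is more detail than the paper supplies, but it is not needed for the argument to close.
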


\begin{proof}
Since $s_k^+$ is contained in $\I_k^+$, its only possible intersections
with other sides are contained in $\I_{k-1}^+$, $\I_{k-1}^-$, $\I_{k+1}^+$ and
$\I_{k+1}^-$ by Corollary \ref{coro-pairwise-intersection}. Since $\I_k^+\cap\I_{k-1}^+$ and $\I_k^+\cap\I_{k+1}^+$ are
contained in the interiors of other isometric spheres, the intersections 
$s_k^+\cap s_{k-1}^+$ and $s_k^+\cap s_{k+1}^+$ are empty. 
Also, $\I_k^+\cap\I_{k-1}^-\cap\I_k^-=\emptyset$ and so 
$s_k^+\cap s_{k-1}^-$ and $s_k^+\cap s_k^-$ are disjoint.
Since isometric spheres are topological balls and their pairwise intersections
are connected, the description of $s_k^+$ follows. A similar argument describes
$s_k^-$.
\end{proof}

The side pairing $\sigma:{\mathcal F}_1(D)\longrightarrow\Pu$ is defined by
\begin{equation}\label{side-pairing}
\sigma(s_k^+)=A^kSA^{-k},\qquad \sigma(s_k^-)=A^kS^{-1}A^{-k}.
\end{equation}
Let $\Upsilon=\la A\ra$ be the infinite cyclic group generated by $A$. 
By construction the side pairing $\sigma$ is compatible with $\Upsilon$. 
Furthermore, using Proposition \ref{prop-describe sides} the set of ridges
is ${\mathcal F}_2(D)=\{r_k^+,\,r_k^-\ :\ k\in\Z\}$ where
$r_k^+=s_k^+\cap s_k^-$ and $r_k^-=s_k^+\cap s_{k-1}^-$. 
We can now verify that $\sigma$ satisfies the first condition of being
a side pairing.

\begin{prop}\label{prop-side-homeo}
The side pairing map $\sigma(s_k^+)=A^kSA^{-k}$ is a homeomorphism from
$s_k^+$ to $s_k^-$. Moreover $\sigma(s_k^-)$ sends $r_k^+=s_k^+\cap s_k^-$
to itself and sends $r_k^-=s_k^+\cap s_{k-1}^-$ to $r_{k+1}^-=s_k^-\cap s_{k+1}^+$.
\end{prop}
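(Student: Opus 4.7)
The plan is to reduce to $k=0$ and then identify each of the two ridges $r_0^+$ and $r_0^-$ as a triple-equidistance bisector intersection which is setwise preserved by the corresponding order-$3$ elliptic of Proposition~\ref{ST}. Combining these with the identity $A=ST$ then yields the ridge correspondences, and the cylinder description of sides gives the homeomorphism.

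\emph{Reduction.} Since $A^k$ is an isometry fixing $q_\infty$, conjugation by $A^k$ carries $\sigma(s_0^+)=S$ to $\sigma(s_k^+)=A^kSA^{-k}$ and sends $s_0^\pm$, $r_0^+$, $r_0^-$, $r_1^-$ to $s_k^\pm$, $r_k^+$, $r_k^-$, $r_{k+1}^-$. So it suffices to prove the claims for $k=0$.

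\emph{The two ridges as triple intersections.} By Lemma~\ref{lem-isomA}, $S$ restricts to a homeomorphism $\I_0^+\to\I_0^-$. The identity $\langle Sq,v\rangle=\langle q,S^{-1}v\rangle$ combined with the fact that $S\bp_\infty$, $S\bp_{AB}$, $S\bp_B$ are unit-modulus multiples of $\bp_{AB}$, $\bp_B$, $\bp_\infty$ (a direct check from \eqref{S}) shows that $S$ cyclically permutes the three equidistance conditions defining
\[
\I_0^+\cap\I_0^-=\bigl\{q:|\langle q,\bp_\infty\rangle|=|\langle q,\bp_B\rangle|=|\langle q,\bp_{AB}\rangle|\bigr\},
\]
so $S$ setwise preserves this intersection. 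The same argument with $T$ in place of $S$ handles the other ridge: Lemma~\ref{lem-isomA}(2) together with $T=S^{-1}A$ (from $A=ST$) gives $\I_{-1}^-=\I(A^{-1}S^{-1}A)=\I(S^{-1}A)=\I(T)$, and since $T$ cyclically permutes $(q_\infty,p_B,p_{BA})$ the set $\I_0^+\cap\I_{-1}^-=\I(S)\cap\I(T)$ is setwise preserved by $T$. Combining Corollary~\ref{coro-pairwise-intersection}, Proposition~\ref{prop-triple-inter} and the connectedness of bisector intersections (Proposition~\ref{prop-inter-isom}), each of the two connected $2$-disks $\I_0^+\cap\I_0^-$ and $\I_0^+\cap\I_{-1}^-$ is disjoint from every other $\I_l^\pm$ and therefore lies entirely in $\overline D$. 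Hence $r_0^+=\I_0^+\cap\I_0^-$ with $S(r_0^+)=r_0^+$, and $r_0^-=\I_0^+\cap\I_{-1}^-$ with $T(r_0^-)=r_0^-$.

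\emph{Assembly.} From $A=ST$ we get $S=AT^{-1}$, hence
\[
S(r_0^-)=A\,T^{-1}(r_0^-)=A(r_0^-)=r_1^-,
\]
using $T(r_0^-)=r_0^-$ and the $A$-equivariance $A(r_0^-)=r_1^-$. Since $S$ is a continuous bijection of $\HdC\cup\partial\HdC$, restricts to a homeomorphism $\I_0^+\to\I_0^-$, and carries the two disjoint boundary disks $r_0^+\sqcup r_0^-$ of $s_0^+$ bijectively onto the two disjoint boundary disks $r_0^+\sqcup r_1^-$ of $s_0^-$ (Proposition~\ref{prop-describe sides}), it restricts to a homeomorphism $s_0^+\to s_0^-$.

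\emph{Main obstacle.} The delicate point is showing that the two $2$-disks $\I_0^+\cap\I_0^-$ and $\I_0^+\cap\I_{-1}^-$ lie \emph{entirely} in $\overline D$. Corollary~\ref{coro-pairwise-intersection} restricts the list of other spheres that can meet $\I_0^+$, while Proposition~\ref{prop-triple-inter} rules out the critical triple intersection $\I_0^+\cap\I_0^-\cap\I_{-1}^-$; the remaining potential triple crossings (e.g.\ with $\I_{\pm 1}^+$) are excluded by the containment statements in Corollary~\ref{coro-pairwise-intersection}. Connectedness then forces each $2$-disk to be nowhere strictly inside any other $\I_l^\pm$; the only non-obvious verification, that each disk sits on the \emph{outside} rather than the inside of the nearby spheres, can be carried out at the $\R$-Fuchsian center of $\mathcal Z$ and propagated to all of $\mathcal Z$ by continuity since no triple crossing occurs inside $\mathcal Z$.
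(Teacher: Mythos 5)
Your proof is correct and follows essentially the same route as the paper's: reduce to $k=0$, observe that $S$ cyclically permutes the three equidistance conditions defining $r_0^+=\I(S)\cap\I(S^{-1})$, and deduce the second ridge correspondence from $A=ST$ together with the cylinder structure of the sides from Proposition~\ref{prop-describe sides}. The only (harmless) variation is that for $r_0^-$ you use $T$-invariance of $\I(T)\cap\I(T^{-1})$ and $S=AT^{-1}$, where the paper instead computes directly that $S$ carries the centre of $\I_{-1}^-$ to the centre of $\I_1^+$; you are also more explicit than the paper about why the ridges coincide with the full bisector intersections.
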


\begin{proof}
By applying powers of $A$ we need only need to consider the case where $k=0$.
First, the ridge $r_0^+=s_0^+\cap s_0^-=\I(S)\cap\I(S^{-1})$ is defined by the triple equality
\begin{equation}\label{tripleSSm}
|\la \bz,\bq_\infty\ra|= |\la \bz, S^{-1}\bq_\infty\ra|=|\la \bz, S\bq_\infty\ra|.
\end{equation}
The map $S$ cyclically permutes $p_B=S^{-1}(q_\infty)$, $p_A=q_\infty$, 
$p_{AB}=S(q_\infty)$, and so maps $r_0^+$ to itself. Similarly, consider 
$r_0^-=s_0^+\cap s_{-1}^-$. The side pairing map $S$
sends $A^{-1}S(q_\infty)$, the centre of $\I_{-1}^-$, to
$$
S(A^{-1}S)(q_\infty)=S(T^{-1}S^{-1})S(q_\infty)=ST^2(q_\infty)
=(ST)S^{-1}(ST)(q_\infty)=AS^{-1}(q_\infty),
$$
which is the centre of $\I_1^+$, where we have used 
$A^{-1}=T^{-1}S^{-1}$, $T^{-1}=T^2$ and $ST(q_\infty)=q_\infty$.
Therefore $r_0^-=s_0^+\cap s_{-1}^-$ is sent to $r_1^-=s_0^-\cap s_1^+$
as claimed. The rest of the result follows from our description of $s_k^\pm$ in
Proposition \ref{prop-describe sides}.
\end{proof} 

\paragraph{Local tessellation. }
We now prove local tessellation around the sides and ridges of $D$.
\begin{itemize}
 \item[$s_k^\pm$.] Since $\sigma(s_k^\pm)=A^kS^{\pm 1}A^{-1}$ sends the exterior 
of $\I_k^\pm$ to the
interior of $\I_k^\mp$ we see that $D$ and $A^kS^{\pm 1}A^{-k}(D)$ have
disjoint interiors and cover a neighbourhood of each point in $s_k^{\mp}$.
Together with Proposition \ref{prop-side-homeo} this means $\sigma$
satisfies the three conditions of being a side pairing.
\item[$r_0^+$.]  Consider the case of $r_0^+=s_0^+\cap s_0^-=\I(S)\cap\I(S^{-1})$,
which is given by \eqref{tripleSSm}. Observe that $r_0^+$ is mapped to itself 
by $S$. Using Proposition \ref{prop-side-homeo},
we see that when constructing the cycle transformation 
for $r_0^+$ we have one ordered triple $(r_0^+,s_0^-,s_0^+)$ and the cycle 
transformation $\rho(r_0^+)=S$. The cycle relation is $S^3=id$ and 
${\mathcal C}(r_0^+)=\{id,\,S,\,S^2\}$. Consider an open neighbourhood
$U_0^+$ of $r_0^+$ but not intersecting any other ridge. The intersection
of $D$ with $U_0^+$ is the same as the intersection of $U_0^+$ with 
the Ford domain $D_S$ for the order three group $\langle S\rangle$. 
Since $S$ has order 3 this Ford domain is the intersection of the exteriors 
of $\I(S)$ and $\I(S^{-1})$. For $z$ in  $D_S$, 
$|\langle{\bf z},{\bf q}_\infty\rangle|$ is the smallest of the three
quantities in \eqref{tripleSSm}. Applying $S=\sigma(s_0^+)$ and
$S^{-1}=\sigma(s_0^-)$ gives regions $S(D_S)$ and $S^{-1}(D_S)$ where one of the other two
quantities is the smallest. Therefore $U_0^+\cap S(U_0^+)\cap S(U_0^-)$
is an open neighbourhood of $r_0^+$ contained in $D\cup S(D)\cup S^{-1}(D)$.
This proves the cycle condition at $r_0^+$.
\item[$r_0^-$.] Now consider $r_0^-=s_0^+\cap s_{-1}^-$. When constructing 
the cycle transformation for $r_0^-$ we start with the ordered triple 
$(r_0^-,s_{-1}^-,s_0^+)$. Applying $S=\sigma(s_0^+)$ to $r_0^-$ gives the 
ordered triple $(r_1^-,s_0^-,s_1^+)$, which is simply $(Ar_0^-,As_{-1}^-,As_0^+)$. Thus 
the cycle transformation of $r_0^-$ is $\rho(r_0^-)=A^{-1}S=T^{-1}$, which has order 3.
Therefore the cycle relation is $(A^{-1}S)^3=id$, 
and ${\mathcal C}(r_0^-)=\{id,\,A^{-1}S,\,(A^{-1}S)^2\}$.  
Noting that $\I_0^+$ has centre $S^{-1}(q_\infty)S^{-1}A(q_\infty)=T(q_\infty)$ and 
$\I_{-1}^-$ has centre $A^{-1}S(q_\infty)=T^{-1}(q-\infty)$ we see
$\I_0^+=\I(T^{-1})$ and $\I_0^-=\I(T)$. Therefore a similar argument
involving the Ford domain for $\langle T\rangle$ shows that the cycle 
condition is satisfied at $r_0^-$.
\item[$r_k^\pm$.] Using compatibility of the side pairings with $\Upsilon=\langle A\rangle$, 
we see that $\rho(r_k^+)=A^kSA^{-k}$ with cycle relation 
$(A^kSA^{-k})^3=A^kS^3A^{-k}=id$ and that the cycle condition is satisfied 
at $r_k^+$. Likewise, $\rho(r_k^-)=A^k(A^{-1}S)A^{-k}=A^{k-1}SA^{-k}$ with 
cycle relation $(A^{k-1}SA^{-k})^3=A^k(A^{-1}S)A^{-k}=id$ and the cycle 
condition is satisfied at $r_k^-$.
\end{itemize}

This is sufficient to prove Theorem \ref{thm-main-Z} by applying the Poincar\'e 
polyhedron theorem when $D$ has no ideal vertices, that is to all groups 
$\Gamma$ in the interior of ${\mathcal Z}$.
In particular, $\Gamma$ is generated by the generator $A$ of
$\Upsilon$ and the side pairing maps. Using the compatibility relations,
there is only one side pairing map up to the action of $\Upsilon$, namely $S$.
There are no reflection relations, and (again up to the action of $\Upsilon$)
the only cycle relations are $S^3=id$ and $(A^{-1}S)^3=id$. Thus the
Poincar\'e polyhedron theorem gives the presentation \eqref{eq-pres-poin}.
This completes the proof of Theorem \ref{thm-main-Z}.

For groups on the boundary of $\mathcal{Z}$ the same result is also 
true. This follows from the fact (Chuckrow's theorem): the algebraic limit 
of a sequence of discrete and faithful representations of a non virtually 
nilpotent group in Isom($\HnC$) is discrete and faithful (see for instance 
Theorem 2.7 of \cite{CooLonThi} or \cite{Mart} for a more general result 
in the frame of negatively curved groups).

We do not need to apply the Poincar\'e
polyhedron theorem for these groups. However, to describe the manifold at 
infinity for the limit groups, we will need to know a fundamental domain, 
and we will have to go through a similar analysis in the next section.

\section{The limit group.}\label{section-limit-group}

In this section, we consider the group $\Gamma^{\lim}$, and unless otherwise 
stated, the parameters $\alpha_1$ and $\alpha_2$ will always be assumed to 
be equal to $0$ and $\alpha_2^{\lim}$ respectively. We know already that 
$\Gamma^{\lim}$ is discrete and isomorphic to $\Z_3\ast\Z_3$. Our goal is to 
prove that its manifold at infinity is homeomorphic to the complement of 
the Whitehead link. For these values of the parameters, the maps $S^{-1}T$ and $ST^{-1}$ are unipotent parabolic 
(see the results of Section \ref{section-type-commutator}), and we 
denote by $V_{S^{-1}T}$ and $V_{ST^{-1}}$ respectively the sets of (parabolic) 
fixed points of conjugates of 
$S^{-1}T$ and $ST^{-1}$ by powers of $A$. 
\begin{enumerate}
\item As in the previous section, we apply the Poincar\'e polyhedron theorem, 
this time to the group $\Gamma^{\lim}$. We obtain an infinite $A$-invariant 
polyhedron, still denoted $D$, which is a fundamental domain for $A$-cosets. 
This polyhedron is slightly more complicated than the one in the previous 
section due to the appearance of ideal vertices that are the points in 
$V_{S^{-1}T}$ and $V_{ST^{-1}}$.
\item We analyse the combinatorics of the ideal boundary $\partial_\infty D$ 
of this polyhedron. More precisely, we will see that the quotient of 
$\partial_\infty D\setminus \left(\{p_A\}\cup V_{S^{-1}T}\cup V_{ST^{-1}}\right)$ 
by the action of the group $\la S, T\ra$ is homeomorphic the complement 
of the Whitehead link, as stated in Theorem \ref{theo-CRWLC}. 
\end{enumerate}

\subsection{Matrices and fixed points.}

Before going any further, we provide  specific expressions for the various objects we consider
at the  limit point. When $\alpha_1=0$ and $\alpha_2=\alpha_2^{\lim}$, the map 
$\varphi$ described in Proposition \ref{prop-special-symmetry} is given in 
Heisenberg coordinates by
\begin{equation}\label{limit-phi}
\varphi : [z,t]\longmapsto 
\left[\overline{z}+\sqrt{3/8}+i\sqrt{5/8},-t+x\sqrt{5/2}+y\sqrt{3/2}\right].
\end{equation}
In particular its invariant line $\Delta_\varphi$ is parametrised by 
\begin{equation}\label{param-inv-phi}
\Delta_\varphi
=\Bigl\lbrace\delta_{\varphi}(x)=\left[ \,x+i\sqrt{5/32},\,x\sqrt{5/8}\,\right]
\ : \ x\in\R\Bigr\rbrace.
\end{equation}
The parabolic map $A=\varphi^2$ acts on $\Delta_\varphi$ as  
$A:\delta_\varphi(x)\longmapsto \delta_\varphi(x+\sqrt{3/2})$. As a matrix it
is given by 
\begin{equation}\label{A-lim}
A=\begin{bmatrix}
    1 & -\sqrt{3} & -3/2+i\sqrt{15}/2\\0 & 1 & \sqrt{3}\\ 0&0&1
   \end{bmatrix}.
\end{equation}
We can decompose $A$ into the product of regular elliptic maps $S$ and $T$:
\begin{equation*}
S=\begin{bmatrix}1 & \sqrt{3}/2-i\sqrt{5}/2 & -1 \\
   -\sqrt{3}/2-i\sqrt{5}/2 & -1 & 0\\
   -1 & 0 & 0 
  \end{bmatrix},\quad
 T=\begin{bmatrix} 0 &0 &-1\\ 0& -1& -\sqrt{3}/2+i\sqrt{5}/2\\ 
-1 &\sqrt{3}/2+i\sqrt{5}/2 & 1 
\end{bmatrix}
\end{equation*}
These maps cyclically permute $(p_A,p_{AB},p_B)$ and $(p_A,p_B,p_{BA})$ where
\begin{equation}\label{eq-pslim}
\bp_A=\begin{bmatrix}1 \\ 0 \\ 0\end{bmatrix}, \quad 
\bp_B=\begin{bmatrix}0 \\ 0 \\ 1\end{bmatrix}, \quad
\bp_{AB}=\begin{bmatrix} -1\\ \sqrt{3}/2+i\sqrt{5/2}\\1\end{bmatrix}, \quad
\bp_{BA}=\begin{bmatrix} -1\\-\sqrt{3}/2+i\sqrt{5}/2\\1\end{bmatrix}.
\end{equation}
Using $\alpha_1=0$, we will occasionally
use the facts from Proposition \ref{prop-dec-ST} that $(S,T)$
is $\C$-decomposable and $(A,B)$ is $\R$-decomposable.

As mentioned above, in the group $\Gamma^{\lim}$ the elements 
$ST^{-1}$, $S^{-1}T$, $TST$, $STS$ and the commutator $[A,B]=(ST^{-1})^3$ 
are unipotent parabolic. For future reference, we provide here lifts of their 
fixed points, both as vectors in $\C^3$ and in terms of geographical 
coordinates $g(\alpha,\beta)$ (we omit the $w$ coordinates: since 
we are on the boundary at infinity, it is equal to $\sqrt{2\cos\alpha}$).
\begin{eqnarray}\label{fixed-points-limit}
\bp_{ST^{-1}} & = &\begin{bmatrix}
-1/4+i\sqrt{15}/4\\\sqrt{3}/4+i\sqrt{5}/4\\1\end{bmatrix}
\ =\ {\bf g}\left(\arccos(1/4),\pi/2\right), \nonumber\\
 \bp_{S^{-1}T} & = &\begin{bmatrix}
-1/4-i\sqrt{15}/4\\-\sqrt{3}/4+i\sqrt{5}/4\\1\end{bmatrix}
\ =\ {\bf g}\left(-\arccos(1/4),\pi/2\right),\nonumber\\
 \bp_{TST} & = &\begin{bmatrix}
-1\\-3\sqrt{3}/4+i\sqrt{5}/4\\1\end{bmatrix}
\ =\ {\bf g}\left(0,-\arccos\left(\sqrt{27/32}\right)\right),\nonumber \\
 \bp_{STS} & = &\begin{bmatrix}
-1\\3\sqrt{3}/4+i\sqrt{5}/4\\1\end{bmatrix}
\ =\ {\bf g}\left(0,\arccos\left(\sqrt{27/32}\right)\right). 
\end{eqnarray}

It follows from \eqref{limit-phi} that $\varphi$ acts on these parabolic
fixed points as follows: 
\begin{equation}\label{action-phi}
 \cdots p_{T^{-1}STST}\overset{\varphi}{\longrightarrow} 
p_{TST}\overset{\varphi}{\longrightarrow} 
p_{S^{-1}T}\overset{\varphi}{\longrightarrow} 
p_{ST^{-1}}\overset{\varphi}{\longrightarrow} 
p_{STS}\overset{\varphi}{\longrightarrow} p_{STSTS^{-1}}\cdots 
\end{equation}

\subsection{The Poincar\'e theorem for the limit group.}
\label{section-Poincare-limit}

The limit group has extra parabolic elements. Therefore, in order to
apply the Poincar\'e theorem, we must construct a system of consistent
horoballs at these parabolic fixed points (see Section \ref{section-general-Poincare}).

\begin{lem}\label{lem-tangency}
The isometric spheres $\I_1^+$  and $\I_{-1}^-$ are tangent at $p_{ST^{-1}}$.
The isometric spheres $\I_{-1}^+$ and $\I_0^-$ are tangent at $p_{S^{-1}T}$.
\end{lem}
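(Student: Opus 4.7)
The plan is first to prove the tangency of $\I_1^+$ and $\I_{-1}^-$ at $p_{ST^{-1}}$ by a direct computation, then to deduce the second tangency from the first by applying the isometry $\varphi$ of Proposition~\ref{prop-special-symmetry}.

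For the first tangency, I would start by identifying the two Cygan spheres. By Proposition~\ref{centre-translates-S-Sm}, $\I_1^+$ has radius~$1$ and centre $A(p_B)=[\ell_A,t_A]$, and $\I_{-1}^-=\I(T)$ has radius~$1$ and centre $T^{-1}(q_\infty)=p_{BA}$. At the limit parameters $(\alpha_1,\alpha_2)=(0,\alpha_2^{\lim})$, \eqref{eq-def-lA} gives $\ell_A=\sqrt{3/2}$ and $t_A=\sqrt{15}/2$, while the Heisenberg coordinates of $p_{BA}$ read off from \eqref{eq-pslim} as $[-\sqrt{3/8}+i\sqrt{5/8},\,0]$. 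Using the explicit lifts in \eqref{fixed-points-limit}, \eqref{A-lim} and \eqref{eq-pslim}, a direct Hermitian-product computation gives
\[
\langle\bp_{ST^{-1}},\,A\bp_B\rangle \;=\; \langle\bp_{ST^{-1}},\,\bp_{BA}\rangle \;=\; -1,
\]
so that $p_{ST^{-1}}$ lies on both Cygan spheres at Cygan distance~$1$ from their centres.

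To promote this incidence into a tangency I would use the projection trick of Remark~\ref{rem-proj-cygan-sphere}. The two Cygan spheres project vertically onto the closed Euclidean discs of radius~$1$ centred at $\sqrt{3/2}$ and $-\sqrt{3/8}+i\sqrt{5/8}$ in~$\C$, and the squared Euclidean distance between these centres is
\[
\bigl|3\sqrt{3/8}-i\sqrt{5/8}\bigr|^2 \;=\; \tfrac{27}{8}+\tfrac{5}{8} \;=\; 4,
\]
so that these two discs are externally tangent in~$\C$. From the defining equation $(|z|^2+u)^2+t^2=r^4$ with $u\ge 0$, a Cygan sphere of radius~$r$ contains a unique point of $\overline{\HdC}$ above each point on the boundary circle of its projection disc. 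Hence $\I_1^+\cap\I_{-1}^-$ is entirely contained in the fibre over the unique tangency point of the two projection discs and must reduce to a single point of $\partial\HdC$, which is $p_{ST^{-1}}$ by the incidence verified above.

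The second tangency is then immediate by symmetry: Proposition~\ref{prop-special-symmetry} gives $\varphi^{-1}(\I_1^+)=\I_0^-$ and $\varphi^{-1}(\I_{-1}^-)=\I_{-1}^+$, while \eqref{action-phi} gives $\varphi^{-1}(p_{ST^{-1}})=p_{S^{-1}T}$, and applying the isometry $\varphi^{-1}$ to the first assertion yields the second. The only genuine work is numerical, namely the verification that the Euclidean distance between the two disc centres in~$\C$ is exactly~$2$; I do not foresee any conceptual obstacle.
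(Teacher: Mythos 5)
Your proposal is correct and follows essentially the same route as the paper: verify that $p_{ST^{-1}}$ lies on both spheres via the Hermitian products $\la\bp_{ST^{-1}},\bp_{BA}\ra$ and $\la\bp_{ST^{-1}},A\bp_B\ra$, then project vertically onto $\C$ and observe that the two unit projection discs are externally tangent, so the spheres meet in exactly one point. Your explicit distance computation $\bigl|3\sqrt{3/8}-i\sqrt{5/8}\bigr|^2=4$ checks out, and deducing the second tangency by applying $\varphi^{-1}$ is a clean (if cosmetic) alternative to the paper's ``along the same lines.''
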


\begin{proof}
It is straightforward to verify that 
$|\la\bp_{ST^{-1}},\bp_{BA}\ra|=|\la\bp_{ST^{-1}},A(\bp_{B})\ra|=1$, and therefore 
$p_{ST^{-1}}$ belongs to both $\I_{-1}^-$ and $\I_1^+$. Projecting vertically 
(see Remark \ref{rem-proj-cygan-sphere}), we see that the projections of $\I_{-1}^-$ and $\I_1^+$ are tangent 
discs and as they are strictly convex, their intersection contains at most one point. This gives the result. 
The other tangency is along the same lines.
\end{proof}

A consequence of Lemma \ref{lem-tangency} is that the
parabolic fixed points are tangency points of isometric spheres.
The following lemma is proved in 
Section \ref{section-proof-triple-inside}.

\begin{lem}\label{lem-triple-limit}
For the group $\Gamma^{\lim}$ the triple 
intersection $\I_0^+\cap\I_0^{-}\cap\I_{-1}^-$ contains exactly 
two points, namely the parabolic fixed points $p_{ST^{-1}}$ and $p_{S^{-1}T}$.
\end{lem}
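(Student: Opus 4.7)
The proof splits into two steps: verifying the two listed points lie in the triple intersection, and then ruling out any other points.

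For the first step, I would use that by Proposition \ref{centre-translates-S-Sm} each of $\I_0^+$, $\I_0^-$, $\I_{-1}^-$ has unit radius, with respective centers $p_B$, $p_{AB}$, and $A^{-1}(p_{AB}) = p_{BA}$ (the last identity following since $A^{-1}$ exchanges the fixed points of $AB$ and $BA$). Because the lifts in \eqref{eq-pslim} and \eqref{fixed-points-limit} are all standard lifts, membership of a boundary point $p$ in a unit Cygan sphere centered at $c$ reduces to the equality $|\langle \bp, \bc\rangle| = 1$. Computing the three required Hermitian products for each of $p_{ST^{-1}}$ and $p_{S^{-1}T}$ is then a finite, explicit check.

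For the second step, I would invoke Proposition \ref{prop-triple-inside}(1) to localize the triple intersection inside the meridian $\mathfrak{m}$ of $\I_0^+$ corresponding to $\beta = (\pi-\alpha_1)/2 = \pi/2$. Parametrizing $\mathfrak{m}$ by geographical coordinates $(\alpha,w)$ with $\alpha\in[-\pi/2,\pi/2]$ and $w\in[-\sqrt{2\cos\alpha},\sqrt{2\cos\alpha}]$, the conditions $p \in \I_0^-$ and $p \in \I_{-1}^-$ become two real polynomial equations in $\cos\alpha$, $\sin\alpha$, and $w$, following the same template used for the proof of Proposition \ref{prop-triple-inside}. Solving this system at the specific parameters $(\alpha_1,\alpha_2) = (0,\alpha_2^{\lim})$ should yield exactly the two listed points. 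To rule out interior solutions, I would use Proposition \ref{prop-interior}: inside $\mathcal{Z}$ the intersection $\I_{-1}^-\cap\I_0^-$ is contained in the interior of $\I_0^+$, and by continuity, at the boundary parameter the intersection only touches $\I_0^+$ at isolated boundary points.

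The main obstacle is the algebraic manipulation required to show exactly two solutions rather than one or more. The structural input I would exploit is that $(0,\alpha_2^{\lim})$ is a critical point of the null-locus of $\mathcal{D}\bigl(4\cos^2\alpha_1,4\cos^2\alpha_2\bigr)$ (the partials in both $\alpha_1$ and $\alpha_2$ vanish there), and moreover by Proposition \ref{prop-Z-in-L} this is precisely where the boundary of $\mathcal{Z}$ meets the parabolicity curve $\mathcal{P}$. This double-critical geometry forces two tangent points rather than one. In fact, since this lemma is essentially a boundary specialization of Proposition \ref{prop-triple-inter}, the same polynomial framework used for that result in Section \ref{section-no-triple} supplies the algebraic computation; only a final specialization at the limit parameter remains.
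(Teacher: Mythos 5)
Your overall route is the paper's: check directly that $p_{ST^{-1}}$ and $p_{S^{-1}T}$ lie on all three spheres, use Proposition \ref{prop-triple-inside}(1) to confine the triple intersection to the meridian $\mathfrak{m}$ (here $\beta=\pi/2$ since $\alpha_1=0$), and then solve the resulting system in $(\alpha,w)$. The gap is in how you propose to close that last step: neither of the two devices you offer for excluding solutions in the \emph{interior} of $\HdC$ actually does the job. First, the polynomial framework of Section \ref{section-no-triple} (the quartic $L_{\alpha_1,\alpha_2}(T)$) is derived only after substituting $w=\pm\sqrt{2\cos\alpha}$, i.e.\ it detects only ideal points of the triple intersection; the reduction to ideal points there rests on Proposition \ref{prop-triple-inside}(2), which is an existence statement (``if non-empty, it contains an ideal point''), not the statement that every point of the triple intersection is ideal. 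So specialising $L$ at $(0,\alpha_2^{\lim})$ gives the two double roots $T=\pm\sqrt{3/5}$ --- the two claimed points --- but leaves open the possibility of further intersection points inside $\HdC$. Second, the continuity argument from Proposition \ref{prop-interior} only yields that at the limit parameter $\I_{-1}^-\cap\I_0^-$ lies in the closed interior of $\I_0^+$, hence that the triple intersection is a set of tangency points; it says nothing about those points lying in $\partial\HdC$, nor that there are finitely many of them, let alone exactly two.

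The paper closes exactly this gap by an identity rather than by continuity or by the quartic: on $\mathfrak m$, with $\alpha_1=0$ and $\sin\alpha_2=\sqrt{5/8}$, the sum $f^{[0]}_{0,\alpha_2^{\lim}}+f^{[-1]}_{0,\alpha_2^{\lim}}$ (which must vanish on the triple intersection, since the difference already vanishes on $\mathfrak m$) equals $\bigl(2\cos(\alpha/2)-\sqrt{5}\,w\bigr)^2+\bigl(2\cos(\alpha)-w^2\bigr)$. Both summands are non-negative on the Cygan sphere, and the second vanishes precisely when the point is ideal; forcing both to vanish gives $w^2=2\cos\alpha$ and $\alpha=\pm\arccos(1/4)$, i.e.\ exactly the two parabolic fixed points, with interior points excluded automatically. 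If you replace the appeal to Section \ref{section-no-triple} and to Proposition \ref{prop-interior} by this (or an equivalent) direct analysis of the system on the meridian with $w$ left free, your plan becomes the paper's proof. Your first step --- verifying membership of the two points via Hermitian products against the centres $p_B$, $p_{AB}$, $p_{BA}$ --- is fine and matches the computation in Lemma \ref{lem-tangency}.
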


\begin{figure}
\begin{center}
\begin{tabular}{cc}
 \includegraphics[scale=0.6]{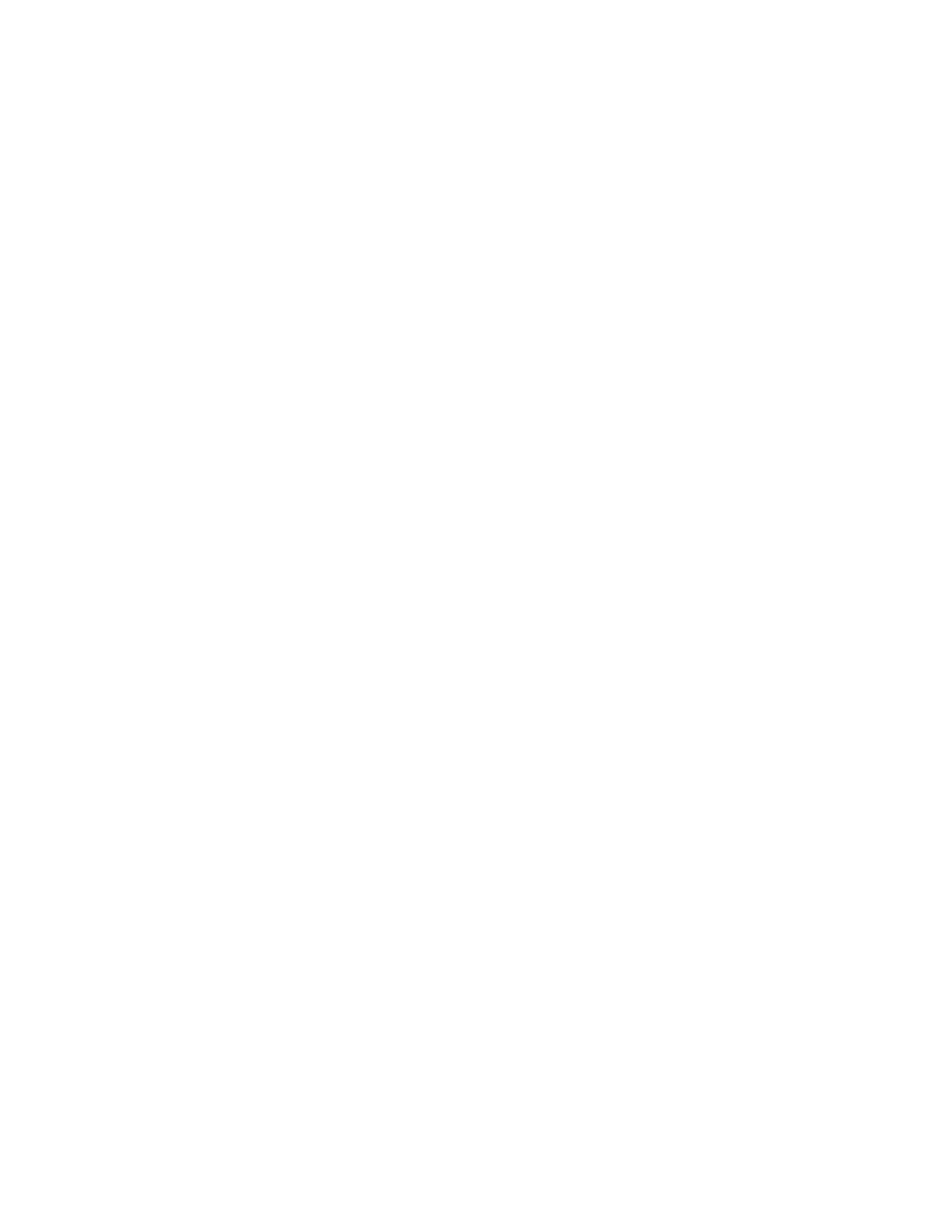} & 
\includegraphics[scale=0.6]{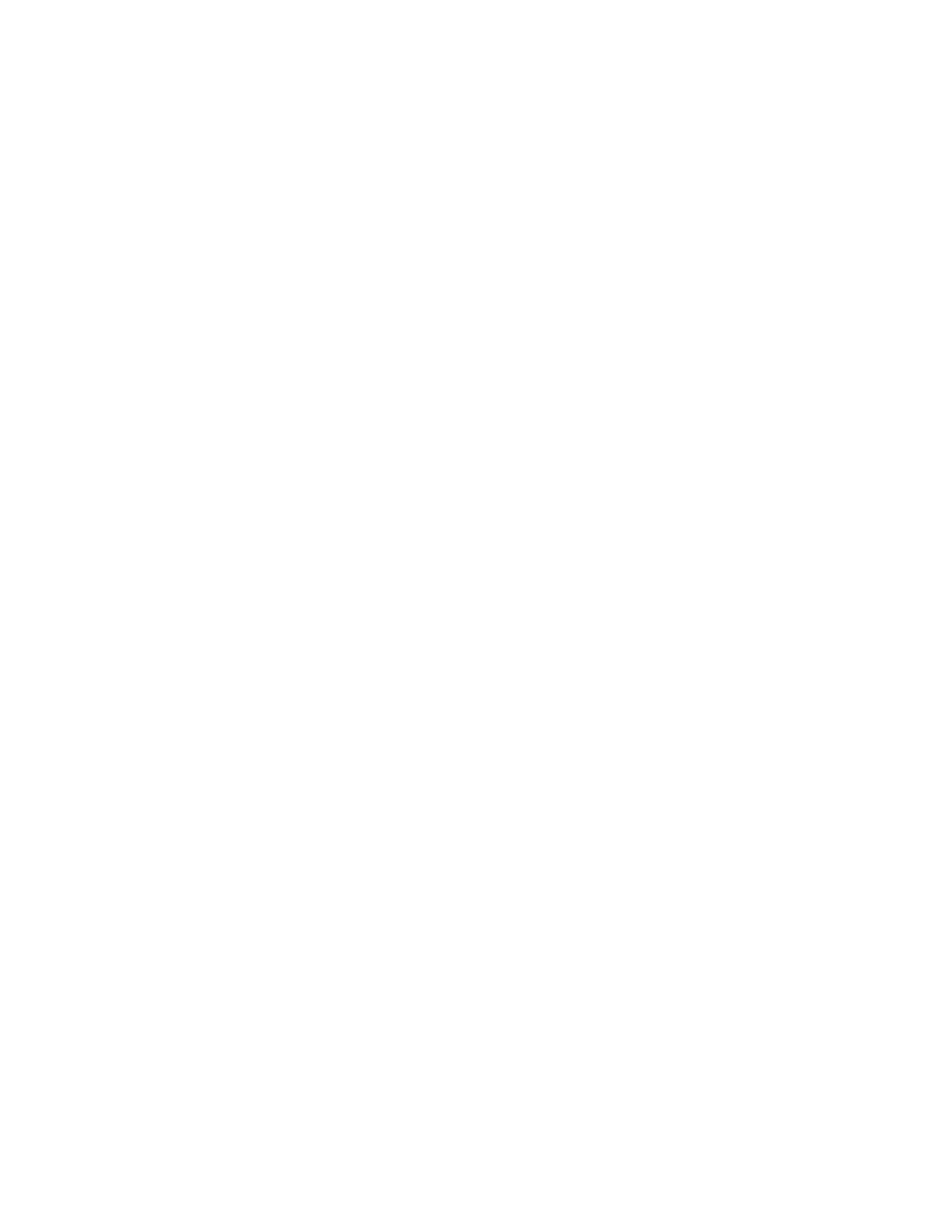}\\
  \end{tabular}
\end{center}
\caption{Two realistic views of the isometric spheres $\I_0^+$, $\I_1^+$ and $\I_{0}^-$ for the limit group $\Gamma^{\lim}$. 
The thin bigon is $\mathcal{B}_0^+$ (defined in Proposition \ref{prop-part-outside}). 
Compare with Figures \ref{fig-faces-in-IS} and \ref{c0plus} \label{realistic1}}
\end{figure}

Applying powers of $\varphi$, we see that these triple intersections are 
actually quadruple intersections of sides and triple intersections of ridges.

\begin{coro}\label{cor-ideal-vertices}
The parabolic fixed point $A^k(p_{ST^{-1}})$ lies on
$\I_{k-1}^-\cap \I_k^+\cap \I_k^-\cap \I_k^+$. In particular, it is
the triple ridge intersection $r_k^-\cap r_k^+\cap r_{k+1}^-$.
Similarly, $A^k(p_{S^{-1}T})$ lies on 
$\I_{-1}^+\cap \I_{-1}^-\cap \I_0^+ \cap \I_0^-$. In particular it is
$r_{k-1}^+\cap r_k^-\cap r_k^+ $.
\end{coro}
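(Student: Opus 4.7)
The plan is to combine the two preceding lemmas and then propagate the result by the $A$-action. Lemma \ref{lem-triple-limit} already places both parabolic fixed points $p_{ST^{-1}}$ and $p_{S^{-1}T}$ on the triple intersection $\I_0^+\cap\I_0^-\cap\I_{-1}^-$, while Lemma \ref{lem-tangency} supplies a fourth isometric sphere through each one: $p_{ST^{-1}}\in\I_1^+$ and $p_{S^{-1}T}\in\I_{-1}^+$. This immediately establishes the $k=0$ case of both quadruple incidences. The general case then follows by applying $A^k$, since $A^k(\I_j^\pm)=\I_{j+k}^\pm$ directly from the definition $\I_j^\pm=\I(A^j S^{\pm 1}A^{-j})$.

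To identify these quadruple intersections of isometric spheres with the asserted triple ridge intersections, I would unfold the side/ridge definitions from Proposition \ref{prop-describe sides}: $r_j^+=s_j^+\cap s_j^-$ and $r_j^-=s_j^+\cap s_{j-1}^-$. At $k=0$, the three ridges $r_0^-$, $r_0^+$, $r_1^-$ are built from pairs chosen among the four sides $s_{-1}^-$, $s_0^+$, $s_0^-$, $s_1^+$, so their common intersection is precisely the quadruple intersection of these four sides, which sits inside $\I_{-1}^-\cap\I_0^+\cap\I_0^-\cap\I_1^+$. The analogous statement for $p_{S^{-1}T}$ uses the four sides $s_{-1}^+,s_{-1}^-,s_0^+,s_0^-$ and the ridges $r_{-1}^+,r_0^-,r_0^+$.

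The only remaining point is to check that each parabolic fixed point actually lies in $\overline{D}$, so that it belongs to each $s_j^\pm=\I_j^\pm\cap\overline{D}$ and not merely to the underlying isometric sphere. This is immediate from Corollary \ref{coro-pairwise-intersection}: the point in question lies in the closed exterior of every other isometric sphere in the family $\{\I_j^\pm\}_{j\in\Z}$, so no additional sphere excludes it from $\overline{D}$. Translating by $A^k$ and using the $A$-equivariance of the side and ridge labelling then yields the full statement.

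I do not expect any serious obstacle here: the geometric content has already been packaged into Lemmas \ref{lem-tangency} and \ref{lem-triple-limit} together with the pairwise-disjointness data in Corollary \ref{coro-pairwise-intersection}, so the corollary amounts to careful combinatorial bookkeeping of which sides and ridges meet at the ideal vertices $A^k(p_{ST^{-1}})$ and $A^k(p_{S^{-1}T})$.
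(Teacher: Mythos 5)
Your proof is correct, and it reaches the same conclusion by a slightly different (equally valid) mechanism. The paper's own justification is the one-line remark preceding the corollary: it applies powers of the antiholomorphic symmetry $\varphi$ (with $\varphi(\I_k^+)=\I_k^-$, $\varphi(\I_k^-)=\I_{k+1}^+$ and $\varphi(p_{S^{-1}T})=p_{ST^{-1}}$) to the triple intersection of Lemma \ref{lem-triple-limit}, so that the image of that triple intersection under $\varphi^{\pm 1}$ supplies the fourth isometric sphere through each point. You instead read the fourth incidence off Lemma \ref{lem-tangency} directly ($p_{ST^{-1}}\in\I_1^+$, $p_{S^{-1}T}\in\I_{-1}^+$), which is just as short and arguably more transparent; the two routes use the same underlying facts. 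Your extra care about membership in $\overline{D}$ --- needed to pass from a quadruple intersection of isometric spheres to a triple intersection of ridges, since ridges are by definition subsets of $\overline{D}$ --- is a point the paper leaves implicit; note only that Corollary \ref{coro-pairwise-intersection} excludes all but finitely many spheres, and for the one remaining sphere not already accounted for (e.g.\ $\I_{-1}^+$ in the case of $p_{ST^{-1}}$) a separate small check, or the $\varphi$-symmetry, is still required. Finally, you have silently corrected two typos in the statement (the quadruple for $p_{ST^{-1}}$ should end with $\I_{k+1}^+$, and the one for $p_{S^{-1}T}$ should carry the subscripts $k-1$ and $k$), which is the right reading.
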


To construct a system of consistent horoballs at the 
parabolic fixed points we must investigate the action of the side 
pairing maps on them. 
First, 
$p_{S^{-1}T}\in\Isom_{-1}^+\cap\Isom_{-1}^-\cap\Isom_0^+\cap\Isom_0^-$, we have
\begin{eqnarray*}
\sigma(s_{-1}^+)=A^{-1}SA:p_{S^{-1}T} & \longmapsto & p_{T^{-1}STST}, \\
\sigma(s_{-1}^-)=A^{-1}S^{-1}A:p_{S^{-1}T} &\longmapsto &p_{TST}, \\
\sigma(s_0^+)=S:p_{S^{-1}T} &\longmapsto &p_{ST^{-1}}, \\
\sigma(s_0^-)=S^{-1}:p_{S^{-1}T}&\longmapsto&p_{STS}.
\end{eqnarray*}
Likewise $p_{ST^{-1}}\in\Isom_{-1}^-\cap\Isom_0^+\cap\Isom_0^-\cap\Isom_1^+$. 
We have
\begin{eqnarray*}
\sigma(s_{-1}^-)=A^{-1}S^{-1}A:p_{ST^{-1}} &\longmapsto & A^{-2}(p_{ST^{-1}}), \\
\sigma(s_0^+)=S:p_{ST^{-1}}&\longmapsto & p_{STS}, \\
\sigma(s_0^-)=S^{-1}:p_{ST^{-1}}&\longmapsto &p_{S^{-1}T}, \\
\sigma(s_1^+)=ASA^{-1}:p_{ST^{-1}}&\longmapsto& A^2(p_{ST^{-1}}).
\end{eqnarray*}

We can combine these maps to show how the points $A^k(p_{ST^{-1}})$ and 
$A^k(p_{S^{-1}T})$
are related by the side pairing maps. This leads to an infinite graph, 
a section of which is:
\begin{equation}\label{eq-graph} 
\xymatrix{ 
 \ar[rrr]^-{A^{-1}SA} &&& 
p_{ST^{-1}} \ar[rrrr]^-{ASA^{-1}} \ar[rrd]^-{S} 
&& && 
A^2(p_{ST^{-1}}) \ar[r] &  \\
& p_{T^{-1}STST} \ar[l] \ar[d]_-{A^{-1}SA} &&
p_{S^{-1}T} \ar[ll]_-{A^{-1}SA} \ar[u]^-{S} && 
p_{STS} \ar[ll]^-{S} \ar[d]_-{ASA^{-1}} && 
A^2(p_{S^{-1}T}) \ar[ll]_-{ASA^{-1}} \ar[u]^-{A^2SA^{-2}} &
\ar[l] \\ 
\ar[r] & 
p_{TST} \ar[rrrr]_-{S} \ar[urr]_-{A^{-1}SA} && &&
p_{STSTS^{-1}} \ar[rrr]_-{A^2SA^{-2}} 
\ar[urr]_-{ASA^{-1}} &&& \\
}
\end{equation}
From this it is clear that all the cycles in the graph 
\eqref{eq-graph} are generated 
by triangles and quadrilaterals. Up to powers of $A$, the
triangles lead to the word $S^3$, which is the identity.
Up to powers of $A$ the quadrilaterals lead to words cyclically
equivalent to the one coming from:
\begin{displaymath} 
\xymatrix{ 
p_{S^{-1}T} \ar[rr]^-{S^{-1}} && p_{STS} \ar[rr]^-{ASA^{-1}} &&
p_{STSTS^{-1}} \ar[rr]^-{S^{-1}} && p_{TST} \ar[rr]^{A^{-1}SA} && p_{S^{-1}T}
}
\end{displaymath}
In other words, $p_{S^{-1}T}$ is fixed by
$(A^{-1}SA)(S^{-1})(ASA^{-1})(S^{-1})=(T^{-1}S)^3$. This is parabolic and
so preserves all horoballs based at $p_{S^{-1}T}$.

\begin{figure}
 \begin{center}
\begin{tabular}{cc}
 \scalebox{0.4}{ \includegraphics{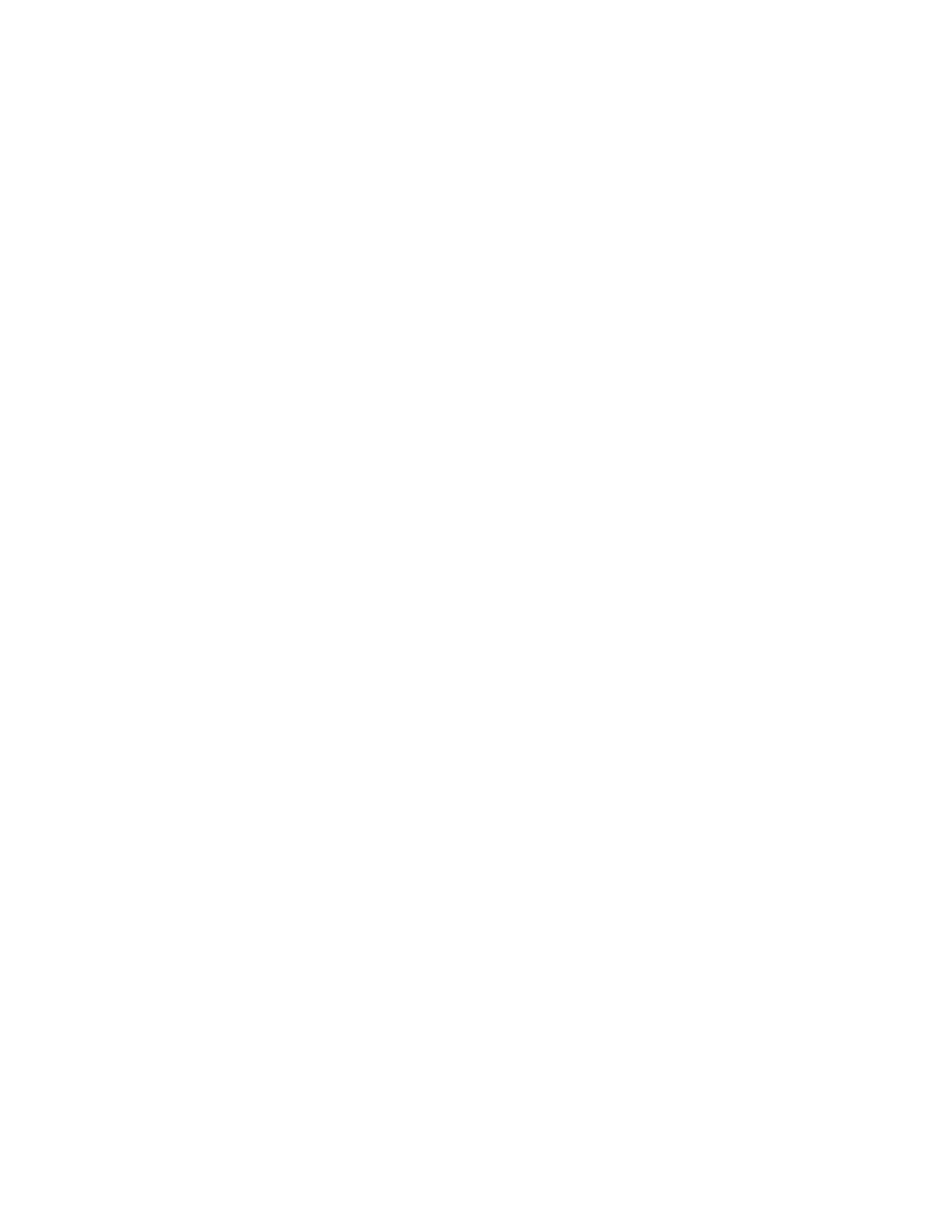}} & 
 \scalebox{0.4}{ \includegraphics{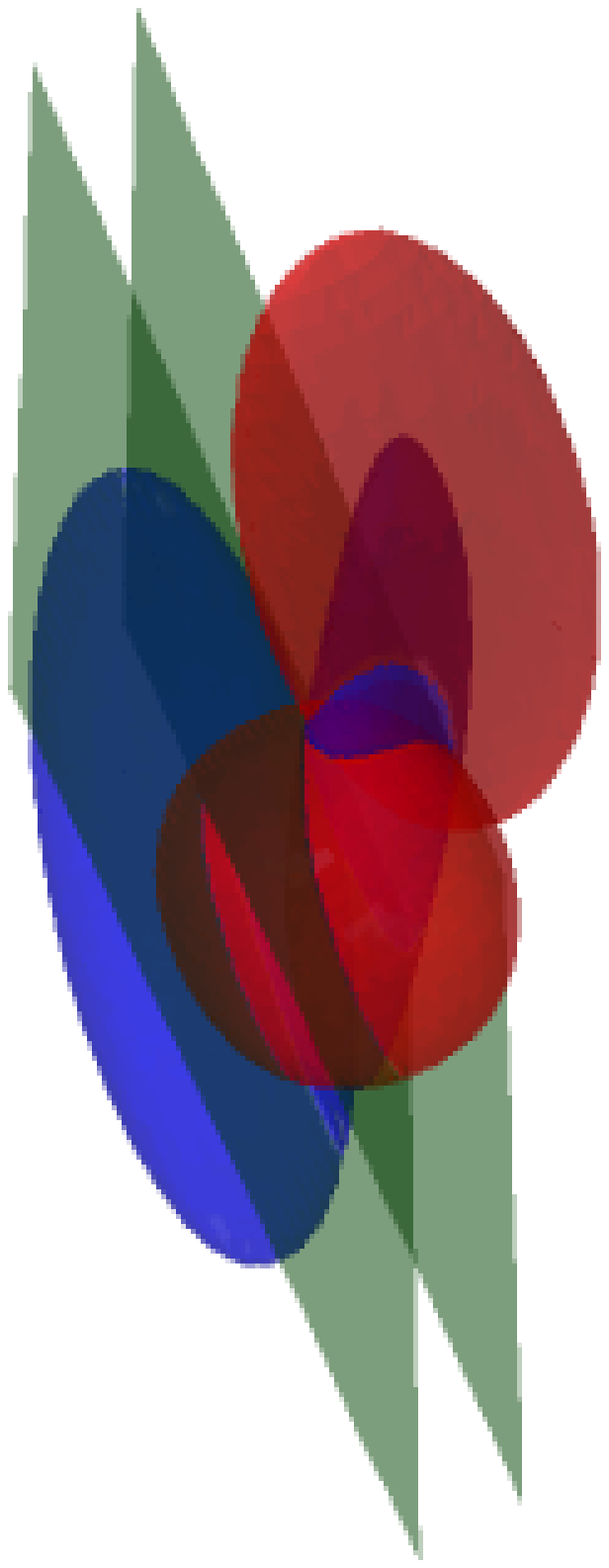}}
\end{tabular} 
\end{center}
\caption{Vertical projection and realistic view of the isometric spheres and the fans $F_0$ and $F_{-1}$ for the parameter 
values $\alpha_1=0$, $\alpha_2=\alpha_2^{\lim}$. Compare with 
Figure \ref{fig-proj-isom}.
\label{fig-vertiproj}}
\end{figure}

Therefore, we can define a system of horoballs as follows. Let $U_0^+$
be a horoball based at $p_{S^{-1}T}$, disjoint from the closure of any 
side not containing $p_{S^{-1}T}$ in its closure. Now define
horoballs $U_k^+$ and $U_k^-$ by applying the side pairing maps to
$U_0^+$. Since every cycle in the graph \eqref{eq-graph} gives rise 
either to the
identity map or to a parabolic map, this process is well defined and
gives rise to a consistent system of horoballs. Therefore we can
apply the Poincar\'e polyhedron theorem for the two limit groups.
Using the same arguments as we did for groups in the interior of
${\mathcal Z}$, we see that $\Gamma$ has the presentation \eqref{eq-pres-poin}.

\subsection{The boundary of the limit orbifold.}

\begin{theo}\label{theo-CRWLC}
The manifold at infinity of the group $\Gamma^{\lim}$ is homeomorphic 
to the Whitehead link complement.
\end{theo}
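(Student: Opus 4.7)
The approach is to extract from the $A$-invariant Ford polyhedron $D$ (constructed in Section \ref{section-Poincare-limit}) a finite-sided fundamental domain $F$ for $\Gamma^{\lim}$ and to identify its ideal part $\partial_\infty F$, together with its face identifications, with the classical ideal-octahedron description of the Whitehead link complement from Section 3.3 of \cite{Thu} and Section 10.3 of \cite{Rat}. Since the stabiliser $\langle A\rangle$ of $q_\infty$ acts on the Heisenberg group as a screw motion with axis $\Delta_\varphi$ (see Proposition \ref{prop-special-symmetry}), I would slice $D$ by a pair of $A$-related walls transverse to $\Delta_\varphi$ to obtain a fundamental strip $\Sigma$; then $F=D\cap\Sigma$ is a finite-sided fundamental polyhedron for $\Gamma^{\lim}$ whose ideal boundary will serve as the model for the manifold at infinity.

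Next I would read off the combinatorics of $\partial_\infty F$. Its sides are finitely many pieces of $\Isom_k^\pm$ together with the two walls of $\Sigma$; Corollary \ref{cor-ideal-vertices} locates the parabolic points $A^k p_{ST^{-1}}$ and $A^k p_{S^{-1}T}$ as ideal vertices where four sides meet, while Lemma \ref{lem-tangency} gives the relevant tangencies between adjacent isometric spheres. Combining the relation $S(p_{S^{-1}T})=p_{ST^{-1}}$ from \eqref{action-phi} with the $\langle A\rangle$-action, one checks that all these tangency points lie in a single $\Gamma^{\lim}$-orbit, while $q_\infty$ forms a second orbit; thus $F$ has exactly two cusps, matching the number of components of the Whitehead link. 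The parabolic cycles $A$ at $q_\infty$ and $(T^{-1}S)^3$ at $p_{S^{-1}T}$, together with the consistent horoballs of Section \ref{section-Poincare-limit}, will ensure that these cusp neighbourhoods have the correct topology.

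To conclude, I would project the ideal boundaries of the isometric spheres vertically to $\C$ as in Remark \ref{rem-proj-cygan-sphere}, yielding a planar picture of nested Heisenberg discs which, together with their Heisenberg heights, should lift combinatorially to an ideal octahedron whose six ideal vertices, eight triangular faces and twelve edges fall after gluing into the expected pattern. The key verification is that the four pairings $S$, $S^{-1}$, $A$, $A^{-1}$ match Thurston's standard identifications of opposite faces of this octahedron, whence the quotient is homeomorphic to the Whitehead link complement. The hard part will be precisely this final combinatorial matching: the walls of $\Sigma$ subdivide $\Isom_0^\pm$ and their $A^{\pm 1}$-neighbours into triangles whose gluing pattern must be shown to coincide exactly with Thurston's recipe, not just with some other cell complex of the right Euler characteristic. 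Here the antiholomorphic involution $\varphi$ of Proposition \ref{prop-special-symmetry}, satisfying $\varphi^2=A$ and $\varphi(\Isom_k^+)=\Isom_k^-$, $\varphi(\Isom_k^-)=\Isom_{k+1}^+$, effectively halves the bookkeeping by exchanging positive and negative isometric spheres, and together with the cyclic action on parabolic fixed points displayed in \eqref{action-phi} should make the comparison tractable.
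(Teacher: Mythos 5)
Your overall strategy---Ford domain, fundamental slab for $\langle A\rangle$, comparison with Thurston's ideal octahedron---is the one the paper follows, and several of your preliminary checks are sound: the tangency points do form a single $\Gamma^{\lim}$-orbit while $q_\infty$ and $p_B$ form a second one, giving two cusps, and $\varphi$ does halve the bookkeeping. But the final step, which you correctly flag as the hard part, contains a gap that your proposed route cannot close as stated. The domain $F=D\cap\Sigma$ is not an ideal octahedron. The complement $D^c$ of $D$ in $\partial\HdC-\{q_\infty\}$ is an unbounded solid tube around $\Delta_\varphi$, so each wall of $\Sigma$ meets $\partial_\infty D$ in a plane minus an open disc, whose closure in $S^3=\partial\HdC$ is a disc containing $q_\infty$ in its \emph{interior}; since $q_\infty$ is a parabolic fixed point it is not in the domain of discontinuity, so the two wall faces of $F$ are once-punctured discs, and the cusp at $q_\infty$ appears as an interior puncture of a face rather than as an ideal vertex. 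Relatedly, the proposed pairings $S$, $S^{-1}$, $A$, $A^{-1}$ cannot give four face pairs of an octahedron: $S$ and $S^{-1}$ pair the same two faces with each other, and $A$ pairs the two walls, so even counting the quadrilateral and bigon pieces separately this list accounts for at most three pairs of faces, not four, and the faces involved are quadrilaterals, triangles and bigons rather than eight triangles.

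The missing idea is how to turn $q_\infty$ into an ideal vertex. The paper does this by first proving (Proposition \ref{prop-solid-cylinder}, which needs Lemma \ref{lem-delta-phi} to rule out knotting of the tube) that $\partial_\infty D$ is the exterior of an unknotted $A$-invariant solid cylinder, hence carries an $A$-equivariant foliation by arcs running from the boundary cylinder $\partial D$ to $q_\infty$. Coning the $A$-fundamental domain $\mathcal{Q}_0^+\cup\mathcal{B}_0^+\cup\mathcal{Q}_0^-\cup\mathcal{B}_0^-$ of $\partial D$ to $q_\infty$ along these leaves produces a fundamental domain that is a union of two pyramids with common apex $q_\infty$. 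One then cuts and pastes, replacing $\mathcal{P}^-$ by $S^{-1}(\mathcal{P}^-)$ so that the second apex moves to $p_B=p_{TS}$, and finally absorbs each bigon into the adjacent triangle carrying the same side pairing; only after these two manipulations does one obtain a genuine combinatorial ideal octahedron, whose face pairings are $TS$, $ST$, $T$ and $S$ and match Thurston's identifications. Without the coning and the cut-and-paste, the vertical projection of Heisenberg discs you propose will not yield eight triangles glued in four pairs, so the comparison with the standard octahedral decomposition of the Whitehead link complement cannot be carried out.
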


The ideal boundary of $D$ is made up of those pieces of the isometric spheres 
$\I_{k}^\pm$ that are outside all other isometric spheres in 
$\{\I_k^{\pm}\ :\ k\in\Z\}$. 
Recall that the (ideal boundary of) the side $s_k^\pm$ is the part of 
$\partial \Isom_k^\pm$ which is outside (the ideal boundary of) 
all other isometric spheres. In this section, when we speak of sides and ridges 
we implicitly mean their intersection with $\partial\HdC$.

We will see that each isometric sphere in $\{\I_k^{\pm}\ :\ k\in\Z\}$ 
contributes a side $s_k^\pm$ made up of one quadrilateral, denoted by 
$\mathcal{Q}_k^\pm$ and one bigon $\mathcal{B}_k^\pm$. A very similar 
configuration of isometric spheres has been observed by Deraux and Falbel 
in \cite{DF8}. We begin by analysing the contribution of $\Isom_0^+$.

\begin{prop}\label{prop-part-outside}
The side $(s_0^+)^\circ$ of $D$ has two connected components.
\begin{enumerate}
 \item One of them is a quadrilateral, denoted $\mathcal{Q}_0^+$, 
whose vertices are 
points $p_{ST^{-1}}$, $p_{S^{-1}T}$, $p_{STS}$ and $p_{TST}$ (all of which are
parabolic fixed points) 
 \item The other is a bigon, denoted $\mathcal{B}_0^+$, whose vertices are 
$p_{ST^{-1}}$ and $p_{S^{-1}T}$
\end{enumerate}
\end{prop}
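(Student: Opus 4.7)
The plan is to carry out a cell-decomposition analysis of $\partial_\infty \I_0^+ \cong S^2$ induced by its intersections with the other isometric spheres, and then count the open two-cells that lie in $s_0^+$. First, by Proposition~\ref{prop-pairwise-intersections} (extended to the boundary of $\mathcal{Z}$ by continuity from the interior), only the four neighbours $\I_0^-, \I_{-1}^-, \I_1^+, \I_{-1}^+$ can cut $\I_0^+$ at the limit; each one removes an open disk $D_{0^-}, D_{-1^-}, D_{1^+}, D_{-1^+}$ from $\partial_\infty\I_0^+$. Next I would locate all points lying in the intersection of $\partial_\infty\I_0^+$ with at least two other isometric spheres. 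Corollary~\ref{cor-ideal-vertices} already gives the two quadruple points $p_{ST^{-1}}$ and $p_{S^{-1}T}$; a direct calculation with the Hermitian form, analogous to Lemma~\ref{lem-tangency}, shows that the two further points $p_{STS}$ and $p_{TST}$ are also triple intersections, lying on $\I_0^+\cap\I_0^-\cap\I_1^+$ and $\I_0^+\cap\I_{-1}^-\cap\I_{-1}^+$ respectively.

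Using the geographical coordinates of \eqref{fixed-points-limit}, these four parabolic fixed points form a diamond on $\partial_\infty\I_0^+$, with $p_{ST^{-1}}, p_{S^{-1}T}$ on the meridian $\beta = \pi/2$ and $p_{STS}, p_{TST}$ on the meridian $\alpha = 0$. The tangencies from Lemma~\ref{lem-tangency}, combined with the symmetries $\varphi$ (Proposition~\ref{prop-special-symmetry}) and $\iota_1$ (Proposition~\ref{prop-dec-ST}), then pin down the pairwise intersection pattern of the four boundary circles: each pair meets exactly at the vertices it shares, with $\partial D_{1^+}$ tangent to $\partial D_{0^-}$ from inside at both $p_{ST^{-1}}$ and $p_{STS}$ and tangent to $\partial D_{-1^-}$ externally at $p_{ST^{-1}}$, with symmetric statements for $\partial D_{-1^+}$ obtained by applying $\iota_1$.

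Finally, removing the interiors of all four disks together with the lower-dimensional ridge arcs and ideal vertices, the remaining open two-dimensional region should split into exactly two components: a topological quadrilateral $\mathcal{Q}_0^+$ bounded by one arc from each of the four circles and having the four parabolic fixed points as its vertices, and a thin bigon $\mathcal{B}_0^+$ wedged between the outer arcs of $\partial D_{0^-}$ and $\partial D_{-1^-}$ with only the corners $p_{ST^{-1}}$ and $p_{S^{-1}T}$. The main obstacle will be this final combinatorial step, namely verifying that the arrangement on $S^2$ really produces exactly these two components and no others; the subtlety is that the three mutually tangent boundary circles at each of $p_{ST^{\pm 1}}$ must be shown to nest in precisely the right way. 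I plan to handle this by computing the tangent directions of the four circles at each shared vertex and by vertically projecting the Cygan spheres to $\C$ via Remark~\ref{rem-proj-cygan-sphere} so that the global configuration of disks can be visualised and the combinatorics read off directly.
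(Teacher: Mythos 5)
Your route is viable in outline but it stops exactly where the real work begins, and it carries more circles than it needs to. The decisive step in your plan -- verifying that the arrangement on $\partial_\infty\I_0^+\cong S^2$ of four mutually tangent circles produces exactly two complementary $2$-cells -- is announced rather than proved, and the tool you propose for it (tangent directions at the shared vertices plus the vertical projection of Remark~\ref{rem-proj-cygan-sphere}) is not sufficient on its own: at a point of tangency the tangent directions of two circles coincide by definition, so they cannot distinguish internal from external tangency; you need a separate containment or second-order argument there. Note also that your own observation that $\partial D_{1^+}$ is tangent to $\partial D_{0^-}$ \emph{from inside} is the key to collapsing the whole configuration, and you do not exploit it: it says precisely that the disc $D_{1^+}$ is contained in $D_{0^-}$ (and symmetrically $D_{-1^+}\subset D_{-1^-}$), so the circles coming from $\I_{\pm1}^+$ remove nothing new and can be discarded from the combinatorics altogether. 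This containment is exactly what Corollary~\ref{coro-pairwise-intersection} provides ($\I_0^+\cap\I_{\pm1}^+$ lies in the interior of $\I_0^-$, respectively $\I_{-1}^-$).

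Once those two circles are discarded, the paper's argument becomes short: only the two Jordan curves $r_0^+=\I_0^+\cap\I_0^-$ and $r_0^-=\I_0^+\cap\I_{-1}^-$ matter; by Lemma~\ref{lem-triple-limit} they meet in exactly the two points $p_{ST^{-1}}$ and $p_{S^{-1}T}$ and their interiors (the discs containing $p_{AB}$ and $p_{BA}$) are disjoint, so their common exterior has exactly two components, each with both tangency points on its boundary. The identification of which component is the quadrilateral is then done by symmetry rather than by your proposed projection argument: the Cygan isometry $\iota_1$ preserves $\I_0^+$, swaps $r_0^+$ with $r_0^-$, and fixes the point $[i,0]$ lying in the common exterior, so it preserves each of the two components; since it swaps $p_{STS}$ and $p_{TST}$, these two points lie on the closure of the same component, which is therefore the quadrilateral, and the other component is the bigon. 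You should restructure your proof around this reduction to two curves; as written, the four-circle analysis is both harder than necessary and incomplete.
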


\begin{figure}
\begin{center}
\scalebox{0.65}{\includegraphics{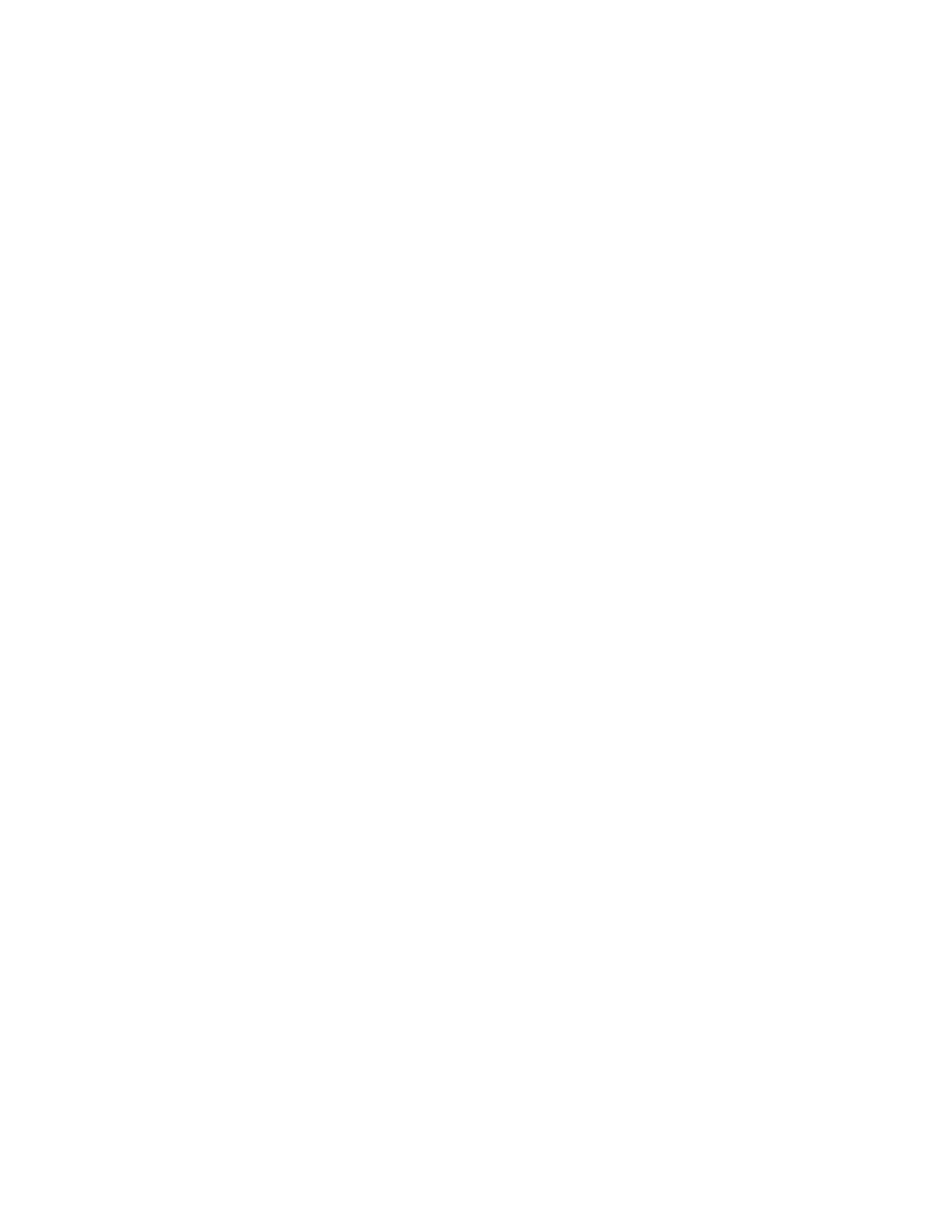}}
\end{center}
\caption{Intersections of the isometric spheres $\Isom_0^-$, $\Isom_{-1}^-$, 
$\Isom_1^+$ and $\Isom_{-1}^+$ with $\Isom_0^+$ in the boundary of $\HdC$, 
viewed in geographical coordinates. Recall that 
$r_0^+=\Isom_0^+\cap\Isom_0^-$ and $r_0^-=\Isom_0^+\cap\Isom_{-1}^-$. 
Here $\alpha\in[-\pi/2,\pi/2]$ is the vertical coordinates, and $\beta\in[-\pi,\pi]$ the horizontal one. 
The vertical dash-dotted segments $\beta=\pm\pi/2$ are the two halves of the 
boundary of the meridian $\mathfrak{m}$. The bigon between the two curves 
$r_0^+$ and $r_0^-$ is $\mathcal{B}_0^+$ 
(see Proposition \ref{prop-part-outside}). 
Compare to Figure 2 of \cite{DF8}.\label{fig-faces-in-IS}}
\end{figure}

\begin{proof}
Since isometric spheres are strictly convex, the ideal boundaries of the
ridges $r_0^+=\Isom_0^+\cap\Isom_0^-$ and $r_0^-=\Isom_0^+\cap\Isom_{-1}^-$ 
are Jordan curves on $\Isom_0^+$. We still denote them by $r_0^\pm$.
The interiors of these curves are respectively the connected components 
containing $p_{AB}$ and $p_{BA}$. By Lemma \ref{lem-triple-limit} in 
Section \ref{section-proof-triple-inside}, $r_0^+$ and $r_0^-$
have two intersection points, namely $p_{S^{-1}T}$ and $p_{ST^{-1}}$, and 
 their interiors are disjoint. As a consequence the common exterior 
of the two curves has two connected components, and the points 
$p_{S^{-1}T}$ and $p_{ST^{-1}}$ lie on the boundary of both.

To finish the proof, consider the involution $\iota_1$ defined
in the proof of Proposition \ref{prop-dec-ST}. (Note that since
$\alpha_1=0$ this involution conjugates $\Gamma^{\lim}$ to itself.)
In Heisenberg coordinates it is defined  
by $\iota_1 : [z,t]\longmapsto [-\overline{z},-t]$ and is clearly a
Cygan isometry. As in Proposition \ref{prop-dec-ST}, $\iota_1$
fixes $p_A$ and $p_B$ and it interchanges $p_{AB}$ and $p_{BA}$. 
Thus it conjugates $S$ to $T^{-1}$ and so it interchanges
$p_{ST^{-1}}$ and $p_{S^{-1}T}$ and it interchanges $p_{STS}$ and $p_{TST}$.
Moreover, since it is a Cygan isometry, $\iota_1$
preserves $\I_0^+$ and interchanges $\I_{-1}^-$ and $\I_0^-$ and thus 
it also exchanges the two curves $r_0^+$ and $r_0^-$. Again, 
since it is a Cygan isometry, it maps  interior to interior and exterior 
to exterior for both curves. As a consequence, the two connected components 
of the common exterior are either exchanged or both preserved. 

Now consider the 
point with Heisenberg coordinates $[i,0]$. It is fixed by $\iota_1$, 
and belongs to the common exterior of both $r_0^+$ and $r_0^-$. 
This implies that both connected components are preserved. Finally, since 
$p_{STS}\in\I_0^+\cap\I_{0}^-$ and $p_{TST}\in\I_0^+\cap\I_{-1}^-$ 
are exchanged by  $\iota_1$, these two points belong to the closure of the 
same connected component. As a consequence, one of the two connected 
components has $p_{ST^{-1}}$, $p_{S^{-1}T}$, $p_{STS}$ and $p_{TST}$ on its 
boundary. This is the quadrilateral. The other one has 
$p_{ST^{-1}}$ and $p_{S^{-1}T}$ on its boundary. This is the bigon.
\end{proof}

We now apply powers of $A$ to get a result about all the isometric
sphere intersections in the ideal boundary of $D$. 
Define $\mathcal{Q}_0^-=\varphi(\mathcal{Q}_0^+)$ and 
$\mathcal{B}_0^-=\varphi(\mathcal{B}_0^+)$. Then applying powers of
$A$ we define quadrilaterals $\mathcal{Q}_k^\pm=A^k(\mathcal{Q}_0^\pm)$, 
and bigons $\mathcal{B}_k^\pm=A^k(\mathcal{B}_0^\pm)$. 
The action of the Heisenberg translation $A$ and 
the glide reflection $\varphi$ are:

\begin{figure}[ht]
\begin{center}
\scalebox{0.35}{\includegraphics{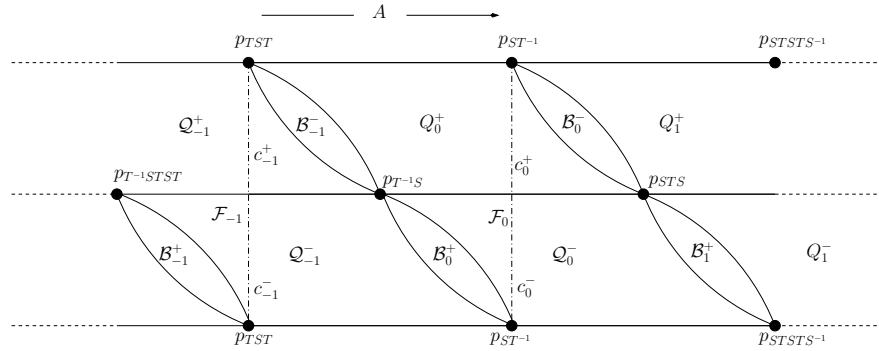}}
\end{center}
\caption{A combinatorial picture of $\partial D$\label{fig-combi}. 
The top and bottom lines are identified.}
\end{figure}

\begin{coro}
For the group $\Gamma^{\lim}$, the (ideal boundary of) the side $s_k^\pm$ is the union of  
the quadrilateral $\mathcal{Q}_k^\pm$ and the bigon $\mathcal{B}_k^\pm$.
The action of $A$ and $\varphi$ are as follows.
\begin{enumerate}
\item[(1)]  $A$ maps $\mathcal{Q}_k^\pm$ to $\mathcal{Q}_{k+1}^\pm$, and 
$\mathcal{B}_k^\pm$ to $\mathcal{B}_{k+1}^\pm$.
\item[(2)] $\varphi$ maps $\mathcal{Q}_k^+$ to $\mathcal{Q}_{k}^-$, 
$\mathcal{Q}_{k}^-$ to $\mathcal{Q}_{k+1}^+$, 
$\mathcal{B}_k^+$ to $\mathcal{B}_{k}^-$ and $\mathcal{B}_{k}^-$ to 
$\mathcal{B}_{k+1}^+$.
\end{enumerate}
\end{coro}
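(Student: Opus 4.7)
The plan is to bootstrap from Proposition \ref{prop-part-outside}, which treats $s_0^+$ only, to all sides $s_k^\pm$ by exploiting the fact that both $A$ and $\varphi$ are symmetries of the polyhedron $D$. The key input is Proposition \ref{prop-special-symmetry}: $\varphi$ permutes the family of isometric spheres via $\varphi(\Isom_k^+) = \Isom_k^-$ and $\varphi(\Isom_k^-) = \Isom_{k+1}^+$, while $A = \varphi^2$ permutes it via $A(\Isom_k^\pm) = \Isom_{k+1}^\pm$. Since $D$ is defined as the common exterior of these isometric spheres, both maps preserve $D$ and permute its sides according to $A(s_k^\pm) = s_{k+1}^\pm$, $\varphi(s_k^+) = s_k^-$, and $\varphi(s_k^-) = s_{k+1}^+$.

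For the decomposition statement, I would first transport $s_0^+ = \mathcal{Q}_0^+ \cup \mathcal{B}_0^+$ by $\varphi$, obtaining $s_0^- = \varphi(\mathcal{Q}_0^+) \cup \varphi(\mathcal{B}_0^+) = \mathcal{Q}_0^- \cup \mathcal{B}_0^-$ directly from the defining equalities $\mathcal{Q}_0^- = \varphi(\mathcal{Q}_0^+)$ and $\mathcal{B}_0^- = \varphi(\mathcal{B}_0^+)$. Applying $A^k$ then yields $s_k^\pm = A^k(s_0^\pm) = A^k(\mathcal{Q}_0^\pm) \cup A^k(\mathcal{B}_0^\pm) = \mathcal{Q}_k^\pm \cup \mathcal{B}_k^\pm$, by definition of the translated pieces.

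The equivariance statements then follow formally from $\varphi^2 = A$, which implies $\varphi A = A \varphi$. Indeed, $A(\mathcal{Q}_k^\pm) = A^{k+1}(\mathcal{Q}_0^\pm) = \mathcal{Q}_{k+1}^\pm$ directly by definition; for the action of $\varphi$, one computes $\varphi(\mathcal{Q}_k^+) = \varphi A^k(\mathcal{Q}_0^+) = A^k \varphi(\mathcal{Q}_0^+) = A^k(\mathcal{Q}_0^-) = \mathcal{Q}_k^-$, and then $\varphi(\mathcal{Q}_k^-) = \varphi^2(\mathcal{Q}_k^+) = A(\mathcal{Q}_k^+) = \mathcal{Q}_{k+1}^+$. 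The identical computation handles the bigons. There is essentially no obstacle here: the corollary is a bookkeeping consequence of Proposition \ref{prop-part-outside} combined with the equivariance provided by Proposition \ref{prop-special-symmetry}. The only minor subtlety is to confirm that $A$ and $\varphi$ genuinely send $D$ to itself (not merely permute the hypersurfaces containing its sides), which is immediate from the exterior-intersection definition of $D$.
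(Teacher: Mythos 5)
Your proposal is correct and follows exactly the route the paper intends: the paper defines $\mathcal{Q}_0^-=\varphi(\mathcal{Q}_0^+)$, $\mathcal{B}_0^-=\varphi(\mathcal{B}_0^+)$, $\mathcal{Q}_k^\pm=A^k(\mathcal{Q}_0^\pm)$, $\mathcal{B}_k^\pm=A^k(\mathcal{B}_0^\pm)$ and states the corollary as immediate bookkeeping from Proposition \ref{prop-part-outside} together with the equivariance of Proposition \ref{prop-special-symmetry} and $\varphi^2=A$. Your additional remark that $\varphi$ and $A$, being Cygan isometries fixing $q_\infty$ that permute the whole family $\{\I_k^\pm\}$, genuinely preserve $D$ and hence carry sides to sides is the right (and only) point needing verification.
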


In order to understand the combinatorics of the sides of $D$, 
we describe the edges of the faces lying in $\Isom_0^+$. 
The three points $p_{S^{-1}T},\,p_{ST^{-1}},\,p_{STS}$ lie on the ridge 
$r_0^+=\Isom_0^+\cap\Isom_0^-$. Likewise, the points
$p_{ST^{-1}},\,p_{S^{-1}T},\,p_{TST}$ lie in the ridge 
$r_0^-=\Isom_0^+\cap\Isom_{-1}^-$. Indeed, these points divide (the ideal 
boundaries of) these ridges into three segments. We have listed the ideal
vertices in positive cyclic order (see Figure \ref{fig-faces-in-IS}).
Using the graph \eqref{eq-graph}, the action of the cycle transformations 
$\rho(s_0^+)=S$ and $\rho(r_0^-)=A^{-1}S=T^{-1}$ on these ideal vertices, 
and hence on the segments of the ridges, is:
\begin{displaymath} 
\xymatrix{ 
p_{S^{-1}T} \ar[rr]^-{S} && p_{ST^{-1}} \ar[rr]^-{S} && 
p_{STS} \ar[rr]^-{S} && p_{S^{-1}T}, \\
p_{ST^{-1}} \ar[rr]^-{A^{-1}S} && p_{S^{-1}T} \ar[rr]^-{A^{-1}S} &&
p_{TST} \ar[rr]^-{A^{-1}S} && p_{ST^{-1}}. \\
}
\end{displaymath}
Furthermore, $S$ maps $p_{TST}$ to $p_{STSTS^{-1}}$.

The quadrilateral $\mathcal{Q}_0^+$ has two edges 
$[p_{S^{-1}T},p_{TST}]\cup[p_{TST},p_{ST^{-1}}]$ in the ridge $r_0^-$ 
and two edges $[p_{ST^{-1}},p_{STS}]\cup[p_{STS},p_{S^{-1}T}]$ in the ridge $r_0^+$.
It is sent by $S$ to the quadrilateral $\mathcal{Q}_0^-$ with two
edges $[p_{ST^{-1}},p_{STSTS^{-1}}]\cup[p_{STSTS^{-1}},p_{STS}]$ in $r_1^-$ and two 
edges $[p_{STS},p_{S^{-1}T}]\cup[p_{S^{-1}T},p_{ST^{-1}}]$ in $r_0^+$. 
Similarly, the edges of the bigon $\mathcal{B}_0^+$ are the remaining
segments in $r_0^-$ and $r_0^+$, both with endpoints $p_{S^{-1}T}$ and 
$p_{ST^{-1}}$. It is sent by $S$ to the bigon $\mathcal{B}_0^-$ with vertices
$p_{ST^{-1}}$ and $p_{STS}$.

Applying powers of $A$ gives the other quadrilaterals and bigons.
As usual, the image under $A^k$ can be found by adding $k$ to each subscript
and conjugating each side pairing map and ridge cycle by $A^k$.
The combinatorics of $D$ is summarised on Figure \ref{fig-combi}.

\begin{lem}\label{lem-delta-phi}
The line $\Delta_{\varphi}$ given in \eqref{param-inv-phi} 
is contained in the complement of $D$. 
\end{lem}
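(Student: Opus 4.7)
The plan is to use the $\varphi$-invariance of $\Delta_\varphi$ together with the action of $\varphi$ on the isometric spheres $\{\I_k^\pm\}_{k\in\Z}$ recorded in Proposition \ref{prop-special-symmetry}, thereby reducing the lemma to a single explicit Cygan-distance estimate. Since the complement of $D$ in $\overline{\HdC}$ is, by definition \eqref{eq-D}, the union of the closed interiors of all the $\I_k^\pm$, what I need is to exhibit, for every point of $\Delta_\varphi$, at least one isometric sphere containing it in its closed interior.

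First I would specialise Proposition \ref{prop-special-symmetry} to the limit parameters: $\varphi$ preserves $\Delta_\varphi$, shifts the parameter by $\ell_A/2 = \sqrt{3/8}$, and cyclically permutes the isometric spheres via $\I_k^+ \mapsto \I_k^-$ and $\I_k^- \mapsto \I_{k+1}^+$. In particular, the $\varphi$-orbit of $\I_0^+$ exhausts the entire family $\{\I_k^\pm\}_{k\in\Z}$. It therefore suffices to show that the fundamental interval
$$
I = \bigl\{\delta_\varphi(x) : x \in [0, \sqrt{3/8}\,]\bigr\}
$$
for $\langle \varphi \rangle$ on $\Delta_\varphi$ is contained in the closed interior of $\I_0^+$: applying $\varphi^k$ would then transport $I$ into the closed interior of the appropriate $\I_j^\pm$, and the iterates of $I$ cover all of $\Delta_\varphi$.

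The remaining step is a single direct calculation. Since $\I_0^+ = \I(S)$ is the unit Cygan sphere centred at $p_B = [0,0]$, substituting $\delta_\varphi(x) = [x + i\sqrt{5/32},\, x\sqrt{5/8}\,]$ into the Cygan-distance formula \eqref{Cygan} produces an expression for $d_{\mathrm{Cyg}}(\delta_\varphi(x),[0,0])^4$ that is a polynomial depending only on $x^2$, visibly monotone increasing in $x^2$ on $[0,\infty)$. Its maximum on the fundamental interval is therefore attained at $x = \sqrt{3/8}$, and a short arithmetic check shows that this maximum is strictly less than $1$, so in fact $I$ lies in the strict interior of $\I_0^+$. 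I do not anticipate any real obstacle: the entire content of the argument, beyond the structural input from Proposition \ref{prop-special-symmetry}, is this elementary monotonicity check on a quadratic in $x^2$.
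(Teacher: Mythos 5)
Your proof is correct and follows essentially the same route as the paper: reduce to a fundamental interval on $\Delta_\varphi$ via the symmetry of the family $\{\I_k^\pm\}$, then verify by the single computation $d_{\rm Cyg}(\delta_\varphi(x),p_B)^4=x^4+15x^2/16+25/1024\le 529/1024<1$ that this interval lies inside $\I_0^+$. The only (immaterial) difference is that you use the glide reflection $\varphi$ and its half-period $\sqrt{3/8}$, whereas the paper uses $A=\varphi^2$ and the interval $x\in[-\sqrt{3/8},\sqrt{3/8}]$, concluding that every point of $\Delta_\varphi$ lies in some $\I_k^+$; both reductions rest on the same equivariance from Proposition \ref{prop-special-symmetry} and the same estimate.
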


\begin{proof}
As noted above, $A$ acts on $\Delta_{\varphi}$ as a translation through 
$\sqrt{3/2}$. We claim that the segment of $\Delta_\varphi$ 
with parameter $x\in[-\sqrt{3/8},\sqrt{3/8}]$ in contained in the
interior of $\I_0^+$. Applying powers of $A$ we see that each point
of $\Delta_\varphi$ is contained in $\I_k^+$ for some $k$. Hence the line
is in the complement of $D$. 

Consider $\delta_\varphi(x)\in\Delta_\varphi$ with 
$x^2\le 3/8$. The Cygan distance between $p_B$ and 
$\delta_\varphi(x)$ satisfies:
$$
d_{\rm Cyg}(p_B,\delta_\varphi(x))^4
=\Bigl|-x^2-5/32+ix\sqrt{5/8}\Bigr|^2
=x^4+15x^2/16+25/1-24\le 529/1024. 
$$
Since $d_{\rm Cyg}(p_B,\delta_\varphi(x))<1$ this means $\delta_\varphi(x)$ is
in the interior of $\I_0^+$ as claimed.
\end{proof}

The following result, which will be proved in 
Section \ref{section-proof-cylinder}, is crucial for proving
Theorem \ref{theo-CRWLC}.

\begin{prop}\label{prop-solid-cylinder}
There exists a homeomorphism $\Psi:\R^3\longrightarrow \partial\HdC-\{q_\infty\}$
mapping the exterior of $S^1\times \R$, that is 
$\bigl\{(x,y,z)\ :\ x^2+y^2\ge 1\bigr\}$, homeomorphically onto $D$ 
and so that $\Psi(x,y,z+1)=A\Psi(x,y,z)$, that is $\Psi$
is equivariant with respect to unit translation along the $z$ axis and $A$.
\end{prop}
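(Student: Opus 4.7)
\bigskip
\noindent\textbf{Proof plan for Proposition \ref{prop-solid-cylinder}.}

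The plan is to exhibit the complement $C := (\partial\HdC\setminus\{q_\infty\})\setminus D$ as a tamely embedded, $A$-invariant solid tube in the Heisenberg group, and then upgrade this identification to an $A$-equivariant homeomorphism of $\R^3$. Throughout, I identify $\partial\HdC\setminus\{q_\infty\}$ with $\R^3$ via Heisenberg coordinates.

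First, I would extract from the combinatorial analysis carried out in Section \ref{section-limit-group} a complete description of $\partial D$ as a 2-complex. By Proposition \ref{prop-part-outside} and the subsequent corollary, the ideal side $s_k^{\pm}$ decomposes into a quadrilateral $\mathcal{Q}_k^{\pm}$ and a bigon $\mathcal{B}_k^{\pm}$, and Figure \ref{fig-combi} records the incidence of these cells along the ridges $r_k^{\pm}$ and at the parabolic ideal vertices $A^k(p_{ST^{-1}})$, $A^k(p_{S^{-1}T})$. The picture is a biinfinite strip whose boundary sides are identified, so as an abstract cell complex $\partial D$ is homeomorphic to $S^1\times\R$, with $A$ acting as a shift by one fundamental period.

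Second, I would promote this combinatorial identification to a genuine tame embedding $\partial D\hookrightarrow\R^3$. Each $\mathcal{Q}_k^{\pm}$ and $\mathcal{B}_k^{\pm}$ is a smooth piece of a Cygan sphere $\I_k^{\pm}$; adjacent cells meet along ridges that are smooth arcs in the transverse intersection of two Cygan spheres (Proposition \ref{prop-inter-isom}); and away from the parabolic vertices these smooth pieces fit together along a $C^\infty$ 2-manifold with corners. At each ideal vertex four isometric spheres meet tangentially (Lemma \ref{lem-tangency}), but because the tangencies are controlled by Corollary \ref{cor-ideal-vertices}, one may produce an explicit local chart (using horospherical coordinates adapted to the parabolic fixed point) in which the four incident cells fit together into a flat topological disk. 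This gives a proper locally flat embedding of $S^1\times\R$ in $\R^3$.

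Third, I would invoke the Schoenflies-type theorem for properly and tamely embedded cylinders in $\R^3$: such a cylinder separates $\R^3$ into two components, and the component whose closure is the "inside" is homeomorphic to $\overline{B}^2\times\R$. The inside is distinguished here by Lemma \ref{lem-delta-phi}: the $A$-invariant line $\Delta_\varphi$ lies in $C$, and its vertical projection is bounded, whereas $D$ contains points arbitrarily far from the projection of the centres of the $\I_k^{\pm}$ (by Remark \ref{rem-proj-cygan-sphere} the union of isometric spheres projects to a bounded horizontal strip). Thus $C$ is the "inside" and is homeomorphic to the solid cylinder. Finally, to obtain the $A$-equivariant homeomorphism $\Psi$, I would first pick an $A$-equivariant homeomorphism $\Delta_\varphi\to\{0\}\times\R$ (both are $\R$-lines on which $A$ acts as translation), extend $A$-equivariantly to a homeomorphism $\partial D\to S^1\times\R$ using the cellular model from Step 1 (working on a single $A$-fundamental slab and pushing forward by powers of $A$), and then interpolate radially through $C$ to obtain a homeomorphism $C\to\overline{B}^2\times\R$; the interior complement is a standard open region and the extension to all of $\R^3$ is automatic.

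The main obstacle is the verification of tameness at the ideal vertices in Step 2. Because four smooth Cygan-sphere caps meet tangentially at each parabolic fixed point, one cannot invoke a transverse-intersection argument directly; instead, one must analyse the second-order behaviour of these bisectors near the tangency point and produce a local bicollar. Once this is in hand, the rest of the argument is a standard combination of the Schoenflies theorem with an equivariance bookkeeping on a fundamental slab for $A$.
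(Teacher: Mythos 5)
Your outline diverges from the paper's proof and, as written, has a genuine gap at the step where you invoke ``the Schoenflies-type theorem for properly and tamely embedded cylinders in $\R^3$''. No such theorem gives what you need. A properly embedded, locally flat cylinder $S^1\times\R$ in $\R^3$ does separate, and its ``inside'' is always homeomorphic to $\overline{B}^2\times\R$, but the cylinder need not be \emph{ambiently} standard: it can be knotted at infinity (take the boundary of a regular neighbourhood of a properly embedded line that ties a trefoil in each unit slab marching off to $+\infty$). For such a cylinder the exterior is not homeomorphic to $\bigl\{(x,y,z) : x^2+y^2\ge 1\bigr\}$ and no homeomorphism of $\R^3$ carries it to the round cylinder, so your concluding sentence --- that the exterior ``is a standard open region and the extension to all of $\R^3$ is automatic'' --- is precisely the point at issue and does not follow from what precedes it. Equivariance under $A$ alone does not repair this, since an $A$-invariant cylinder can still be knotted inside each fundamental slab.

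The paper closes exactly this gap by cutting with the fans $F_{-1}$ and $F_0$ bounding a fundamental slab $D_A$ for $\langle A\rangle$, and proving (Propositions \ref{prop-inter-Fi-F} and \ref{prop-inter-F0-I0}) that each fan meets $\partial D$ in a \emph{single} topological circle; hence $D^c\cap D_A$ is one compact ball (Proposition \ref{prop-ball}, via an Euler-characteristic count on its boundary cell structure) meeting each of the two bounding fans in a single disc and containing the straight segment $\Delta_\varphi\cap D_A$. This exhibits the tube as a trivial product slab by slab, and global standardness then follows by equivariant gluing of the slab homeomorphisms (Proposition \ref{prop-finite-solid cylinder}). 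That fan analysis is the essential ingredient missing from your plan: without it you have an abstract identification $\partial D\cong S^1\times\R$ but no control on its embedding type. Your use of Lemma \ref{lem-delta-phi} to decide which complementary component is the inside is fine, and your worry about local flatness at the tangency vertices is legitimate but secondary --- the paper's route needs only that the cells are topological discs, together with the classical Alexander--Schoenflies theorem for a compact tame sphere, rather than any statement about properly embedded cylinders.
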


As a consequence of Proposition \ref{prop-solid-cylinder}, $D$ admits an 
$A$ invariant 1-dimensional foliation, the leaves being the images of
radial lines $\bigl\{(r\cos(\theta_0),r\sin(\theta_0),z_0)\ :\ r\ge 1\bigr\}$
that foliate the exterior of $S^1\times\R$. Each of these leaves is
a curve connecting a point of $\partial D$ with $q_\infty$.
We can now prove Theorem \ref{theo-CRWLC}.

\begin{proof}[Proof of Theorem \ref{theo-CRWLC}.]
The union $\mathcal{Q}_0^+\cup\mathcal{B}_{0}^+\cup\mathcal{Q}_0^-\cup\mathcal{B}_0^-$ 
is a fundamental domain for the action of $A$ on the boundary cylinder 
$\partial D$. As the foliation obtained above is $A$-invariant, the cone 
to the point $q_\infty$ built over it via the foliation is a fundamental domain 
for the action of $A$ over $D$, and thus, it is a fundamental domain for the 
action of $\Gamma^{\lim}$ on the region of discontinuity 
$\Omega({\Gamma^{\lim}})$. 

This fundamental domain is the union of two pyramids $\mathcal{P}^+$ and 
$\mathcal{P^-}$, with respective bases  $\mathcal{Q}_0^+\cup\mathcal{B}_{0}^-$ 
and $\mathcal{Q}_0^-\cup\mathcal{B}_0^+$, and common vertex $q_\infty=p_{ST}$. 
The two pyramids share a common face, which is a triangle with vertices 
$p_{STS}$, $p_{T^{-1}S}$ and $p_{ST}$. Cutting and pasting, consider the union 
$\mathcal{P}^+\cup S^{-1}\bigl(\mathcal{P}^-\bigr)$. It is again a fundamental 
domain for $\Gamma^{\lim}$. The apex of $S^{-1}(\mathcal{P}^-)$ is
$S^{-1}(q_\infty)=p_B=p_{TS}$. The image under $S^{-1}$ of $\mathcal{Q}_0^-$ is 
$\mathcal{Q}_0^+$, and the bigon $\mathcal{B}_0^+$ is mapped by $S^{-1}$ 
to another bigon connecting $p_{T^{-1}S}$ to $p_{STS}$. Since 
$\mathcal{B}_0^-=S(\mathcal{B}_0^+)$, this new bigon is the
image of $\mathcal{B}_0^-$ under $S^{-2}=S$.

The resulting object is a is a polyhedron (a combinatorial picture is 
provided on Figure \ref{octahedron}), whose faces are triangles and bigons. 
\begin{figure}[ht]
\begin{center}
\scalebox{0.35}{\includegraphics{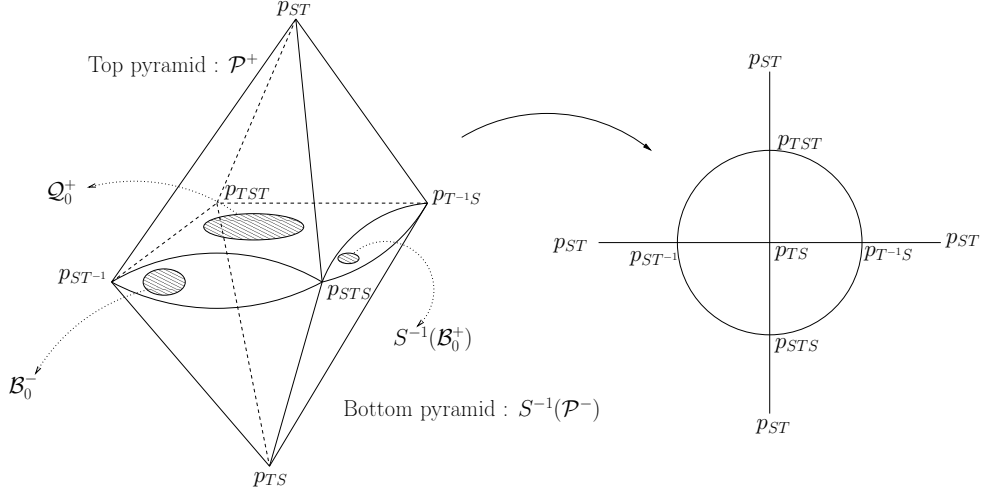}}
\end{center}
\caption{A combinatorial picture of the octahedron.\label{octahedron}}
\end{figure}
The faces of this octahedron are paired as follows.
\begin{eqnarray*}
TS & : & ( p_{TS}, p_{T^{-1}S}, p_{STS}) \longmapsto 
( p_{TS}, p_{TST}, p_{TS^{-1}} ), \\
ST & : & ( p_{ST}, p_{TST}, p_{T^{-1}S} ) \longmapsto 
(p_{ST}, p_{ST^{-1}}, p_{STS} ), \\
T & : & ( p_{ST}, p_{TST}, p_{ST^{-1}} ) \longmapsto 
( p_{TS}, p_{T^{-1}S}, p_{TST} ),\\
S & : &  (p_{TS},p_{ST^{-1}},p_{STS}) \longmapsto 
( p_{ST}, p_{STS}, p_{S^{-1}T} ), \\
S & : & (p_{ST^{-1}},p_{STS})\longmapsto (p_{STS},p_{S^{-1}T}) 
\end{eqnarray*}
The last line is the bigon identification between $\mathcal{B}_0^-$ 
and $S^{-1}(\mathcal{B}_0^+)$. As the triangle 
 $(p_{TS},p_{ST^{-1}},p_{STS})$
and the 
bigon $\mathcal{B}_0^-$ share a common edge and have the same face pairing 
they can be combined into a single triangle, as well as their images. 
Thus the last two lines may be combined into a single side with
side pairing map $S$. We therefore obtain a true combinatorial octahedron. 
The face identifications given above 
make the quotient manifold homeomorphic to the complement of 
the Whitehead link (compare for instance with Section 3.3 of \cite{Thu}).
\end{proof}

\section{Technicalities.}\label{section-technicalities}

\subsection{The triple intersections: proofs of Proposition \ref{prop-triple-inside} and Lemma \ref{lem-triple-limit}\label{section-proof-triple-inside}.}

In this section we first prove Proposition \ref{prop-triple-inside},
which states that the triple intersection must contain a point of 
$\partial\HdC$ and then we analyse the case of the limit group
$\Gamma^{\lim}$, giving a proof of Lemma \ref{lem-triple-limit}.
First recall that the isometric spheres $\Isom_0^-$ and $\Isom_{-1}^-$ are the 
unit Heisenberg spheres with centres given respectively in geographical 
coordinates  by  (see \ref{section-geog})
\begin{eqnarray*}
\bp_{AB} & = &S(\infty)
=g\bigl(-\alpha_1,-\alpha_1/2+\alpha_2,\sqrt{2\cos(\alpha_1)}\bigr)\\
\bp_{BA} & = &A^{-1}S(\infty)
=g\bigl(-\alpha_1,-\alpha_1/2-\alpha_2+\pi,
\sqrt{2\cos(\alpha_1)}\bigr).
\end{eqnarray*}
Consider the two functions of points $q=g(\alpha,\beta,w)\in \I_0^+$
defined by
\begin{eqnarray}
f^{[0]}_{\alpha_1,\alpha_2}(q)
& = & 2\cos^2(\alpha/2-\alpha_1/2)+\cos(\alpha-\alpha_1) 
\label{eq-first-intersection} \\
&& \quad 
-4wx_1\cos(\alpha/2-\alpha_1/2)\cos(\beta+\alpha_1/2-\alpha_2)+w^2x_1^2 ,
\notag \\
f^{[-1]}_{\alpha_1,\alpha_2}(q)
& = & 2\cos^2(\alpha/2-\alpha_1/2)+\cos(\alpha-\alpha_1)
\nonumber \\
&& \quad 
+4wx_1\cos(\alpha/2-\alpha_1/2)\cos(\beta+\alpha_1/2+\alpha_2)+w^2x_1^2.
\label{eq-second-intersection}
\end{eqnarray}
These functions characterise 
those points on $\I_0^+$ that belong to $\I_0^-$ and $\I_{-1}^-$.

\begin{lem}\label{lem-intersections}
A point $q$ on $\I_0^+$ lies on $\Isom_0^-$ (respectively in its interior 
or exterior) if and only if it satisfies $f^{[0]}_{\alpha_1,\alpha_2}(q)=0$  
(respectively is negative or is positive). Similarly, a point $q$ on $\I_0^+$ 
lies on $\Isom_{-1}^{-}$ (respectively in its interior or exterior) if and 
only if it satisfies $f^{[-1]}_{\alpha_1,\alpha_2}(q)=0$  (respectively is 
negative or is positive).
\end{lem}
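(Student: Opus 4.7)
The plan is to prove Lemma \ref{lem-intersections} by direct computation with the Hermitian form on the standard lifts. I will use the lift $\mathbf{g}(\alpha,\beta,w)$ from \eqref{liftgeog} (with $r=1$) for a point $q\in\I_0^+$, and the standard lifts $\mathbf{p}_B$, $\mathbf{p}_{AB}$, $\mathbf{p}_{BA}$ from \eqref{fixedpoints}. The first observation is that the centres of $\I_0^-$ and $\I_{-1}^-$ in the definition of the isometric sphere, namely $(S^{-1})^{-1}\mathbf{q}_\infty = S\mathbf{q}_\infty$ and $(A^{-1}S^{-1}A)^{-1}\mathbf{q}_\infty = A^{-1}S\mathbf{q}_\infty$, are unit-modulus scalar multiples of $\mathbf{p}_{AB}$ and $\mathbf{p}_{BA}$ respectively. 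Indeed, a one-line verification using the matrix for $S$ in \eqref{S} gives $S\mathbf{q}_\infty = -e^{-i\alpha_1/3}\mathbf{p}_{AB}$, and a short computation with $A$ from \eqref{normalAB} shows $A\mathbf{p}_{BA} = \mathbf{p}_{AB}$, so $A^{-1}S\mathbf{q}_\infty = -e^{-i\alpha_1/3}\mathbf{p}_{BA}$. Therefore the modulus in Definition \ref{Isomsphere} is unchanged when one replaces these centres by $\mathbf{p}_{AB}$ and $\mathbf{p}_{BA}$.

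Next, I will compute $|\langle \mathbf{q},\mathbf{p}_B\rangle|^2$, $|\langle \mathbf{q},\mathbf{p}_{AB}\rangle|^2$ and $|\langle \mathbf{q},\mathbf{p}_{BA}\rangle|^2$ explicitly. Using $\langle\mathbf{x},\mathbf{y}\rangle = \mathbf{y}^\ast H\mathbf{x}$, the first is immediately $|\langle \mathbf{q},\mathbf{p}_B\rangle|^2 = 1$, which reflects the fact that $\I_0^+$ is the unit Cygan sphere centred at $p_B$. For the second, I expand
\[
\langle\mathbf{q},\mathbf{p}_{AB}\rangle = -e^{-i\alpha_1}-e^{-i\alpha}+x_1 w\,e^{i(-\alpha/2+\beta-\alpha_2)},
\]
square the modulus, and simplify the cross term with the product-to-sum identity $\cos A+\cos B = 2\cos\!\bigl(\tfrac{A+B}{2}\bigr)\cos\!\bigl(\tfrac{A-B}{2}\bigr)$ applied to the two frequencies that appear. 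This yields
\[
|\langle\mathbf{q},\mathbf{p}_{AB}\rangle|^2
= 2+2\cos(\alpha-\alpha_1) - 4wx_1\cos\!\bigl(\tfrac{\alpha-\alpha_1}{2}\bigr)\cos\!\bigl(\beta+\tfrac{\alpha_1}{2}-\alpha_2\bigr) + w^2x_1^2.
\]
The analogous expansion for $\mathbf{p}_{BA}$, with only the sign of the middle term changed and $\alpha_2$ replaced by $-\alpha_2$ in the relevant cosine, produces the corresponding formula for $|\langle\mathbf{q},\mathbf{p}_{BA}\rangle|^2$.

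To conclude, I will use the elementary identity $2\cos^2(\theta/2) = 1+\cos\theta$ to rewrite \eqref{eq-first-intersection} and \eqref{eq-second-intersection} as $1+2\cos(\alpha-\alpha_1)$ plus the remaining terms, and observe that
\[
|\langle\mathbf{q},\mathbf{p}_{AB}\rangle|^2 = f^{[0]}_{\alpha_1,\alpha_2}(q)+1, \qquad
|\langle\mathbf{q},\mathbf{p}_{BA}\rangle|^2 = f^{[-1]}_{\alpha_1,\alpha_2}(q)+1.
\]
Since $|\langle\mathbf{q},\mathbf{q}_\infty\rangle| = 1$ on $\I_0^+$, the point $q$ lies on $\I_0^-$ (resp.\ in its interior, exterior) precisely when $|\langle\mathbf{q},\mathbf{p}_{AB}\rangle|$ is equal to (resp.\ strictly less than, strictly greater than) $1$, which by the displayed identity is equivalent to $f^{[0]}_{\alpha_1,\alpha_2}(q)$ being zero (resp.\ negative, positive). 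The same argument with $\mathbf{p}_{BA}$ and $f^{[-1]}_{\alpha_1,\alpha_2}$ settles the statement for $\I_{-1}^-$. The only real work is the trigonometric simplification of the cross term; no serious obstacle appears since both cosines collapse cleanly via a single product-to-sum identity.
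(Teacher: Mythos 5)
Your proposal is correct and follows essentially the same route as the paper: both reduce membership in $\I_0^-$ (resp.\ $\I_{-1}^-$) to comparing $\bigl|\langle\bq,\bp_{AB}\rangle\bigr|$ (resp.\ $\bigl|\langle\bq,\bp_{BA}\rangle\bigr|$) with the radius $1$, and both carry out the same trigonometric collapse of the cross term to identify $\bigl|\langle\bq,\bp_{AB}\rangle\bigr|^2-1$ with $f^{[0]}_{\alpha_1,\alpha_2}(q)$. Your explicit verification that the centres $S\bq_\infty$ and $A^{-1}S\bq_\infty$ are unit-modulus multiples of the standard lifts $\bp_{AB}$, $\bp_{BA}$ is a welcome detail that the paper handles implicitly via Lemma \ref{centrisomsphere} and Proposition \ref{centre-translates-S-Sm}.
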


\begin{proof}
A point $q\in\I_0^+$ lies on $\I_0^-$ (respectively in its interior or 
exterior) if and only if its Cygan distance from the centre of $\I_0^-$, 
which is the point $p_{AB}$, equals $1$ (respectively is less than $1$ or 
greater than $1$). Equivalently (see Section \ref{defisom}), the following 
quantity vanishes, is positive or negative respectively,
\begin{eqnarray*} 
\bigl|\langle\bq,\bp_{AB}\rangle\bigr|^2-1 & = & \Bigl| -e^{-i\alpha}+wx_1
e^{-i\alpha/2+i\beta-i\alpha_2} -e^{-i\alpha_1}\Bigr|^2-1 \\
& = & \Bigl| -2\cos(\alpha/2-\alpha_1/2)
+wx_1
e^{i\beta+i\alpha_1/2-i\alpha_2}\Bigr|^2-1 \\
& = & 4\cos^2(\alpha/2-\alpha_1/2)-1
-4\cos(\alpha/2-\alpha_1/2)wx_1
\cos(\beta+\alpha_1/2-\alpha/2)+ w^2x_1^2\\
& = & f^{[0]}_{\alpha_1,\alpha_2}(q).
\end{eqnarray*} 
On the last line we used $2\cos^2(\alpha/2-\alpha_1/2)=1+\cos(\alpha-\alpha_1)$.
This proves the first part of the Lemma and the second is obtained by a 
similar computation.
\end{proof}

\begin{coro}\label{coro-sum-positive}
For given $(\alpha_1,\alpha_2)$, if the sum 
$f^{[0]}_{\alpha_1,\alpha_2}+f^{[-1]}_{\alpha_1,\alpha_2}$ is positive for all $q$, 
then the triple intersection $\I_0^+\cap\I_0^-\cap\I_{-1}^-$ is empty.
\end{coro}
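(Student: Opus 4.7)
The plan is to read this off directly from Lemma \ref{lem-intersections} by contraposition. Suppose, for contradiction, that there exists a point $q\in\I_0^+\cap\I_0^-\cap\I_{-1}^-$. Since $q$ lies on the unit Cygan sphere $\I_0^+=\mathcal{S}_{[0,0]}(1)$, I may parametrise $q=g(\alpha,\beta,w)$ in geographical coordinates as in Definition \ref{def-geog}. Then Lemma \ref{lem-intersections} applied to $q\in\I_0^-$ yields $f^{[0]}_{\alpha_1,\alpha_2}(q)=0$, and applied to $q\in\I_{-1}^-$ yields $f^{[-1]}_{\alpha_1,\alpha_2}(q)=0$. Adding these two equalities gives $\bigl(f^{[0]}_{\alpha_1,\alpha_2}+f^{[-1]}_{\alpha_1,\alpha_2}\bigr)(q)=0$, which contradicts the assumption that this sum is strictly positive at every point of $\I_0^+$. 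Hence no such $q$ exists, and the triple intersection is empty.

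The content of the corollary is thus essentially logical rather than geometric: all of the substantive computation has already been performed in producing the explicit expressions \eqref{eq-first-intersection} and \eqref{eq-second-intersection} for $f^{[0]}_{\alpha_1,\alpha_2}$ and $f^{[-1]}_{\alpha_1,\alpha_2}$ in Lemma \ref{lem-intersections}. There is no genuine obstacle in the proof itself; the point of stating the corollary is that it converts the problem of checking emptiness of a triple intersection (a simultaneous system on the two-dimensional sphere $\I_0^+$) into verifying positivity of a single real-valued function on $\I_0^+$, which is much easier to analyse explicitly in the geographical variables $(\alpha,\beta,w)$ and will be used to carve out the parameter region $\mathcal{Z}$ in the subsequent proofs.
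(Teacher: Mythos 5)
Your proof is correct and is exactly the argument the paper intends: the corollary is an immediate consequence of Lemma \ref{lem-intersections}, since any point of the triple intersection lies on $\I_0^+$ and satisfies $f^{[0]}_{\alpha_1,\alpha_2}(q)=f^{[-1]}_{\alpha_1,\alpha_2}(q)=0$, forcing the sum to vanish. The paper omits the proof precisely because it is this one-line deduction.
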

See Figure \ref{fig-faces-in-IS}. We can now prove 
Proposition \ref{prop-triple-inside}.

\begin{proof}[Proof of Proposition \ref{prop-triple-inside}.]
To prove the first part, note that a necessary condition for a 
point $q\in\I_0^+$ to be in the intersection 
$\I_0^-\cap\I_{-1}^-$ is that 
$f_{\alpha_1,\alpha_2}^{[0]}(q)-f_{\alpha_1,\alpha_2}^{[-1]}(q)=0$. This difference is:
\begin{eqnarray*}
f^{[0]}_{\alpha_1,\alpha_2}(q)
-f^{[-1]}_{\alpha_1,\alpha_2}(q) 
& = & -4wx_1\cos(\alpha/2-\alpha_1/2)\bigl(\cos(\beta+\alpha_1/2-\alpha_2)
+\cos(\beta+\alpha_1/2+\alpha_2)\bigr) \\
& = & -8wx_1\cos(\alpha/2-\alpha_1/2)\cos(\beta+\alpha_1/2)\cos(\alpha_2).
\end{eqnarray*}
Since $\alpha_1$ and $\alpha_2$ lie in $(-\pi/2,\pi/2)$ and
$\alpha\in[-\pi/2,\pi/2]$, the only solutions are
$\cos(\beta+\alpha_1/2)=0$ or $w=0$. Thus either 
$p=g(\alpha,\beta,w)$ lies on the meridian $\mathfrak{m}$,
or on the spine of $\Isom_0^+$, and hence on every meridian, in particular
on $\mathfrak{m}$ (compare with Proposition \ref{prop-geog}).

To prove the second part of Proposition \ref{prop-triple-inside}, 
assume that the triple intersection contains a point 
$q=g\bigl(\alpha,(\pi/2-\alpha_1/2),w\bigr)$ inside $\HdC$, that is such that  
$w^2<2\cos(\alpha)$, and
$$
f^{[0]}_{\alpha_1,\alpha_2}(q)+f^{[-1]}_{\alpha_1,\alpha_2}(q)=0.
$$
In view of Corollary \ref{coro-sum-positive}, we only need to prove that there exists a 
point on $\partial\mathfrak{m}$ where 
the above sum is non-positive, and use the intermediate value theorem. 
To do so, let $\tilde\alpha$ be defined by the 
condition $2\cos(\tilde\alpha)=w^2$ and such that $\tilde\alpha$ and $\alpha_1$ 
have opposite signs. Since $w^2<2\cos(\alpha)$, these conditions imply that 
$|\tilde\alpha|>|\alpha|$. We claim that the point 
$\tilde q=g\bigl(\tilde\alpha,(\pi-\alpha_1)/2,w\bigr)$ is satisfactory.
Indeed, the conditions on $\tilde\alpha$ give
$$
|\alpha-\alpha_1|\le |\alpha|+|\alpha_1|<|\tilde\alpha|+|\alpha_1|
=|\tilde\alpha-\alpha_1|
$$
where the last inequality follows from the fact that $\tilde\alpha$ and 
$\alpha_1$ have opposite signs. Therefore
\begin{equation}\label{eq-tilde}
\cos(\tilde\alpha/2-\alpha_1/2)<\cos(\alpha/2-\alpha_1/2).
\end{equation}
On the other hand, we have
\begin{eqnarray}\label{sum-f0-fm1}
\lefteqn{f^{[0]}_{\alpha_1,\alpha_2}(q)+f^{[-1]}_{\alpha_1,\alpha_2}(q)}\nonumber\\  
& = & 4\cos^2(\alpha/2-\alpha_1/2)+2\cos(\alpha-\alpha_1) 
 -8 w x_1\cos(\alpha/2-\alpha_1/2)\sin(\alpha_2)+2w^2x_1^2 \nonumber\\
& = & 8\cos^2(\alpha/2-\alpha_1/2)-2
-8 w x_1\cos(\alpha/2-\alpha_1/2)\sin(\alpha_2)+2w^2x_1^2.
\end{eqnarray}
We claim this is an increasing function of $\cos(\alpha/2-\alpha_1/2)$.
In order to see this, observe that its derivative with respect to this variable
is
$$
16\cos(\alpha/2-\alpha_1/2)-8wx_1\sin(\alpha_2)
> 16\cos(\alpha/2-\alpha_1/2)-16\sqrt{\cos(\alpha)\cos(\alpha_1)}\ge 0,
$$
where we used $x_1=\sqrt{2\cos(\alpha_1)}$, $w<\sqrt{2\cos(\alpha)}$ 
and $\sin(\alpha_2)\le 1$. Therefore, 
\begin{eqnarray*}
0 & = & f^{[0]}_{\alpha_1,\alpha_2}(q)+f^{[-1]}_{\alpha_1,\alpha_2}(q) \\
& = & 8\cos^2(\alpha/2-\alpha_1/2)-2
-8 w x_1\cos(\alpha/2-\alpha_1/2)\sin(\alpha_2)+2w^2x_1^2 \\
& > & 8\cos^2(\tilde\alpha/2-\alpha_1/2)-2
-8 w x_1\cos(\tilde\alpha/2-\alpha_1/2)\sin(\alpha_2)+2w^2x_1^2 \\
& = & f^{[0]}_{\alpha_1,\alpha_2}(\tilde q)
+f^{[-1]}_{\alpha_1,\alpha_2}(\tilde q).
\end{eqnarray*}
This proves our claim.
\end{proof}

We now prove Lemma \ref{lem-triple-limit} which completely describes 
the triple intersection at the limit point. 

\begin{proof}[Proof of Lemma \ref{lem-triple-limit}]
From the first part of Proposition \ref{prop-triple-inside} we see that
any point $q=g(\alpha,\beta,w)$ in $\I_0^+\cap\I_0^{-}\cap\I_{-1}^-$ must
lie on $\mathfrak{m}$, that is $\beta=(\pi-\alpha_1)/2$. For such points
it is enough to show that 
$f^{[0]}_{0,\alpha_2^{\lim}}(q)+f^{[-1]}_{0,\alpha_2^{\lim}}(q)=0$.
Substituting $\alpha_1=0$ and $\sin(\alpha_2)=\sqrt{5/8}$, this becomes: 
\begin{eqnarray*}
f^{[0]}_{0,\alpha_2^{\lim}}(q)+f^{[-1]}_{0,\alpha_2^{\lim}}(q)
& = & 4\cos^2(\alpha/2)+\cos(\alpha)-4\sqrt{5}w\cos(\alpha/2)+4w^2 \\
& = & \Bigl(2\cos(\alpha/2)-\sqrt{5}w\Bigr)^2+\bigl(2\cos(\alpha)-w^2\bigr).
\end{eqnarray*}
In order to vanish, both terms must be zero. Hence 
$w^2=2\cos(\alpha)$ and $2\cos(\alpha/2)=\sqrt{5}w=\sqrt{10\cos(\alpha)}$
(noting $w$ cannot be negative since $\alpha\in[-\pi/2,\pi/2]$).
This means $\alpha=\pm\arccos(1/4)$ and $w=\sqrt{2\cos(\alpha)}=1/\sqrt{2}$.
Therefore, the only points in $\I_0^+\cap\I_0^{-}\cap\I_{-1}^-$ have
geographical coordinates $g\bigl(\pm\arccos(1/4),\pi/2,1/\sqrt{2}\bigr)$.
Using \eqref{fixed-points-limit}, we see these points are
$p_{ST^{-1}}$ and $p_{S^{-1}T}$.
\end{proof}

\medskip


\subsection{The region $\mathcal{Z}$ is an open disc in the region $\mathcal{L}$: Proof of Proposition \ref{prop-Z-in-L}.\label{section-proof-ZinL}}
Consider the group $\Gamma_{\alpha_1,\alpha_2}$ and, as before, write
$x_1^4=4\cos^2(\alpha_1)$ and $x_2^4=4\cos^2(\alpha_2)$. 
Recall, from Proposition \ref{proptypecomm}, that $(\alpha_1,\alpha_2)$ 
is in ${\mathcal L}$ (respectively ${\mathcal P}$) if 
$\mathcal{G}(x_1^4,x_2^4)>0$ (respectively $=0$) where:
\begin{equation}\label{eqfunccomm}
\mathcal{G}(x,y)=x^2y^4-4x^2y^3+18xy^2-27.
\end{equation}
Recall this means $[A,B]$ is loxodromic (respectively parabolic). Also $(\alpha_1,\alpha_2)$ is in 
the rectangle $\mathcal{R}$ if and only if $(x_1^4,x_2^4)\in[3,4]\times[3/2,4]$. 
From Proposition \ref{prop-triple-inter}, the point 
$(\alpha_1,\alpha_2)\in\mathcal{R}$ is in $\mathcal{Z}$ 
(respectively $\partial\mathcal{Z}$) if $\mathcal{D}(x_1^4,x_2^4)>0$ 
(respectively $=0$) where: 
\begin{equation}\label{eq-def-D}
\mathcal{D}(x,y)=x^3y^3-9x^2y^2-27xy^2+81xy-27x-27.
\end{equation}

\begin{lem}\label{lem-R-in-L}
Suppose $(\alpha_1,\alpha_2)\in\mathcal{R}$. Then 
$(\alpha_1,\alpha_2)\in{\mathcal L}\cup\mathcal{P}$, that is the commutator $[A,B]$ is loxodromic or parabolic 
(see Section \ref{section-type-commutator}). Moreover, 
$(\alpha_1,\alpha_2)\in\mathcal{P}$ if and only if 
$(\alpha_1,\alpha_2)=(0,\pm\alpha_2^{\lim})$.
\end{lem}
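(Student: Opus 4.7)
The plan is to rephrase Lemma \ref{lem-R-in-L} as a polynomial inequality on a rectangle in the $(u,v)$-plane, and then dispatch it by elementary calculus. Setting $u = x_1^4 = 4\cos^2\alpha_1$ and $v = x_2^4 = 4\cos^2\alpha_2$, the condition $(\alpha_1,\alpha_2)\in\mathcal{R}$ translates to $(u,v)\in[3,4]\times[3/2,4]$, while the two points $(0,\pm\alpha_2^{\lim})$ collapse to the single corner $(u,v)=(4,3/2)$. By Proposition \ref{proptypecomm} the commutator $[A,B]$ is non-elliptic exactly when $\mathcal{G}(u,v)\ge 0$, and is parabolic exactly when $\mathcal{G}(u,v)=0$. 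So it suffices to show that $\mathcal{G}(u,v)\ge 0$ on $[3,4]\times[3/2,4]$, with equality only at $(4,3/2)$.

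The key observation is that
\[
\mathcal{G}(u,v) \;=\; v^3(v-4)\,u^2 + 18v^2\,u - 27
\]
is a quadratic in $u$ whose leading coefficient $v^3(v-4)$ is non-positive on $v\in[3/2,4]$. Hence for each fixed $v$ the map $u\mapsto\mathcal{G}(u,v)$ is concave (or linear at $v=4$), and its minimum on the closed interval $[3,4]$ is attained at one of the endpoints. It therefore suffices to verify that the two one-variable polynomials $\mathcal{G}(3,\,\cdot\,)$ and $\mathcal{G}(4,\,\cdot\,)$ are non-negative on $[3/2,4]$ and to locate their zeros.

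For $\mathcal{G}(4,v)=16v^4-64v^3+72v^2-27$, differentiating in $v$ and factoring gives
\[
\frac{\partial}{\partial v}\mathcal{G}(4,v) \;=\; 16\,v\,(2v-3)^2 \;\ge\; 0 \quad \text{on } [3/2,4],
\]
so $\mathcal{G}(4,\,\cdot\,)$ is non-decreasing. Since $\mathcal{G}(4,3/2)=0$ by direct substitution, this gives $\mathcal{G}(4,v)\ge 0$ on $[3/2,4]$, with equality only at $v=3/2$. Similarly, for $\mathcal{G}(3,v)=9(v^4-4v^3+6v^2-3)$ the derivative factors as $36v(v^2-3v+3)$; the quadratic $v^2-3v+3$ has negative discriminant, so $\mathcal{G}(3,\,\cdot\,)$ is strictly increasing on $(0,\infty)$, and a direct evaluation $\mathcal{G}(3,3/2)=297/16>0$ finishes this case.

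Combining the two one-variable estimates with the concavity of $\mathcal{G}(\,\cdot\,,v)$ gives $\mathcal{G}(u,v)\ge 0$ on the whole rectangle, with equality only at $(u,v)=(4,3/2)$. Translating back, this is precisely the locus $(\alpha_1,\alpha_2)=(0,\pm\alpha_2^{\lim})$, which gives both statements of the lemma. I do not expect any real obstacle; the only subtle step is the perfect-square factorisation of $\partial_v\mathcal{G}(4,v)$, but this is forced by geometry, since Proposition \ref{prop-Z-in-L} asserts that $(0,\pm\alpha_2^{\lim})$ are cusp points of the parabolicity curve $\mathcal{P}$ along the vertical axis, so $\mathcal{G}(4,\,\cdot\,)$ must vanish to at least second order at $v=3/2$.
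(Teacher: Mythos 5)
Your proof is correct, and all the computations check out: the identity $\mathcal{G}(u,v)=v^3(v-4)u^2+18v^2u-27$, the factorisations $\partial_v\mathcal{G}(4,v)=16v(2v-3)^2$ and $\partial_v\mathcal{G}(3,v)=36v(v^2-3v+3)$, and the values $\mathcal{G}(4,3/2)=0$ and $\mathcal{G}(3,3/2)=297/16$ are all right, and since $\mathcal{G}(3,v)>0$ throughout, concavity in $u$ does force strict positivity at every interior $u$, so the equality locus is exactly the corner $(u,v)=(4,3/2)$, i.e.\ $(\alpha_1,\alpha_2)=(0,\pm\alpha_2^{\lim})$. The overall strategy is the same as the paper's --- reduce the two-variable inequality on the rectangle $[3,4]\times[3/2,4]$ to one-variable checks on its boundary --- but your reduction mechanism differs: the paper shows that $\mathcal{G}_x$ and $\mathcal{G}_y$ never vanish simultaneously on $(0,\infty)^2$, concludes the minimum lies on the boundary, and then factors $\mathcal{G}$ explicitly on all four edges (obtaining, e.g., $\mathcal{G}(4,y)=(2y+1)(2y-3)^3$ and $\mathcal{G}(x,3/2)=\tfrac{27}{16}(4-x)(5x-4)$). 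Your concavity-in-$u$ observation is a little more economical, since it eliminates the critical-point computation and requires only the two edges $u=3$ and $u=4$, handled by monotonicity in $v$ rather than by full factorisation; the paper's version, in exchange, exhibits the clean closed-form factorisations of $\mathcal{G}$ on each edge, which make the location and order of the zero at $(4,3/2)$ visible at a glance. The closing heuristic about the perfect square being ``forced'' by the cusp structure of $\mathcal{P}$ is harmless but not needed, since you verify the factorisation directly.
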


\begin{proof}
We first claim that the function $\mathcal{G}(x,y)$ 
has no critical points in $(0,\infty)\times(0,\infty)$.
Indeed, the first partial derivatives of $\mathcal{G}(x,y)$ are
$$
\mathcal{G}_x(x,y)= 2y^2(xy^2-4xy+9), \quad \mathcal{G}_y(x,y)=4xy(xy^2-3xy+9).
$$
These are not simultaneously zero for any positive values of $x$ and $y$.
As a consequence, the minimum of $\mathcal{G}$ on 
$[3,4]\times[3/2,4]$ is attained on the boundary of this rectangle. 
We then have:
$$
\begin{array}{ll}
\mathcal{G}(x,3/2)=\dfrac{27}{16}\left(4-x\right)\left(5x-4\right), \quad &
\mathcal{G}(x,4)=9(32x-3), \\
\mathcal{G}(3,y)=9\left(y-1\right)\left(y^3-3y^2+3y+3\right), \quad &
\mathcal{G}(4,y)=(2y+1)(2y-3)^3.
\end{array}
$$
It is a simple exercise to check that under the assumptions that 
$3\le x\le 4$ and $3/2\le y\le 4$ all four of these terms are positive,
except for when $(x,y)=(4,3/2)$ in which case $\mathcal{G}(4,3/2)=0$. 
Then $(x_1^4,x_2^4)=(4,3/2)$ if and only if 
$(\alpha_1,\alpha_2)=(0,\pm\alpha_2^{\lim})$;
compare to Figure \ref{picture-peachandstone}.
\end{proof}

\begin{figure}
\begin{center}
 \scalebox{0.15}{\includegraphics{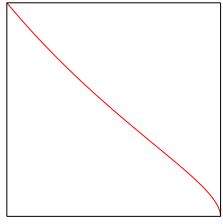}}
\caption{The null locus of $\mathcal{D}(x,y)$ in the rectangle 
$[3,4]\times[3/2,4]$.\label{zeroD}}
\end{center}
\end{figure}

\begin{lem}\label{lem-roots-partialD}
The region $\mathcal{Z}$ is an open topological disc in $\mathcal{R}$ 
symmetric about the axes and intersecting them
in the intervals $\{\alpha_2=0,\,-\pi/6<\alpha_1<\pi/6\}$ and
$\{\alpha_1=0,\,-\alpha_2^{\lim}<\alpha_2<\alpha_2^{\lim}\}$. 
Moreover, the only points of $\partial\mathcal{Z}$ that lie in
the boundary of $\mathcal{R}$ are $(\alpha_1,\alpha_2)=(0,\pm\alpha_2^{\lim})$
and $(\alpha_1,\alpha_2)=(\pm\pi/6,0)$.
\end{lem}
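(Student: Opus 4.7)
The plan is to parametrise via $(x,y)=(x_1^4,x_2^4)=(4\cos^2\alpha_1,4\cos^2\alpha_2)$, giving a map $\pi:\mathcal{R}\to[3,4]\times[3/2,4]$. This map is $4$-to-$1$ on the interior, branched along the two axes of $\mathcal{R}$, and restricts to a homeomorphism from each closed quadrant of $\mathcal{R}$ onto the full rectangle. Since $\mathcal{D}(x_1^4,x_2^4)$ depends only on $|\alpha_1|$ and $|\alpha_2|$, symmetry of $\mathcal{Z}$ about both axes is immediate, and the problem reduces to analysing $\mathcal{Z}'=\{\mathcal{D}>0\}\cap[3,4]\times[3/2,4]$.

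I would then compute $\mathcal{D}$ on each edge of the rectangle and factor:
\begin{align*}
\mathcal{D}(x,4) & = (x-3)(8x+3)^2, & \mathcal{D}(4,y) & = (2y-3)^2(16y-15), \\
\mathcal{D}(3,y) & = 27(y-1)^2(y-4), & \mathcal{D}(x,3/2) & = \tfrac{27}{8}(x-4)(x^2-2x+2).
\end{align*}
Since $x^2-2x+2>0$ and $16y-15>0$ on the relevant intervals, $\mathcal{D}\geq 0$ on the edges $x=4$ and $y=4$ (vanishing only at $(4,3/2)$ and $(3,4)$), and $\mathcal{D}\leq 0$ on $x=3$ and $y=3/2$ (vanishing only at the same two corners). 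Hence the zero set of $\mathcal{D}$ meets $\partial([3,4]\times[3/2,4])$ exactly at $(3,4)$ and $(4,3/2)$, whose preimages under $\pi$ are the four points $(\pm\pi/6,0)$ and $(0,\pm\alpha_2^{\lim})$.

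The heart of the argument is to prove monotonicity in $x$: that $\partial_x\mathcal{D}(x,y)>0$ on $[3,4]\times[3/2,4]$. Here $\partial_x\mathcal{D}$ is a quadratic in $x$ with positive leading coefficient and discriminant $324\,y^3(y-1)^2$, giving roots $x_\pm(y)=\tfrac{3}{y}\pm\tfrac{3(y-1)}{y^{3/2}}$ for $y>1$. The inequality $x_+(y)<3$ reduces, after the substitution $t=\sqrt{y}$, to $(t-1)^2(t+1)>0$, which holds throughout $[3/2,4]$. Hence both critical points of $x\mapsto\mathcal{D}(x,y)$ lie below $3$, and the function is strictly increasing on $[3,4]$ for each fixed $y\in[3/2,4]$. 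Combined with the edge sign data, the intermediate value theorem produces a unique continuous function $x^*:[3/2,4]\to[3,4]$ with $\mathcal{D}(x^*(y),y)=0$, satisfying $x^*(3/2)=4$ and $x^*(4)=3$; smoothness on $(3/2,4)$ follows from the implicit function theorem.

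Consequently the zero set of $\mathcal{D}$ in $[3,4]\times[3/2,4]$ is the graph of $x^*$, a simple smooth arc from $(4,3/2)$ to $(3,4)$. Pulled back to $\mathcal{R}$, the four quadrant images of this arc fit together, by the symmetry $\alpha_i\mapsto-\alpha_i$, into a simple closed curve in $\mathcal{R}$ passing through the four points $(\pm\pi/6,0)$ and $(0,\pm\alpha_2^{\lim})$. By the Jordan curve theorem this curve bounds a unique bounded open region, which must be $\mathcal{Z}$ since $\mathcal{D}(4,4)=1225>0$ places the origin inside; hence $\mathcal{Z}$ is an open topological disc. The axis intersections follow from $\mathcal{D}(x,4)>0\iff x>3$ (i.e.\ $|\alpha_1|<\pi/6$) and $\mathcal{D}(4,y)>0\iff y>3/2$ (i.e.\ $|\alpha_2|<\alpha_2^{\lim}$), and the identification of $\partial\mathcal{Z}\cap\partial\mathcal{R}$ has already been made. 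The main technical obstacle is the monotonicity step, which hinges on the clean factorisation of the discriminant and the identity $(t-1)^2(t+1)>0$; everything else reduces to edge computations and a standard topological argument.
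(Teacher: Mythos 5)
Your proof is correct and follows essentially the same route as the paper's: the same edge factorisations of $\mathcal{D}$ on the boundary of $[3,4]\times[3/2,4]$, monotonicity of $x\mapsto\mathcal{D}(x,y)$ to show the zero locus is a simple arc joining $(4,3/2)$ to $(3,4)$, and reassembly of $\mathcal{Z}$ via the reflection symmetries. The only differences are minor: the paper verifies $\partial_x\mathcal{D}>0$ via the rewriting $\partial_x\mathcal{D}(x,y)=3(x-3)y^2(xy+3y-6)+27(y-1)^3$ rather than your discriminant-and-root-location computation, and it additionally proves uniqueness of the root of $y\mapsto\mathcal{D}(x_0,y)$, a step your graph-over-$y$ argument correctly shows is not needed for the topological conclusion.
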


\begin{proof}
First we examine the values of $\mathcal{D}(x,y)$ on the boundary of 
$[3,4]\times[3/2,4]$: 
\begin{equation}\label{special-values-D}
\begin{array}{ll}
\mathcal{D}\left(x,3/2\right) =\frac{27}{8}(x-4)(x^2-2x+2), \quad & 
\mathcal{D}(x,4) = (x-3)(3+8x)^2, \\
\mathcal{D}(3,y) =  27(y-4)(y-1)^2,  \quad &
\mathcal{D}(4,y)  =  (16y-15)(2y-3)^2.
\end{array}
\end{equation}

We claim that, for any $y_0\in[3/2,4]$ the polynomial $\mathcal{D}(x,y_0)$
has exactly one root in $[3,4]$. Indeed, we have $\mathcal{D}(3,y_0)\le 0 \le \mathcal{D}(4,y_0)$ 
and thus $\mathcal{D}(x,y_0)$ has at least one such root. The $x$-derivative of $\mathcal{D}$ is 
$$
\partial_x\mathcal{D}(x,y)=3(x-3)y^2(xy+3y-6)+27(y-1)^3,
$$
which is positive when $x\in[3,4]$ and  $y\in[3/2,4]$. Thus $\mathcal{D}(x,y_0)$ is increasing, 
and the root is unique. 


Similarly, we claim that, for any $x_0\in[3,4]$, the polynomial 
$\mathcal{D}(x_0,y)$ has a unique root in $[3/2,4]$. It is clear from \eqref{special-values-D} 
when $x_0=4$ (there the root is $y=3/2$).
Now suppose $3\le x_0<4$. Arguing as before, 
we have $\mathcal{D}(x_0,3/2)< 0\le \mathcal{D}(x_0,4)$. However, it is 
not true that $\mathcal{D}(x_0,y)$ is a monotone function of $y$. 
The partial derivative of $\mathcal{D}(x,y)$ with respect to $y$ is
$$
\partial_y\mathcal{D}(x,y)=3x(x^2y^2-6xy-18y+27).
$$
Therefore, for a fixed $x_0\in[3,4)$ we have
$\partial_y\mathcal{D}(x_0,3/2)=27x_0^2(x_0-4)/4< 0$.
Since $\mathcal{D}(x_0,y)$ is a cubic with leading coefficient $x_0^3>0$,
such that both $\mathcal{D}(x_0,3/2)$ and $\partial_y\mathcal{D}(x_0,3/2)$
are negative we see that $\mathcal{D}(x_0,y)$ has exactly one zero 
in $(3/2,\infty)$. Since $\mathcal{D}(x_0,4)\ge 0$ this zero must lies
in $(3/2,4]$ as claimed.

Thus the zero-locus of $\mathcal{D}(x,y)$ in $[3,4]\times [3/2,4]$ is the graph 
of a continuous bijection connecting the two points $(3,4)$ and $(4,3/2)$. 
The polynomial $\mathcal{D}(x,y)$ is positive in the part of 
$[3,4]\times[3/2,4]$ above the zero-locus, that is containing the point 
$(x,y)=(4,4)$ (see Figure \ref{zeroD}). Likewise, it is negative in
the part below the zero locus, that is containing the point $(x,y)=(3,3/2)$.
Changing coordinates to $(\alpha_1,\alpha_2)$, we 
see that the zero locus of 
$\mathcal{D}\bigl(4\cos^2(\alpha_1),4\cos^2(\alpha_2)\bigr)$ 
in the rectangle $[0,\pi/6]\times[0,\alpha_2^{\lim}]$ is the graph
of a continuous bijection connecting the points 
$(\alpha_1,\alpha_2)=(\pi/6,0)$ and $(0,\alpha_2^{\lim})$. 
Moreover, $\mathcal{D}$ is positive on the part below this curve, 
in particular on the interval $\alpha_1=0$ and $0\le \alpha_2<\alpha_2^{\lim}$
and the interval $\alpha_2=0$ and $0\le\alpha_1<\pi/6$. 
The region $\mathcal{Z}$ is the union of the four copies 
of this region by the symmetries about the horizontal 
and vertical coordinate axes. It is clearly a disc and contains the
relevant parts of the axes. This completes the proof.
\end{proof}

Combining Lemmas \ref{lem-R-in-L} and \ref{lem-roots-partialD} proves
Proposition \ref{prop-Z-in-L}.

\subsection{Condition for no triple intersections: Proof of Proposition \ref{prop-triple-inter}.\label{section-no-triple}}

In this section we find a condition on $(\alpha_1,\alpha_2)$
that characterises the set $\mathcal{Z}$ where the triple intersection 
of isometric spheres $\I_0^+\cap\I_0^-\cap\I_{-1}^-$ is empty.

\begin{lem}\label{triple-int-poly}
The triple intersection $\I_0^+\cap\I_0^-\cap\I_{-1}^-$ is empty
if and only if $f_{\alpha_1,\alpha_2}(\alpha)>0$ for all 
$\alpha\in[-\pi/2,\pi/2]$ where
\begin{eqnarray}
f_{\alpha_1,\alpha_2}(\alpha)
& = & 4\cos^2(\alpha/2-\alpha_1/2)+2\cos(\alpha-\alpha_1) 
+8\cos(\alpha)\cos(\alpha_1) \label{eq-exteriors} \\
&& \quad -16\sqrt{\cos(\alpha)\cos(\alpha_1)}
\cos(\alpha/2-\alpha_1/2)\bigl|\sin(\alpha_2)\bigr|. \notag
\end{eqnarray}
\end{lem}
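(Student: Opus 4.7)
The plan is to combine the two parts of Proposition \ref{prop-triple-inside} with a direct substitution into the formulae \eqref{eq-first-intersection} and \eqref{eq-second-intersection}, and then exploit the freedom in the sign of $w$ to obtain the stated minimum.

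First, I would use Proposition \ref{prop-triple-inside}(2) to reduce the emptiness of the triple intersection to checking that no point of $\partial\HdC\cap\I_0^+\cap\I_0^-\cap\I_{-1}^-$ exists. By Proposition \ref{prop-triple-inside}(1), any such boundary point must lie on the meridian $\mathfrak m$ of $\I_0^+$, so it has geographical coordinates $(\alpha,\beta,w)$ with $\beta=(\pi-\alpha_1)/2$ and $w=\pm\sqrt{2\cos\alpha}$ (the boundary condition on $\I_0^+$). Substituting $\beta=(\pi-\alpha_1)/2$ into \eqref{eq-first-intersection} and \eqref{eq-second-intersection} and using $\cos(\pi/2-\alpha_2)=\sin\alpha_2=-\cos(\pi/2+\alpha_2)$ shows that $f^{[0]}_{\alpha_1,\alpha_2}$ and $f^{[-1]}_{\alpha_1,\alpha_2}$ coincide on $\mathfrak m$, reducing to a single function of $\alpha$ and $w$. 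By Lemma \ref{lem-intersections}, the boundary triple intersection is nonempty if and only if this common function vanishes at some admissible pair $(\alpha,w)$.

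Next, I would substitute the two possible values $w=\pm\sqrt{2\cos\alpha}$ together with $x_1=\sqrt{2\cos\alpha_1}$ into the restricted function. After multiplying by $2$ this produces two values $2f_\pm(\alpha)$ that differ only through the sign of the cross term $16\sqrt{\cos\alpha\cos\alpha_1}\cos(\alpha/2-\alpha_1/2)\sin\alpha_2$. Whichever sign of $\sin\alpha_2$ is fixed, one of the two choices of $w$ always makes this term negative, so $2\min(f_+,f_-)(\alpha)$ is exactly $f_{\alpha_1,\alpha_2}(\alpha)$ as defined in \eqref{eq-exteriors}, the absolute value $|\sin\alpha_2|$ reflecting this sign choice.

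Finally, I would conclude by a continuity/IVT argument. If $f_{\alpha_1,\alpha_2}(\alpha)>0$ for every $\alpha\in[-\pi/2,\pi/2]$ then both $f_+$ and $f_-$ are strictly positive and no intersection point exists. Conversely, suppose $f_{\alpha_1,\alpha_2}(\alpha_0)\le 0$ for some $\alpha_0$. At the endpoints $\alpha=\pm\pi/2$ we have $w=0$, so $f_+(\pm\pi/2)=f_-(\pm\pi/2)=1+2\cos(\pm\pi/2-\alpha_1)=1\pm 2\sin\alpha_1$, which is $\ge 0$ in $\mathcal{R}$ since $|\alpha_1|\le\pi/6$. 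By the intermediate value theorem applied to the continuous function $\min(f_+,f_-)$, one of $f_\pm$ must hit $0$ somewhere in $[-\pi/2,\pi/2]$, yielding a point of $\partial\HdC\cap\I_0^+\cap\I_0^-\cap\I_{-1}^-$. The only slightly delicate point is the boundary behaviour at the rectangle's corners $\alpha_1=\pm\pi/6$, where the endpoint values of $f_\pm$ already vanish; this explains why the cleanest statement uses the strict inequality $f_{\alpha_1,\alpha_2}>0$ throughout $[-\pi/2,\pi/2]$.
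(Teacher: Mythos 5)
Your proposal is correct and follows essentially the same route as the paper: reduce to boundary points of the meridian $\mathfrak m$ via the two parts of Proposition \ref{prop-triple-inside}, note that $f^{[0]}_{\alpha_1,\alpha_2}$ and $f^{[-1]}_{\alpha_1,\alpha_2}$ agree there, and minimise over the sign of $w=\pm\sqrt{2\cos\alpha}$ to produce the $\bigl|\sin(\alpha_2)\bigr|$ term in \eqref{eq-exteriors}. Your explicit intermediate value argument for the ``only if'' direction (using positivity of the endpoint values $1\pm2\sin\alpha_1$ on $\mathcal R$) is a welcome elaboration of a step the paper leaves implicit, but it is not a different method.
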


\begin{proof}
 By Corollary \ref{coro-sum-positive}, it is enough to show that  
$f^{[0]}_{\alpha_1,\alpha_2}+f^{[-1]}_{\alpha_1,\alpha_2}> 0$.  This sum is made explicit in 
\eqref{sum-f0-fm1}. In view of the second part of 
Proposition \eqref{prop-triple-inside}, we 
can restrict our attention to showing that
the triple intersection $\I_0^+\cap\I_0^-\cap\I_{-1}^-$ 
contains no points of $\partial\HdC$.
That is, we may assume $w=\pm\sqrt{2\cos(\alpha)}$. Using the first part of 
Proposition \ref{prop-triple-inside} we restrict our attention to points
$m$ in the meridian $\mathfrak{m}$ where $\beta=(\pi-\alpha_1)/2$.
The triple intersection is empty if and only if the sum 
$f^{[0]}_{\alpha_1,\alpha_2}(q)+f^{[-1]}_{\alpha_1,\alpha_2}(q)$ is positive for any 
value of $\alpha$, where 
$q=g\bigl(\alpha,(\pi-\alpha_1/2),\pm\sqrt{2\cos(\alpha)}\bigr)$.
When $w\sin(\alpha_2)$ is negative all terms in \eqref{sum-f0-fm1} are
positive. Therefore we may suppose 
$w\sin(\alpha_2)=\sqrt{2\cos(\alpha_1)}\bigl|\sin(\alpha_2)\bigr|\ge 0$.
Substituting these values in the expression for $f^{[0]}_{\alpha_1,\alpha_2}(q)+f^{[-1]}_{\alpha_1,\alpha_2}(q)$ 
given in  \eqref{sum-f0-fm1} gives the function
$f_{\alpha_1,\alpha_2}(\alpha)$ in \eqref{eq-exteriors}.
\end{proof}

We want to convert \eqref{eq-exteriors} into a polynomial expression 
in a function of $\alpha$. The numerical condition given in the statement of 
Proposition \ref{prop-triple-inter} will follow from the next lemma.

\begin{lem}\label{lem-tech-proof-theo-triple}
If $\alpha\in[-\pi/2,\pi/2]$ is a zero of $f_{\alpha_1,\alpha_2}$ 
then $T_\alpha=\tan(\alpha/2)\in[-1,1]$ is a root of the quartic polynomial 
$L_{\alpha_1,\alpha_2}(T)$, where
\begin{eqnarray}\label{poly-condition}
 L_{\alpha_1,\alpha_2}(T) 
& = & T^4\left(2x_1^4x_2^4-4x_1^2x_2^4+x_1^4+10x_1^2+1\right)
-8T^3\sin(\alpha_1)\left(x_1^2x_2^4-x_1^2-1\right)\nonumber\\
&  &-2T^2\left(2x_1^4x_2^4+3x_1^4-9\right)
+8T\sin(\alpha_1)\left(x_1^2x_2^4-x_1^2+1\right)\nonumber\\
&  &  + \left(2x_1^4x_2^4+4x_1^2x_2^4+x_1^4-10x_1^2+1\right)
\end{eqnarray}
\end{lem}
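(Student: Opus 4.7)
The plan is to convert the transcendental equation $f_{\alpha_{1},\alpha_{2}}(\alpha)=0$ into a polynomial identity in $T=\tan(\alpha/2)$. The overall strategy is to isolate the unique square-root term in $f_{\alpha_{1},\alpha_{2}}$, square both sides to remove the radical, and then apply the Weierstrass substitution, clearing denominators. Note that $\alpha\in[-\pi/2,\pi/2]$ automatically gives $T\in[-1,1]$, so the constraint stated in the lemma is immediate once a polynomial identity is in hand.

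First I would simplify the rational part of \eqref{eq-exteriors} using the half-angle identity $4\cos^{2}(\alpha/2-\alpha_{1}/2)=2+2\cos(\alpha-\alpha_{1})$, rewriting $f_{\alpha_{1},\alpha_{2}}(\alpha)$ as
\[
\bigl[2+4\cos(\alpha-\alpha_{1})+8\cos(\alpha)\cos(\alpha_{1})\bigr]\;-\;16\,|\sin(\alpha_{2})|\,\sqrt{\cos(\alpha)\cos(\alpha_{1})}\,\cos(\alpha/2-\alpha_{1}/2).
\]
Moving the radical term to the right-hand side and squaring yields an identity with no square roots, since the squared radical factor $\cos^{2}(\alpha/2-\alpha_{1}/2)$ is once again handled by the same half-angle identity. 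Substituting $\cos^{2}(\alpha_{1})=x_{1}^{4}/4$ and $\sin^{2}(\alpha_{2})=1-x_{2}^{4}/4$ then expresses the squared equation as a polynomial identity in $\cos(\alpha)$, $\sin(\alpha)$, $\sin(\alpha_{1})$, $x_{1}^{2}$ and $x_{2}^{4}$, with $\sin(\alpha_{1})$ appearing only linearly (all even powers collapse via $\sin^{2}(\alpha_{1})=1-x_{1}^{4}/4$).

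Next I apply the Weierstrass substitution: for $\alpha\in[-\pi/2,\pi/2]$,
\[
\cos(\alpha)=\frac{1-T^{2}}{1+T^{2}},\qquad \sin(\alpha)=\frac{2T}{1+T^{2}}.
\]
Multiplying through by $(1+T^{2})^{2}$ clears all denominators and produces a polynomial in $T$ of degree four, whose coefficients belong to $\mathbb{Z}[x_{1}^{2},x_{2}^{4},\sin(\alpha_{1})]$, with $\sin(\alpha_{1})$ appearing only in the coefficients of $T$ and $T^{3}$.

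The final step is a direct coefficient-by-coefficient comparison against $L_{\alpha_{1},\alpha_{2}}(T)$ from \eqref{poly-condition}. One checks that the polynomial just obtained is exactly $4\,L_{\alpha_{1},\alpha_{2}}(T)$; therefore any root $\alpha\in[-\pi/2,\pi/2]$ of $f_{\alpha_{1},\alpha_{2}}$ provides a root $T_{\alpha}=\tan(\alpha/2)\in[-1,1]$ of $L_{\alpha_{1},\alpha_{2}}$. The main obstacle is purely bookkeeping: five coefficients must be expanded and matched, which is tedious but mechanical. Finally, squaring can introduce spurious roots, but this only enlarges the zero set of the polynomial, so it does not compromise the one-way implication stated in the lemma.
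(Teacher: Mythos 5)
Your proposal is correct and follows essentially the same route as the paper: isolate the (nonnegative) radical term, square, reduce everything to a polynomial identity in $\cos\alpha$ and $\sin\alpha$ with $\sin(\alpha_1)$ appearing only linearly, and then substitute $T=\tan(\alpha/2)$ and clear the $(1+T^2)^2$ denominator to land on the quartic $L_{\alpha_1,\alpha_2}$ (your overall factor of $4$ versus the paper's normalisation is immaterial, and your remark that squaring only needs to give the one-way implication is exactly the right observation). Nothing further is needed beyond the coefficient bookkeeping you describe.
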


\begin{proof}
Squaring the two lines of \eqref{eq-exteriors} and using $\sqrt{2\cos(\alpha_1)}\bigl|\sin(\alpha_2)\bigr|\ge 0$, 
we see that the condition $f_{\alpha_1,\alpha_2}(\alpha)=0$ is equivalent to 
\begin{equation}\label{condi-f-0}
\Bigl(2\cos^2(\alpha/2-\alpha_1/2)+\cos(\alpha-\alpha_1) 
+4\cos(\alpha)\cos(\alpha_1)\Bigr)^2 
=64\cos(\alpha)\cos(\alpha_1)
\cos^2(\alpha/2-\alpha_1/2)\sin^2(\alpha_2).\nonumber\\
\end{equation}
After rearranging and expanding, we obtain the following polynomial equation 
in $\cos(\alpha)$ and $\sin(\alpha)$. 
\begin{eqnarray*}
 0 & = &4\left(8\cos^2(\alpha_1)\cos^2(\alpha_2)+2\cos^2(\alpha_1)-1\right)
\cos^2(\alpha)
+4\cos(\alpha_1)\bigl(8\cos^2(\alpha_2)-5\bigr)\cos(\alpha)
\nonumber\\
& & \quad +8\cos(\alpha_1)\sin(\alpha_1)
\left(4\cos^2(\alpha_2)-1\right)\cos(\alpha)\sin(\alpha)
+4\sin(\alpha_1)\sin(\alpha)-4\cos^2(\alpha_1)+5. 
\end{eqnarray*}
Substituting $\tan(\alpha/2)=T$, $2\cos(\alpha_1)=x_1^2$ and 
$2\cos(\alpha_2)=x_2^2$ into this equation gives $L_{\alpha_1,\alpha_2}(T)$.
\end{proof}

Before proving Proposition \ref{prop-triple-inter}, we analyse the situation 
on the axes $\alpha_1=0$ and $\alpha_2=0$.

\begin{lem}\label{lem-L-axes}
Let $L_{\alpha_1,\alpha_2}(T)$ be given by \eqref{poly-condition}.
\begin{enumerate}
\item When $\alpha_2=0$ and $-\pi/6<\alpha_1<\pi/6$ 
then $L_{\alpha_1,0}(T)$ has two real double roots $T_-$ and $T_+$
where $T_-< -1$ and $T_+> 1$, and no other roots.
\item When $\alpha_1=0$ and $0<\alpha_2<\alpha_2^{\lim}$ 
or $-\alpha_2^{\lim}<\alpha_2<0$ the polynomial
$L_{0,\alpha_2}(T)$ has no real roots.
\end{enumerate}
\end{lem}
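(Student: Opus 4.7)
The strategy is direct substitution on each axis, followed by exploiting algebraic structure that appears only after the substitution.

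For (1), I would set $\alpha_2 = 0$, hence $x_2^4 = 4$, in \eqref{poly-condition}. The coefficients simplify: the $T^4$ coefficient becomes $9x_1^4 - 6x_1^2 + 1 = (3x_1^2-1)^2$ and the constant becomes $9x_1^4 + 6x_1^2 + 1 = (3x_1^2+1)^2$, while the $T^3$ and $T$ coefficients become $-8(3x_1^2-1)\sin(\alpha_1)$ and $8(3x_1^2+1)\sin(\alpha_1)$. This pattern suggests trying to write the result as $Q(T)^2$ with
\[
Q(T) = (3x_1^2-1)T^2 - 4\sin(\alpha_1)\, T - (3x_1^2+1).
\]
Expanding $Q(T)^2$ and using $\sin^2(\alpha_1) = 1 - x_1^4/4$, the resulting $T^2$ coefficient is $18 - 22x_1^4$, matching $-2(11x_1^4 - 9)$. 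So $L_{\alpha_1,0}(T) = Q(T)^2$ and the real roots of $L_{\alpha_1,0}$ are exactly the real roots of $Q$, each doubled. For $-\pi/6 < \alpha_1 < \pi/6$ one has $x_1^2 = 2\cos(\alpha_1) > \sqrt{3}$, so the leading coefficient of $Q$ is positive, and $Q(\pm 1) = -2 \mp 4\sin(\alpha_1) \in (-4,0)$. The intermediate value theorem and the behaviour at $\pm\infty$ then give exactly one root $T_+ > 1$ and one root $T_- < -1$.

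For (2), I would set $\alpha_1 = 0$, so $\sin(\alpha_1) = 0$ and $x_1^2 = 2$. The cubic and linear terms vanish, and the $T^4$ coefficient collapses: $8x_2^4 - 8x_2^4 + 25 = 25$. Thus
\[
L_{0,\alpha_2}(T) = 25\, T^4 - 2(8x_2^4 + 3)\, T^2 + (16x_2^4 - 15),
\]
a biquadratic. Setting $U = T^2$, I would compute the discriminant of $25U^2 - 2(8x_2^4+3)U + (16x_2^4-15)$:
\[
4\bigl[(8x_2^4+3)^2 - 25(16x_2^4-15)\bigr] = 4\bigl[64\, x_2^8 - 352\, x_2^4 + 384\bigr] = 256\,(x_2^4 - 4)(x_2^4 - 3/2).
\]
For $0 < |\alpha_2| < \alpha_2^{\lim}$ one has $\cos(\alpha_2) > \sqrt{3/8}$ and $\cos(\alpha_2) < 1$, so $3/2 < x_2^4 < 4$ and the discriminant is strictly negative. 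Hence the quadratic in $U$ has no real roots, and neither does $L_{0,\alpha_2}$.

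The main obstacle is recognising the two algebraic simplifications: that along $\alpha_2 = 0$ the quartic is a perfect square of a quadratic with easily controlled roots, and that along $\alpha_1 = 0$ the discriminant factors through the values $x_2^4 \in \{3/2, 4\}$ that correspond precisely to $(\alpha_1,\alpha_2) = (0, \pm\alpha_2^{\lim})$ and to $A = B = \mathrm{Id}$ respectively, matching the endpoints of the interval claimed. Once these factorisations are in hand, the sign computations are elementary.
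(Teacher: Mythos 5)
Your proof is correct and follows essentially the same route as the paper: for $\alpha_2=0$ the paper also writes $L_{\alpha_1,0}=M_{\alpha_1}(T)^2$ with $M_{\alpha_1}(T)=(3x_1^2-1)T^2-4\sin(\alpha_1)T-(3x_1^2+1)$, which is exactly your $Q$, and locates the roots by the same sign check at $T=\pm1$ together with the behaviour at $\pm\infty$. For $\alpha_1=0$ the paper completes the square, writing $L_{0,\alpha_2}(T)=\bigl(5T^2-\tfrac{8x_2^4+3}{5}\bigr)^2+\tfrac{32}{25}(2x_2^4-3)(4-x_2^4)$, which is the same computation as your discriminant of the biquadratic in $U=T^2$, with the same factors $x_2^4-3/2$ and $4-x_2^4$ controlling the sign.
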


\begin{proof}
First, substituting $\alpha_2=0$ in \eqref{poly-condition} we find $L_{(\alpha_1,0)}=M_{\alpha_1}(T)^2$, where
$$
M_{\alpha_1}(T)=T^2(3x_1^2-1)-4T\sin(\alpha_1)-(3x_1^2+1).
$$
The condition on $\alpha_1$ guarantees that
$3x_1^2-1>0$ and so as $T$ tends to $\pm\infty$ so
$M_{\alpha_1}(T)$ tends to $+\infty$. On the other hand,
$$
M_{\alpha_1}(-1)=4\sin(\alpha_1)-2< 0,\quad 
M_{\alpha_1}(1)=-4\sin(\alpha_1)-2< 0.
$$
Therefore $M_{\alpha_1}(T)$ has two real roots $T_-< -1$ and $T_+> 1$
as claimed. Since $M_{\alpha_1}(T)$ is quadratic, it cannot have any
more roots. In particular, it is negative for $-1\le T\le 1$.

Secondly, we substitute $\alpha_1=0$ in \eqref{poly-condition}, giving:
$$
L_{0,\alpha_2}(T)=\left(5T^2-\frac{8x_2^4+3}{5}\right)^2+
\frac{32}{25}(2x_2^4-3)(4-x_2^4).
$$
When $\alpha_2\in(-\alpha_2^{\lim},\alpha_2^{\lim})$ and $\alpha_2\neq 0$,
we have $x_2^4=4\cos^2(\alpha_2)\in(3/2,4)$. In particular, this means that
$(2x_2^4-3)(4-x_2^4)>0$ and so $L_{0,\alpha_2}(T)$ has no real roots,
proving the second part.
\end{proof}

We note that when $\alpha_1=\alpha_2=0$ then $L_{0,0}(T)$ has double
roots at $T=\pm\sqrt{7/5}$ and when $\alpha_1=0$ and 
$\alpha_2=\pm\alpha_2^{\lim}$ then $L_{0,\pm\alpha_2^{\lim}}(T)$ has double 
roots at $T=\pm\sqrt{3/5}$.

\begin{lem}\label{lem-L-in-Z}
If $(\alpha_1,\alpha_2)\in\mathcal{Z}$ then the polynomial
$L_{\alpha_1,\alpha_2}(T)$ has no roots $T$ in $[-1,1]$.
\end{lem}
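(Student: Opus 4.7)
The plan is to show that on $\mathcal{Z}$, $L_{\alpha_1,\alpha_2}(T)>0$ for every $T\in[-1,1]$, treating the endpoints $T=\pm 1$ and the interior $T\in(-1,1)$ separately.

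For the endpoints, a direct substitution in \eqref{poly-condition} shows that the coefficients involving $x_1^4x_2^4$, $x_1^2x_2^4$, and $x_1^2$ cancel pairwise, and after replacing $x_1^4$ by $4-4\sin^2(\alpha_1)$ one obtains the clean formula
\[
L_{\alpha_1,\alpha_2}(\pm 1) = 4\bigl(2\sin(\alpha_1)\pm 1\bigr)^2.
\]
This vanishes only at $\alpha_1=\mp\pi/6$. By Proposition \ref{prop-Z-in-L}, $\mathcal{Z}$ is open and its boundary meets $\partial\mathcal{R}$ only at the four points $(0,\pm\alpha_2^{\lim})$ and $(\pm\pi/6,0)$, none of which lies in $\mathcal{Z}$. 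In particular every $(\alpha_1,\alpha_2)\in\mathcal{Z}$ satisfies $|\alpha_1|<\pi/6$ strictly, so $L_{\alpha_1,\alpha_2}(\pm 1)>0$.

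For the interior, fix $T\in(-1,1)$ and set $\alpha=2\arctan(T)\in(-\pi/2,\pi/2)$. Going back to the derivation of $L$ in Lemma \ref{lem-tech-proof-theo-triple}, the equation $L(T)=0$ is equivalent, after multiplication by the positive factor $(1+T^2)^2$, to $A(\alpha)^2=B(\alpha)^2$, where $A$ denotes the first three terms on the right-hand side of \eqref{eq-exteriors} and $B\geq 0$ the last one, so that $f_{\alpha_1,\alpha_2}=A-B$. Using $4\cos^2(\alpha/2-\alpha_1/2)=2+2\cos(\alpha-\alpha_1)$ together with the addition formula, $A$ simplifies to
\[
A(\alpha)=2+12\cos(\alpha)\cos(\alpha_1)+4\sin(\alpha)\sin(\alpha_1)=2+R\cos(\alpha-\phi),
\]
with $R=4\sqrt{9\cos^2(\alpha_1)+\sin^2(\alpha_1)}$ and $\tan(\phi)=\tan(\alpha_1)/3$. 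Since $|\phi|<|\alpha_1|<\pi/2$, the minimum of $A$ on $[-\pi/2,\pi/2]$ is attained at one of the endpoints $\alpha=\pm\pi/2$; a short computation using the identity $R\sin(\phi)=4\sin(\alpha_1)$ gives $A(\pm\pi/2)=2\pm 4\sin(\alpha_1)$, so
\[
\min_{\alpha\in[-\pi/2,\pi/2]} A(\alpha) = 2-4|\sin(\alpha_1)|>0,
\]
using once more $|\alpha_1|<\pi/6$ on $\mathcal{Z}$. Thus $A>0$ throughout $[-\pi/2,\pi/2]$, and since $B\geq 0$ the equation $A^2=B^2$ forces $A=B$, i.e.\ $f_{\alpha_1,\alpha_2}(\alpha)=0$. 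But by Lemma \ref{triple-int-poly} and the definition of $\mathcal{Z}$, $f_{\alpha_1,\alpha_2}>0$ on $[-\pi/2,\pi/2]$, a contradiction. Hence $L_{\alpha_1,\alpha_2}$ has no zero in $(-1,1)$ either.

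The only subtle step is the strict positivity $A>0$; it relies crucially on the strict bound $|\alpha_1|<\pi/6$, and indeed at the boundary points $(\mp\pi/6,0)\in\partial\mathcal{Z}$ one sees both $A(\pm\pi/2)=0$ and $L(\pm 1)=0$ simultaneously, confirming that the estimate is sharp and that the argument cannot be extended to $\partial\mathcal{Z}$ without modification.
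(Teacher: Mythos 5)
Your endpoint computation is correct and checks out: substituting $T=\pm 1$ into \eqref{poly-condition} and writing $x_1^4=4-4\sin^2(\alpha_1)$ does yield $L_{\alpha_1,\alpha_2}(\pm 1)=4\bigl(2\sin(\alpha_1)\pm 1\bigr)^2$, and since $\mathcal{D}(3,y)=27(y-4)(y-1)^2\leq 0$ for $y\in[3/2,4]$, no point with $\mathcal{D}(x_1^4,x_2^4)>0$ in $\mathcal{R}$ can have $|\alpha_1|=\pi/6$; so $L(\pm 1)>0$ on $\mathcal{Z}$. The identity $A(\alpha)=2+12\cos(\alpha)\cos(\alpha_1)+4\sin(\alpha)\sin(\alpha_1)$ and the bound $\min_{[-\pi/2,\pi/2]}A=2-4|\sin(\alpha_1)|>0$ for $|\alpha_1|<\pi/6$ are also correct, and this part of your argument would survive under the correct hypothesis.

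The interior case, however, is circular. You close the argument by invoking ``Lemma \ref{triple-int-poly} and the definition of $\mathcal{Z}$'' to assert $f_{\alpha_1,\alpha_2}>0$ on $[-\pi/2,\pi/2]$. But by Lemma \ref{triple-int-poly} that positivity is exactly the statement that the triple intersection $\I_0^+\cap\I_0^-\cap\I_{-1}^-$ is empty, and the entire purpose of Lemma \ref{lem-L-in-Z} is to establish that this emptiness follows from the numerical condition $\mathcal{D}(x_1^4,x_2^4)>0$: the paper's proof of Proposition \ref{prop-triple-inter} runs $\mathcal{D}>0\Rightarrow L$ has no root in $[-1,1]\Rightarrow f$ has no zero $\Rightarrow$ the triple intersection is empty. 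The hypothesis ``$(\alpha_1,\alpha_2)\in\mathcal{Z}$'' must therefore be read as ``$\mathcal{D}(x_1^4,x_2^4)>0$'' (which is how the paper's own proof reads it, via Proposition \ref{prop-Z-in-L}); under your reading the lemma reduces to the contrapositive of Lemma \ref{lem-tech-proof-theo-triple} together with the observation $A>0$, and it can no longer be used where it is needed. The paper avoids this by arguing from the inequality $\mathcal{D}>0$ alone: it computes the discriminant of $L_{\alpha_1,\alpha_2}$, which on $\mathcal{R}$ is a positive quantity times $16\sin^4(\alpha_2)\cdot\mathcal{D}(x_1^4,x_2^4)$ by \eqref{full-resultant}, deduces that the root-type of $L$ is constant on each of the discs $\mathcal{Z}^{\pm}$, propagates ``no real roots'' from the $\alpha_2$-axis (Lemma \ref{lem-L-axes}) by continuity, and treats the segment $\alpha_2=0$ separately. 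Your positivity of $A$ on $[-\pi/2,\pi/2]$ for $|\alpha_1|<\pi/6$ remains a genuinely useful remark, since it shows the implication in Lemma \ref{lem-tech-proof-theo-triple} is reversible on $\mathcal{R}$, but it does not by itself prove the lemma.
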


\begin{proof}
We analyse the number, type (real or non-real) and location of roots 
of the polynomial $L_{\alpha_1,\alpha_2}(T)$ when 
$(\alpha_1,\alpha_2)\in\mathcal{R}$. 
As $L_{\alpha_1\alpha_2}(T)$ has real coefficients, whenever it has only simple 
roots, its root set is of one of the following types:
\begin{enumerate}
\item[(a)] two pairs of complex conjugate non real simple roots,
\item[(b)] a pair of non-real complex conjugate simple roots and two simple 
real roots,
\item[(c)] four simple real roots.
\end{enumerate}
But the set of roots of a polynomial is a continuous map (in bounded degree) 
for the Hausdorff distance on compact subsets of $\C$. In particular, the 
root set type of $L_{\alpha_1,\alpha_2}(T)$ is a continuous function of $\alpha_1$ 
and $\alpha_2$. This implies that it is not possible to pass from one of 
the above types to another without passing through a polynomial having a 
double root. 

We compute the discriminant $\Delta_{\alpha_1,\alpha_2}$
of $L_{\alpha_1,\alpha_2}(T)$ (a computer may be useful to do so):
\begin{eqnarray}\label{full-resultant}
\Delta_{\alpha_1,\alpha_2} & = & 
2^{16}x_1^4\bigl(x_1^4+1\bigr)^2
\bigl(2x_1^2(2-x_1^2)(4-x_2^4)+(3x_1^2-1)^2\bigr)\bigl(4-x_2^4\bigr)^2\cdot
\mathcal{D}\bigl(x_1^4,x_2^4\bigr)
\end{eqnarray}
where $\mathcal{D}(x,y)$ is as in Proposition \ref{prop-triple-inter}, and
$x_i=\sqrt{2\cos(\alpha_i)}$. The polynomial $L_{\alpha_1,\alpha_2}(T)$ 
has a multiple root in $\C$ if and only if $\Delta_{\alpha_1\alpha_2}=0$. 
Let us examine the different factors.
\begin{itemize}
 \item The first two factors $x_1^4$ and $(x_1^4+1)^2$ are positive when
$(\alpha_1,\alpha_2)\in(-\pi/2,\pi/2)^2$. 
 \item Note that $(2-x_1^2)(4-x_2^4)\ge 0$ and 
$(3x_1^2-1)^2>0$ when $\sqrt{3}\le x_1^2\le 2$ and $x_2^4\le 4$, and so the
third factor is positive.
\end{itemize}
Thus, the only factors of $\Delta_{\alpha_1,\alpha_2}$ that can vanish on 
$\mathcal{R}$ are $(4-x_2^4)^2=16\sin^4\alpha_2$ and $\mathcal{D}(x_1^4,x_2^4)$. 
In particular $L_{\alpha_1,\alpha_2}(T)$ has a multiple root in $\C$ if and only 
if one of these two factors vanishes. 
We saw in Proposition \ref{prop-Z-in-L} that the subset of $\mathcal{R}$
where $\mathcal{D}(x_1^4,x_2^4)>0$ is a topological disc $\mathcal{Z}$, 
symmetric about the $\alpha_1$ and $\alpha_2$ axes and intersecting them
in the intervals $\{\alpha_2=0,\,-\pi/6<\alpha_1<\pi/6\}$ and
$\{\alpha_1=0,\,-\alpha_2^{\lim}<\alpha_2<\alpha_2^{\lim}\}$. 
Therefore, the rectangle $\mathcal{R}$ contains two open discs on which
$\Delta_{\alpha_1,\alpha_2}>0$, namely 
$$
\mathcal{Z}^+=\{(\alpha_1,\alpha_2)\in\mathcal{Z}\ :\ \alpha_2>0\}, \quad 
\mathcal{Z}^-=\{(\alpha_1,\alpha_2)\in\mathcal{Z}\ :\ \alpha_2<0\}.
$$
These two sets each contain an open interval of the $\alpha_2$ axis. 
We saw in the second part of Lemma \ref{lem-L-axes} that on both these 
intervals $L_{\alpha_1,\alpha_2}(T)$ has no real roots, that is its roots are
of type (a). Therefore it has no real roots on all of $\mathcal{Z}^+$ 
and $\mathcal{Z}^-$.

The only points of $\mathcal{Z}$ yet to be considered are those in
the interval $\{\alpha_2=0,\,-\pi/6<\alpha_1<\pi/6\}$.
We saw in the first part of Lemma \ref{lem-L-axes} that 
for such points $L_{\alpha_1,\alpha_2}(T)$ has no roots with $-1\le T\le1$. 
This completes the proof of Proposition \ref{prop-triple-inter}.
\end{proof}

\subsection{Pairwise intersection: Proof of Proposition \ref{prop-pairwise-intersections}.\label{section-proof-pairwise}}
Proposition \ref{prop-pairwise-intersections} will follow from the next lemma.

\begin{lem}\label{lem-est-1}
If $0< x\le 4$ and $\mathcal{D}(x,y)\ge 0$ then $xy\ge 6$  
with equality if and only if $(x,y)=(4,3/2)$.
\end{lem}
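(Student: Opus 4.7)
The proof hinges on the identity
\[
\mathcal{D}(x,y) \;=\; (xy-3)^3 \;-\; 27\,x\,(y-1)^2,
\]
which one verifies directly by expanding $(xy-3)^3 = x^3y^3-9x^2y^2+27xy-27$ and $-27x(y-1)^2 = -27xy^2+54xy-27x$ and summing. Spotting this factorisation is the principal difficulty; once it is available, the rest is elementary calculus and polynomial algebra.

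Setting $u := xy$ and writing $x = u/y$, the hypothesis $\mathcal{D}(x,y)\ge 0$ reads $(u-3)^3 \ge 27\,u\,(y-1)^2/y$. Since the right-hand side is non-negative, the crude estimate $u \ge 3$ is immediate. To sharpen this to the desired $u\ge 6$ I would exploit the lower bound on $y$ that is implicit from the context in which the lemma is applied (the rectangle $\mathcal{R}$, so $y\ge 3/2$): the function $\psi(y) := (y-1)^2/y = y - 2 + 1/y$ has derivative $1 - 1/y^2$, which is positive on $y>1$, so $\psi$ is increasing there and $\psi(y)\ge \psi(3/2) = 1/6$.

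Feeding this bound back yields $(u-3)^3 \ge 9u/2$, equivalently
\[
2u^3 - 18u^2 + 45u - 54 \;\ge\; 0.
\]
The value $u=6$ is forced to be a root by the anticipated equality case $(x,y)=(4,3/2)$, so one factors
\[
2u^3 - 18u^2 + 45u - 54 \;=\; (u-6)(2u^2 - 6u + 9).
\]
The quadratic $2u^2-6u+9$ has discriminant $36-72=-36<0$ and positive leading coefficient, hence is strictly positive, so the inequality reduces to $u\ge 6$ as required.

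Chasing the equality case backwards, $u=6$ forces $\psi(y)=1/6$, hence $y=3/2$, and then $x=u/y=4$, which is compatible with the explicit hypothesis $x\le 4$; the role of that hypothesis is to single out this branch. The only computational work beyond recognising the factorisation of $\mathcal{D}$ is the factorisation of the resulting cubic in $u$, where the useful root $u=6$ is dictated in advance by the equality case, so the division is immediate.
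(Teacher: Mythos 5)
Your argument is correct for the statement you actually prove, but it is genuinely different from the paper's, and it proves a slightly different statement. The paper does not use the identity $\mathcal{D}(x,y)=(xy-3)^3-27x(y-1)^2$ (though that identity is implicit in the special values $\mathcal{D}(3,y)=27(y-4)(y-1)^2$ computed in Section~\ref{section-proof-ZinL}); instead it restricts $\mathcal{D}$ to the hyperbola $xy=6$, computes $\mathcal{D}(x,6/x)=-27(x-4)(x-9)/x$, observes that this is non-positive for $0<x\le 4$ and vanishes only at $x=4$, and concludes that $xy-6$ cannot change sign on the set where $\mathcal{D}>0$, the sign being fixed by the sample point $(4,4)$. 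Your route reduces everything to one cubic inequality in $u=xy$ after bounding $(y-1)^2/y$ below by $1/6$; this buys a self-contained computation with a clean equality analysis, at the price of needing the hypothesis $y\ge 3/2$, which is not in the statement of Lemma~\ref{lem-est-1}.

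That imported hypothesis is the one substantive point of divergence, and it cuts both ways. Read in isolation, your proof establishes a different lemma from the one stated. On the other hand, some such restriction is genuinely necessary: the lemma as literally stated is false, since for example $\mathcal{D}(7/2,1)=(7/2-3)^3-0=1/8>0$ while $(7/2)\cdot 1<6$ (and $\mathcal{D}(3,1)=0$ with $3\cdot 1\neq 6$, so the equality claim fails as well). The paper's own argument silently assumes that $\{0<x\le 4,\ \mathcal{D}(x,y)>0\}$ is connected, which it is not: the component containing $(7/2,1)$ lies on the opposite side of the hyperbola $xy=6$ from the component containing $(4,4)$. Since the lemma is only ever invoked, in the proof of Proposition~\ref{prop-pairwise-intersections}, for $(x,y)=(x_1^4,x_2^4)\in[3,4]\times[3/2,4]$, your version with $y\ge 3/2$ added (after which, as you note, $x\le 4$ becomes redundant even for the equality case) is the statement that should have been proved, and your proof of it is complete and correct.
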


\begin{proof}
Substituting $y=6/x$ in \eqref{eq-def-D} and simplifying, we find
$\mathcal{D}(x,6/x)=-27(x-4)(x-9)/x$.
When $0< x\le 4$ we see immediately that this is non-positive
and equals zero if and only if $x=4$. This means that 
$xy-6$ has a constant sign on the region where $\mathcal{D}(x,y)>0$.
Checking at $(x,y)=(4,4)$ we see that it is positive.
\end{proof}

\begin{proof}[Proof of Proposition \ref{prop-pairwise-intersections}]
To prove the disjointness of the given isometric spheres we calculate the
Cygan distance between their centres. Since all the isometric spheres
have radius $1$, if we can show their centres are a Cygan distance at least 
$2$ apart, then the spheres are disjoint. (Note that the Cygan distance
is not a path metric, so it may be the distance is less than $2$ but
the spheres are still disjoint. This will not be the case in our examples.)

The centre of $I_k^+$ is 
$A^k(p_B)=\bigl[kx_1x_2^2/\sqrt{2},kx_1^2x_2^2\sin(\alpha_2)\bigr]$; see
Proposition \ref{centre-translates-S-Sm}.  
We will show that $d_{\rm Cyg}\bigl(A^k(p_B),p_B\bigr)^4>16$ when $k^2\ge 4$
and $(\alpha_1,\alpha_2)\in\mathcal{R}$, that is 
$(x_1^4,x_2^4)\in[3,4]\times[3/2,4]$:
\begin{eqnarray*}
d_{\rm Cyg}\bigl(A^k(p_B),p_B\bigr)^4
& = & \frac{k^4x_1^4x_2^8+k^2x_1^4x_2^4(4-x_2^4)}{4} \\
& \ge & \frac{27 k^4}{16}.
\end{eqnarray*} 
This number is greater than $16$ when $k\ge 2$ or $k\le-2$ as claimed.

Again using Proposition \ref{centre-translates-S-Sm}, the centre of $\I_k^-$ is 
$A^k(p_{AB})=\bigl[(kx_1x_2^2+x_1e^{i\alpha_2})/\sqrt{2},-\sin(\alpha_1)\bigr]$.
We suppose $(x_1^4,x_2^4)\in[3,4]\times[3/2,4]$ satisfies $x_1^4x_2^4\ge 6$, 
which is valid for $(\alpha_1,\alpha_2)\in{\mathcal Z}$ by 
Lemma \ref{lem-est-1}. 
\begin{eqnarray*}
d_{\rm Cyg}\bigl(A^k(p_{AB}),p_B\bigr)^4
& = & \frac{\bigl(k(k+1)x_1^2x_2^4+x_1^2\bigr)^2+4-x_1^4}{4} \\
& = & 1 + \frac{k^2(k+1)^2x_1^4x_2^8+2k(k+1)x_1^4x_2^4}{4} \\
& \ge & \left(\frac{3k(k+1)}{2}+1\right)^2.
\end{eqnarray*}
This number is at least $16$ when $k\ge 1$ or $k\le-2$ as claimed.
Moreover, we have equality exactly when $k=1$ or $k=-2$ and
when $x_1^4x_2^4=6$ and $x_2^4=3/2$; that is when  
$(x_1^4,x_2^4)=(4,3/2)$.
\end{proof}

\subsection{$\partial_\infty D$ is a cylinder: Proof of Proposition \ref{prop-solid-cylinder}\label{section-proof-cylinder}.}
To prove Proposition \ref{prop-solid-cylinder}, we adopt the following 
strategy.
\begin{itemize}
 \item {\bf Step 1.} First, we intersect $D$ with a fundamental domain $D_A$ 
for the action of $A$ on the Heisenberg group. The domain $D_A$ is 
bounded by two parallel vertical planes $F_{-1}$ and $F_0$ that are boundaries 
of \textit{fans} in the sense of \cite{GP2}. These two fans are such that 
$A(F_{-1})=F_0$ (see Figure \ref{fig-vertiproj} for a view  of the situation 
in vertical projection). We analyse the intersections of $F_0$ and $F_{-1}$ 
with $D$, and show that they are topological circles, denoted by $c_{-1}$ and 
$c_0$ with $A(c_{-1})=c_0$.
\item {\bf Step 2.} Secondly, we consider the subset of the complement of 
$D$ which is contained in $D_A$, and prove that it is a 3-dimensional 
ball that intersects $F_{-1}$ and $F_0$ along topological discs (bounded by 
$c_{-1}$ and $c_{0}$). This proves that $D\cap D_A$ is the 
complement a solid tube in $D_A$, which is unknotted using
Lemma \ref{lem-delta-phi}. Finally, we prove that, gluing together copies 
by powers of $A$ of $D\cap D_A$, we indeed obtain the complement of a 
solid cylinder. 
\end{itemize}

We construct a fundamental domain $D_A$ for the cyclic group of Heisenberg
translations $\langle A\rangle$. The domain $D_A$ will be bounded by two fans, 
chosen to intersect as few bisectors as possible. The fan $F_0$
will pass through $p_{ST^{-1}}$ and will be tangent to both
$\I_1^+$ and $\I_{-1}^-$; compare Figure \ref{fig-vertiproj}. Similarly,
$F_{-1}=A^{-1}(F_0)$ will pass through $A^{-1}(p_{ST^{-1}})=p_{TST}$
and be tangent to both $\I_0^+$ and $\I_{-2}^-$.
We first give $F_0$ and $F_{-1}$ in terms of horospherical coordinates and 
then we give them in terms of their own geographical coordinates
(see \cite{GP2}). In horospherical coordinates they are:
\begin{eqnarray}
F_0 & = & \Bigl\{[x+iy,t]\ :\ 3x\sqrt{3}-y\sqrt{5}=\sqrt{2}/2\Bigr\}, 
\label{eq-F0} \\
F_{-1} & = & \Bigl\{[x+iy,t]\ :\ 3x\sqrt{3}-y\sqrt{5}=-4\sqrt{2}\Bigr\}.
\label{eq-F-1}
\end{eqnarray}
This leads to the definition of $D_A$:
\begin{equation}\label{eq-DA}
D_A = \Bigl\{[x+iy,t]\ :\ 
-4\sqrt{2}\le 3x\sqrt{3}-y\sqrt{5}\le \sqrt{2}/2\Bigr\}.
\end{equation}

We choose geographical coordinates $(\xi,\eta)$ on $F_0$:  the lines where 
$\xi$ is constant (respectively $\eta$ is constant) are boundaries of 
complex lines (respectively Lagrangian planes). These coordinates correspond 
to the double foliation of fans by real planes and complex lines, which 
is described in Section 5.2 of \cite{GP2}. The particular choice is made
so that the origin is the midpoint of the centres of $\I_0^+$ and $\I_0^-$.
Doing so gives the fan $F_0$ as the set of points $f(\xi,\eta)$:
$$
f(\xi,\eta)=\left\{
\left[ \frac{\sqrt{5}\xi+\sqrt{3}+3i\sqrt{3}\xi+i\sqrt{5}}{4\sqrt{2}},\
\eta-\xi/4\right]\ :\ \xi,\,\eta\in{\mathbb R}\right\}.
$$
The standard lift of $f(\xi,\eta)$ is given by 
$$
{\bf f}(\xi,\eta)=\left[\begin{matrix} 
-\xi^2-\sqrt{15}\xi/4-1/4+i\eta-i\xi/4 \\
\sqrt{5}\xi/4+\sqrt{3}/4+3i\sqrt{3}\xi/4+i\sqrt{5}/4 \\
1 \end{matrix}\right].
$$

Using the convexity of Cygan spheres, we see that their intersection with 
$F_0$ (or $F_{-1}$) is one of: empty, a point or a topological 
circle. For the particular fans and isometric spheres of interest to
us, the possible intersections are summarised in the following result:

\begin{prop}\label{prop-inter-Fi-F}
The intersections of the fans $F_{-1}$ and $F_0$ with the isometric spheres 
$\I_k^\pm$ are empty, except for those indicated in the following table.
$$
\begin{array}{|c|c|c|c|c|c|c|c|c|}
\hline
\bigcap & \I_{-2}^- & \I_{-2}^+ & \I_{-1}^- 
& \I_{-1}^+ & \I_0^- & \I_0^+ & \I_{1}^- & \I_1^+ \\
\hline
F_{0} & \emptyset & \emptyset  & \lbrace p_{ST^{-1}} \rbrace 
& \emptyset &  \hbox{a circle} & \hbox{a circle} & \emptyset 
& \lbrace p_{ST^{-1}}\rbrace\\
F_{-1} & \lbrace{ p_{TST}\rbrace} & \emptyset & \hbox{a circle}  
& \hbox{a circle} & \emptyset & \lbrace{ p_{TST}\rbrace} 
& \emptyset & \emptyset\\
\hline
\end{array}
$$
Moreover, the point $p_{S^{-1}T}$ belongs to the interior of $D_A$. 
The parabolic fixed points $A^k(p_{ST^{-1}})$ lie outside $D_A$ for all 
$k\ge 1$ and $k\le -1$; parabolic fixed points
$A^k(p_{S^{-1}T})$ lie outside $D_A$ for all $k\neq 0$.
\end{prop}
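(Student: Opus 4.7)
The strategy is vertical projection to the complex plane, as in Remark~\ref{rem-proj-cygan-sphere}. Every Cygan sphere $\I_k^{\pm}$ of radius $1$ projects to a closed Euclidean disc of radius $1$ centred at the horospherical $z$-coordinate of its centre, and each fan $F_i$ projects to the affine line $L_i \subset \C$ obtained by reading the defining equation \eqref{eq-F0}--\eqref{eq-F-1} as an equation in $(x,y) \in \R^2 = \C$. First I would tabulate the centres of the isometric spheres $\I_k^{\pm}$ in horospherical coordinates, using the specialisation of Proposition~\ref{centre-translates-S-Sm} at $(\alpha_1,\alpha_2)=(0,\alpha_2^{\lim})$, where $\ell_A = \sqrt{3/2}$ and the translation part of $A$ has real $z$-increment.

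Then, for each of the $16 = 2 \times 8$ pairs (fan, isometric sphere), I would compute the signed perpendicular distance from the projected centre to the projected line $L_i$; the normalising denominator is $\sqrt{27+5}=4\sqrt 2$. Three outcomes arise:
\begin{enumerate}
\item Distance $>1$. The projections are disjoint, so $\I_k^{\pm}\cap F_i = \emptyset$. I expect this for $\I_{-2}^{-}$ up to $\I_{-2}^{+}$, $\I_{-1}^{+}$, $\I_{1}^{-}$, $\I_{1}^{+}$ as they intersect $F_{-1}$ or $F_0$, matching the empty entries of the table.
\item Distance $=1$. The projected disc is tangent to the line at a single point $z_0 \in \C$. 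At any point lying above $z_0$ with horospherical coordinates $(z_0,t,u)$, convexity of the Cygan ball \eqref{unitspinal} forces $u=0$ and pins down $t$ uniquely, so $\I_k^{\pm}\cap F_i$ is a single boundary point. It then suffices to check via Lemma~\ref{lem-tangency} (and its $A^{-1}$-translate) that this point is respectively $p_{ST^{-1}}$ for $F_0$ and $p_{TST}$ for $F_{-1}$.
\item Distance $<1$. The line cuts the open disc in a proper chord $(\xi_1,\xi_2)$. In the $(\xi,\eta)$-parametrisation of the fan, for each $\xi \in (\xi_1,\xi_2)$ the equation $|\langle \mathbf{f}(\xi,\eta),\mathbf{c}\rangle|^2 = 1$ becomes a quadratic in $\eta$ with two real solutions, collapsing to one at the endpoints $\xi_1,\xi_2$. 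The resulting locus is a topological circle, which is what the table records.
\end{enumerate}

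For the last sentences of the proposition I would compute the quantity $\lambda(q) := 3\sqrt{3}\,\Re(z) - \sqrt{5}\,\Im(z)$ on the parabolic fixed points \eqref{fixed-points-limit} and their $A^k$-translates, using that $A^k$ shifts $z$ by $k\sqrt{3/2}$ (so shifts $\lambda$ by $k \cdot 9\sqrt{2}/2 \cdot$ \ldots, a constant $k$-multiple). Comparing $\lambda$ to the thresholds $-4\sqrt 2$ and $\sqrt 2/2$ which define $D_A$ via \eqref{eq-DA} yields $\lambda(p_{S^{-1}T}) = -7\sqrt 2/4$, which lies strictly between these bounds, while $\lambda(A^k(p_{ST^{-1}}))$ and $\lambda(A^k(p_{S^{-1}T}))$ leave the interval $[-4\sqrt 2,\sqrt 2/2]$ for the indicated values of $k$.

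The bulk of the work is the case-by-case bookkeeping; the one genuinely subtle point is step (3), where the ``circle'' conclusion relies on the fact that the fan, being a bisector that in horospherical coordinates is an affine hyperplane, intersects the (bounded convex) Cygan ball in a bounded convex set whose boundary is a Jordan curve. The check in step (2) that the tangency points are exactly the expected parabolic fixed points, rather than unrelated boundary points, is essentially forced by Lemma~\ref{lem-tangency} combined with the fact that both $F_0$ and $\I_1^{+}\cap\I_{-1}^{-}$ contain $p_{ST^{-1}}$, but I would record this verification explicitly.
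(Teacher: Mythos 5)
Your plan is correct and is essentially the paper's own argument: the paper likewise disposes of the table by vertical projection onto $\C$ (comparing the distance from each projected centre to the projected line of the fan with the common radius $1$, and invoking strict convexity to conclude empty/point/circle), and settles the statements about $p_{S^{-1}T}$ and the $A^k$-translates by direct substitution of \eqref{fixed-points-limit} into the defining inequalities \eqref{eq-F0}--\eqref{eq-DA}. The specific numbers you quote ($\ell_A=\sqrt{3/2}$, normaliser $4\sqrt{2}$, $\lambda(p_{S^{-1}T})=-7\sqrt{2}/4$) all check out, and identifying the tangency points via Lemma \ref{lem-tangency} together with the fact that $F_0$ contains $p_{ST^{-1}}$ is exactly the right closing step.
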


A direct consequence of this proposition is that the only point in the 
closure of the quadrilateral $\mathcal{Q}_{-1}^-$ and the bigon $\mathcal{B}_{-1}^-$ that lie on 
$F_0$ is their vertex $p_{TST}$.

\begin{proof}
The part about intersections of fans and isometric spheres is proved easily 
by projecting vertically onto $\C$, as in the proof of 
Proposition \ref{prop-pairwise-intersections} (see Figure \ref{fig-vertiproj}). 
Note that as isometric spheres are strictly convex, their intersections with 
a plane is either empty or a point or a topological circle.
The part about the  parabolic fixed points is a direct verification using 
\eqref{eq-F0} as well as \eqref{fixed-points-limit}. 
\end{proof}

We need to be slightly more precise about the intersection of $F_0$ 
with $\I_0^+$ and $\I_{0}^-$.

\begin{prop}\label{prop-inter-F0-I0}
The intersection of $F_0$ with $\I_0^+\cup\I_{0}^-$ (and thus with $\partial D$) 
is a topological circle $c_0$, which is the union of two topological segments 
$c_0^+$ and $c_0^-$, where the segment $c_0^\pm$ is the part of 
$F_0\cap\I_0^\pm$ that is outside $\I_0^\mp$.  
The two segments $c_0^+$ and $c_0^-$ have the same endpoints: one of 
them is $p_{ST^{-1}}$, and we will denote the other by $q_0$. Moreover, 
the point $q_0$ lies on the segment $[p_{STS},p_{S^{-1}T}]$ of $\I_0^+\cap\I_0^-$.
\end{prop}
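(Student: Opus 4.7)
The plan is to work explicitly in the geographical coordinates $(\xi,\eta)$ on $F_0$ and the parametrisation $f(\xi,\eta)=[\zeta(\xi),\eta-\xi/4]$ introduced above. Using the Heisenberg centre $[\sqrt{3/8}+i\sqrt{5/8},0]$ of $\I_0^-$ (which follows from Proposition \ref{centre-translates-S-Sm} at $\alpha_1=0$, $\alpha_2=\alpha_2^{\lim}$), a direct calculation gives $|\zeta(\xi)|^2=\xi^2+\sqrt{15}\xi/4+1/4$ and $|\zeta(\xi)-\sqrt{3/8}-i\sqrt{5/8}|^2=\xi^2-\sqrt{15}\xi/4+1/4$, together with $\Im\bigl(\zeta(\xi)\,\overline{\sqrt{3/8}+i\sqrt{5/8}}\bigr)=\xi/4$. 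Plugging these into the Cygan distance formula \eqref{Cygan}, the curves $C^{\pm}:=F_0\cap\I_0^{\pm}$ are cut out in the $(\xi,\eta)$-plane by
\begin{align*}
C^+ &: \bigl(\xi^2+\tfrac{\sqrt{15}}{4}\xi+\tfrac{1}{4}\bigr)^2+\bigl(\eta-\tfrac{\xi}{4}\bigr)^2=1,\\
C^- &: \bigl(\xi^2-\tfrac{\sqrt{15}}{4}\xi+\tfrac{1}{4}\bigr)^2+\eta^2=1.
\end{align*}
Since $f$ is affine in horospherical coordinates and the interior of each Cygan sphere is convex there (as read off from \eqref{unitspinal}), each of $C^\pm$ is a (strictly) convex simple closed curve in the $(\xi,\eta)$-plane.

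Subtracting the two equations yields $\xi\bigl[\sqrt{15}(\xi^2+1/4)+\xi/16-\eta/2\bigr]=0$. The branch $\xi=0$ plugged into either equation returns $\eta=\pm\sqrt{15}/4$. Substituting the second branch $\eta=2\sqrt{15}\xi^2+\xi/8+\sqrt{15}/2$ into the equation for $C^-$ and clearing denominators, the odd-degree terms cancel exactly and the result is the polynomial $3904\,\xi^4+2013\,\xi^2+180$, which has strictly positive coefficients and therefore no real roots. Hence $C^+\cap C^-=\{(0,\sqrt{15}/4),(0,-\sqrt{15}/4)\}$. Reading off Heisenberg coordinates from \eqref{fixed-points-limit} identifies $(0,\sqrt{15}/4)$ with $p_{ST^{-1}}$; I set $q_0:=(0,-\sqrt{15}/4)$, and direct comparison shows that $q_0$ is not any of the parabolic fixed points listed in \eqref{fixed-points-limit}. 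Since $C^+$ and $C^-$ are distinct convex simple closed curves meeting at exactly two points, each is divided by these points into two arcs; for each curve, the arc lying outside the convex disc bounded by the other is a single topological segment. This furnishes $c_0^\pm$ and shows that $c_0=c_0^+\cup c_0^-$ is a topological circle.

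It remains to show that $q_0$ lies on the arc $[p_{STS},p_{S^{-1}T}]\subset\partial r_0^+$ avoiding $p_{ST^{-1}}$. Using \eqref{fixed-points-limit}, I compute $3x\sqrt{3}-y\sqrt{5}$ at the two remaining parabolic vertices on the ridge and obtain $11/(2\sqrt{2})>\sqrt{2}/2$ at $p_{STS}$ and $-7/(2\sqrt{2})<\sqrt{2}/2$ at $p_{S^{-1}T}$. Hence these two points lie in opposite open half-spaces of the Heisenberg group determined by the plane $F_0$ defined in \eqref{eq-F0}, and every arc of the Jordan curve $\partial r_0^+$ joining them must meet $F_0$. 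Since $F_0\cap\partial r_0^+=\{p_{ST^{-1}},q_0\}$, and by the cyclic order $p_{S^{-1}T},p_{ST^{-1}},p_{STS}$ recalled in Section \ref{section-limit-group} the point $p_{ST^{-1}}$ lies on the arc of $\partial r_0^+$ from $p_{STS}$ to $p_{S^{-1}T}$ complementary to $[p_{STS},p_{S^{-1}T}]$, the meeting point on $[p_{STS},p_{S^{-1}T}]$ must be $q_0$.

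The hard part is the algebraic verification in the second paragraph: subtracting the two circle equations quickly gives a factored form with $\xi$ factored out, but one still has to check that the quartic coming from the second factor has no real roots. The exact cancellation of the $\xi^3$ and $\xi$ terms there is what makes the argument go through and ultimately forces $\#(C^+\cap C^-)=2$.
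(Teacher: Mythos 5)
Your overall strategy is the same as the paper's (compute the two defining equations of $F_0\cap\I_0^{\pm}$ in the coordinates $(\xi,\eta)$, subtract, factor out $\xi$, and show the remaining branch has no real solutions; then locate $q_0$ on the correct ridge segment by a separate argument), but there is a genuine error in your equation for $C^-=F_0\cap\I_0^-$. You derived it from the Heisenberg-coordinate expression in \eqref{Cygan}, whose middle term $t-s+{\rm Im}(z\overline{w})$ is not consistent with the outer equalities $d_{\rm Cyg}(p,q)=|\langle\bp,\bq\rangle|^{1/2}$: with the standard lifts \eqref{lift} one has $\langle\bp,\bq\rangle=-|z-w|^2+i\bigl(t-s+2\,{\rm Im}(z\overline{w})\bigr)$, so the imaginary part carries a factor $2$ (test with $p=[1,0]$, $q=[i,0]$: the Hermitian product has modulus $2\sqrt2$, not $\sqrt5$). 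Since the isometric spheres are defined via the Hermitian form (Definition \ref{Isomsphere}), the correct equation is
$$
C^-:\ \bigl(\xi^2-\tfrac{\sqrt{15}}{4}\xi+\tfrac14\bigr)^2+\bigl(\eta+\tfrac{\xi}{4}\bigr)^2=1,
$$
not $(\cdots)^2+\eta^2=1$; your ${\rm Im}(\zeta\overline{w})=\xi/4$ is right, but it must be doubled, giving $\eta-\xi/4+2\cdot\xi/4=\eta+\xi/4$. Consequently your subtraction, the branch $\eta=2\sqrt{15}\xi^2+\xi/8+\sqrt{15}/2$, and the quartic $3904\xi^4+2013\xi^2+180$ are all artefacts of the wrong curve, and the step that is supposed to rule out further intersection points does not do so. (The two points you do find survive only because they sit at $\xi=0$, where the missing shear term vanishes.)

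The conclusion itself is salvageable with the corrected equation: the difference of the two (correct) equations is $\xi\bigl(\sqrt{15}(\xi^2+1/4)-\eta\bigr)$, and substituting $\eta=\sqrt{15}(\xi^2+1/4)$ back reduces to $16\xi^4+9\xi^2=0$, whence $\xi=0$ and the two points $(0,\pm\sqrt{15}/4)$ — this is exactly the computation in the paper, carried out there directly with the Hermitian products $\langle{\bf f}(\xi,\eta),\bp_B\rangle$ and $\langle{\bf f}(\xi,\eta),\bp_{AB}\rangle$, which is the safe route. Your final paragraph, locating $q_0$ on $[p_{STS},p_{S^{-1}T}]$ by checking that $p_{STS}$ and $p_{S^{-1}T}$ lie on opposite sides of the plane $F_0$ (values $11/(2\sqrt2)$ and $-7/(2\sqrt2)$ against $\sqrt2/2$), is correct and is a nice, more elementary alternative to the paper's argument, which instead uses the Cygan isometry $I_2=SI_1$ interchanging $\I_0^+$ and $\I_0^-$ and swapping $p_{STS}$ with $p_{S^{-1}T}$. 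You should also make the "outside arc is a single segment" step explicit (the paper does this by determining the sign of $|\langle{\bf f},\bp_B\rangle|-|\langle{\bf f},\bp_{AB}\rangle|$, identifying $c_0^+$ with $\xi<0$ and $c_0^-$ with $\xi>0$), rather than relying on a purely topological assertion about two convex curves meeting at two points.
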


The point $q_0$ appears in Figures \ref{c0plus}, \ref{interF0} and  \ref{FinDA}.

\begin{proof}
The point $f(\xi,\eta)$ of the fan $F_0$ lies of $\Isom_0^+$ whenever
$1=\bigl|\langle{\bf f}(\xi,\eta),{\bf p}_{B}\rangle\bigr|$ and on $\Isom_0^-$ 
whenever $1=\bigl|\langle{\bf f}(\xi,\eta),{\bf p}_{AB}\rangle\bigr|$.
We first find all points on $F_0\cap\Isom_0^+\cap\Isom_0^-$. These correspond
to simultaneous solutions to:
\begin{equation}\label{eq-fan-ridge}
1=\bigl|\langle{\bf f}(\xi,\eta),{\bf p}_{B}\rangle\bigr|
=\bigl|\langle{\bf f}(\xi,\eta),{\bf p}_{AB}\rangle\bigr|
\end{equation}

Computing these products and rearranging, we obtain
\begin{eqnarray*}
\bigl|\langle{\bf f}(\xi,\eta),{\bf p}_{B}\rangle\bigr|^2 
& = & (\xi^2+1/4)^2+\xi^2+\eta^2
+\xi\Bigl(\sqrt{15}\xi^2+\sqrt{15}/4-\eta\Bigr)/2, \\
\bigl|\langle{\bf f}(\xi,\eta),{\bf p}_{AB}\rangle\bigr|^2
& = & (\xi^2+1/4)^2+\xi^2+\eta^2
-\xi\Bigl(\sqrt{15}\xi^2+\sqrt{15}/4-\eta\Bigr)/2.
\end{eqnarray*}
Subtracting, we see that solutions to \eqref{eq-fan-ridge} 
must either have $\xi=0$ or $\eta=\sqrt{15}(\xi^2+1/4)$.
Substituting these solutions into
$1=\bigl|\langle{\bf f}(\xi,\eta),{\bf p}_{B}\rangle\bigr|^2$,
we see first that $\xi=0$ implies $1 = \eta^2+1/16$; and secondly that 
$\eta=\sqrt{15}(\xi^2+1/4)$ implies
$$
1=(\xi^2+1/4)^2+\xi^2+15(\xi^2+1/4)^2=(4\xi^2+1)^2+\xi^2.
$$
Clearly the only solution to this equation is $\xi=0$. 
So both cases lead to the solutions $(\xi,\eta)=(0,\pm\sqrt{15}/4)$.
Thus the only points satisfying \eqref{eq-fan-ridge}, that is the points in 
$F_0\cap\Isom_0^+\cap\Isom_0^-$, are
$$
f(0,\sqrt{15}/4)=\left[\frac{\sqrt{3}+i\sqrt{5}}{4\sqrt{2}},\ 
\frac{\sqrt{15}}{4}\right] \mbox{ and }
f(0,-\sqrt{15}/4)=\left[\frac{\sqrt{3}+i\sqrt{5}}{4\sqrt{2}},\ 
\frac{-\sqrt{15}}{4}\right].
$$
Note that the first of these points is $p_{ST^{-1}}$. We call the
other point $q_0$.

These two points divide $F_0\cap\Isom_0^+$ and $F_0\cap\Isom_0^-$ into
two arcs. It remains to decide which of these arcs is outside the
other isometric sphere.
Clearly $\bigl|\langle{\bf f}(\xi,\eta),{\bf p}_{B}\rangle\bigr|>
\bigl|\langle{\bf f}(\xi,\eta),{\bf p}_{AB}\rangle\bigr|$ if and only
if $\xi\bigl(\sqrt{15}\xi^2+\sqrt{15}/4-\eta\bigr)>0$. Close to 
$\eta=-\sqrt{15}/4$ we see this quantity changes sign only when $\xi$ does. 
This means that if $f(\xi,\eta)\in\Isom_0^-$ with $\xi>0$ then 
$f(\xi,\eta)$ is in the exterior of $\Isom_0^+$. Similarly, if 
$f(\xi,\eta)\in\Isom_0^+$ with $\xi<0$ then $f(\xi,\eta)$ 
is in the exterior of $\Isom_0^-$.
In other words, $c_0^+$ is the segment of $F_0\cap\Isom_0^+$ where $\xi<0$
and $c_0^-$ is the segment of $F_0\cap\Isom_0^-$ where $\xi>0$.

Finally, consider the involution $I_2=SI_1$ in ${\rm PU}(2,1)$ from the 
proof of Proposition \ref{prop-dec-ST}. (Note that since $\alpha_1=0$, this
involution conjugates $\Gamma^{\lim}$ to itself.) The involution $I_2$
preserves $F_0$, acting on it by sending $f(\xi,\eta)$ to $f(-\xi,\eta)$, 
and hence interchanging the components of its complement. 
In Heisenberg coordinates $I_2$ is given by 
\begin{equation}\label{invol-I1}
I_2:\Bigl[x+iy,\,t\Bigr] \longleftrightarrow
\Bigl[-x-iy+\sqrt{3/8}+i\sqrt{5/8},\,t-\sqrt{5/2}\,x+\sqrt{3/2}\,y\Bigr].
\end{equation}
As $I_2$ is elliptic and fixes the point $q_\infty$, it is a Cygan 
isometry (see Section \ref{defisom}). 
Since it interchanges $p_B$ and $p_{AB}$, it also interchanges 
$\I_0^+$ and $\I_0^-$. Hence their intersection is preserved setwise.
The involution $I_2$ also interchanges $p_{S^{-1}T}$ and $p_{STS}$ contained
in $\I_0^+\cap\I_0^-$ (but not on $F_0$). Therefore, these two points 
lie in different components of the complement of $F_0$. Hence there
must be a point of $F_0$ on the segment $[p_{S^{-1}T},p_{STS}]$. This point 
cannot be $p_{ST^{-1}}$, and so must be $q_0$ (see Figure \ref{c0plus}).
\end{proof}

\begin{figure}
\begin{center}
\scalebox{0.65}{\includegraphics{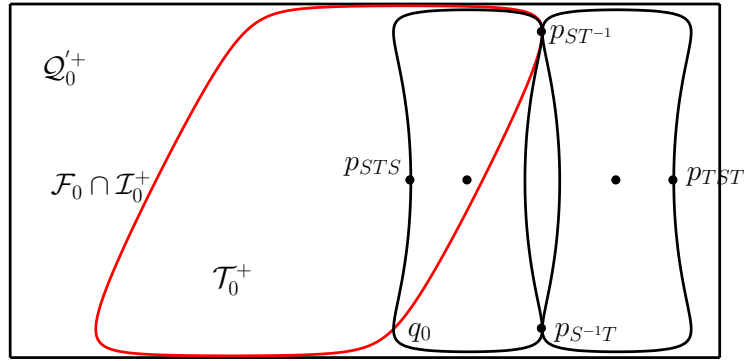}}
\end{center}
\caption{The intersection of $F_0$ with $\I_0^+$ drawn on $\I_0^+$, in 
geographical coordinates.\label{c0plus}}
\end{figure}


Let $D^c$ denote closure of the complement of $D$ in $\partial\HdC-\{q_\infty\}$.

\begin{prop}\label{prop-ball}
The closure of the intersection $D^c\cap D_A$ is a solid tube homeomorphic 
to a 3-ball. 
\end{prop}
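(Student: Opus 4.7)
The plan is to identify $\overline{D^c\cap D_A}$ with $U\cap D_A$, where $U\subset\partial\HdC\setminus\{q_\infty\}\cong\R^3$ is the union of four closed Cygan balls, and then to verify that both $U$ and its slice $U\cap D_A$ are topological $3$-balls. First, I would reduce to the four relevant isometric spheres: by Proposition~\ref{prop-inter-Fi-F}, only $\I_{-1}^{\pm}$ and $\I_0^{\pm}$ meet the interior of $D_A$, while $\I_1^+$ and $\I_{-2}^-$ touch $F_0$ and $F_{-1}$ at the single points $p_{ST^{-1}}$ and $p_{TST}$ from outside $D_A$, and every other $\I_k^{\pm}$ is disjoint from $D_A$ by a centre-distance estimate using Proposition~\ref{centre-translates-S-Sm} of the kind employed in the proof of Proposition~\ref{prop-pairwise-intersections}. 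Writing $\bar B_k^{\pm}$ for the closed Cygan ball bounded by $\I_k^{\pm}$, this identifies
\[
\overline{D^c\cap D_A}\;=\;U\cap D_A,\qquad U:=\bar B_{-1}^+\cup\bar B_{-1}^-\cup\bar B_0^+\cup\bar B_0^-.
\]

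Next, I would assemble $U$ as a $3$-ball by iterated union. Each $\bar B_k^{\pm}$ is a closed convex $3$-ball in horospherical coordinates by~\eqref{unitspinal}, and the union of two closed convex $3$-balls in $\R^3$ with nonempty intersection is again a topological $3$-ball: its boundary is a $2$-sphere obtained by gluing two spherical caps along a circle, while Mayer--Vietoris combined with Alexander's theorem gives the ball property. Starting from $\bar B_0^+$ and adding $\bar B_0^-$, then $\bar B_{-1}^-$, then $\bar B_{-1}^+$ in turn, Corollary~\ref{coro-pairwise-intersection} ensures that at each step the new ball meets the previously built union nontrivially and that the ``hidden'' intersections $\bar B_{-1}^-\cap\bar B_0^-$ and $\bar B_{-1}^+\cap\bar B_0^+$ are absorbed into $\bar B_0^+$ and $\bar B_{-1}^-$ respectively, while $\bar B_{-1}^+\cap\bar B_0^-$ is empty. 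Thus each step is a union of two convex bodies with convex intersection, preserving the $3$-ball property.

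Finally, I would cut $U$ by the two parallel fans. Proposition~\ref{prop-inter-F0-I0} identifies $F_0\cap\partial U$ as the circle $c_0$ bounding a disc $\Delta_0\subset F_0$, and its $A^{-1}$-image is a disc $\Delta_{-1}\subset F_{-1}$ bounded by $c_{-1}$. A $3$-ball in $\R^3$ cut transversally by two parallel planes, each meeting its boundary in a single circle, decomposes into three $3$-balls, and $U\cap D_A$ is the middle one.

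The main obstacle is controlling the topology at the limit tangency points $p_{S^{-1}T}$ (interior to $D_A$, where all four spheres concur by Corollary~\ref{cor-ideal-vertices}) and $p_{ST^{-1}}$ (on $F_0$), where the transversality underlying the preceding arguments fails. Lemma~\ref{lem-tangency} and Lemma~\ref{lem-triple-limit} show that these are isolated tangencies rather than curves of contact, so a local geographical-coordinate analysis on each sphere should suffice to confirm that $\partial U$ remains locally a topological $2$-sphere near each such point, no pinching occurs, and the cuts by $F_{-1}$ and $F_0$ meet $\partial U$ transversally away from these isolated points.
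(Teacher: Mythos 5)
Your strategy (realise $\overline{D^c\cap D_A}$ as $U\cap D_A$ for $U$ a union of four closed Cygan balls, build $U$ up as a $3$-ball, then slice by the fans) is genuinely different from the paper's, which never assembles $U$ at all: it takes the face decomposition of $\partial(D^c\cap D_A)$ already established (the quadrilaterals $\mathcal{Q'}$, triangles $\mathcal{T}$, bigons $\mathcal{B}$ and the two fan bigons), counts $5$ vertices, $11$ edges and $8$ faces, gets $\chi=2$, and concludes that the boundary is a sphere bounding a ball. However, your plan has a concrete gap at its central step. The assertion that the union of two closed convex $3$-balls with nonempty intersection is a $3$-ball is false when the intersection is a single point, and this degenerate case actually occurs among your four balls: by Lemma \ref{lem-tangency} the spheres $\I_{-1}^+$ and $\I_0^-$ are \emph{tangent} at $p_{S^{-1}T}$, so $\bar B_{-1}^+\cap\bar B_0^-=\{p_{S^{-1}T}\}$ --- it is not empty, as you claim. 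Consequently your last gluing step attaches $\bar B_{-1}^+$ to the previously built union along the set $(\bar B_{-1}^+\cap\bar B_0^+)\cup(\bar B_{-1}^+\cap\bar B_{-1}^-)\cup\{p_{S^{-1}T}\}$, which is a union of two lens-shaped regions meeting at a single point; this is neither convex nor obviously a ball, and "union of two convex bodies with convex intersection" does not describe it (the accumulated union is already non-convex after the first step). Whether $\partial U$ is a $2$-manifold at the concurrence points $p_{S^{-1}T}$ and $p_{ST^{-1}}$, where all four spheres meet with a tangency among them, is precisely the content of the proposition, not a routine check to be deferred; your closing paragraph correctly identifies this as the obstacle but does not resolve it.

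By contrast, the paper sidesteps the tangency degeneracy because the hard local information has already been packaged into Proposition \ref{prop-part-outside}, Lemma \ref{lem-triple-limit} and Proposition \ref{prop-inter-F0-I0}: these show that the faces of $\partial(D^c\cap D_A)$ are honest discs (two quadrilaterals, two triangles, four bigons) fitting along $11$ edges and $5$ vertices, after which the Euler characteristic does the rest. If you want to salvage your approach, the cleanest fix is not a further nerve or convexity argument (contractibility of $U$ would not by itself rule out a non-manifold point at the external tangency) but exactly the kind of explicit face-by-face analysis the paper performs, at which point the two proofs converge.
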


\begin{proof}
We describe the combinatorial cell structure of $D^c\cap D_A$; see
Figure \ref{FinDA}. 
Using Proposition \ref{prop-inter-F0-I0}, it is clear $D^c$ intersects $F_0$ 
in a topological disc whose boundary circle is made up of two edges,
$c_0^\pm$ and two vertices $p_{ST^{-1}}$ and $q_0$. Combinatorially, this is
a bigon. Applying $A^{-1}$ we see $D^c$ intersects $F_{-1}$ in a bigon with 
boundary made up of edges $c_{-1}^\pm$ and two vertices $p_{TST}$ and $q_{-1}$. 

Moreover, Proposition \ref{prop-inter-F0-I0} immediately implies 
that $c_0$ cuts $\mathcal{Q}_0^\pm$ into a
quadrilateral and a triangle, which we denote by $\mathcal{{Q'}}_0^\pm$ and
$\mathcal{T}_0^\pm$. Since $D_A$ contains $p_{S^{-1}T}$ and
$p_{TST}$, we see that $D_A$ contains $\mathcal{{Q'}}_0^+$ and
$\mathcal{T}_0^-$. These have vertex sets 
$\{p_{ST^{-1}},\,p_{TST},\,p_{S^{-1}T},\,q_0\}$ and
$\{p_{S^{-1}T},\,p_{ST^{-1}},\,q_0\}$ respectively.
Applying $A^{-1}$ we see that $c_{-1}$ cuts $\mathcal{Q}_{-1}^\pm$ into
a quadrilateral, denoted $\mathcal{{Q'}}_{-1}^\pm$, and a 
triangle, denoted $\mathcal{T}_{-1}^\pm$. Of these the quadrilateral 
$\mathcal{{Q'}}_{-1}^-$ and the triangle $\mathcal{T}_{-1}^+$ lie in $D_A$. 
Finally, the bigons $\mathcal{B}_0^+$ and $\mathcal{B}_{-1}^-$ also lie in
$D_A$.

In summary, the boundary of $D^c\cap D_A$ has a combinatorial cell structure
with five vertices $\{p_{ST^{-1}},\,p_{S^{-1}T},\,p_{TST},\,q_0,\,q_{-1}\}$
and eight faces. 
$$
\{\mathcal{Q'}_0^+,\,\mathcal{Q'}_{-1}^-,\,\mathcal{T}_0^-,\,
\mathcal{T}_{-1}^+,\,\mathcal{B}_0^+,\,\mathcal{B}_{-1}^-,\,
F_0\cap D^c,\,F_{-1}\cap D^c\}.
$$
These are respectively two quadrilaterals, two triangles and four bigons. 
Therefore, in total the cell structure has 
$(2\times 4+2\times 3+4\times 2)/2=11$ edges.
Therefore the Euler characteristic of $\partial(D^c\cap D_A)$ is
$$
\chi\bigl(\partial(D^c\cap D_A)\bigr)=5-11+8=2.
$$
Hence $\partial(D^c\cap D_A)$ is indeed a sphere. This means
$D^c\cap D_A$ is a ball as claimed.
\end{proof}

\begin{figure}
     \begin{minipage}[c]{.46\linewidth}
      \scalebox{0.5}{\includegraphics{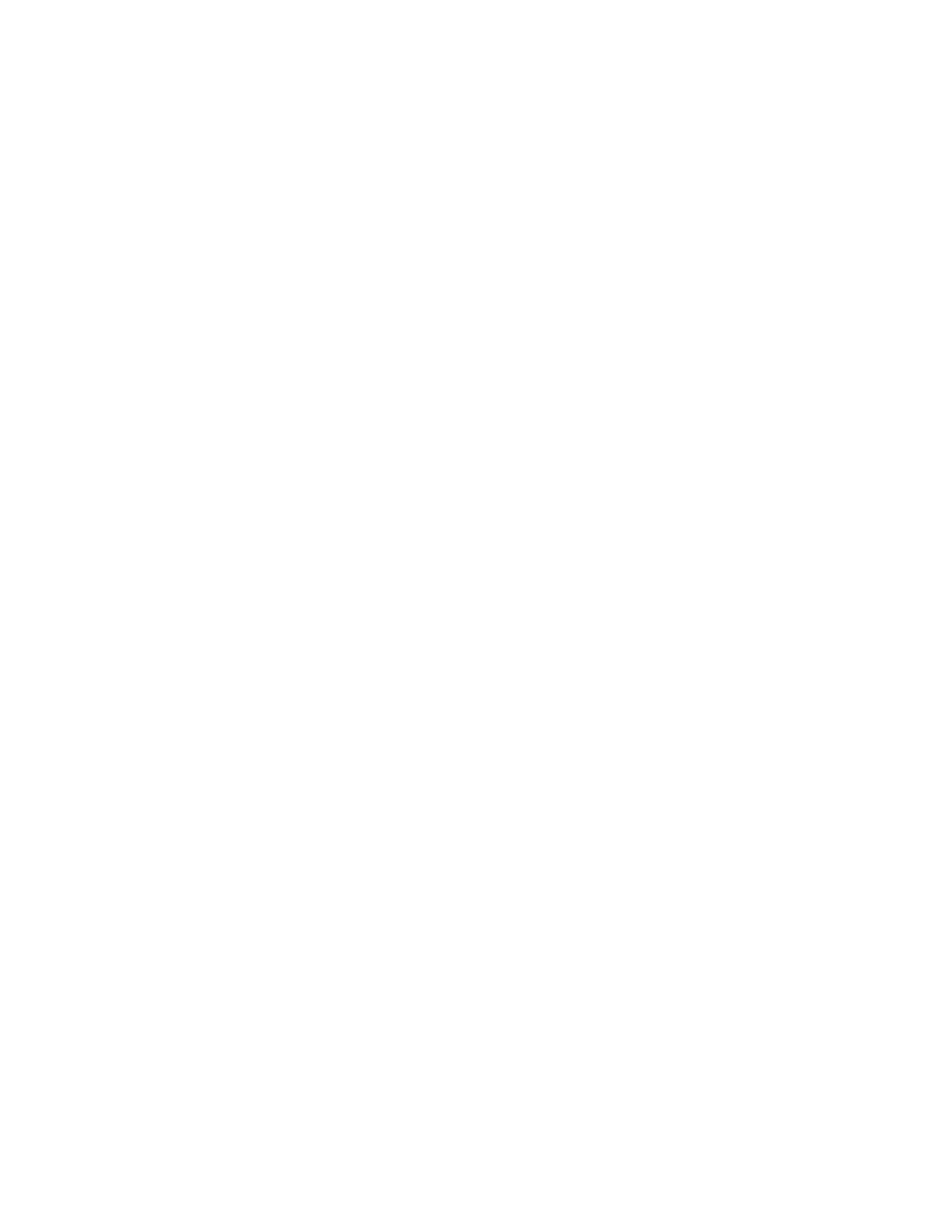}}
\caption{ The intersection of $F_0$ with $\I_0^+\cap \I_0^-$. 
The disc $\mathcal{D}_0$ is the interior of  
$c_0=c_0^+\cap c_0^-$. The two segments $c_0^+$ and $c_0^-$ are 
the thicker parts of $F_0\cap\I_0^+$ and $F_0\cap\I_0^-$. \label{interF0}}
   \end{minipage} \hfill
\begin{minipage}[c]{.46\linewidth}
      \scalebox{0.5}{\includegraphics{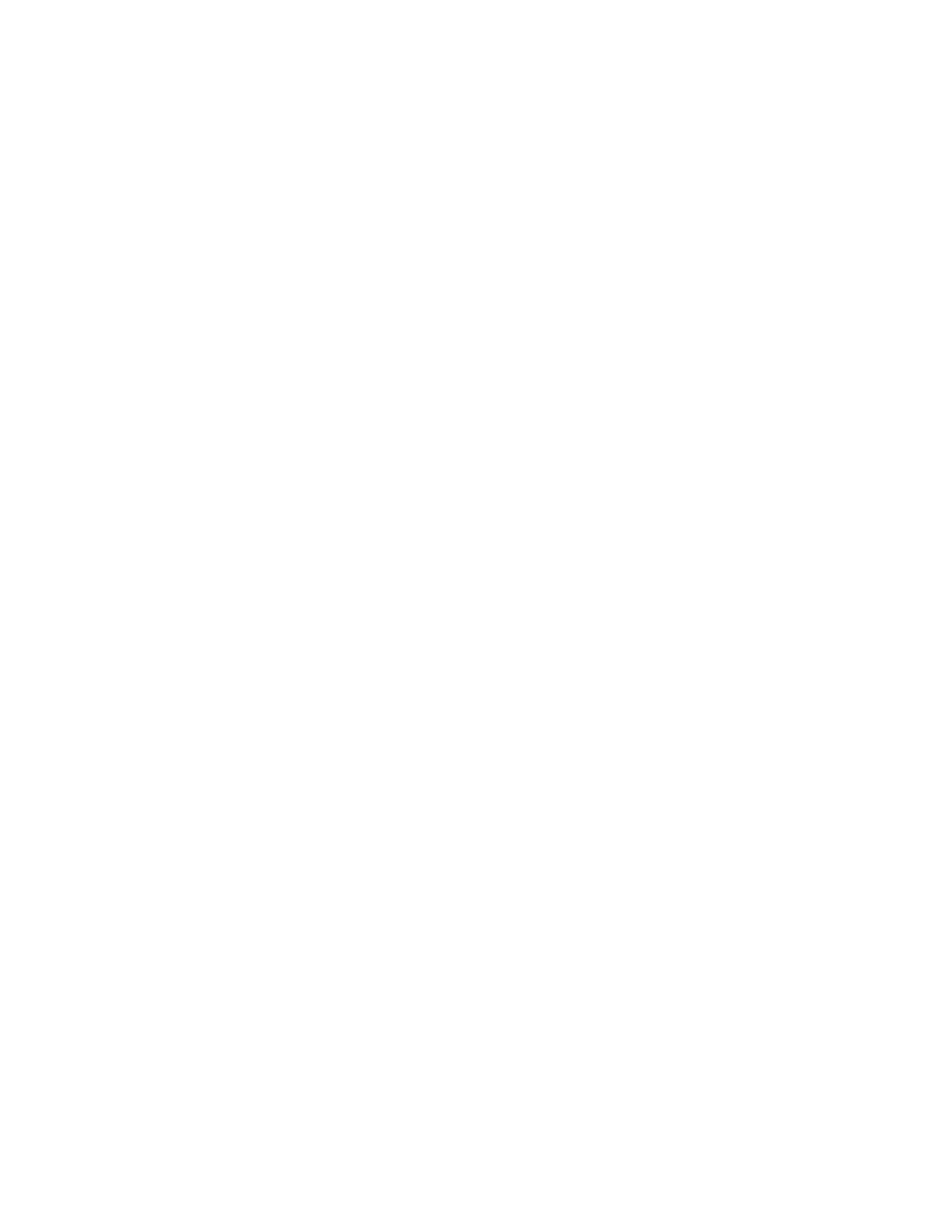}}
\caption{A combinatorial picture of the intersection of $\partial D$ with 
$D_A$ \label{FinDA}. The top and bottom lines are identified. 
The curve $c_0$ corresponds to the right hand side of the figure.}
   \end{minipage}
\end{figure}

\begin{rem}\label{rem-simple-combi}
The combinatorial structure described on Figure \ref{FinDA} is quite simple. 
However, the geometric realisation of this structure is much more intricate. 
As an example, there are fans $F$ parallel to $F_0$ and $F_{-1}$ whose
intersection with $D^c$ is disconnected. This means that the 
foliation described right after Proposition \ref{prop-solid-cylinder} that is 
used in the proof of Theorem \ref{theo-CRWLC} is actually quite ``distorted''.
\end{rem}

We are now ready to prove Proposition \ref{prop-solid-cylinder}.

\begin{prop}\label{prop-finite-solid cylinder}
There is a homeomorphism $\Psi_A:\R^2\times[0,1]\longrightarrow D_A$
that satisfies $\Psi_A(x,y,1)=A\Psi_A(x,y,0)$ and 
so that $\Psi_A$  restricts to a homeomorphism from the exterior of 
$S^1\times [0,1]$, that is 
$\bigl\{(x,y,z)\ :\ x^2+y^2\ge 1,\, 0\le z\le 1\bigr\}$, to $D\cap D_A$. 
\end{prop}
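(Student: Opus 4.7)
The plan is to build $\Psi_A$ in three steps: parametrise the bottom fan $F_{-1}$ as a plane, transport the parametrisation to $F_0$ by the side pairing $A$, and then interpolate across $D_A$ using a product structure adapted to the tube $B := \overline{D^c \cap D_A}$.

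First, I would construct a homeomorphism $\psi_0 : \R^2 \to F_{-1}$ with $\psi_0(\bar D^2) = \mathcal{D}_{-1}$ and $\psi_0(\partial D^2) = c_{-1}$, where $\mathcal{D}_{-1} := B \cap F_{-1}$ is the topological disc bounded in $F_{-1} \cong \R^2$ by the Jordan curve $c_{-1}$. Its existence as a disc follows from the $A^{-1}$-translate of Proposition \ref{prop-inter-F0-I0} together with Proposition \ref{prop-inter-Fi-F}, and $\psi_0$ itself comes from the planar Schoenflies theorem. Setting $\psi_1 := A \circ \psi_0 : \R^2 \to F_0$ then automatically yields $\psi_1(\bar D^2) = \mathcal{D}_0$, $\psi_1(\partial D^2) = c_0$, and enforces the equivariance $\Psi_A(x,y,1) = A\Psi_A(x,y,0)$ at the level of the two boundary fans.

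Second, I would build a product structure on $D_A$ sending $\bar D^2 \times [0,1]$ onto $B$. By Proposition \ref{prop-ball}, $B$ is a $3$-ball whose boundary sphere decomposes as $\mathcal{D}_{-1} \cup \mathcal{A} \cup \mathcal{D}_0$, with side annulus $\mathcal{A}$ assembled from the six faces $\mathcal{Q'}_0^+, \mathcal{T}_0^-, \mathcal{B}_0^+, \mathcal{Q'}_{-1}^-, \mathcal{T}_{-1}^+, \mathcal{B}_{-1}^-$. This ball-pair is homeomorphic to $(\bar D^2 \times [0,1], \bar D^2 \times \{0,1\})$ by standard $3$-manifold topology, giving a homeomorphism $\bar D^2 \times [0,1] \to B$ that matches $\psi_0|_{\bar D^2}$ on $\bar D^2 \times \{0\}$ and $\psi_1|_{\bar D^2}$ on $\bar D^2 \times \{1\}$. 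Lemma \ref{lem-delta-phi} supplies a geometric unknotted core for $B$: the segment $\Delta_\varphi \cap D_A$ lies in $B$, joins $F_{-1}$ to $F_0$, and $A$ acts on it by translation, so it is compatible with the required equivariance. An analogous argument applied to the closed complement $\overline{D_A \setminus B}$, whose boundary consists of $\mathcal{A}$ together with the unbounded planar regions $F_{-1} \setminus \mathrm{int}\,\mathcal{D}_{-1}$ and $F_0 \setminus \mathrm{int}\,\mathcal{D}_0$, produces a homeomorphism from $(\R^2 \setminus \mathrm{int}\,\bar D^2) \times [0,1]$ onto this complement. Splicing the two product structures along $\mathcal{A}$ gives $\Psi_A$.

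The main obstacle is the $A$-equivariant matching of the two product structures, both along the splicing annulus $\mathcal{A}$ and in the ends at infinity of each level fan. To handle this cleanly, I would first fix a baseline equivariant homeomorphism $\Phi_0 : \R^2 \times [0,1] \to D_A$ satisfying $\Phi_0(x,y,1) = A\Phi_0(x,y,0)$, obtained by linearly interpolating the Heisenberg equations \eqref{eq-F0} and \eqref{eq-F-1} defining $F_0$ and $F_{-1}$, and then isotope $\Phi_0$ within a compact neighbourhood of $\bar D^2 \times [0,1]$ (keeping the equivariance and the behaviour at infinity) so that the new map sends $\bar D^2 \times [0,1]$ onto $B$. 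As Remark \ref{rem-simple-combi} warns, this isotopy is geometrically distorted even though the combinatorial picture of Figure \ref{FinDA} is very simple; the argument is purely topological and does not attempt to follow any natural foliation of $D_A$ by fans.
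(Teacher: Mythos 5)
Your proposal is correct and follows essentially the same route as the paper's proof: both identify $\overline{D^c\cap D_A}$ as a solid tube meeting $F_{-1}$ and $F_0$ in the discs bounded by $c_{-1}$ and $c_0$ (via Propositions \ref{prop-ball} and \ref{prop-inter-F0-I0}), use Lemma \ref{lem-delta-phi} to see that this tube is an unknotted thickening of $\Delta_\varphi\cap D_A$, and then extend the product structure equivariantly to the complement. Your additional scaffolding (Schoenflies on the fans, the ball-pair argument, the baseline interpolated homeomorphism $\Phi_0$) merely makes explicit steps the paper leaves implicit.
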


\begin{proof}
We have shown Proposition \ref{prop-ball} that $D^c\cap D_A$ is a
solid tube homeomorphic to a $3$-ball and (using  
Proposition \ref{prop-inter-F0-I0}) that $D^c$ intersects $\partial D_A$
in two discs, one in $F_0$ bounded by $c_0$ and the other in $F_{-1}$
bounded by $c_{-1}$. This means we can
construct a homeomorphism $\Psi_A^c$ from the solid cylinder 
$\bigl\{(x,y,z)\ :\ x^2+y^2\le 1,\,0\le z\le 1\bigr\}$ to $D^c\cap D_A$
so that the restriction of $\Psi_A^c$ to $S^1\times[0,1]$ is a
homeomorphism to $\partial D\cap D_A$, with 
$\Psi_A^c: S^1\times \{0\}\longmapsto c_{-1}$ and
$\Psi_A^c: S^1\times \{1\}\longmapsto c_0$. 
Adjusting $\Psi_A^c$ if necessary, we can assume that 
$\Psi_A^c(x,y,1)=A\Psi_A^c(x,y,0)$.

Furthermore, in Lemma \ref{lem-delta-phi}, we showed that $D^c$ contains
the invariant line $\Delta_\varphi$ of $\varphi$. This means that the
cylinder $D^c\cap D_A$ is a thickening of $\Delta_\varphi\cap D_A$ and so, in
particular, it cannot be knotted. Hence we can extend $\Psi_A^c$ to
homeomorphism $\Psi_A:\R^2\times[0,1]\longrightarrow D_A$ satisfying
$\Psi_A(x,y,1)=A\Psi_A(x,y,0)$. In particular, $\Psi$ maps 
$\bigl\{(x,y,z)\ :\ x^2+y^2\ge 1,\, 0\le z\le 1\bigr\}$
homeomorphically to $D\cap D_A$ as claimed. 
\end{proof}

Finally, we prove Proposition \ref{prop-solid-cylinder} by extending
$\Psi_A:\R^2\times[0,1]\longrightarrow D_A$ equivariantly to a 
homeomorphism $\Psi:\R^3\longmapsto \partial\HdC-\{q_\infty\}$. 
That is, if $(x,y,z+k)\in\R^3$ where
$k\in\Z$ and $z\in[0,1]$, we define $\Psi(x,y,z+k)=A^k(x,y,z)$.
Since $\Psi(x,y,1)=A\Psi(x,y,0)$ there is no ambiguity at the boundary.

\bibliographystyle{plain}

\begin{thebibliography}{10}

\bibitem{ASWY}
H.~Akiyoshi, M.~Sakuma, M.~Wada, and Y.~Yamashita.
\newblock {\em Punctured Torus Groups and 2-Bridge Knot Groups I}.
\newblock Springer LNM 1909, Heidelberg, 2007.

\bibitem{Bear}
A.F. Beardon.
\newblock {\em The geometry of discrete groups}.
\newblock Springer, New York, 1983.

\bibitem{CG}
S.~Chen and L.~Greenberg.
\newblock Hyperbolic spaces.
\newblock In {\em Contribution to Analysis}, pages 49--87. Academic Press, New
  York, 1974.

\bibitem{CooLonThi}
D.~Cooper, D.~Long, and M.~Thistlethwaite.
\newblock Flexing closed hyperbolic manifolds.
\newblock {\em Geom. Topo.}, 11:2413--2440, 2007.

\bibitem{SnapPy}
M.~Culler, N.~M. Dunfield, and J.~R. Weeks.
\newblock Snap{P}y, a computer program for studying the topology of
  $3$-manifolds.
\newblock Available at http://snappy.computop.org.

\bibitem{DerF8}
M.~Deraux.
\newblock A 1-parameter family of spherical $\mbox{CR}$ uniformisations of the
  fighure eight knot complement.
\newblock {\em arxiv:1410.1198}.

\bibitem{Derrep}
M.~Deraux.
\newblock On spherical $\mbox{CR}$ uniformization of 3-manifolds.
\newblock {\em Exp. Math.}, 24:355--370, 2015.

\bibitem{DF8}
M.~Deraux and E.~Falbel.
\newblock Complex hyperbolic geometry of the figure eight knot.
\newblock {\em Geom. Topol.}, 19:237--293, 2015.

\bibitem{DPP2}
M.~Deraux, J.R. Parker, and J.~Paupert.
\newblock New non-arithmetic complex hyperbolic lattices.
\newblock {\em Invent. Math.}, 203:681--771, 2016.

\bibitem{EP}
D.~B. Epstein and C.~Petronio.
\newblock An exposition of $\mathrm{P}$oincar\'e's polyhedron theorem.
\newblock {\em Enseign. Math. (2)}, 40(1-2):113--170, 1994.

\bibitem{FalJDG}
E.~Falbel.
\newblock A spherical $\mbox{CR}$ structure on the complement of the figure
  eight knot with discrete holonomy.
\newblock {\em Journal Diff. Geom.}, 79(1):69--110, 2008.

\bibitem{FGKRT}
E.~Falbel, A.~Guilloux, P.V. Koseleff, F.~Rouillier, and M.~Thistlethwaite.
\newblock Character varieties for $\mbox{SL(3,$\mathbb{C})$}$ : the figure
  eight knot.
\newblock {\em arXiv:1412.4711}, 2014.

\bibitem{FKR}
E.~Falbel, P.V. Koseleff, and F.~Rouillier.
\newblock Representations of fundamental groups of 3-manifolds into
  $\mbox{PGL}(3,\mathbb{C})$ : exact computations in low complexity.
\newblock {\em Geometriae Dedicata}, 177:229--255, 2015.

\bibitem{FJP2}
E.~Falbel and J.R. Parker.
\newblock The geometry of the $\mbox{E}$isenstein-$\mbox{P}$icard modular
  group.
\newblock {\em Duke Math. J.}, 131(2):249--289, 2006.

\bibitem{Go}
W.M. Goldman.
\newblock {\em Complex Hyperbolic Geometry}.
\newblock Oxford University Press, Oxford, 1999.

\bibitem{GP}
W.M. Goldman and J.R. Parker.
\newblock Complex hyperbolic ideal triangle groups.
\newblock {\em Journal f\mbox{\"u}r dir reine und angewandte Math.},
  425:71--86, 1992.

\bibitem{GP2}
W.M. Goldman and J.R. Parker.
\newblock Dirichlet $\mbox{P}$olyhedra for $\mbox{D}$ihedral $\mbox{G}$roups
  $\mbox{A}$cting on $\mbox{C}$omplex $\mbox{H}$yperbolic $\mbox{S}$pace.
\newblock {\em Jour. Geom. Analysis}, 2(6):517--554, 1992.

\bibitem{HMP}
M.~Heusener, V.~Munoz, and J.~Porti.
\newblock The $\mbox{SL(3,$\mathbb{C}$)}$-character variety of the figure eight
  knot.
\newblock {\em arXiv:1505.04451}, 2015.

\bibitem{KS}
L.~Keen and C.~Series.
\newblock The $\mbox{R}$iley slice of $\mbox{S}$hottky space.
\newblock {\em Proc. London Math. Soc}, 69:72--90, 1994.

\bibitem{MP}
B.~Martelli and C.~Petronio.
\newblock Dehn filling of the 'magic' 3-manifold.
\newblock {\em Comm. Anal. Geom.}, 14:967--1024, 2006.

\bibitem{Mart}
G.~Martin.
\newblock On discrete isometry groups of negative curvature.
\newblock {\em Pacific J. Math.}, 160:109--128, 1993.

\bibitem{Most}
G.D. Mostow.
\newblock A remarkable class of polyhedra in complex hyperbolic space.
\newblock {\em Pac. J. Math.}, 86:171--276, 1980.

\bibitem{ParkWi}
J.~Parker and P.~Will.
\newblock Complex hyperbolic free groups with many parabolic elements.
\newblock In {\em Geometry, Groups and Dynamics}, volume 639 of {\em
  Contemporary Mathematics}, pages 327--348. American Mathematical Society,
  Providence, RI, 2015.

\bibitem{Parbook}
J.R. Parker.
\newblock {\em Complex Hyperbolic Kleinian Groups}.
\newblock Cambridge University Press, to appear,.

\bibitem{PP2}
J.R. Parker and I.D. Platis.
\newblock Complex hyperbolic quasi-$\mbox{F}$uchsian groups.
\newblock In {\em Geometry of Riemann Surfaces}, volume 368 of {\em London
  Mathematical Society Lecture Notes}, pages 309--355. 2010.

\bibitem{PWX}
J.R. Parker, Jieyan Wang, and Baohua Xie.
\newblock Complex hyperbolic $(3,3,n)$ triangle groups.
\newblock {\em Pacific. J. Mathematics}, 280:433--453, 2016.

\bibitem{PaW}
J.~Paupert and P.~Will.
\newblock Real reflections, commutators and cross-ratios in complex hyperbolic
  space.
\newblock {\em Preprint arXiv:1312.3173}.

\bibitem{Pra}
A.~Pratoussevitch.
\newblock Traces in complex hyperbolic triangle groups.
\newblock {\em Geometriae Dedicata}, 111:159--185, 2005.

\bibitem{Rat}
J.G. Ratcliffe.
\newblock {\em Foundations of Hyperbolic Manifolds}.
\newblock Springer-Verlag, New York, 2006.

\bibitem{S1}
R.~E. Schwartz.
\newblock Degenerating the complex hyperbolic ideal triangle groups.
\newblock {\em Acta Math.}, 186:105--154, 2001.

\bibitem{S}
R.~E. Schwartz.
\newblock Ideal triangle groups, dented tori, and numerical analysis.
\newblock {\em Ann. of Math. (2)}, 153:533--598, 2001.

\bibitem{S2}
R.~E. Schwartz.
\newblock Complex hyperbolic triangle groups.
\newblock {\em Proc. Int. Math. Cong.}, 1:339--350, 2002.

\bibitem{S3}
R.~E. Schwartz.
\newblock A better proof of the {$\mbox{G}$}oldman-{$\mbox{P}$}arker
  conjecture.
\newblock {\em Geometry and Topology}, 9:1539--1601, 2005.

\bibitem{Schbook}
R.~E. Schwartz.
\newblock {\em Spherical $\mbox{CR}$ $\mbox{G}$eometry and $\mbox{D}$ehn
  $\mbox{S}$urgery}, volume 165 of {\em Ann. of Math. Studies}.
\newblock Princeton University Press, Princeton, 2007.

\bibitem{Thu}
W.~Thurston.
\newblock {\em The Geometry and Topology of Three-Manifolds}.
\newblock \texttt{http://www.msri.org/publications/books/gt3m/}.

\bibitem{Wi4}
P.~Will.
\newblock Traces, $\mbox{C}$ross-ratios and 2-$\mbox{G}$enerator
  $\mbox{S}$ubgroups of $\mbox{PU}$(2,1).
\newblock {\em Canad. J. Math.}, 61(6):189--209, 2009.

\bibitem{Wi7}
P.~Will.
\newblock Bending $\mbox{F}$uchsian representations of fundamental groups of
  cusped surfaces $\mbox{PU(2,1)}$.
\newblock {\em Jour. Differ. Geom.}, 90(3):473--520, 2012.

\bibitem{Wi8}
P.~Will.
\newblock Two-generator groups acting on the complex hyperbolic plane.
\newblock In {\em Handbook of {T}eichm\"uller theory. {V}ol. {VI}}, IRMA Lect.
  Math. Theor. Phys. Eur. Math. Soc., Z\"urich, to appear.

\end{thebibliography}

\end{document}